\documentclass[12pt,reqno,a4paper]{amsart}

\usepackage{amssymb}
\usepackage{dsfont} 
\usepackage{mathrsfs}
\usepackage[hidelinks]{hyperref}
\usepackage{mathabx}
\usepackage[all]{xy}
\usepackage{graphicx} 
\usepackage{appendix}

\usepackage{geometry}
\newtheorem{proposition}{Proposition}[section]
\newtheorem{theorem}[proposition]{Theorem}
\newtheorem{corollary}[proposition]{Corollary}
\newtheorem{lemma}[proposition]{Lemma}
\theoremstyle{definition}
\newtheorem{definition}[proposition]{Definition}
\newtheorem{remark}[proposition]{Remark}
\newtheorem{remarks}[proposition]{Remarks}

\newcommand{\cst}{\ensuremath{\mathrm{C}^*}}
\newcommand{\comp}{\circ}
\newcommand{\ee}{\mathrm{e}}
\newcommand{\eps}{\varepsilon}
\newcommand{\ph}{\varphi}
\newcommand{\hh}[1]{\widehat{#1}}
\newcommand{\I}{\mathds{1}}
\newcommand{\id}{\mathrm{id}}
\newcommand{\ii}{\mathrm{i}}
\newcommand{\is}[2]{{\left\langle{#1}\,\vline\,#2\right\rangle}}
\newcommand{\ket}[1]{{\left|#1\right\rangle}}
\newcommand{\bra}[1]{{\left\langle#1\right|}}
\newcommand{\tens}{\otimes}
\newcommand{\algtens}{\otimes_{\textrm{alg}}}
\newcommand{\algboxtimes}{\boxtimes_{\textrm{alg}}}

\newcommand{\CC}{\mathbb{C}}
\newcommand{\GG}{\mathbb{G}}
\newcommand{\HH}{\mathbb{H}}
\newcommand{\NN}{\mathbb{N}}
\newcommand{\RR}{\mathbb{R}}
\newcommand{\TT}{\mathbb{T}}
\newcommand{\ZZ}{\mathbb{Z}}

\newcommand{\cA}{\mathscr{A}}
\newcommand{\cB}{\mathscr{B}}
\newcommand{\cK}{\mathscr{K}}

\newcommand{\sA}{\mathsf{A}}
\newcommand{\sB}{\mathsf{B}}

\newcommand{\sH}{\mathsf{H}}
\newcommand{\sK}{\mathsf{K}}
\newcommand{\sM}{\mathsf{M}}
\newcommand{\sN}{\mathsf{N}}
\newcommand{\sX}{\mathsf{X}}
\newcommand{\sY}{\mathsf{Y}}

\newcommand{\bh}{{\boldsymbol{h}}}
\newcommand{\bphi}{{\boldsymbol{\phi}}}
\newcommand{\brho}{{\boldsymbol{\rho}}}
\newcommand{\biota}{{\boldsymbol{\iota}}}
\newcommand{\bn}{{\boldsymbol{n}}}
\newcommand{\bz}{{\boldsymbol{z}}}

\newcommand{\uu}{\text{\rm u}}
\newcommand{\ur}{\text{\rm u,r}}
\newcommand{\ww}{\mathrm{W}}
\newcommand{\vv}{\mathrm{V}}

\newcommand{\Ww}{\mathds{W}}
\newcommand{\wW}{\text{\reflectbox{$\Ww$}}\:\!}

\DeclareMathOperator{\A}{A}
\DeclareMathOperator{\B}{B}
\DeclareMathOperator{\C}{C}
\DeclareMathOperator{\D}{D}
\DeclareMathOperator{\End}{End}

\DeclareMathOperator{\Ltwo}{\mathnormal{L}^2}
\DeclareMathOperator{\Linf}{\mathnormal{L}^{\infty}}
\DeclareMathOperator{\M}{M}
\DeclareMathOperator{\Mat}{\mathsf{Mat}}
\DeclareMathOperator{\Mor}{Mor}
\DeclareMathOperator{\Pol}{Pol}
\DeclareMathOperator{\cb}{cb}
\DeclareMathOperator{\ev}{ev}
\DeclareMathOperator{\Sp}{Sp}

\newcommand{\SU}{\mathrm{SU}}
\newcommand{\cJhat}{\hat{\mathscr{J}}}
\newcommand{\kmap}{\kappa}

\numberwithin{equation}{section}

\title[Braided quantum $\mathrm{SU}(2)$ group]{Braided quantum $\mathrm{SU}(2)$ group -- a case study}

\author{Jacek Krajczok}
\address{Vrije Universiteit Brussel}
\email{jacek.krajczok@vub.be}

\author{Piotr M.~So{\l}tan}
\address{University of Warsaw}
\email{piotr.soltan@fuw.edu.pl}

\keywords{braided quantum group, $\SU_q(2)$, compact quantum group, von Neumann algebra, Haar measure, scaling group, braided Hopf algebra}

\makeatletter
\@namedef{subjclassname@2020}{\textup{2020} Mathematics Subject Classification}
\makeatother
\subjclass[2020]{46L67, 18M15, 16T99, 46L89}

\begin{document}

\begin{abstract}
We continue the study of the braided compact quantum group $\mathrm{SU}_q(2)$ for complex $q$ satisfying $0<|q|<1$ introduced by Kasprzak, Meyer, Roy and Woronowicz (\cite{BraidedSU2}). We address such aspects as existence of the Haar measure, construct the scaling group, the antipode and its polar decomposition and describe the related braided Hopf algebra. We also study when the braided flip extends to a completely bounded map and establish equivalence between the two approaches to bosonization and braided tensor product taken in the literature (\cite{BraidedSU2} vs.~\cite{MeyerRoyWoronowiczI,RoyBraided, DeCommerKrajczok}).
\end{abstract}

\maketitle

\tableofcontents

\allowdisplaybreaks

\section{Introduction}

The general theory of compact quantum groups was introduced in \cite{CQGs} as an extension of the theory of compact groups to the framework of non-commutative topology (let us also note here that the paper \cite{CQGs} circulated in preprint form many years before its actual publication). Later the scope of the theory was enlarged to encompass locally compact objects (\cite{lcqg,mnw}). A prime example of compact quantum group is the quantum group $\SU_q(2)$ (\cite{su2}), with \emph{real} deformation parameter satisfying $0<|q|<1$. It is described by the universal unital \cst-algebra $\C(\SU_q(2))$ generated by elements $\alpha$ and $\gamma$ making the matrix $u=\left[\begin{smallmatrix}\alpha&-q\gamma^*\\\gamma&\alpha^*\end{smallmatrix}\right]$ unitary, together with a comultiplication $\C(\SU_q(2))\to\C(\SU_q(2))\tens\C(\SU_q(2))$ characterized by the property that $u$ is a unitary representation of $\SU_q(2)$ on $\CC^2$. The authors of \cite{BraidedSU2} extended this construction to \emph{complex} deformation parameter $0<|q|<1$ -- this time however, $\SU_q(2)$ is a \emph{braided} compact quantum group. This means that the comultiplication is a map $\C(\SU_q(2))\to\C(\SU_q(2))\boxtimes\C(\SU_q(2))$, where $\boxtimes$ is the braided tensor product -- a construction of which depends on $q$ and an action of $\TT$ on $\C(\SU_q(2))$ (alternatively we might look at the action of Drinfeld double $\D(\TT)$ on $\C(\SU_q(2))$, see Section \ref{sect-SUq2} and Appendix \ref{app-BraidedTP}). This objects fits into a more general framework of \cst-algebraic braided quantum groups studied in \cite{ExampleBraided,MeyerRoyWoronowiczII,RoyBraided} and was also studied in the framework of quantum groups with projection on the von Neumann algebra level in \cite{KaspSol2020} (see also \cite{KaspSol2015}).

In our work we continue the study of the braided compact quantum group $\SU_q(2)$ undertaken in \cite{BraidedSU2}. Our results go roughly in three directions.

First, we show that the dense unital $*$-algebra $\Pol(\SU_q(2))$ generated by $\alpha$ and $\gamma$ coincides with the space spanned by matrix coefficients of finite dimensional unitary representations (Proposition \ref{prop-prop7}). On this space we can introduce a version of the antipode map $S$ and a braided flip $\chi^{\boxtimes}$. Together with the counit (Proposition \ref{prop-lemma19}) it can be understood as a braided Hopf algebra, i.e.~a Hopf algebra in the braided monoidal category $\sideset{_\ZZ^\ZZ}{}{\operatorname{\mathcal{YD}}}$ of Yetter-Drinfeld modules over $\CC[\ZZ]$ (Section \ref{sect-BrHopf}). We also prove that $S$ acts in the way known from non-braided setting on matrix coefficients of representations (Proposition \ref{propprop5}).

Next, as in the non-braided setting, we introduce scaling automorphisms $(\tau_t)_{t\in\RR}$ which form a group of automorphisms of $\C(\SU_q(2))$, and define the unitary antipode $R$ which is a bounded map on $\C(\SU_q(2))$ (sections \ref{sect-tau}, \ref{sect-UApolar}). Despite a number of similarities, the equation $S=R\comp\tau_{-\ii/2}$ does \emph{not} hold when $q\not\in\RR$ and we need to introduce a \emph{residual mapping} $\vartheta$ on $\Pol(\SU_q(2))$ (Section \ref{sect-UApolar}). It provides us with a polar decomposition $S=R\comp\tau_{-\ii/2}\comp\vartheta^{-1}$ (Definition \ref{def-R}). We also include Proposition \ref{prop1} which says that the braided flip $\chi^\boxtimes$ extends to a completely bounded map on $\C(\SU_q(2))\boxtimes\C(\SU_q(2))$ if and only if $\zeta=q/\overline{q}$ is a root of unity.

Finally, we study the Haar measure $\bh$ on $\SU_q(2)$ which is defined via the bosonization (see Section \ref{sect-HaarMeasure}). As expected, it is a $\D(\TT)$-invariant KMS state on $\C(\SU_q(2))$, and using it one can define the von Neumann algebra $\Linf(\SU_q(2))$ via the GNS representation. The Haar measure is left and right invariant: $(\id\boxtimes\bh)\Delta_A(a)=\bh(a)\I=(\bh\boxtimes\id)\Delta_A(a)$ for all $a\in A$. We provide two arguments for this result.

In the first appendix, we carefully check that the two approaches to braided tensor product (the one used in \cite{BraidedSU2} and the more general one of \cite{MeyerRoyWoronowiczI,MeyerRoyWoronowiczII,DeCommerKrajczok}) are equivalent in an appropriate sense -- see Theorem \ref{thm2} and Remark \ref{rem1}. The second appendix establishes a canonical isomorphism between the two definitions of bosonization from \cite{BraidedSU2} and \cite{MeyerRoyWoronowiczII}. The third appendix is devoted to the proof of Proposition \ref{prop1} describing the conditions for the braided flip to extend to a completely bounded map on $\C(\SU_q(2))\boxtimes \C(\SU_q(2))$.

In order to lighten the notation, throughout the paper we will write $A$, $\cA$ and $M$ to denote the algebras $\C(\SU_q(2))$, $\Pol(\SU_q(2))$, and $\Linf(\SU_q(2))$ respectively. The comultiplication on $A=\C(\SU_q(2))$ will be denoted by $\Delta_A$.

Whenever we refer to a map $\Psi$ as a \emph{morphism} from a \cst-algebra $\sA$ to a \cst-algebra $\sB$, we mean that $\Psi$ is a non-degenerate $*$-homomorphism from $\sA$ to the multiplier algebra $\M(\sB)$ of $\sB$. For details of this notion and its use we refer the reader to \cite{unbo,Lance}. The set of morphisms from $\sA$ to $\sB$ will be denoted by the symbol $\Mor(\sA,\sB)$.

For a  locally compact quantum group $\GG$, we denote by $\ww^{\GG}\in\M(\C_0(\GG)\tens\C_0(\hh{\GG}))$ the Kac-Takesaki operator of $\GG$, i.e.~its left regular representation. A \emph{left action} of $\GG$ on (the quantum space underlying) a \cst-algebra $\sA$ is described by a morphism $\alpha\in\Mor(\sA,\C_0(\GG)\tens\sA)$ such that $(\id\tens\alpha)\comp\alpha=(\Delta_\GG\tens\id)\comp\alpha$ and $(\C_0(\GG)\tens\I)\alpha(\sA)$ is dense in $\C_0(\GG)\tens\sA$ (Podle\'s condition). We will always assume that $\alpha$ is injective. If $\GG$ happens to be a classical locally compact group (i.e.~the \cst-algebra $\C_0(\GG)$ is commutative) then out of $\alpha$ one can produce an actual action of $\GG$ on $\sA$ by $*$-automorphisms via $\GG\ni g\mapsto\alpha_g\in\operatorname{Aut}(\sA)$, where $\alpha_g=(\operatorname{ev}_{g^{-1}}\tens\id)\comp\alpha$. The inverse in this formula is needed, so that $g\mapsto\alpha_g$ is multiplicative. We will stick to this convention throughout the paper.

If $\GG=\D(\HH)$ is the Drinfeld double of a locally compact quantum group $\HH$ (\cite[Section 4]{PodlesWoronowiczQLG}, \cite{Yamanouchi}, \cite[Section 8]{mnw}, \cite[Section 8]{BaajVaes}), then action of $\D(\HH)$ on $\sA$ amounts to a pair of actions, namely of $\HH$ and of $\hh{\HH}$ on $\sA$ satisfying the Yetter-Drinfeld condition (\cite[Proposition 3.2]{NestVoigt}). We will be interested in the case of $\GG=\D(\TT)$ in which this condition simplifies: the action $\brho^{\sA}\in\Mor(\sA,\D(\TT)\tens\sA)$ corresponds to a pair of commuting actions $\rho^{\sA}\in\Mor(\sA,\C(\TT)\tens\sA)$ and $\tilde{\rho}^{\sA}\in\Mor(\sA,\mathrm{c}_0(\ZZ)\tens\sA)$. This (bijective) correspondence is given by restriction of the action of $\D(\TT)=\TT\times\ZZ$ to the closed subgroups $\TT,\ZZ\subseteq\D(\TT)$ in one direction, and by equation $\brho^{\sA}=(\id\tens\tilde{\rho}^{\sA})\comp\rho^{\sA}$ in the other. We will use the same notational convention (of $\brho$, $\rho$, and $\tilde{\rho}$) throughout the paper.

\subsection{Further notational conventions}\label{sect-notation}
Whenever $X$ and $Y$ are subspaces of an algebra, the symbol $XY$ will denote $XY=\operatorname{span}\{xy\,|\,x\in X,\:y\in Y\}$.

In the first paper \cite{ExampleBraided} dealing with braided tensor products on \cst-algebra level (referred to ``crossed products'' in that article) written by S.L.~Woronowicz, a central role was played by the inclusion morphisms of the factor \cst-algebras into the braided product. The notation used in that paper was $j_1\in\Mor(\sA,\sA\tens_{\scriptscriptstyle\mathcal{C}}\!\sB)$ and $j_2\in\Mor(\sB,\sA\tens_{\scriptscriptstyle\mathcal{C}}\!\sB)$. The theory of braided tensor products of \cst-algebras was further developed in \cite{BraidedSU2,MeyerRoyWoronowiczI,MeyerRoyWoronowiczII} and \cite{DeCommerKrajczok}, where the notation for the braided product utilizes the symbol $\boxtimes$ (or $\boxtimes_\zeta$ in \cite{BraidedSU2}, see Section \ref{app-BraidedTP}) and variously $\iota_1$, $\iota_2$ etc.~or $\iota_A$, $\iota_B$ for the inclusion morphisms. The useful convention of denoting the inclusion morphism with a symbol which contains a symbol designating the \cst-algebra in question, like $\iota_\sA$, adopted e.g.~in \cite{MeyerRoyWoronowiczI,MeyerRoyWoronowiczII,DeCommerKrajczok} is not sufficiently precise when we are dealing with the braided tensor product of a given \cst-algebra $\sA$ with itself: $\sA\boxtimes\sA$. In that case the notation $j_1$ and $j_2$ seems more appropriate, but it becomes increasingly clumsy when multiple braided products are considered. The solution of using notation of the form $\iota_{\sA,1}$, $\iota_{\sB,3}$ etc.~(e.g.~to denote inclusion morphisms from $\sA$ and $\sB$ into $\sA\boxtimes\sB\boxtimes\sB$) solves the notational dilemmas, but is rather heavy and most of the time unnecessary. We propose to pass seamlessly between the convention of including the symbol for the \cst-algebra in the notation and only indicating its position in the braided product depending on the context. Although this is not a coherent notational scheme, we feel it will not cause any confusion to the reader, as the context always clearly explains which algebra is being included at what position in the braided tensor product. Sometimes (e.g.~in the proof of Proposition \ref{prop_BBDel}) we will use a combination of these two variants of notation simultaneously.

\section{The braided quantum group \texorpdfstring{$\SU_q(2)$}{SUq(2)}}\label{sect-SUq2}

\subsection{Definition}
For $q\in\CC\setminus\{0\}$ the (\cst-algebraic) braided compact quantum group $\SU_q(2)$ was introduced in \cite{BraidedSU2}: the \cst-algebra $A$ is defined as the universal unital \cst-algebra generated by $\alpha,\gamma$ such that
\begin{equation}\label{eq-SUq2rels}
\begin{aligned}
\alpha^*\alpha+\gamma^*\gamma&=\I,&\alpha\gamma&=\overline{q}\gamma\alpha,\\
\alpha\alpha^*+|q|^2\gamma^*\gamma&=\I,&\gamma\gamma^*&=\gamma^*\gamma.
\end{aligned}
\end{equation}
or, equivalently, the matrix $\left[\begin{smallmatrix}\alpha&-q\gamma^*\\\gamma&\alpha^*\end{smallmatrix}\right]$ is unitary (note that the relation $\alpha\gamma^*=q\gamma^*\alpha$ follows from the remaining ones, see e.g.~\cite[Remark 3.1]{BraidedPodlesSpheres}). The \cst-algebra $A$ could also be denoted by the symbol $\C(\SU_q(2))$, but we will stick to $A$ in this paper to lighten the notation. Before defining the comultiplication we need to introduce the action $\rho^A$ of $\TT$ on $A$:
\begin{equation}\label{eq-eq5}
\rho^A(\alpha)=\I\tens\alpha,\quad
\rho^A(\gamma)=\bz\tens\gamma
\end{equation}
where $\bz$ is the canonical generator of $\C(\TT)$. This way $A$ becomes an object of the category of \cst-algebras endowed with an action of $\TT$ (and equivariant morphisms of \cst-algebras) on which the \emph{braided tensor product} is defined. This construction depends on the choice of number $\zeta\in \TT$ -- we discuss this notion and its more general version in detail in Appendix \ref{app-BraidedTP}. At this point let us use the simplified approach to the braided tensor product as presented in \cite[Section 1]{BraidedSU2}: first we put $\zeta=q/\overline{q}$ (where $q$ is the deformation parameter in \eqref{eq-SUq2rels}). Then, given two \cst-algebras $\sX$ and $\sY$ equipped with actions $\rho^\sX$ and $\rho^\sY$ of $\TT$, we define $\iota_{\sX}\in\Mor(\sX,\C(\TT^2_\zeta)\tens\sX\tens\sY)$ and $\iota_{\sY}\in\Mor(\sY,\C(\TT^2_\zeta)\tens\sX\tens\sY)$ by
\[
\iota_{\sX}(x)=U^n\tens x\tens\I,\quad
\iota_{\sY}(y)=V^m\tens\I\tens y,
\]
where $x\in\sX$ and $y\in\sY$ are homogeneous elements of degrees $n$ and $m$ respectively ($\rho^\sX(x)=\bz^n\tens x$, $\rho^\sY(y)=\bz^m\tens y$) and $\C(\TT^2_\zeta)$ is the quantum torus:
\[
\C(\TT^2_\zeta)=\cst\bigl\langle U,V,\I\,\bigr|\bigl.\,U^*U=UU^*=V^*V=VV^*=\I,\:UV=\zeta VU\bigr\rangle.
\]
Then the \emph{braided tensor product} $\sX\boxtimes\sY$ of $\sX$ and $\sY$ is defined as the closed linear span of elements of the form $\iota_\sX(x)\iota_\sY(y)$ in $\M(\C(\TT^2_\zeta)\tens\sX\tens\sY)$, in symbols $\sX\boxtimes\sY=\overline{\iota_\sX(\sX)\iota_\sY(\sY)}$. Having defined $\sX\boxtimes\sY$ we consider $\iota_\sX$ and $\iota_\sY$ as morphisms from $\sX$ and $\sY$ to $\sX\boxtimes\sY$ respectively. By definition
\[
\iota_{\sX}(x)\iota_{\sY}(y)=\zeta^{\deg(x)\deg(y)} \iota_{\sY}(y)\iota_{\sX}(x)
\]
for homogeneous $x\in\sX$ and $y\in\sY$ of degrees respectively $\deg(x),\deg(y)\in\ZZ$.

The comultiplication on $A$ is the unital $*$-homomorphism $\Delta_A$ from $A$ to $A\boxtimes A$ defined by
\[
\Delta_A(\alpha)=\iota_1(\alpha)\iota_2(\alpha)-q\iota_1(\gamma^*)\iota_2(\gamma),\quad
\Delta_A(\gamma)=\iota_1(\gamma)\iota_2(\alpha)+\iota_1(\alpha^*)\iota_2(\gamma).
\]
As explained in Section \ref{sect-notation}, we have switched to the notation $\iota_1$, $\iota_2$ as in the case of braided tensor product $A\boxtimes A$ having the same factors, the symbol $\iota_A$ becomes ambiguous. The braided tensor product is a bifunctor on the category of $\TT$-\cst-algebras and hence we can take braided tensor product of (equivariant) morphisms (we can take several versions of morphisms, in particular equivariant c.p.~maps -- see \cite[Section 5.2]{MeyerRoyWoronowiczI} and \cite[Proposition 5.1]{DeCommerKrajczok}). This allows us to state that $\Delta_A$ is coassociative: $(\Delta_A\boxtimes\id)\comp\Delta_A=(\id\tens\Delta_A)\comp\Delta_A$. Moreover the ``cancellation laws'' hold, i.e.~$\overline{\iota_1(A)\Delta_A(A)}=A\boxtimes A=\overline{\Delta_A(A)\iota_2(A)}$.

A further study of the braided quantum group $\SU_q(2)$ including description of some of its representations, actions and homogeneous spaces is contained in \cite{BraidedPodlesSpheres}.

\subsection{Bosonization}\label{sect-boson}

The authors of \cite{BraidedSU2} define the bosonization of $\SU_q(2)$ as the compact quantum groups described by $(B,\Delta_B)$, where $B$ is universal unital \cst-algebra $B$ with generators $\alpha,\gamma,z$ where $\alpha,\gamma$ satisfy relations \eqref{eq-SUq2rels} together with
\[
z\alpha z^*=\alpha,\quad
z\gamma z^*=\zeta^{-1}\gamma,\quad
zz^*=z^*z=\I
\]
and $\Delta_B$ acts as follows:
\begin{equation}\label{eq-eq32}
\Delta_B(\alpha)=\alpha\tens\alpha-q\gamma^*z\tens\gamma,\quad
\Delta_B(\gamma)=\gamma\tens\alpha+\alpha^*z\tens\gamma,\quad
\Delta_B(z)=z\tens z.
\end{equation}
We will discuss an alternative approach to the bosonization in Section \ref{sect-BosonBoson}.

In fact, as noted in \cite[Section 6]{BraidedSU2}, the compact quantum group described by $(B,\Delta_B)$ is the co-opposite of the compact quantum group $\mathrm{U}_q(2)$ studied in \cite{ZhangZhao} via $\alpha\mapsto a^*$, $\gamma\mapsto b^*$, $z\mapsto D^*$.

Let us denote by $\kmap$ the obvious map $A=\C(\SU_q(2))\to B$. Furthermore let $\bn$ be the canonical (unbounded) generator of $\mathrm{c}_0(\ZZ)$, i.e.~the function $\ZZ\ni n\mapsto n\in\CC$.

\begin{proposition}\label{prop-injkap}
The map $\kmap$ is injective.
\end{proposition}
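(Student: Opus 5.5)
The plan is to build a left inverse of $\kmap$ at the level of representations: a \cst-algebra $C$, a unital $*$-homomorphism $\Phi\colon B\to C$, and an injective map $A\to C$ that factors as $\Phi\comp\kmap$. The natural candidate is the crossed product of $A$ by the automorphism $\rho^A_\zeta$, or more concretely an operator picture using the $\mathrm{c}_0(\ZZ)$ generator $\bn$.

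The relations of $B$ say precisely that $z$ implements the automorphism $\theta$ of $A$ determined by $\alpha\mapsto\alpha$, $\gamma\mapsto\zeta^{-1}\gamma$. In the notation of \eqref{eq-eq5}, $\theta=(\ev_{\zeta^{-1}}\tens\id)\comp\rho^A$, so it is a genuine automorphism of $A$. The first step is therefore to identify $B$ with the crossed product $A\rtimes_\theta\ZZ$. The universal property of $B$ and that of the full crossed product give mutually inverse maps: both are universal for a representation of $A$ together with a unitary implementing $\theta$, and $A$ is generated by $\alpha$ and $\gamma$. Under this identification, $\kmap$ becomes the canonical map $A\to A\rtimes_\theta\ZZ$.

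It is standard that the canonical map from $A$ into the full crossed product is injective. Composing with the regular representation shows this. Take a faithful representation $\pi$ of $A$ on $\sH$, and let $A$ act on $\sH\tens\ell^2(\ZZ)$ by $\tilde\pi(a)(\xi\tens\delta_k)=\pi(\theta^{-k}(a))\xi\tens\delta_k$, with $z$ acting as the shift $\I\tens S$. One checks that the covariance relation $z\tilde\pi(a)z^*=\tilde\pi(\theta(a))$ holds, so this pair is a representation of $B$. Its restriction to $A$ is $\tilde\pi$, which contains $\pi$ as the $k=0$ summand and is therefore injective. Hence $\kmap$ is injective.

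To avoid citing crossed product theory, one can write the representation explicitly on generators. On $\sH\tens\ell^2(\ZZ)$, set
\[
\alpha\mapsto\pi(\alpha)\tens\I,\quad \gamma\mapsto\pi(\gamma)\tens\zeta^{\bn},\quad z\mapsto\I\tens S.
\]
Here $\zeta^{\bn}$ is the diagonal unitary $\delta_k\mapsto\zeta^k\delta_k$, and the sign convention is chosen so that $S\zeta^{\bn}S^*=\zeta^{-1}\zeta^{\bn}$. One verifies that relations \eqref{eq-SUq2rels} still hold, since $\zeta^{\bn}$ is a central-like unitary phase commuting with everything else. The commutation $\alpha\gamma=\overline q\gamma\alpha$ is unaffected, and the relations involving $z$ hold by the choice of $S$. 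The only mildly delicate step is showing that the restriction to $A$ is faithful. Compressing to $\sH\tens\CC\delta_0$ recovers $\pi$, because $\zeta^{\bn}$ acts by $1$ on $\delta_0$, and this compression is a $*$-homomorphism because the diagonal subspace is reducing. This last step is the main point to check, and it is immediate.
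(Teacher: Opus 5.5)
Your proof is correct and is essentially the paper's argument: the paper identifies $B$ with the crossed product $\ZZ\ltimes A$, realized concretely by $\I\tens\alpha$, $\zeta^{\bn}\tens\gamma$ and the shift, which is exactly your explicit representation with the legs swapped. The only difference is that the paper invokes amenability of $\ZZ$ to show this concrete algebra is universal for the relations (so $B\simeq\ZZ\ltimes A$), whereas you observe that the universal property of $B$ alone yields a representation whose restriction to $A$ is faithful, which is all that injectivity of $\kmap$ requires.
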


\begin{proof}
We have a natural isomorphism $B\simeq \ZZ\ltimes A$, for the action $\tilde{\rho}^{A}$ of $\ZZ$ on $A$ given by $\alpha\mapsto \I\tens\alpha$, $\gamma\mapsto\zeta^\bn\tens\gamma$. Indeed $\ZZ\ltimes A=\overline{\tilde{\rho}^A(A)(\C(\TT)\tens\I)}$ is generated by $\mathsf{x}=\tilde{\rho}^A(\alpha)$, $\mathsf{y}=\tilde{\rho}^A(\gamma)$ and $\mathsf{z}=(z\tens\I)$ and it is the universal \cst-algebra for the relations
\begin{align*}
\mathsf{x}^*\mathsf{x}+\mathsf{y}^*\mathsf{y}&=\I,&\mathsf{x}\mathsf{y}&=\overline{q}\mathsf{y}\mathsf{x},
&\mathsf{x}\mathsf{x}^*+|q|^2\mathsf{y}^*\mathsf{y}&=\I,&\mathsf{y}\mathsf{y}^*&=\mathsf{y}^*\mathsf{y},\\
\mathsf{z}\mathsf{x}\mathsf{z}^*&=\mathsf{x},
&\mathsf{z}\mathsf{y}\mathsf{z}^*&=\zeta^{-1}\mathsf{y},
&\mathsf{z}\mathsf{z}^*&=\I,&\mathsf{z}^*\mathsf{z}&=\I
\end{align*}
because $\ZZ$ is amenable (\cite[Theorem 7.13]{Williams}). Under this isomorphism $\kmap(A)\subseteq B$ corresponds to $A\subseteq \ZZ\ltimes A$ and $z$ corresponds to the generator $\lambda(1)\in\cst_{\text{r}}(\ZZ)\subseteq \ZZ\ltimes A$. It follows that $\kmap$ is injective.
\end{proof}

In what follows we will denote by $\cA$ the algebra generated by $\alpha,\alpha^*,\gamma$ and $\gamma^*$ inside $A$. This is a dense unital $*$-subalgebra and we have

\begin{corollary}\label{cor-bazacA}
The system
\begin{equation}\label{eq-bazacA}
\bigl\{\alpha^n\gamma^m{\gamma^*}^k\,\bigr|\bigl.\,n,m,k\in\ZZ_+\bigr\}\cup\bigl\{{\alpha^*}^n\gamma^m{\gamma^*}^k\,\bigr|\bigl.\,n\in\NN,\:m,k\in\ZZ_+\bigr\}
\end{equation}
is a basis of $\cA$.
\end{corollary}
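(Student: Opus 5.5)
Spanning is immediate from the relations \eqref{eq-SUq2rels}, and linear independence will be transferred from the real case via the bosonization $B$, which is where Proposition \ref{prop-injkap} enters.

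\emph{Spanning.} The relations \eqref{eq-SUq2rels}, together with $\alpha\gamma^*=q\gamma^*\alpha$ (and the adjoint relations $\gamma^*\alpha^*=q\alpha^*\gamma^*$, $\gamma\alpha^*=\overline{q}\alpha^*\gamma$), let us move every $\gamma$ or $\gamma^*$ to the right of every $\alpha$ or $\alpha^*$, at the cost of scalar factors. Since $\gamma$ is normal, any word in $\gamma,\gamma^*$ equals $\gamma^m{\gamma^*}^k$. It remains to treat a word in $\alpha,\alpha^*$. Using $\alpha^*\alpha=\I-\gamma^*\gamma$ and $\alpha\alpha^*=\I-|q|^2\gamma^*\gamma$, we cancel adjacent pairs $\alpha\alpha^*$ or $\alpha^*\alpha$. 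Each cancellation replaces the word by a combination of words with fewer $\alpha$-letters times elements of the $*$-algebra generated by $\gamma$, which can again be commuted to the right. Induction on the number of $\alpha$-letters then shows that $\cA$ is spanned by the system \eqref{eq-bazacA}.

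\emph{Linear independence.} By Proposition \ref{prop-injkap}, it suffices to prove independence of the images $\kmap(\alpha^n\gamma^m{\gamma^*}^k)$ and $\kmap({\alpha^*}^n\gamma^m{\gamma^*}^k)$ in some representation of $A$. I would use the concrete representation on $\ell^2(\ZZ_+)\tens\ell^2(\ZZ)$ given by
\[
\alpha e_j\tens f_l=\sqrt{1-|q|^{2j}}\,e_{j-1}\tens f_l,\qquad
\gamma e_j\tens f_l=q'^{\,j}\,e_j\tens f_{l+1},
\]
with $q'$ chosen as $\overline{q}$ or a suitable phase-twist of $|q|$ so that $\alpha\gamma=\overline{q}\gamma\alpha$ holds (I would check this convention). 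Since we only need a representation, injectivity of $\kmap$ is not strictly required. Alternatively, one can use the $\TT^2$-grading: the action $\rho^A$ from \eqref{eq-eq5} together with the automorphism $\alpha\mapsto\lambda\alpha$, $\gamma\mapsto\gamma$ (which preserves the relations). This reduces the problem to a fixed bidegree $(\deg_\alpha,\deg_\gamma)=(n \text{ or } -n,\;m-k)$.

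Within a fixed bidegree, the candidate elements are $\alpha^n\gamma^{m}{\gamma^*}^{k}$ with $m-k$ fixed, that is, $\alpha^n(\gamma^*\gamma)^{k}\gamma^{m-k}$ up to a scalar. It remains to see that the powers $(\gamma^*\gamma)^k$ are linearly independent modulo the factor $\alpha^n\gamma^{m-k}$. In the representation above, $\gamma^*\gamma$ acts diagonally with eigenvalues $|q|^{2j}$, $j\in\ZZ_+$, which are infinitely many distinct numbers. Moreover, $\alpha^n\gamma^{d}$ (respectively ${\alpha^*}^n\gamma^d$) has nonzero matrix entries $e_{j}\tens f_l\mapsto e_{j\mp n}\tens f_{l+d}$ for all large $j$. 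Hence a vanishing combination $\sum_k c_k\alpha^n(\gamma^*\gamma)^k\gamma^d$ yields $\sum_k c_k|q|^{2jk}=0$ for infinitely many $j$, a polynomial in $|q|^{2j}$ with infinitely many roots, so all $c_k=0$.

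The main obstacle I expect is fixing a correct faithful-enough representation for complex $q$ and verifying that the relations \eqref{eq-SUq2rels} hold there. If this proves awkward, I would instead first establish independence for real $|q|$ via the standard basis of $\Pol(\SU_{|q|}(2))$, and then use that the bigraded pieces and the multiplication by $\gamma^*\gamma$ are governed by the same eigenvalue argument.
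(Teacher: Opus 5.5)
Your proof is correct, but for linear independence you take a different route from the paper. The paper writes down no representation. It combines injectivity of $\kmap$ (Proposition \ref{prop-injkap}) with the known basis of $B\simeq\C(\mathrm{U}_q(2))$ from \cite[Theorem 2.2]{ZhangZhao}. For spanning it only notes that the span of \eqref{eq-bazacA} is stable under adjoints and multiplication by the generators, which your normal-ordering induction makes explicit.

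You instead work in a concrete representation of $A$, so your opening sentence and the appeal to Proposition \ref{prop-injkap} do not describe what you actually do. Neither $B$ nor $\kmap$ is needed: the representation of $A$ given by universality carries any linear relation in $\cA$ to one among the operators.

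The convention you were unsure about is $q'=\overline{q}$. Indeed $\alpha\gamma(e_j\tens f_l)=(q')^{j}\sqrt{1-|q|^{2j}}\,e_{j-1}\tens f_{l+1}$ and $\gamma\alpha(e_j\tens f_l)=(q')^{j-1}\sqrt{1-|q|^{2j}}\,e_{j-1}\tens f_{l+1}$, so $\alpha\gamma=q'\gamma\alpha$. The remaining relations in \eqref{eq-SUq2rels} only require $|q'|=|q|$.

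With this choice, $\alpha^n\gamma^m{\gamma^*}^k$ (resp.\ ${\alpha^*}^n\gamma^m{\gamma^*}^k$) sends $e_j\tens f_l$ to $e_{j-n}\tens f_{l+m-k}$ (resp.\ $e_{j+n}\tens f_{l+m-k}$), multiplied by $|q|^{2j\min(m,k)}$ and a scalar that depends only on $j,n,m-k$ and is nonzero for $j\ge n$. Comparing matrix entries therefore already isolates each pattern $(\pm n,m-k)$, so your $\TT^2$-grading step is redundant though harmless. Your polynomial argument in $|q|^{2j}$ then finishes the proof.

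The paper's route is a two-line argument that reuses $\kmap$, which it needs anyway to define $\bh$, at the price of an external basis theorem. Yours is self-contained and elementary, essentially the classical argument for real $q$ carried over to complex $q$. It does not depend on the identification $B\simeq\ZZ\ltimes A$ or on \cite{ZhangZhao}. It uses $0<|q|<1$ twice: to define the weights $\sqrt{1-|q|^{2j}}$ and to guarantee infinitely many distinct values $|q|^{2j}$. This is the paper's standing assumption, so no gap arises.
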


\begin{proof}
Since $\kmap\colon A\to B$ is injective and the system \eqref{eq-bazacA} is linearly independent in $B\simeq\C(\mathrm{U}_q(2))$ (\cite[Theorem 2.2]{ZhangZhao}), it is also linearly independent in $A$. Furthermore it is easy to see that the space spanned by \eqref{eq-bazacA} is invariant under taking adjoints and multiplication by $\alpha$ and $\gamma$ which ends the proof.
\end{proof}

There is an action of $\TT$ on $B$ given by
\[
\rho^B(\alpha)=\I\tens\alpha,\quad
\rho^B(\gamma)=\bz\tens\gamma,\quad
\rho^B(z)=\bz\tens z,
\]
which has the following relation to the comultiplication $\Delta_B$: $(\rho^B\tens\id)\comp\Delta_B=(\id\tens\Delta_B)\comp\rho^B$. It follows from this that if $\bh_B$ denotes the Haar measure of the quantum group described by $(B,\Delta_B)$ then $\bh_B$ is $\TT$-invariant: since
\[
(\id\tens\bh_B\tens\id)(\rho^B\tens\id)\Delta_B(b)=(\id\tens\bh_B\tens\id)(\id\tens\Delta_B)\rho^B(b)
=\bigl((\id\tens\bh_B)\rho^B(b)\bigr)\tens\I
\]
for any $b\in B$, for any state $\omega\in\C(\TT)^*$ we have
\[
\bigl(((\omega\tens\bh_B)\comp\rho^B)\tens\id\bigr)\Delta_B(b)
=\bigl((\omega\tens\bh_B)\rho^B(b)\bigr)\I,\qquad b\in B,
\]
so the state $(\omega\tens\bh_B)\comp\rho^B$ is right invariant. By uniqueness of the Haar measure $(\omega\tens\bh_B)\comp\rho^B=\bh_B$ (for all $\omega$) and hence $(\id\tens\bh_B)\rho^B(b)=\bh_B(b)\I$ for all $b\in B$. Also by \cite[Theorem 2.8]{GuinSaurabh} the Haar measure $\bh_B$ is faithful if $q$ is not a root of unity. We will from now on assume that $0<|q|<1$.

The modular group $\sigma^{\bh_B}$ of the Haar measure can be calculated based on the results of \cite{GuinSaurabh} and the isomorphism $B\to\C(\mathrm{U}_q(2)^{\operatorname{op}})$ taking $\alpha$ to $a^*$, $\gamma$ to $b^*$, and $z$ to $D^*$ (see above):
\[
\sigma^{\bh_B}_t(\alpha)=|q|^{-2\ii t}\alpha,\quad
\sigma^{\bh_B}_t(\gamma)=\gamma,\quad
\sigma^{\bh_B}_t(z)=z,\qquad t\in\RR.
\]

\section{Further results}

\subsection{The Haar measure}\label{sect-HaarMeasure}

We define a state $\bh$ on $A$ by $\bh=\bh_B\comp\kmap$, where $\kmap$ is the inclusion $A\hookrightarrow B$ discussed in Subsection \ref{sect-boson}. The state $\bh$ is faithful. Also, the modular group $\sigma^{\bh_B}$ restricts to a point-norm continuous group of automorphism of $A$ considered as a subalgebra of $B$ (via the embedding $\kmap$) which clearly satisfies the KMS condition. Explicitly, the modular group of $\bh$ is given by
\[
\sigma^{\bh}_t(\alpha)=|q|^{-2\ii t}\alpha,\quad
\sigma^{\bh}_t(\gamma)=\gamma
,\qquad t\in\RR.
\]
It follows that $\bh$ extends to a faithful, normal state on the von Neumann algebra $\Linf(\SU_q(2))$ generated by the image of the GNS representation of $A$ with respect to $\bh$. We denote it by $M$ in order to lighten the notation.

\begin{proposition}
The action of $\TT$ on $A$ extends to a point-weak${}^*$ continuous action on $M$.
\end{proposition}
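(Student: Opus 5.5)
The key input is that $\bh$ is $\TT$-invariant. Once we have that, each automorphism of $A$ coming from the action is implemented by a unitary on the GNS space, and conjugation by these unitaries gives the action on $M$. For $\lambda\in\TT$ write $\rho^A_\lambda=(\ev_{\lambda^{-1}}\tens\id)\comp\rho^A$, so that $\rho^A_\lambda(\alpha)=\alpha$ and $\rho^A_\lambda(\gamma)=\lambda^{-1}\gamma$.

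\textbf{Step 1: invariance of $\bh$.} We have $\kmap\comp\rho^A_\lambda=\rho^B_\lambda\comp\kmap$. To see this, note that $\rho^B$ acts on $\alpha,\gamma$ by the same formulas as $\rho^A$, and both sides are $*$-homomorphisms agreeing on the generators. Subsection \ref{sect-boson} shows $(\id\tens\bh_B)\rho^B(b)=\bh_B(b)\I$, that is, $\bh_B\comp\rho^B_\lambda=\bh_B$. Hence
\[
\bh\comp\rho^A_\lambda=\bh_B\comp\kmap\comp\rho^A_\lambda=\bh_B\comp\rho^B_\lambda\comp\kmap=\bh .
\]

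\textbf{Step 2: implementing unitaries.} Let $(\Ltwo(\SU_q(2)),\pi_\bh,\Omega)$ be the GNS triple of $\bh$. Define $U_\lambda\pi_\bh(a)\Omega=\pi_\bh(\rho^A_\lambda(a))\Omega$. By invariance of $\bh$ this map is isometric, and it has dense range, so it extends to a unitary. The maps $\lambda\mapsto U_\lambda$ form a representation of $\TT$, and $U_\lambda\pi_\bh(a)U_\lambda^*=\pi_\bh(\rho^A_\lambda(a))$.

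The representation is strongly continuous. Since $\lambda\mapsto\rho^A_\lambda(a)$ is norm continuous, the vectors $U_\lambda\pi_\bh(a)\Omega$ depend continuously on $\lambda$; by uniform boundedness this gives strong continuity on all of $\Ltwo(\SU_q(2))$. One can see norm continuity directly on $\cA$, where homogeneous elements are simply multiplied by powers of $\lambda$, and then pass to $A$ by density.

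\textbf{Step 3: the action on $M$.} Put $\rho^M_\lambda(x)=U_\lambda xU_\lambda^*$ for $x\in M$. This is a normal automorphism of $B(\Ltwo(\SU_q(2)))$. It maps $\pi_\bh(A)$ onto itself, so by normality it maps $M=\pi_\bh(A)''$ onto itself. It extends $\rho^A_\lambda$, and it preserves the extension of $\bh$, since $\bh=\is{\Omega}{\cdot\,\Omega}$ and $U_\lambda\Omega=\Omega$.

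For point-weak$^*$ continuity, take $x\in M$ and a normal functional of the form $\omega_{\xi,\eta}$. Then $\omega_{\xi,\eta}(U_\lambda xU_\lambda^*)=\is{U_\lambda^*\xi}{xU_\lambda^*\eta}$, which is continuous in $\lambda$ by strong continuity of $U$. Since the functionals $\omega_{\xi,\eta}$ span a norm-dense subspace of $M_*$ and the maps $\rho^M_\lambda$ are isometric, this gives weak$^*$ continuity for every $x$.

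In the paper's language we can then set $\rho^M(x)\in\Linf(\TT)\bar\tens M$ as the corresponding coaction; the identity $\rho^M(x)=\mathds{W}(\I\tens x)\mathds{W}^*$ with $\mathds{W}=\int^\oplus U_\lambda$ is a routine check.

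The only step with real content is Step 1, the invariance of $\bh$ under $\TT$. It rests on the $\TT$-invariance of $\bh_B$ already proved in Subsection \ref{sect-boson} and on the intertwining $\kmap\comp\rho^A_\lambda=\rho^B_\lambda\comp\kmap$. The rest is the standard argument for extending actions that preserve a state.
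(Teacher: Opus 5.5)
Your proof is correct, but your continuity argument differs from the paper's.

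\textbf{What the two proofs share.} Both start from $\bh\comp\rho^A_\mu=\bh$. The paper takes this for granted from the $\TT$-invariance of $\bh_B$ established in Subsection \ref{sect-boson}; your Step 1 supplies exactly that deduction via $\kmap\comp\rho^A_\lambda=\rho^B_\lambda\comp\kmap$. Both then extend each $\rho^A_\mu$ to a normal automorphism of $M$.

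\textbf{How the paper gets continuity.} It never discusses continuity of the implementing unitaries. It approximates a given $\omega\in M_*$ by $\omega_{\Lambda_\bh(y),\Lambda_\bh(y')}$ with $y,y'\in\cA$. It then uses the KMS property to rewrite $\bh\bigl(y^*(\rho^M_\mu(x)-\rho^M_{\mu_n}(x))y'\bigr)$ as $\bh\bigl(c\,(\rho^M_\mu(x)-\rho^M_{\mu_n}(x))\bigr)$ with $c=\sigma^\bh_\ii(y')y^*\in\cA$. Invariance of $\bh$ then moves the action off $x$ and onto $c$, and norm continuity of $\mu'\mapsto\rho^A_{\mu'}(c)$ finishes the argument.

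\textbf{How your route compares.} You get strong continuity of $\lambda\mapsto U_\lambda$ from norm continuity of the action on the dense set $\Lambda_\bh(A)$. You then use continuity of $\lambda\mapsto\omega_{\xi,\eta}(U_\lambda xU_\lambda^*)$ and norm-density of vector functionals in $M_*$. This is the standard argument and the more elementary one. It avoids modular theory altogether: no KMS condition, no explicit $\sigma^\bh$, no analyticity of elements of $\cA$. It works verbatim for any continuous state-preserving action, and it produces the implementing unitary representation explicitly. The paper's argument, in exchange, only ever needs norm continuity on a single element of $\cA$ and stays within the $\bh$-calculus used throughout the rest of the paper.

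\textbf{A slip in your closing aside.} This does not affect the proof of the statement. With your convention $\rho^A_\lambda=(\ev_{\lambda^{-1}}\tens\id)\comp\rho^A$, the slice of the coaction at $\mu$ is $\rho^M_{\mu^{-1}}(x)=U_\mu^*xU_\mu$. So with $\Ww=\int^\oplus U_\mu\,d\mu$ the correct formula is $\rho^M(x)=\Ww^*(\I\tens x)\Ww$, matching the paper's formula for $\rho^{\cK(\sH)}$ in Section \ref{sect-RepTh}, not $\Ww(\I\tens x)\Ww^*$.
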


\begin{proof}
For each $\mu\in\TT$ the automorphism $\rho^A_\mu$ preserves $\bh$, hence it extends to unital, normal $*$-homomorphism $\rho^M_\mu\colon M\to M$. One easily checks that this gives a group action and $\bh$ (extended to a state on $M$) is $\rho^M$-invariant.

Point-weak${}^*$ continuity follows by density. Indeed, take $x\in M$, $\omega\in M_*$, and let $\mu_n\xrightarrow[n\to\infty]{}\mu$ in $\TT$. Next given $\eps>0$, by a standard approximation argument, we can find $y,y'\in\cA$ such that $\|\omega-\omega_{\Lambda_\bh(y),\Lambda_\bh(y')}\|\leq\eps$, where $\Lambda_\bh$ is the GNS map for $\bh$. Then using the fact that $\bh$ is $\rho^M$-invariant we have
\begin{align*}
\Bigl|\bigl\langle\rho^M_\mu(x)-\rho^M_{\mu_n}(x),\omega\bigr\rangle\Bigr|
&\leq 2\|x\|\eps+\Bigl|\bigl\langle\rho^M_\mu(x)-\rho^M_{\mu_n}(x),\omega_{\Lambda_\bh(y),\Lambda_\bh(y')}\bigr\rangle\Bigr|\\
&=2\|x\|\eps+\Bigl|\bh\Bigl(y^*\bigl(\rho^M_\mu(x)-\rho^M_{\mu_n}(x)\bigr)y'\Bigr)\Bigr|\\
&=2\|x\|\eps+\Bigr|\bh\Bigr(\sigma^\bh_\ii(y')y^*\bigl(\rho^M_\mu(x)-\rho^M_{\mu_n}(x)\bigr)\Bigr)\Bigr|\\
&=2\|x\|\eps+\biggl|\bh\biggl(\Bigl(
\rho^M_{\overline{\mu}}\bigl(\sigma^\bh_\ii(y')y^*\bigr)-
\rho^M_{\overline{\mu_n}}\bigl(\sigma^\bh_\ii(y')y^*\bigr)\Bigr)x\biggr)\biggr|,
\end{align*}
but $\sigma^\bh_\ii(y')y^*\in\cA\subseteq A$ and hence $\mu'\mapsto\rho^A_{\mu'}(\sigma^\bh_\ii(y')y^*)$ is norm continuous and we obtain
\[
\limsup_{n\to\infty}\bigl|\langle\rho^M_\mu(x)-\rho^M_{\mu_n}(x),\omega\rangle\bigr|\leq 2\|x\|\eps.\qedhere
\]
\end{proof}

We will now prove that $\bh$ is a Haar measure on the braided quantum group $\SU_q(2)$. This question was addressed in the preliminary version \cite{BraidedSU2-arXiv} of \cite{BraidedSU2}. Our proof follows the same lines as that presented in \cite{BraidedSU2-arXiv}. In addition, in Section \ref{sect-Antipode}, we give a new argument for right invariance of $\bh$ (see Proposition \ref{prop-hRightInv}) which together with Proposition \ref{prop-hLeftInv} gives an alternative proof of Corollary \ref{cor-hHaar}

We begin with Corollary \ref{cor-idbth} which says that the braided slice maps $\bh\boxtimes\id$ and $\id\boxtimes\bh$ are well defined normal u.c.p.~maps $M\,\overline{\boxtimes}\,M\to M$. The fact that $\bh$ is the Haar measure of $\SU_q(2)$ is expressed as
\begin{equation}\label{eq-HaarMeasure}
(\bh\boxtimes\id)\Delta(a)=(\id\boxtimes\bh)\Delta(a)=\bh(a)\I,\qquad a\in M.
\end{equation}
The slice maps restrict to equivariant u.c.p.~maps $A\boxtimes A\to A$ (cf.~also \cite[Section 5.2]{MeyerRoyWoronowiczI}). Note that the von Neumann algebraic formulation of the fact that $\bh$ is the Haar measure of $\SU_q(2)$ given by \eqref{eq-HaarMeasure} is equivalent to the analogous statement on the \cst-algebra level thanks to normality of the maps involved.

\begin{proposition}\label{prop-hLeftInv}
The state $\bh$ is left invariant, i.e.~$(\id\boxtimes\bh)\Delta(a)=\bh(a)\I$ for all $a\in M$.
\end{proposition}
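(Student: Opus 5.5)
Left invariance will follow from the invariance of the Haar measure $\bh_B$ of the bosonization, by comparing $(\id\boxtimes\bh)\Delta_A$ with $(\id\tens\bh_B)\Delta_B$ on $\kmap(A)$. By normality of the slice map $\id\boxtimes\bh$ (Corollary \ref{cor-idbth}) and density of $\cA$, it suffices to check the identity on the basis \eqref{eq-bazacA}, and by linearity on homogeneous elements $a\in\cA$.

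\emph{Step 1: a formula for the slice map.} For homogeneous $x,y\in A$ of degrees $n,m$, compute
\[
(\id\boxtimes\bh)\bigl(\iota_1(x)\iota_2(y)\bigr)=x\,\bh(y).
\]
Realising $A\boxtimes A\subseteq\M(\C(\TT^2_\zeta)\tens A\tens A)$ with $\iota_1(x)\iota_2(y)=U^nV^m\tens x\tens y$, the slice $\id\boxtimes\bh$ applies $\bh$ in the third leg and then identifies the remaining element with an element of $A$. This is consistent because $\bh(y)=0$ unless $m=0$, by $\TT$-invariance of $\bh$ (inherited from that of $\bh_B$, shown in Section \ref{sect-boson}). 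I will take this formula as the defining property of the slice map from Corollary \ref{cor-idbth}.

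\emph{Step 2: the comparison map.} Define $\Phi\colon A\boxtimes A\to B\tens B$ on the spanning set by
\[
\Phi\bigl(\iota_1(x)\iota_2(y)\bigr)=\kmap(x)z^{m}\tens\kmap(y),\qquad \deg y=m.
\]
This mirrors \eqref{eq-eq32}: $\Delta_B(\kmap(a))=\Phi(\Delta_A(a))$ holds on the generators $\alpha,\gamma$, because $-q\gamma^*z\tens\gamma$ corresponds to $-q\iota_1(\gamma^*)\iota_2(\gamma)$ with $\deg\gamma=1$. We need $\Phi$ to be a well-defined $*$-homomorphism; this is the main obstacle.

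The cleanest route is the bosonization picture (Appendix B / Section \ref{sect-BosonBoson}): $\Phi$ should be the restriction of the canonical embedding of $A\boxtimes A$ into the bosonization. Concretely, check that the elements $X=\kmap(x)\tens\I$ and $Y=z^m\tens\kmap(y)$ satisfy the braiding relation $XY=\zeta^{nm}YX$, using $z\gamma z^*=\zeta^{-1}\gamma$. Then $\iota_1(\cdot)\iota_2(\cdot)\mapsto XY$ extends by universality of $A\boxtimes A$, equivalently via Theorem \ref{thm2} of the appendix, which describes $A\boxtimes A$ as a universal braided product for commuting $\TT$-gradings. Multiplicativity of $\Phi$ together with agreement on $\alpha,\gamma$ then gives $\Phi\comp\Delta_A=\Delta_B\comp\kmap$ on all of $\cA$.

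\emph{Step 3: conclusion.} For homogeneous $x,y$, Step 2 gives
\[
(\id\tens\bh_B)\Phi\bigl(\iota_1(x)\iota_2(y)\bigr)=\kmap(x)z^m\,\bh_B(\kmap(y))=\kmap(x)\,\bh(y),
\]
since $\bh(y)=0$ unless $m=0$. Hence $(\id\tens\bh_B)\comp\Phi=\kmap\comp(\id\boxtimes\bh)$. Therefore
\[
\kmap\bigl((\id\boxtimes\bh)\Delta_A(a)\bigr)=(\id\tens\bh_B)\Delta_B(\kmap(a))=\bh_B(\kmap(a))\I=\bh(a)\I.
\]
Injectivity of $\kmap$ (Proposition \ref{prop-injkap}) yields $(\id\boxtimes\bh)\Delta_A(a)=\bh(a)\I$ for $a\in\cA$, and by continuity and normality for all $a\in M$.
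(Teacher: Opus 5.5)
Your argument is the paper's argument. Your $\Phi$ is exactly the map $\psi$ used there: the restriction to $A\boxtimes A\subseteq A\boxtimes A\boxtimes\C(\TT)$ of the homomorphism $\Psi$ of Proposition~\ref{prop-prop2Cstar}, read through $\widetilde{B}\simeq B$ (so $\iota_A=\kmap$ and $\iota_{\C(\TT)}(\bz)=z$). Your Step~3 is precisely \eqref{eq-eq36} combined with \eqref{eq-eq34} and injectivity of $\kmap$.

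The step that does not hold up as written is your justification of Step~2.

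\begin{itemize}
\item Theorem~\ref{thm2} asserts no universal property. It only identifies two concretely defined algebras, $\sX\boxtimes_\zeta\sY$ and $\sX\boxtimes\sY$.
\item Braided tensor products are not universal for braided commutation relations in general. For $\zeta=1$ one gets $\sX\boxtimes_1\sY\cong\sX\tens\sY$, the minimal tensor product. The universal object for commuting pairs is the maximal tensor product.
\item So ``extends by universality of $A\boxtimes A$'' is an unproven claim. It could only be salvaged with extra input, such as a nuclearity argument, which you do not supply.
\end{itemize}

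There are two ways to repair this.

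\emph{Option 1.} Do as the paper does: obtain $\psi$ as a genuine $*$-homomorphism $A\boxtimes A\to B\tens B$ by composing the inclusion $A\boxtimes A\hookrightarrow A\boxtimes A\boxtimes\C(\TT)$ with $\Psi$ from Proposition~\ref{prop-prop2Cstar}.

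\emph{Option 2.} More cheaply, note that your Step~3 only ever applies $\Phi$ to $\Delta_A(\cA)\subseteq\cA\algboxtimes\cA$, so no analytic extension is needed.
\begin{itemize}
\item By Proposition~\ref{prop-lemma10}, the formula $\iota_1(x)\iota_2(y)\mapsto\kmap(x)z^{\deg(y)}\tens\kmap(y)$, defined gradewise on $\cA\algtens\cA$, gives a well-defined linear map on $\cA\algboxtimes\cA$.
\item The relation $z\kmap(w)z^*=\zeta^{-\deg(w)}\kmap(w)$, which you essentially verified, shows this map is multiplicative and $*$-preserving. It is therefore a homomorphism of $*$-algebras, which also covers $\alpha^*$ and $\gamma^*$ in your generator check.
\item The identity $(\id\boxtimes\bh)\Delta_A(a)=\bh(a)\I$ for $a\in\cA$ then passes to $A$ by continuity of the u.c.p.\ map $(\id\boxtimes\bh)\comp\Delta_A$ and of $\bh$.
\item It passes to $M$ by normality.
\end{itemize}

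With either repair the proof is complete.
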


\begin{proof}
Define
\[
\psi\colon A\boxtimes A\longrightarrow B\tens B
\]
as the composition
\[
\xymatrix{
A\boxtimes A\ar[r]
&A\boxtimes A\boxtimes\C(\TT)\ar[r]^(.32){\Psi}
&\bigl(A\boxtimes\C(\TT)\bigr)\tens\bigl(A\boxtimes\C(\TT)\bigr)
=B\tens B,
}
\]
where the first map is the canonical inclusion and $\Psi$ is introduced in Proposition \ref{prop-prop2Cstar}. Observe that
\[
\psi\comp\Delta_A=\Delta_B\comp\iota_A.
\]
Indeed, this follows from the calculations:
\begin{align*}
\psi\bigl(\Delta_A(\alpha)\bigr)
&=\psi\bigl(\iota_1(\alpha)\iota_2(\alpha)-q\iota_1(\gamma)^*\iota_2(\gamma)\bigr)\\
&=\bigl(\iota_A(\alpha)\tens\I\bigr)\bigl((\iota_{\C(\TT)}\tens\iota_A)\rho^A(\alpha)\bigr)
-q(\iota_A(\gamma^*)\tens\I)\bigl((\iota_{\C(\TT)}\tens\iota_A)\rho^A(\gamma)\bigr)\\
&=\iota_A(\alpha)\tens\iota_A(\alpha)
-q\iota_A(\gamma^*)\iota_{\C(\TT)}(\bz)\tens\iota_A(\gamma)=\Delta_B\bigl(\iota_A(\alpha)\bigr)
\end{align*}
and
\begin{align*}
\psi\bigl(\Delta_A(\gamma)\bigr)
&=\psi\bigl(\iota_1(\gamma)\iota_2(\alpha)+\iota_1(\alpha^*)\iota_2(\gamma)\bigr)\\
&=\bigl(\iota_A(\gamma)\tens\I\bigr)\bigl((\iota_{\C(\TT)}\tens\iota_A)\rho^A(\alpha)\bigr)
+\bigl(\iota_A(\alpha^*)\tens\I\bigr)\bigl((\iota_{\C(\TT)}\tens\iota_A)\rho^A(\gamma)\bigr)\\
&=\iota_A(\gamma)\tens\iota_A(\alpha)+\iota_A(\alpha^*)\iota_{\C(\TT)}(\bz)\tens\iota_A(\gamma)
=\Delta_B\bigl(\iota_A(\gamma)\bigr).
\end{align*}
It follows that for any $a\in A$
\begin{equation}\label{eq-eq34}
(\id\tens\bh_B)\psi\bigl(\Delta_A(a)\bigr)
=(\id\tens\bh_B)\Delta_B\bigl(\iota_A(a)\bigr)
=\bh_B\bigl(\iota_A(a)\bigr)\I=\bh(a)\I.
\end{equation}
Next we claim that
\begin{equation}\label{eq-eq36}
(\id\tens\bh_B)\comp\psi=\iota_A\comp(\id\boxtimes\bh)
\end{equation}
as maps $A\boxtimes A\to B$. It is enough to check equality \eqref{eq-eq36} on elements of the form $\iota_1(x)\iota_2(y)$ with $x,y$ are homogeneous of degrees $\deg(x),\deg(y)\in\ZZ$. Then we have
\begin{align*}
(\id\tens\bh_B)\psi\bigl(\iota_1(x)\iota_2(y)\bigr)
&=(\id\tens\bh_B)
\bigl((\iota_A(x)\tens\I)(\iota_{\C(\TT)}\tens\iota_A)\rho^A(y)\bigr)\\
&=(\id\tens\bh_B)\bigl(\iota_A(x)\iota_{\C(\TT)}(\bz^{\deg(y)})\tens\iota_A(y)\bigr)\\
&=\delta_{\deg(y),0}\,\iota_A(x)\bh(y)
=\iota_A(\id\boxtimes\bh)\bigl(\iota_1(x)\iota_2(y)\bigr)
\end{align*}
since both $\bh$ and $\bh_B$ are invariant under the actions of $\TT$. This proves equality \eqref{eq-eq36}. Consequently, using \eqref{eq-eq36} and \eqref{eq-eq34}
\[
\iota_A\bigl((\id\boxtimes\bh)\Delta_A(a)\bigr)
=(\id\tens\bh_B)\psi\bigl(\Delta_A(a)\bigr)
=\bh(a)\I,\qquad a\in A
\]
and since $\iota_A$ is injective, we obtain $(\id\boxtimes\bh)\Delta_A(a)=\bh(a)\I$ for all $a\in A$.
\end{proof}

\begin{corollary}\label{cor-hHaar}
The state $\bh$ is the Haar measure of $\SU_q(2)$.
\end{corollary}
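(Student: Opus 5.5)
With left invariance already established in Proposition \ref{prop-hLeftInv}, the remaining task is right invariance: $(\bh\boxtimes\id)\Delta_A(a)=\bh(a)\I$ for $a\in A$. Once both hold, uniqueness is standard. If $\bh'$ is another state with the same two invariance properties, I would apply $\bh'$ to $(\id\boxtimes\bh)\Delta_A(a)=\bh(a)\I$ and $\bh$ to $(\bh'\boxtimes\id)\Delta_A(a)=\bh'(a)\I$. Both left-hand sides equal $(\bh'\boxtimes\bh)\Delta_A(a)$. For this I need the compatibility $\bh'\comp(\id\boxtimes\bh)=\bh'\boxtimes\bh=\bh\comp(\bh'\boxtimes\id)$ of braided slice maps, which follows from checking on $\iota_1(x)\iota_2(y)$ with $x,y$ homogeneous, using $\TT$-invariance of the states so that only degree-zero parts contribute. (If one does not assume invariance of $\bh'$, I would first average it over $\TT$.)

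For right invariance I mirror the proof of Proposition \ref{prop-hLeftInv}. The map $\psi\colon A\boxtimes A\to B\tens B$ already satisfies $\psi\comp\Delta_A=\Delta_B\comp\iota_A$. Applying $\bh_B\tens\id$ and using invariance of the Haar measure $\bh_B$ of $(B,\Delta_B)$ gives
\[
(\bh_B\tens\id)\psi(\Delta_A(a))=\bh_B(\iota_A(a))\I=\bh(a)\I .
\]
The analogue of \eqref{eq-eq36} I aim for is
\[
(\bh_B\tens\id)\comp\psi=\iota_A\comp(\bh\boxtimes\id).
\]
I would check it on $\iota_1(x)\iota_2(y)$ with $\deg(x)=n$ and $\deg(y)=m$. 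Here $\psi(\iota_1(x)\iota_2(y))=\iota_A(x)\iota_{\C(\TT)}(\bz^m)\tens\iota_A(y)$. Applying $\bh_B$ to the first leg gives $\bh_B(\iota_A(x)z^m)$.

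This is the main obstacle: the element $z^m$ does not disappear just because $\TT$-invariance kills the nonzero degree of $x$. So I would compute $\bh_B(\iota_A(x)z^m)$ directly. I would use the isomorphism $B\simeq\ZZ\ltimes A$ from Proposition \ref{prop-injkap} under which $z=\lambda(1)$. The Haar state $\bh_B$ should be the dual-invariant state $\bh\comp E$, where $E$ is the canonical conditional expectation onto $A$. This identification holds because, under $\TT$-invariance, $\bh\comp E$ is determined by its values on $A$. Alternatively I would use the basis of $B$ from \cite{ZhangZhao} and the explicit Haar state from \cite{GuinSaurabh}, in which monomials containing a nonzero power of $z$ have zero Haar measure.

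Granting this, $\bh_B(\iota_A(x)z^m)=\delta_{m,0}\bh(x)$. Note that $\bh(x)=0$ unless $n=0$. Hence $(\bh_B\tens\id)\psi(\iota_1(x)\iota_2(y))=\bh(x)\iota_A(y)$, which equals $\iota_A\bigl((\bh\boxtimes\id)(\iota_1(x)\iota_2(y))\bigr)$. Combining the two identities and using injectivity of $\iota_A=\kmap$ (Proposition \ref{prop-injkap}) gives right invariance on $A$. Normality of the slice maps (Corollary \ref{cor-idbth}) then extends it to $M$. Together with Proposition \ref{prop-hLeftInv} and the uniqueness argument above, this shows that $\bh$ is the Haar measure of $\SU_q(2)$.
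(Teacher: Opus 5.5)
Your reduction to right invariance plus uniqueness is sound, and so is the uniqueness computation. The gap is in the claimed analogue of \eqref{eq-eq36}: the identity $(\bh_B\tens\id)\comp\psi=\iota_A\comp(\bh\boxtimes\id)$ is false. Your own computation gives, for homogeneous $x,y$ with $\deg(x)=n$ and $\deg(y)=m$,
\[
(\bh_B\tens\id)\psi\bigl(\iota_1(x)\iota_2(y)\bigr)=\bh_B\bigl(\iota_A(x)z^m\bigr)\iota_A(y)=\delta_{m,0}\,\bh(x)\,\iota_A(y).
\]
The factor $\delta_{m,0}$ concerns the degree of $y$, so the vanishing of $\bh(x)$ for $n\neq0$ cannot absorb it. Concretely, for $x=\I$ and $y=\gamma$ the left-hand side is $\bh_B(z)\iota_A(\gamma)=0$, whereas $\iota_A\bigl((\bh\boxtimes\id)\iota_2(\gamma)\bigr)=\iota_A(\gamma)$. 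In the proof of Proposition \ref{prop-hLeftInv} the analogous step works because the factor $z^{\deg(y)}$ produced by $\psi$ and the element being integrated are both governed by $\deg(y)$. The bosonized coproduct $\Delta_B$ attaches $z^{\deg}$ of the second-leg element to the first leg. So $\psi$ is adapted only to slicing the second leg, and the argument does not mirror.

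What your computation actually yields is $(\bh_B\tens\id)\comp\psi=\iota_A\comp E\comp(\bh\boxtimes\id)$, where $E=(\bh_\TT\tens\id)\comp\rho^A$ is the conditional expectation onto degree zero. Hence $E\bigl((\bh\boxtimes\id)\Delta_A(a)\bigr)=\bh(a)\I$. Since $(\bh\boxtimes\id)\comp\Delta_A$ is $\TT$-equivariant, this gives right invariance only on the fixed-point algebra of $\rho^A$. For homogeneous $a$ of nonzero degree both sides are $0$, and the required vanishing of $(\bh\boxtimes\id)\Delta_A(a)$ is not established.

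The missing ingredient is an independent source of a right-invariant state. Your uniqueness computation uses only left invariance of $\bh$ and right invariance of $\bh'$, so it already shows that every ($\TT$-invariant) right-invariant state equals $\bh$. The paper supplies such a state $\widetilde{\bh}$ by carrying Woronowicz's existence proof over to the braided setting, and then concludes $\bh=\widetilde{\bh}$ by exactly your computation. Alternatively, Proposition \ref{prop-hRightInv} obtains right invariance from the antipode, using that $\bh\comp S$ is right invariant.

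A smaller point: $\rho^B$-invariance of $\bh_B$ only gives $\bh_B(\iota_A(x)z^m)=0$ for $n+m\neq0$. The formula $\bh_B(\iota_A(x)z^m)=\delta_{m,0}\bh(x)$ instead follows from invariance of $\bh_B$ under the translation $(\id\tens\pi)\comp\Delta_B$. Here $\pi\colon B\to\C(\TT)$ sends $\alpha\mapsto\I$, $\gamma\mapsto0$, $z\mapsto\bz$, so this translation fixes $\iota_A(A)$ and maps $z$ to $z\tens\bz$.
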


\begin{proof}
As observed in \cite[Proof of Proposition 7.2]{BraidedSU2-arXiv} the proof of existence of the Haar measure on second countable compact quantum groups (\cite[Section 3]{CQGs}) carries over to the braided case (one only needs to change tensor products to braided tensor products and use appropriate embeddings instead of maps of the form $a\mapsto a\tens\I$ or $a\mapsto\I\tens a$). Hence the Haar measure $\widetilde{\bh}$ on $\SU_q(2)$ exists and it satisfies $\omega\comp(\widetilde{\bh}\boxtimes\id)\comp\Delta_A=\widetilde{\bh}$ for any state $\omega\in A^*$. Taking $\omega=\bh$ and using $\bh\comp(\widetilde{\bh}\boxtimes\id)=\widetilde{\bh}\comp(\id\boxtimes \bh)$ yields $\bh=\widetilde{\bh}$.
\end{proof}

\subsection{The scaling group}\label{sect-tau}

The name \emph{scaling group} of a locally compact quantum group $\GG$ comes from the fact that this one-parameter group of automorphisms of $\C_0(\GG)$ (or $\Linf(\GG)$) scales the Haar weights by a constant. The group itself is introduced in terms of the polar decomposition of the antipode (\cite[Section 5]{lcqg}, \cite{mnw}). In case $\GG$ is compact, the scaling is actually trivial and the name does not reflect the nature of this object. In this section we will define the appropriate analog of the scaling group which will, in particular, leave the Haar measure invariant as in the case of non-braided compact quantum groups. The role of the scaling group in the polar decomposition of the antipode will be described in Section \ref{sect-UApolar}.

The scaling group of the quantum group $\SU_q(2)$ (for real $q$) is well known to be defined by
\begin{equation}\label{eq-eq62}
\tau_t(\alpha)=\alpha,\quad\tau_t(\gamma)=|q|^{2\ii t}\gamma,\qquad t\in\RR
\end{equation}
(cf.~e.g..~\cite[Example 1.7.8]{NeshveyevTuset}). We will use the same formulas to introduce this group in the case of complex $q$ (satisfying $0<|q|<1$).

\begin{proposition}\label{prop-tau}
There is a point-norm continuous one-parameter group $\tau=\{\tau_t\}_{t\in\RR}$ of automorphisms of $A$ such that
\begin{enumerate}
\item for any $t\in\RR$ we have $\tau_t(\alpha)=\alpha$ and $\tau_t(\gamma)=|q|^{2\ii t}\gamma$,
\item each automorphism $\tau_t$ globally preserves the subalgebra $\cA$,
\item we have $\tau_t=\rho^A_{|q|^{-2\ii t}}$ for all $t\in\RR$, in particular $\tau_t$ is $\TT$ and $\D(\TT)$-equivariant for each $t$,
\item for all $t\in\RR$ we have $\bh\comp\tau_t=\bh$ and hence $\tau$ extends to a point-weak${}^*$ continuous one-parameter group of automorphisms of $M$,
\item $\tau_t$ and $\sigma^\bh_s$ commute for all $s,t\in\RR$,
\item\label{prop-tau1} we have
\begin{subequations}
\begin{align}
\Delta_A\comp\tau_t&=(\tau_t\boxtimes\tau_t)\comp\Delta_A
=(\sigma^\bh_t\boxtimes\sigma^\bh_{-t})\comp\Delta_A,\label{eq-prop-tau1}\\
\Delta_A\comp\sigma^\bh_t&=(\tau_t\boxtimes\sigma^\bh_t)\comp\Delta_A
=(\sigma^\bh_{t}\boxtimes\tau_{-t})\comp\Delta_A.\label{eq-prop-tau2}
\end{align}
\end{subequations}
\end{enumerate}
\end{proposition}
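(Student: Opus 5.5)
The plan is to \emph{define} $\tau_t:=\rho^A_{|q|^{-2\ii t}}$ and derive everything else from this choice. Since $\rho^A$ is a continuous action of $\TT$ and $t\mapsto|q|^{-2\ii t}$ is a continuous homomorphism $\RR\to\TT$, the family $\tau$ is a point-norm continuous one-parameter group of automorphisms. Recall that $\rho^A_\mu=(\ev_{\mu^{-1}}\tens\id)\comp\rho^A$. Hence $\tau_t(\alpha)=\alpha$ and $\tau_t(\gamma)=|q|^{2\ii t}\gamma$, which gives (1) and (3). Each $\tau_t$ scales generators, so it preserves $\cA$; this is (2).

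For equivariance, note that $\tau_t$ commutes with $\rho^A$ because $\TT$ is abelian. It also commutes with $\tilde\rho^A$: the $\ZZ$-action multiplies $\gamma$ by $\zeta^n$ and fixes $\alpha$, so it commutes with any scaling of generators. $\D(\TT)$-equivariance follows.

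For (4), recall from Section \ref{sect-boson} that $\bh_B$ is $\TT$-invariant. Since $\kmap$ intertwines $\rho^A$ and $\rho^B$, we get $\bh\comp\rho^A_\mu=\bh$. The extension of $\tau$ to $M$, together with point-weak${}^*$ continuity, is then exactly the proposition proved just before for $\rho^M$, composed with the continuous homomorphism $t\mapsto|q|^{-2\ii t}$.

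Item (5) holds because both $\tau_t$ and $\sigma^\bh_s$ act diagonally on $\alpha,\gamma$ (and hence on $\alpha^*,\gamma^*$), so they commute on generators and therefore everywhere.

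Item (6) is the only real computation. First I would note that all four maps $\tau_t$, $\sigma^\bh_t$, $\tau_{-t}$, $\sigma^\bh_{-t}$ are $\TT$-equivariant automorphisms (again because they scale homogeneous generators). So their braided tensor products are well-defined $*$-homomorphisms on $A\boxtimes A$, acting by $\iota_1(x)\iota_2(y)\mapsto\iota_1(f(x))\iota_2(g(y))$. Both sides of each identity are then $*$-homomorphisms $A\to A\boxtimes A$, so it suffices to check them on $\alpha$ and $\gamma$, using the formulas for $\Delta_A$. Write $\lambda=|q|^{2\ii t}$.

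\textbf{First line.} Check $\Delta_A\comp\tau_t=(\tau_t\boxtimes\tau_t)\comp\Delta_A$. For $\alpha$, the term $\iota_1(\gamma^*)\iota_2(\gamma)$ acquires the factor $\bar\lambda\lambda=1$. For $\gamma$, each term acquires one factor $\lambda$. Next check $(\sigma^\bh_t\boxtimes\sigma^\bh_{-t})$. Here $\sigma_t(\alpha)=\bar\lambda\alpha$, so $\alpha\tens\alpha$ gets $\bar\lambda\lambda=1$ and $\gamma^*\tens\gamma$ gets $1$. On the $\gamma$-side, $\gamma\tens\alpha$ gets $\lambda$ (from $\sigma_{-t}(\alpha)=\lambda\alpha$), and $\alpha^*\tens\gamma$ gets $\lambda$. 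Both agree with $\Delta_A(\tau_t\gamma)=\lambda\Delta_A(\gamma)$.

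\textbf{Second line.} The same bookkeeping applies to $\Delta_A(\sigma^\bh_t\alpha)=\bar\lambda\Delta_A(\alpha)$ and $\Delta_A(\sigma^\bh_t\gamma)=\Delta_A(\gamma)$. Under $\tau_t\boxtimes\sigma_t$: the term $\alpha\tens\alpha$ gets $\bar\lambda$; the term $\gamma^*\tens\gamma$ gets $\bar\lambda\cdot1=\bar\lambda$; the term $\gamma\tens\alpha$ gets $\lambda\bar\lambda=1$; the term $\alpha^*\tens\gamma$ gets $1$. Under $\sigma_t\boxtimes\tau_{-t}$: the term $\alpha\tens\alpha$ gets $\bar\lambda$; the term $\gamma^*\tens\gamma$ gets $\bar\lambda$; the term $\gamma\tens\alpha$ gets $1$; the term $\alpha^*\tens\gamma$ gets $\lambda\bar\lambda=1$.

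The main point needing care is the justification that $f\boxtimes g$ exists for equivariant automorphisms and acts as claimed on $\iota_1(x)\iota_2(y)$. This is the bifunctoriality of $\boxtimes$ cited earlier; explicitly, $f\boxtimes g$ can be realized as the restriction of $\id\tens f\tens g$ on $\C(\TT^2_\zeta)\tens A\tens A$, which preserves degrees.
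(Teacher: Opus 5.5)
Your proposal is correct and follows essentially the same route as the paper. Items (1)--(5) are reduced to the identification $\tau_t=\rho^A_{|q|^{-2\ii t}}$ and to the fact that everything acts diagonally on generators, and item (6) is checked on $\alpha$ and $\gamma$ with exactly the scalar bookkeeping you carry out. The only minor differences are these: you take $\rho^A_{|q|^{-2\ii t}}$ as the \emph{definition} of $\tau_t$, so existence needs no appeal to the universal property, and you obtain the normal extension to $M$ and its point-weak${}^*$ continuity from the preceding proposition on $\rho^M$, whereas the paper invokes spatial implementation by $\Lambda_\bh(a)\mapsto\Lambda_\bh(\tau_t(a))$.
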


\begin{proof}
Existence of the automorphisms $\{\tau_t\}_{t\in\RR}$ of $A$ satisfying \eqref{eq-eq62}, the point-norm continuity of the group and the fact that it preserves the subalgebra $\cA$ are standard. Also it is clear from the definition that $\tau_t=\rho^A_{|q|^{-2\ii t}}$ for all $t$ and then existence of point-weak${}^*$ continuous extension to $M$ follows from general principles: $\tau_t$ is spatially implemented by the unitary $\Lambda_\bh(a)\mapsto\Lambda_\bh(\tau_t(a))$. Finally the fact that the groups $\tau$ and $\sigma^\bh$ pointwise commute is also obvious.

To prove the formulas in point \eqref{prop-tau1} it is enough to check them on generators. Note that they are well defined -- automorphisms $\tau_t$ and $\sigma^\bh_t$ are equivariant. For $t\in\RR$ we have
\[
\resizebox{\textwidth}{!}{\ensuremath{\displaystyle
\begin{aligned}
\Delta_A\bigl(\tau_t(\alpha)\bigr)
&=\Delta_A(\alpha)=\iota_1(\alpha)\iota_2(\alpha)-q\iota_1(\gamma^*)\iota_2(\gamma),\\
(\tau_t\boxtimes\tau_t)\Delta_A(\alpha)
&=\iota_1\bigl(\tau_t(\alpha)\bigr)\iota_2\bigl(\tau_t(\alpha)\bigr) -q\iota_1\bigl(\tau_t(\gamma)^*\bigr)\iota_2\bigl(\tau_t(\gamma)\bigr)
=\iota_1(\alpha)\iota_2(\alpha)-q\iota_1(\gamma^*)\iota_2(\gamma),\\
(\sigma^\bh_t\boxtimes\sigma^\bh_{-t})\Delta_A(\alpha)
&=\iota_1\bigl(\sigma^\bh_t(\alpha)\bigr)\iota_2\bigl(\sigma^\bh_{-t}(\alpha)\bigr)
-q\iota_1\bigl(\sigma^\bh_t(\gamma)^*\bigr)\iota_2\bigl(\sigma^\bh_{-t}(\gamma)\bigr)
=\iota_1(\alpha)\iota_2(\alpha)-q\iota_1(\gamma^*)\iota_2(\gamma)
\end{aligned}
}}
\]
and
\[
\resizebox{\textwidth}{!}{\ensuremath{\displaystyle
\begin{aligned}
\Delta_A\bigl(\tau_t(\gamma)\bigr)
&=|q|^{2\ii t}\bigl(\iota_1(\gamma)\iota_2(\alpha)+\iota_1(\alpha^*)\iota_2(\gamma)\bigr),\\
(\tau_t\boxtimes\tau_t)\Delta_A(\gamma)
&=\iota_1\bigl(\tau_t(\gamma)\bigr)\iota_2\bigl(\tau_t(\alpha)\bigr)
+\iota_1\bigl(\tau_t(\alpha)^*\bigr)\iota_2\bigl(\tau_t(\gamma)\bigr)
=|q|^{2\ii t}\bigl(\iota_1(\gamma)\iota_2(\alpha) + \iota_1(\alpha^*)\iota_2(\gamma)\bigr),\\
(\sigma^\bh_t\boxtimes\sigma^\bh_{-t})\Delta_A(\gamma)
&=\iota_1\bigl(\sigma^\bh_t(\gamma)\bigr)\iota_2\bigl(\sigma^\bh_{-t}(\alpha)\bigr)
+\iota_1\bigl(\sigma^\bh_t(\alpha)^*\bigr)\iota_2\bigl(\sigma^\bh_{-t}(\gamma)\bigr)
=|q|^{2\ii t}\bigl(\iota_1(\gamma)\iota_2(\alpha) + \iota_1(\alpha^*)\iota_2(\gamma)\bigr)
\end{aligned}
}}
\]
which proves \eqref{eq-prop-tau1}. Next
\[
\resizebox{\textwidth}{!}{\ensuremath{\displaystyle
\begin{aligned}
\Delta_A\bigl(\sigma^\bh_t(\alpha)\bigr)
&=|q|^{-2\ii t}\bigl(\iota_1(\alpha)\iota_2(\alpha) -q\iota_1(\gamma^*)\iota_2(\gamma)\bigr),\\
(\tau_t\boxtimes\sigma^\bh_t)\Delta_A(\alpha)
&=\iota_1\bigl(\tau_t(\alpha)\bigr)\iota_2\bigl(\sigma^\bh_t(\alpha)\bigr) -q\iota_1\bigl(\tau_t(\gamma)^*\bigr)\iota_2\bigl(\sigma^\bh_t(\gamma)\bigr)
=|q|^{-2\ii t}\bigl(\iota_1(\alpha)\iota_2(\alpha) -q\iota_1(\gamma^*)\iota_2(\gamma)\bigr),\\
(\sigma^\bh_t\boxtimes\tau_{-t})\Delta_A(\alpha)
&=\iota_1\bigl(\sigma^\bh_t(\alpha)\bigr)\iota_2\bigl(\tau_{-t}(\alpha)\bigr) -q\iota_1\bigl(\sigma^\bh_t(\gamma)^*\bigr)\iota_2\bigl(\tau_{-t}(\gamma)\bigr)
=|q|^{-2\ii t}\bigl(\iota_1(\alpha)\iota_2(\alpha) -q\iota_1(\gamma^*)\iota_2(\gamma)\bigr)
\end{aligned}
}}
\]
and
\[
\resizebox{\textwidth}{!}{\ensuremath{\displaystyle
\begin{aligned}
\Delta_A\bigl(\sigma^\bh_t(\gamma)\bigr)
&=\iota_1(\gamma)\iota_2(\alpha)+\iota_1(\alpha^*)\iota_2(\gamma),\\
(\tau_t\boxtimes\sigma^\bh_t)\Delta_A(\gamma)
&=\iota_1\bigl(\tau_t(\gamma)\bigr)\iota_2\bigl(\sigma^\bh_t(\alpha)\bigr)
+\iota_1\bigl(\tau_t(\alpha)^*\bigr)\iota_2\bigl(\sigma^\bh_t(\gamma)\bigr)
=\iota_1(\gamma)\iota_2(\alpha)+\iota_1(\alpha^*)\iota_2(\gamma),\\
(\sigma^\bh_t\boxtimes\tau_{-t})\Delta_A(\gamma)
&=\iota_1\bigl(\sigma^\bh_t(\gamma)\bigr)\iota_2\bigl(\tau_{-t}(\alpha)\bigr)
+\iota_1\bigl(\sigma^\bh_t(\alpha)^*\bigr)\iota_2\bigl(\tau_{-t}(\gamma)\bigr)
=\iota_1(\gamma)\iota_2(\alpha)+\iota_1(\alpha^*)\iota_2(\gamma)
\end{aligned}
}}
\]
and \eqref{eq-prop-tau2} follows.
\end{proof}

\subsection{The antipode}

Recall that the $*$-algebra $\cA$ was defined in Section \ref{sect-SUq2} as the $*$-subalgebra of $A$ generated (algebraically) by $\alpha$ and $\gamma$. We first note the following fact:

\begin{proposition}\label{prop-lemma13}
$\cA$ is the universal unital algebra generated by $\alpha$, $\alpha^*$, $\gamma$ and $\gamma^*$ satisfying \eqref{eq-SUq2rels} and $\alpha\gamma^*=q\gamma^*\alpha$.
\end{proposition}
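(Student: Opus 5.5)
Let $\widetilde{\cA}$ denote the universal unital algebra generated by symbols $\alpha,\alpha^*,\gamma,\gamma^*$ subject to the relations \eqref{eq-SUq2rels} together with $\alpha\gamma^*=q\gamma^*\alpha$. I read the starred symbols as independent generators, and I include the adjoints of these relations as well (that is, $\gamma^*\alpha^*=q\gamma^*\alpha^*$-type relations obtained by formally applying $*$). Since $\cA$ is a unital algebra generated by $\alpha,\alpha^*,\gamma,\gamma^*$ in which all these relations hold, universality gives a surjective unital homomorphism
\[
\pi\colon\widetilde{\cA}\to\cA.
\]
It remains to show that $\pi$ is injective. The plan is to prove that the images in $\widetilde{\cA}$ of the monomials in \eqref{eq-bazacA} span $\widetilde{\cA}$. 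Corollary \ref{cor-bazacA} says that their images under $\pi$ are linearly independent in $\cA$, so a spanning set of $\widetilde{\cA}$ is mapped bijectively onto a basis of $\cA$. This forces $\pi$ to be an isomorphism.

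For the spanning claim, let $W\subseteq\widetilde{\cA}$ be the span of the monomials in \eqref{eq-bazacA}. Since $\I\in W$, it suffices to show that $W$ is stable under left multiplication by each of the four generators. The relations give the following tools.
\begin{itemize}
\item Commutation rules: $\alpha\gamma=\overline{q}\gamma\alpha$ and $\alpha\gamma^*=q\gamma^*\alpha$, their adjoints $\gamma^*\alpha^*=q\alpha^*\gamma^*$ and $\gamma\alpha^*=\overline{q}\alpha^*\gamma$, and $\gamma\gamma^*=\gamma^*\gamma$. Together these let us move $\gamma$'s and $\gamma^*$'s to the right of $\alpha$ or $\alpha^*$, and reorder them among themselves.
\item Reduction rules: $\alpha^*\alpha=\I-\gamma^*\gamma$ and $\alpha\alpha^*=\I-|q|^2\gamma^*\gamma$. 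These let us cancel any adjacent pair $\alpha\alpha^*$ or $\alpha^*\alpha$.
\end{itemize}

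Now check each generator in turn.
\begin{itemize}
\item Multiplying a monomial by $\gamma$ or $\gamma^*$ on the left: commute it past the $\alpha$-power (this only produces scalars), then place it among the $\gamma$'s. The result is again a monomial in $W$.
\item Multiplying $\alpha^n\gamma^m{\gamma^*}^k$ by $\alpha$ on the left: this gives directly a monomial of the same form.
\item Multiplying ${\alpha^*}^n\gamma^m{\gamma^*}^k$ with $n\geq1$ by $\alpha$ on the left: write $\alpha{\alpha^*}^n=(\I-|q|^2\gamma^*\gamma){\alpha^*}^{n-1}$. Then commute $\gamma^*\gamma$ past ${\alpha^*}^{n-1}$, which only produces a scalar factor. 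The result is a combination of ${\alpha^*}^{n-1}$-monomials, which lie in $W$ including the case $n-1=0$.
\item Multiplying by $\alpha^*$ on the left is symmetric: use $\alpha^*\alpha=\I-\gamma^*\gamma$ on the $\alpha^n$-monomials.
\end{itemize}

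I do not expect a genuine obstacle. The only delicate point is bookkeeping. First, one must make sure the adjoint commutation relations really hold in $\widetilde{\cA}$; this depends on which relations are imposed in the universal definition. If only the relations literally listed are imposed, then $\gamma\alpha^*=\overline{q}\alpha^*\gamma$ has to be derived. I would derive it by multiplying $\alpha\gamma^*=q\gamma^*\alpha$ suitably with $\alpha^*$ on both sides and using the two unitarity relations and normality of $\gamma$, in the manner of \cite[Remark 3.1]{BraidedPodlesSpheres}. Second, one must confirm that the reductions terminate, which follows by induction on $n$.
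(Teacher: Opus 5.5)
Your main argument is the paper's: surject the universal algebra onto $\cA$, show that the monomials \eqref{eq-bazacA} span it, and get injectivity from Corollary~\ref{cor-bazacA}. The paper asserts the spanning step without justification (``we can write $x=\dots$''). Your reordering argument supplies that justification. It is correct \emph{provided} the adjoint relations $\gamma\alpha^*=\overline{q}\alpha^*\gamma$ and $\gamma^*\alpha^*=q\alpha^*\gamma^*$ are part of the presentation, as in your first reading.

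Your fallback would fail. With four independent generators and only the five relations literally listed, $\gamma\alpha^*=\overline{q}\alpha^*\gamma$ is \emph{not} derivable. The cited \cite[Remark 3.1]{BraidedPodlesSpheres} is a statement about the \cst-algebra, where adjoints and norm arguments are available. Algebraically, for $X=\gamma\alpha^*-\overline{q}\alpha^*\gamma$ you only get $\alpha X=X\alpha=0$ and $X=\gamma^*\gamma X$, and these do not force $X=0$.

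Here is a counterexample. Write $a,b,c,d$ for the images of $\alpha,\alpha^*,\gamma,\gamma^*$, put $s_n=\sqrt{1-|q|^{2n}}$, and fix $\lambda\in\CC\setminus\{0,1\}$. Use the convention $e_{-1}=v_{-1}=0$.
\begin{itemize}
\item On $\operatorname{span}\{e_n\}_{n\ge0}$ let $ae_n=s_ne_{n-1}$, $be_n=s_{n+1}e_{n+1}$, $ce_n=\lambda\overline{q}^{\,n}e_n$, $de_n=\lambda^{-1}q^ne_n$.
\item On $\operatorname{span}\{v_k\}_{k\ge0}$ let $av_k=v_{k+1}$, $bv_k=(1-|q|^{-2k})v_{k-1}$, $cv_k=\overline{q}^{\,-k-1}v_k$, $dv_k=q^{-k-1}v_k$.
\end{itemize}
Each piece satisfies the five listed relations. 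On the direct sum, replace $b$ by $b+Y$, where $Yv_0=e_0$ and $Y$ kills all other basis vectors. $Y$ takes values in $\CC e_0=\ker a$, and it vanishes on the range of $a$, which is spanned by the $e_n$ and the $v_k$ with $k\ge1$. Hence $aY=Ya=0$. Since $\alpha^*$ enters the five relations only through $\alpha^*\alpha$ and $\alpha\alpha^*$, they still hold. Yet $\bigl(c(b+Y)-\overline{q}(b+Y)c\bigr)v_0=(\lambda-1)e_0\neq0$.

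So for the literal presentation the canonical map onto $\cA$ is not injective, and the monomials do not span. The adjoint relations must be imposed, or equivalently the statement read as a universal $*$-algebra; the paper's spanning claim tacitly uses this too.

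Your reading (independent generators plus the formal adjoints of the relations) is the useful one. Theorem~\ref{thm-prop6} applies the universal property with a target $\cB$ that carries no involution. There the two extra relations must also be checked, and they hold: their images are $-\overline{q}\,\flat(\alpha\gamma-\overline{q}\gamma\alpha)=0$ and $-q^{-1}\flat(\alpha\gamma^*-q\gamma^*\alpha)=0$.
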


\begin{proof}
For real $q$ this is \cite[Theorem 1.3]{su2}. Denote by $\widetilde{\cA}$ the universal unital algebra generated by elements $\tilde{\alpha}$, $\tilde{\alpha}^*$, $\tilde{\gamma}$, $\tilde{\gamma}^*$ satisfying
\begin{align*}
\tilde{\alpha}^*\tilde{\alpha}+\tilde{\gamma}^*\tilde{\gamma}&=\I,&\tilde{\alpha}\tilde{\gamma}&=\overline{q}\tilde{\gamma}\tilde{\alpha},&\tilde{\alpha}\tilde{\gamma}^*&=q\tilde{\gamma}^*\tilde{\alpha},\\
\tilde{\alpha}\tilde{\alpha}^*+|q|^2\tilde{\gamma}^*\tilde{\gamma}&=\I,&\tilde{\gamma}\tilde{\gamma}^*&=\tilde{\gamma}^*\tilde{\gamma}.
\end{align*}
We have the canonical unital algebra homomorphism $\pi\colon\widetilde{\cA}\to\cA$ which is surjective. Take $x\in\ker{\pi}$. Then we can write
\[
x=\sum_{n,m,k\ge 0}x_{n,m,k}\tilde{\alpha}^n\tilde{\gamma}^m(\tilde{\gamma}^*)^k+
\sum_{n>0,\:m,k\ge 0}x_{n,m,k}(\tilde{\alpha}^*)^n\tilde{\gamma}^m(\tilde{\gamma}^*)^k
\]
for some $x_{n,m,k}\in\CC$ (we do not claim linear independence here) and consequently
\[
0=\pi(x)=
\sum_{n,m,k\ge 0}x_{n,m,k}\alpha^n\gamma^m(\gamma^*)^k+
\sum_{n>0,\:m,k\ge 0}x_{n,m,k}(\alpha^*)^n\gamma^m(\gamma^*)^k.
\]
But from Corollary \ref{cor-bazacA} we know that the system
\[
\bigl\{\alpha^n\gamma^m{\gamma^*}^k\,\bigr|\bigl.\,n,m,k\in\ZZ_+\bigr\}\cup\bigl\{{\alpha^*}^n\gamma^m{\gamma^*}^k\,\bigr|\bigl.\,n\in\NN,\:m,k\in\ZZ_+\bigr\}
\]
is a basis of $\cA$, which shows that $x_{n,m,k}=0$ for all $n\in\ZZ$, $m,k\in\ZZ_+$, and hence $x=0$.
\end{proof}

\subsubsection{Definition of the antipode}\label{sect-Antipode}

The goal of this section is to introduce the antipode of the braided quantum group $\SU_q(2)$. In analogy to the non-braided case we want to have $S\colon\cA\to\cA$ such that
\begin{enumerate}
\item\label{enum-S1} $S$ is unital, linear and $(S\tens\id)u=u^*$, where $u=\left[\begin{smallmatrix}\alpha&-q\gamma^*\\\gamma&\alpha^* \end{smallmatrix}\right]$ is the fundamental representation od $\SU_q(2)$ (cf.~Sections \ref{sect-SUq2}, \ref{sect-RepTh} and \cite[Section 5]{BraidedPodlesSpheres}),
\item\label{enum-S2} $S$ is braided-anti-multiplicative, i.e.~$S(ab)=\zeta^{-\deg(a)\deg(b)}S(b)S(a)$ for homogeneous $a,b\in\cA$ (see equation \eqref{eq-BrFlip}).
\end{enumerate}
We note that \eqref{enum-S1} forces
\begin{equation}\label{eq-eq41}
S(\alpha)=\alpha^*,\quad
S(\alpha^*)=\alpha,\quad
S(\gamma)=-\overline{q}\gamma,\quad
S(\gamma^*)=-\tfrac{1}{q}\gamma^*,
\end{equation}
while \eqref{enum-S2} is satisfied in algebraic setting \cite[Proposition 3.2.12]{HeckenbergerSchneider}.

\begin{theorem}\label{thm-prop6}
There exists a unique $S\colon\cA\to\cA$ which is unital, linear, preserves the degree, satisfies \eqref{eq-eq41} and $S(ab)=\zeta^{-\deg(a)\deg(b)} S(b)S(a)$ for homogeneous $a,b\in\cA$.
\end{theorem}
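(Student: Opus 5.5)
Uniqueness is immediate: $\cA$ is generated as a unital algebra by $\alpha,\alpha^*,\gamma,\gamma^*$, which are homogeneous of degrees $0,0,1,-1$. Braided anti-multiplicativity therefore determines $S$ on every monomial in the generators from the values \eqref{eq-eq41}. Linearity then determines $S$ on all of $\cA$.

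For existence I would use Proposition \ref{prop-lemma13}, which presents $\cA$ as the universal unital algebra on the relations \eqref{eq-SUq2rels} together with $\alpha\gamma^*=q\gamma^*\alpha$. The braided-anti-multiplicative maps are exactly the algebra homomorphisms into a twisted opposite algebra. Concretely, on the $\ZZ$-graded vector space $\cA$ (graded by $\rho^A$) define the new product
\[
a\cdot_\zeta b=\zeta^{-\deg(a)\deg(b)}\,ba
\]
for homogeneous $a,b$. This product is associative, because $\zeta^{-\deg(a)\deg(b)-\deg(a)\deg(c)-\deg(b)\deg(c)}$ is symmetric in the bracketing. Write $\cA^{\zeta\text{-op}}$ for the resulting graded unital algebra. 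A degree-preserving linear map $S$ with $S(ab)=\zeta^{-\deg(a)\deg(b)}S(b)S(a)$ is precisely a unital algebra homomorphism $\cA\to\cA^{\zeta\text{-op}}$ that preserves degrees. By universality, such a homomorphism exists once I check that the elements
\[
\alpha^*,\quad \alpha,\quad -\overline{q}\gamma,\quad -\tfrac1q\gamma^*
\]
(the images of $\alpha,\alpha^*,\gamma,\gamma^*$) satisfy the five defining relations in $\cA^{\zeta\text{-op}}$. Degree preservation is then automatic, since the images have the same degrees as the generators and the twisted product is still graded.

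The main work is this relation check, done one relation at a time in $\cA$. Note that $\zeta^{-\deg\deg}$ equals $1$ whenever one factor has degree $0$, and that $\gamma\cdot_\zeta\gamma^*=\zeta\gamma^*\gamma$.
\begin{itemize}
\item For $\alpha^*\alpha+\gamma^*\gamma=\I$ the left side maps to $\alpha^*\alpha+\frac{\overline{q}}{q}\zeta\gamma\gamma^*$. Since $\zeta=q/\overline{q}$, this equals $\alpha^*\alpha+\gamma^*\gamma=\I$, using normality of $\gamma$.
\item For $\alpha\alpha^*+|q|^2\gamma^*\gamma$ the image is $\alpha\alpha^*+|q|^2\gamma^*\gamma=\I$.
\item The relation $\alpha\gamma=\overline{q}\gamma\alpha$ requires
\[
(-\overline{q}\gamma)\alpha^*=\overline{q}\,\alpha^*(-\overline{q}\gamma),\qquad\text{that is,}\qquad \gamma\alpha^*=\overline{q}\alpha^*\gamma,
\]
which is the adjoint of $\alpha\gamma^*=q\gamma^*\alpha$.
\item Similarly, $\alpha\gamma^*=q\gamma^*\alpha$ requires $\gamma^*\alpha^*=q\alpha^*\gamma^*$, the adjoint of $\alpha\gamma=\overline{q}\gamma\alpha$.
\item Normality requires $\zeta^{-1}\gamma^*\gamma=\zeta\,\gamma\gamma^*\cdot\zeta^{-2}$ up to the common scalar; I would recompute both sides carefully, since $\zeta$-factors appear on each of them.
\end{itemize}
These verifications are where care with the $\zeta$-twists is needed. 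In particular, I expect the first and last relations to be where the precise choice $\zeta=q/\overline{q}$ becomes essential, and I would carry them out explicitly.
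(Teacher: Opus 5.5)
Your argument is the paper's proof. The paper likewise treats braided anti-multiplicative maps as unital homomorphisms into the twisted opposite algebra $\cB=\flat(\cA)$, with $\flat(a)\flat(b)=\zeta^{-\deg(a)\deg(b)}\flat(ba)$. It then applies the universal property of Proposition \ref{prop-lemma13}, checks the same five relations, and obtains degree preservation from the degrees of the images of the generators.

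The one check you left unfinished, normality, is immediate, and the formula you wrote for it has the wrong exponent. Since $-\deg(\gamma)\deg(\gamma^*)=1$, both $(-\overline{q}\gamma)\cdot_\zeta(-q^{-1}\gamma^*)$ and $(-q^{-1}\gamma^*)\cdot_\zeta(-\overline{q}\gamma)$ carry the same factor $\tfrac{\overline{q}}{q}\zeta$, so normality holds for any $\zeta$. The choice $\zeta=q/\overline{q}$ is needed only in the two relations $\alpha^*\alpha+\gamma^*\gamma=\I$ and $\alpha\alpha^*+|q|^2\gamma^*\gamma=\I$, not in the normality relation as you expected.
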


\begin{proof}
Uniqueness of $S$ is clear. Let $\cB$ be a vector space isomorphic to $\cA$ with isomorphism $\cA\ni a\mapsto\flat(a)\in \cB$ and let us equip $\cB$ with multiplication
\[
\flat(a)\flat(b)=
\zeta^{-\deg(a)\deg(b)}\flat(ba)
\]
for homogeneous $a,b\in\cA$. This makes $\cB$ into a unital associative algebra generated by elements $\bigl\{\flat(\alpha^*),\flat(\alpha),\flat(-\overline{q}\gamma),\flat(-q^{-1}\gamma^*)\bigr\}$ with unit $\flat(\I)$. These elements satisfy
\begin{align*}
\flat(\alpha)\flat(\alpha^*)+\flat(-q^{-1}\gamma^*)\flat(-\overline{q}\gamma)-\flat(\I)
&=\flat\bigl(\alpha^*\alpha +(\overline{q}/q)\zeta\gamma\gamma^*-\I\bigr)=0,\\
\flat(\alpha^*)\flat(\alpha)+|q|^2\flat(-\overline{q}\gamma)\flat(-q^{-1}\gamma^*)-\flat(\I)
&=\flat\bigl(\alpha\alpha^*+|q|^2(\overline{q}/q)\zeta\gamma^*\gamma-\I\bigr)=0,\\
\flat(-\overline{q}\gamma)\flat(-q^{-1} \gamma^*)-\flat(-q^{-1}\gamma^*)\flat(-\overline{q}\gamma)
&=(\overline{q}/q)\flat(\zeta\gamma^*\gamma-\zeta\gamma\gamma^*)=0,\\
\flat(\alpha^*)\flat(-\overline{q}\gamma)-\overline{q}\flat(-\overline{q}\gamma)\flat(\alpha^*)
&=-\overline{q}\flat(\gamma \alpha^*-\overline{q}\alpha^*\gamma)=0,\\
\flat(\alpha^*)\flat(-q^{-1}\gamma^*)-q\flat(-q^{-1}\gamma^*)\flat(\alpha^*)
&=-q^{-1}\flat(\gamma^*\alpha^*-q\alpha^*\gamma^*)=0.
\end{align*}

The universal property of $\cA$ (Proposition \ref{prop-lemma13}) gives us a linear, unital, multiplicative map $\tilde{S}\colon\cA\to\cB$ such that
\[
\tilde{S}(\alpha)=\flat(\alpha^*),\quad
\tilde{S}(\alpha^*)=\flat(\alpha),\quad
\tilde{S}(\gamma)=\flat(-\overline{q}\gamma),\quad
\tilde{S}(\gamma^*)=\flat(-q^{-1}\gamma^*).
\]
Now we define $S=\flat^{-1}\comp\tilde{S}\colon\cA\to\cA$ which, by its construction, satisfies \eqref{eq-eq41}. Let us check that $S$ is braided-anti-multiplicative: take homogeneous $a,b\in\cA$. Then
\begin{equation}\label{eq2}
\begin{aligned}
\flat\bigl(S(ab)\bigr)&=\tilde{S}(ab)=\tilde{S}(a)\tilde{S}(b)
=\flat\bigl((\flat^{-1}\comp\tilde{S})(a)\bigr)\flat\bigl((\flat^{-1}\comp\tilde{S})(b)\bigr)\\
&=\zeta^{-\deg((\flat^{-1}\comp\tilde{S})(a))\deg((\flat^{-1}\comp\tilde{S})(b))}
\flat\bigl((\flat^{-1}\comp\tilde{S})(b)(\flat^{-1}\comp\tilde{S})(a)\bigr)\\
&=\zeta^{-\deg(S(a))\deg(S(b))}
\flat\bigl(S(b)S(a)\bigr).
\end{aligned}
\end{equation}
To finish the proof, we need to show that $\deg(S(a))=\deg(a)$ for any homogeneous $a$. This is easily shown on the basis \eqref{eq-bazacA} of $\cA$ using equation \eqref{eq2}.
\end{proof}

We note a corollary of the above proof that $S$ defined in Theorem \ref{thm-prop6} is equivariant and preserves the degree.

In the following sections we will analyze $S$ and prove a number of its properties which include its relationship with representation theory of the braided compact quantum group $\SU_q(2)$ (Section \ref{sect-RepTh}) and the notion of a braided Hopf algebra (Section \ref{sect-BrHopf}).

\subsubsection{Braided flip}\label{sect-BrFlip}

We already observed in the Proof of Theorem \ref{thm-prop6} that the action of $\TT$ on $A$ considered in this paper restricts to an action on $\cA$. Clearly so does the action $\tilde{\rho}^\cA$ of $\ZZ$ given by restricting the action provided by Proposition \ref{prop-lemma1}. By definition $\tilde{\rho}^\cA_n(\alpha)=\alpha$ and $\tilde{\rho}^\cA_n(\gamma)=\zeta^{-n}\gamma$ for all $n\in\ZZ$ (cf.~Section \ref{sect-BrHopf}). With this action of $\ZZ$ the algebra $\cA$ becomes a Yetter-Drinfeld module over the group algebra $\CC[\ZZ]$ as in \cite[Definition 1.4.1]{HeckenbergerSchneider} (we describe this structure in more detail at the beginning of Section \ref{sect-BrHopf}). As such it comes equipped with additional structure such as the flip map $c_{\cA,\cA}\colon\cA\algtens\cA\to\cA\algtens\cA$ which satisfies
\[
c_{\cA,\cA}(a\tens b)=\tilde{\rho}^\cA_{\deg(a)}(b)\tens a
\]
for all $a,b\in\cA$ with $a$ homogeneous (\cite[Definition 1.4.10]{HeckenbergerSchneider}). In other words for homogeneous elements
\begin{equation}\label{eq-BrFlip}
c_{\cA,\cA}(a\tens b)=\zeta^{-\deg(a)\deg(b)}b\tens a.
\end{equation}
We will transfer this flip to the braided tensor product of $\cA$ with itself. To this end we will use the next lemma in which $\cA\algboxtimes\cA$ is defined to be the subspace of $A\boxtimes A$ spanned by $\bigl\{\iota_1(a)\iota_2(b)\,\bigr|\bigl.\,a,b\in\cA\bigr\}$.

\begin{proposition}\label{prop-lemma10}
There is a well defined linear isomorphism
\begin{equation}\label{eq42}
\phi_{\tens,\boxtimes}\colon\cA\algtens\cA\ni a\tens b\longmapsto\iota_1(a)\iota_2(b)\in\cA\algboxtimes\cA.
\end{equation}
\end{proposition}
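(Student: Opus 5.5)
The plan is to use the concrete model of $A\boxtimes A$ from Section \ref{sect-SUq2}. There, $\iota_1(a)\iota_2(b)=U^{\deg(a)}V^{\deg(b)}\tens a\tens b$ for homogeneous $a,b$, an element of $\C(\TT^2_\zeta)\tens A\tens A$. Well-definedness and surjectivity are then immediate. The map $(a,b)\mapsto\iota_1(a)\iota_2(b)$ is bilinear, since $\iota_1,\iota_2$ are linear and multiplication is bilinear. It therefore factors through $\cA\algtens\cA$, and its image is by definition $\cA\algboxtimes\cA$. All the work is in injectivity.

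For injectivity, I would grade everything by $\TT$-degree. Every element of $\cA$ is a finite sum of homogeneous components: the basis \eqref{eq-bazacA} of Corollary \ref{cor-bazacA} consists of homogeneous elements, with $\deg(\alpha^n\gamma^m{\gamma^*}^k)=m-k$. So $\cA=\bigoplus_{n\in\ZZ}\cA_n$, and hence $\cA\algtens\cA=\bigoplus_{n,m}\cA_n\algtens\cA_m$.

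Take $x=\sum_{n,m}x_{n,m}$ with $x_{n,m}\in\cA_n\algtens\cA_m$, a finite sum, and suppose $\phi_{\tens,\boxtimes}(x)=0$. Then
\[
0=\sum_{n,m}U^nV^m\tens x_{n,m}\in\C(\TT^2_\zeta)\tens A\tens A,
\]
where $x_{n,m}$ is regarded in $A\algtens A\subseteq A\tens A$. To separate the terms, I would apply slice maps $\omega\tens\id\tens\id$ with functionals $\omega$ on $\C(\TT^2_\zeta)$ that detect individual monomials. Take the canonical faithful trace $\tau$ on $\C(\TT^2_\zeta)$, which satisfies $\tau(U^nV^m)=\delta_{n,0}\delta_{m,0}$, and the functionals $\omega_{k,l}=\tau\bigl((U^kV^l)^*\,\cdot\,\bigr)$. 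Since $(U^kV^l)^*U^nV^m$ is a scalar multiple of unit modulus times $U^{n-k}V^{m-l}$, the functional $\omega_{k,l}$ kills every monomial except $U^kV^l$, on which it gives $1$. Slicing therefore gives $x_{k,l}=0$ in $A\tens A$.

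It remains to pass back to the algebraic tensor product. The canonical map $A\algtens A\to A\tens A$ is injective for the minimal tensor product, and $\cA\algtens\cA\to A\algtens A$ is injective because $\cA\subseteq A$ (tensor products over a field preserve injections). Hence $x_{k,l}=0$ in $\cA\algtens\cA$ for all $k,l$, so $x=0$.

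The main point needing care is the existence of the trace on the quantum torus and the monomial-separation computation. Both are standard: the trace can be obtained by averaging the gauge action of $\TT^2$ on $\C(\TT^2_\zeta)$, and this needs no assumption on $\zeta$. One should also confirm that the tensor product used in $\C(\TT^2_\zeta)\tens A\tens A$ is the minimal one, so that slice maps are available; since $\C(\TT^2_\zeta)$ is nuclear, this causes no ambiguity.
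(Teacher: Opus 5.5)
Your proof is correct and follows the paper's argument: decompose by bidegree, use the concrete realization $\iota_1(a)\iota_2(b)=U^{\deg(a)}V^{\deg(b)}\tens a\tens b$ in $\C(\TT^2_\zeta)\tens A\tens A$, and separate the monomials $U^nV^m$. The only difference is that you justify explicitly two steps the paper leaves implicit. The first is that $\sum_{n,m}U^nV^m\tens x_{n,m}=0$ forces each $x_{n,m}=0$, which you obtain via the slice maps $\tau\bigl((U^kV^l)^*\,\cdot\,\bigr)\tens\id\tens\id$. The second is the injectivity of $\cA\algtens\cA\to A\tens A$.
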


\begin{proof}
A linear map \eqref{eq42} is well defined and unique by the universal property of the algebraic tensor product. It is also surjective, so we only need to prove that it is injective.

Take $x\in\ker{\phi_{\tens,\boxtimes}}$ and write $x=\sum\limits_{n,m=-\infty}^{\infty} x_{n,m}$ where
\[
x_{n,m}=\sum_{i=1}^{N_{n,m}}a_{n,m}^i\tens b_{n,m}^i
\]
for some $a_{n,m}^i,b_{n,m}^i\in\cA$ with $\deg(a_{n,m}^i)=n$ and $\deg(b_{n,m}^i)=m$.
Then viewing $A\boxtimes A$ as a subalgebra of $\C(\TT^2_\zeta)\tens A\tens A$ (Section \ref{sect-SUq2}) we have
\[
0=\phi_{\tens,\boxtimes}(x)
=\sum_{n,m=-\infty}^{\infty}\sum_{i=1}^{N_{n,m}}\iota_1(a^i_{n,m})\iota_2(b^i_{n,m})
=\sum_{n,m=-\infty}^{\infty}\sum_{i=1}^{N_{n,m}}U^nV^m\tens a^i_{n,m}\tens b^i_{n,m}.
\]
This implies $\sum\limits_{i=1}^{N_{n,m}}a^i_{n,m}\tens b^i_{n,m}=0$ for all $n,m\in\ZZ$, and hence $x_{n,m}=0$ for all $n,m$ which shows that $\phi_{\tens,\boxtimes}$ is injective.
\end{proof}

\begin{proposition}
There is a unique linear map $S\boxtimes S\colon\cA\algboxtimes\cA\to\cA\algboxtimes\cA$ such that
\[
(S\boxtimes S)\bigl(\iota_1(a)\iota_2(b)\bigr)
=\iota_1\bigl(S(a)\bigr)\iota_2\bigl(S(b)\bigr),\qquad a,b\in\cA.
\]
\end{proposition}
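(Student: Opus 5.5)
The plan is to transport the ordinary algebraic tensor product map $S\tens S$ through the isomorphism of Proposition \ref{prop-lemma10}. Since $S\colon\cA\to\cA$ is linear (Theorem \ref{thm-prop6}), the universal property of the algebraic tensor product gives a unique linear map $S\tens S\colon\cA\algtens\cA\to\cA\algtens\cA$ with $(S\tens S)(a\tens b)=S(a)\tens S(b)$. We then define
\[
S\boxtimes S=\phi_{\tens,\boxtimes}\comp(S\tens S)\comp\phi_{\tens,\boxtimes}^{-1}\colon\cA\algboxtimes\cA\longrightarrow\cA\algboxtimes\cA.
\]
This composition makes sense because $\phi_{\tens,\boxtimes}$ is a linear isomorphism onto $\cA\algboxtimes\cA$.

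Evaluating on $\iota_1(a)\iota_2(b)=\phi_{\tens,\boxtimes}(a\tens b)$ immediately gives
\[
(S\boxtimes S)\bigl(\iota_1(a)\iota_2(b)\bigr)=\phi_{\tens,\boxtimes}\bigl(S(a)\tens S(b)\bigr)=\iota_1\bigl(S(a)\bigr)\iota_2\bigl(S(b)\bigr).
\]
The image lies in $\cA\algboxtimes\cA$ since $S(\cA)\subseteq\cA$.

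Uniqueness holds because the elements $\iota_1(a)\iota_2(b)$ with $a,b\in\cA$ span $\cA\algboxtimes\cA$ by definition. Any linear map is therefore determined by its values on them.

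The only substantive point is well-definedness. A priori, an element of $\cA\algboxtimes\cA$ can be written as $\sum_i\iota_1(a_i)\iota_2(b_i)$ in many ways, and the prescribed values must not depend on the choice of representation. This is precisely what the injectivity in Proposition \ref{prop-lemma10} supplies, so no further obstacle is expected. Note that neither the degree-preservation of $S$ nor its braided anti-multiplicativity is needed here, only its linearity.
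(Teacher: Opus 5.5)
Your proposal is correct and is exactly the paper's argument. You define $S\boxtimes S=\phi_{\tens,\boxtimes}\comp(S\tens S)\comp\phi_{\tens,\boxtimes}^{-1}$, which is well defined by the linear isomorphism of Proposition~\ref{prop-lemma10}, and uniqueness follows because the elements $\iota_1(a)\iota_2(b)$ span $\cA\algboxtimes\cA$.
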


\begin{proof}
Uniqueness of $S\boxtimes S$ is obvious and its existence is proved by defining  $S\boxtimes S=\phi_{\tens,\boxtimes}\comp(S\tens S)\comp\phi_{\tens,\boxtimes}^{-1}$.
\end{proof}

\begin{definition}
We define the \emph{braided flip} $\chi^\boxtimes$ on $\cA\algboxtimes\cA$ by $\chi^\boxtimes=\phi_{\tens,\boxtimes}\comp c_{\cA,\cA}\comp\phi_{\tens,\boxtimes}^{-1}$.
\end{definition}

The mapping $\chi^\boxtimes\colon\cA\algboxtimes\cA\to\cA\algboxtimes\cA$ satisfies
\begin{equation}\label{eq1} 
\chi^\boxtimes\bigl(\iota_1(a)\iota_2(b)\bigr)=\zeta^{-\deg(a)\deg(b)}\iota_1(b)\iota_2(a)
=\iota_2(a)\iota_1(b)
\end{equation}
for all homogeneous $a,b\in\cA$. Braided flip is invertible with
\[
(\chi^{\boxtimes})^{-1} \bigl(\iota_1(a)\iota_2(b)\bigr)=\zeta^{\deg(a)\deg(b)}\iota_1(b)\iota_2(a).
\]

\begin{remark}
Formula \eqref{eq1} extends the braided flip $\chi^{\boxtimes}$ to a map on a larger space $A^{\operatorname{Pol}}\algboxtimes A^{\operatorname{Pol}}$, where $A^{\operatorname{Pol}}$ is the dense subspace of $A$ spanned by homogeneous elements. Since we will have no use for this map, we restrict ourselves to working with the smaller space $\cA$.
\end{remark}

Whether $\chi^\boxtimes$ extends to a completely bounded map on $A\boxtimes A$, depends on properties of $\zeta=q/\overline{q}\in \TT$.

\begin{proposition}\label{prop1}
\noindent
\begin{enumerate}
\item If $\zeta$ is a root of unity with $\zeta=\ee^{2\pi\ii n/d}$ for $n\in\ZZ_+$ and $d\in\NN$ with $0\le n\le d-1$ and $\gcd(n,d)=1$, then $\chi^{\boxtimes}$ extends to a completely bounded linear map $A\boxtimes A\to A\boxtimes A$ with the c.b.~norm not greater than $d^2$.
\item If $\zeta$ is not a root of unity, then $\chi^{\boxtimes}$ does not extend to a completely bounded map on $A\boxtimes A$.
\end{enumerate}
\end{proposition}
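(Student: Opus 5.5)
The plan is to realize $A\boxtimes A$ concretely inside $\C(\TT^2_\zeta)\tens A\tens A$ and decompose $\chi^\boxtimes$ along the $\ZZ^2$-grading given by the $\TT$-actions. For homogeneous $a,b$ of degrees $n,m$ we have $\iota_1(a)\iota_2(b)=U^nV^m\tens a\tens b$ and $\chi^\boxtimes(\iota_1(a)\iota_2(b))=\zeta^{-nm}U^mV^n\tens b\tens a$. Write $\sigma\colon A\tens A\to A\tens A$ for the ordinary flip, which is a $*$-isomorphism, and let $\theta$ be the map on $\C(\TT^2_\zeta)$ determined on the monomials by $U^nV^m\mapsto\zeta^{-nm}U^mV^n$. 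The braided flip is then the restriction of $\theta\tens\sigma$ to the span of the homogeneous elementary tensors. The whole question therefore reduces to whether $\theta$ is completely bounded on the relevant part of the quantum torus, which is the part spanned by $U^nV^m$.

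For (1), take $\zeta=\ee^{2\pi\ii n/d}$. The scalar factor $\zeta^{-kl}$ depends only on $(k,l)$ modulo $d$. I decompose the identity on $A\boxtimes A$ by the spectral projections of the $\TT^2$-action, that is, by the degrees of the two legs taken modulo $d$. Each such projection $P_{r,s}$ is an averaging over $\ZZ_d\times\ZZ_d\subset\TT^2$, which acts by automorphisms of $A\boxtimes A$, so $P_{r,s}$ is completely contractive. On the range of $P_{r,s}$ the map $\chi^\boxtimes$ coincides with $\zeta^{-rs}$ times the map $\iota_1(a)\iota_2(b)\mapsto\iota_1(b)\iota_2(a)$, up to a corrected phase. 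On that graded piece this map should be implemented by a $*$-homomorphism, namely the flip of the legs composed with an automorphism of $\C(\TT^2_\zeta)$ that swaps $U$ and $V$. Such an automorphism exists only as an anti-automorphism, so I would instead use the quantum-torus relation to rewrite $U^mV^n$ as $\zeta^{mn}V^nU^m$, and then apply the $*$-homomorphism $U\mapsto V$, $V\mapsto U$ from $\C(\TT^2_\zeta)$ to $\C(\TT^2_{\bar\zeta})$. This last step needs to be checked, together with the identification of the target as $A\boxtimes A$ after a further twist. Summing over the $d^2$ pieces then gives a c.b.~norm of at most $d^2$.

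For (2), I argue by contradiction. If $\chi^\boxtimes$ is completely bounded, then restricting it to elements $\iota_1(a)\iota_2(b)$ with $a,b$ in the commutative $\cst$-algebra generated by the normal element $\gamma$ (whose spectrum contains a circle of degrees $\pm1$ via $\gamma/|\gamma|$ on a spectral subset) reduces the problem to the map $U^kV^l\mapsto\zeta^{-kl}U^lV^k$ on $\C(\TT^2_\zeta)$. This map would then be completely bounded. Composing with the canonical trace-preserving conditional expectations, I extract the Schur multiplier $(k,l)\mapsto\zeta^{-kl}$ transported to the flip. This is equivalent to the transpose-type map $U^kV^l\mapsto U^lV^k$ on the irrational rotation algebra, which behaves like the transpose on matrix blocks of growing size. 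Restricting to $d\times d$ matrix units built from $U^k$ and $V^l$, with $\zeta$ approximated by $d$-th roots of unity and using Fejér kernels, gives c.b.~norms at least of order $d$, hence unbounded, which is the contradiction.

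The main obstacle is part (2): showing a lower bound that grows, through an explicit family of finite matrices in $M_N(A\boxtimes A)$ on which the norm of the flip is of order $N$ while the input has norm $1$. I would first try to transport the transpose on $M_N$ inside a copy of the quantum torus.
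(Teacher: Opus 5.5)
\textbf{Part (1)} breaks down exactly at the step you flag as needing a check. The $*$-isomorphism $U\mapsto\tilde V$, $V\mapsto\tilde U$ onto $\C(\TT^2_{\bar\zeta})$, tensored with the flip $\sigma$ of $A\tens A$, sends $\iota_1(a)\iota_2(b)$ to $\tilde V^{\deg(a)}\tilde U^{\deg(b)}\tens b\tens a$. The ``further twist'' $\tilde V^k\tilde U^l\tens b\tens a\mapsto V^kU^l\tens b\tens a$ that brings this back into $A\boxtimes A$ is not multiplicative. Since $\chi^\boxtimes$ is this twist composed with a $*$-isomorphism, asking whether the twist is completely bounded is literally the original question; by part (2) it fails when $\zeta$ is not a root of unity. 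The real obstruction is that $U^kV^l\mapsto U^lV^k$ is the \emph{anti}-automorphism of $\C(\TT^2_\zeta)$ exchanging $U$ and $V$, which is a transpose-type map. You use $\zeta^d=1$ only to make the phase $\zeta^{-kl}$ periodic, never to control this transpose.

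Your grading can be rescued by one more observation. For $\zeta^d=1$ the unitaries $U^d,V^d$ are central and generate a copy of $\C(\TT^2)$. Hence each $x$ in the range of $P_{r,s}$ can be written as $x=y\,(U^rV^s\tens\I\tens\I)$ with $y\in\cst(U^d,V^d)\tens A\tens A$. Let $F$ be the automorphism exchanging $U^d$ and $V^d$. Then $x\mapsto(F\tens\sigma)(y)\,(U^sV^r\tens\I\tens\I)$ is a complete isometry sending $\iota_1(a)\iota_2(b)$ to $\iota_1(b)\iota_2(a)$. Weighting by $\zeta^{-rs}$ and summing the $d^2$ pieces gives the bound $d^2$. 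That would be a genuinely different and more elementary route than the paper's, which uses Boca's embedding into $\C(\TT^2)\tens\Mat_d(\CC)$, a conjugated transpose of c.b.\ norm $d$, and then the multiplier $f_{\bar\zeta}$ of c.b.\ norm $d$. But it is not what you wrote.

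\textbf{Part (2).} Your reduction to $\theta\colon U^kV^l\mapsto\zeta^{-kl}U^lV^k$ on $\C(\TT^2_\zeta)$ via the clopen unit circle in $\Sp(\gamma)$ is fine. You need to extend the formula to homogeneous elements of $A$ by continuity, and to use simplicity of the irrational rotation algebra to identify the algebra generated by the corner unitaries. It even avoids the paper's estimate through the multipliers $g_{z_1,z_2}$.

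The gap is the core claim that $\theta$ is not completely bounded. You have $\theta=M\circ\beta$, where $\beta$ is the transpose-type anti-automorphism and $M$ is the Fourier multiplier with symbol $\zeta^{-kl}$. Since $M$ is itself not completely bounded, unboundedness of $\beta$ says nothing about $\theta$, and the claimed ``equivalence'' is not a reduction. The approximation by roots of unity with Fej\'er kernels is not carried out. It would need uniform control, as $\zeta_d\to\zeta$, of matrix norms in the field of rotation algebras and of the phases $\zeta^{-kl}$ for large $k,l$. Note also that for each root of unity the map is c.b.\ by part (1).

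The missing idea is to square. $(\chi^\boxtimes)^2$ is the pure multiplier $\iota_1(a)\iota_2(b)\mapsto\zeta^{-2\deg(a)\deg(b)}\iota_1(a)\iota_2(b)$, so the transpose disappears. By transference to $\C(\TT^2_\zeta)$ and amenability of $\ZZ^2$, it then suffices that $(n,m)\mapsto\zeta^{-2nm}$ is not in the Fourier--Stieltjes algebra of $\ZZ^2$. The paper proves this via weak almost periodicity and Grothendieck's double-limit criterion. It builds sequences $(n_k)$, $(m_l)$ along which the two iterated limits of $\xi^{n_km_l}$, for $\xi$ of infinite order, are $1$ and $-1$.
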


We delegate the proof of Proposition \ref{prop1} to Appendix \ref{app-BraidedFlip}.

\subsection{Algebraic properties of the antipode}

\begin{proposition}
We have $(S\boxtimes S)\comp\chi^\boxtimes=\chi^\boxtimes\comp(S\boxtimes S)$.
\end{proposition}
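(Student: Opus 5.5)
Both sides are linear maps on $\cA\algboxtimes\cA$, which is spanned by the elements $\iota_1(a)\iota_2(b)$. Since every element of $\cA$ is a finite sum of homogeneous elements (the basis \eqref{eq-bazacA} consists of homogeneous elements), it suffices to check the identity on $\iota_1(a)\iota_2(b)$ with $a,b\in\cA$ homogeneous. Equivalently, transporting everything through the isomorphism $\phi_{\tens,\boxtimes}$ of Proposition \ref{prop-lemma10}, we must show $(S\tens S)\comp c_{\cA,\cA}=c_{\cA,\cA}\comp(S\tens S)$ on $\cA\algtens\cA$. Indeed, $S\boxtimes S=\phi_{\tens,\boxtimes}\comp(S\tens S)\comp\phi_{\tens,\boxtimes}^{-1}$ and $\chi^\boxtimes=\phi_{\tens,\boxtimes}\comp c_{\cA,\cA}\comp\phi_{\tens,\boxtimes}^{-1}$.

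The key input is that $S$ preserves the degree, as established at the end of the proof of Theorem \ref{thm-prop6}. Thus for homogeneous $a,b$ the elements $S(a)$ and $S(b)$ are homogeneous with $\deg S(a)=\deg a$ and $\deg S(b)=\deg b$. Using \eqref{eq1} we compute
\[
(S\boxtimes S)\chi^\boxtimes\bigl(\iota_1(a)\iota_2(b)\bigr)=\zeta^{-\deg(a)\deg(b)}\iota_1\bigl(S(b)\bigr)\iota_2\bigl(S(a)\bigr).
\]
For the other composition we get
\[
\chi^\boxtimes(S\boxtimes S)\bigl(\iota_1(a)\iota_2(b)\bigr)=\chi^\boxtimes\bigl(\iota_1(S(a))\iota_2(S(b))\bigr)=\zeta^{-\deg(S(a))\deg(S(b))}\iota_1\bigl(S(b)\bigr)\iota_2\bigl(S(a)\bigr).
\]
By degree preservation the two scalars agree, which gives the claim.

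The only delicate point is ensuring that \eqref{eq1} is applied to homogeneous arguments. If $S(a)$ happened to be zero or of a different degree, the formula would still need justification. Degree preservation handles this, since $S$ maps each homogeneous component into the component of the same degree (a zero element is harmless by linearity). No boundedness issues arise, because everything takes place on the algebraic level.
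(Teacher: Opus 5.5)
Your proposal is correct and is essentially the paper's proof: both evaluate the two compositions on $\iota_1(a)\iota_2(b)$ for homogeneous $a,b\in\cA$ using \eqref{eq1}, and match the scalars via degree preservation of $S$. The only difference is that you state the degree-preservation step explicitly, whereas the paper uses it silently.
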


\begin{proof}
Take homogeneous $a,b\in\cA$. Then
\begin{align*}
(S\boxtimes S)\chi^\boxtimes(\iota_1(a)\iota_2(b))
&=\zeta^{-\deg(a)\deg(b)}(S\boxtimes S)(\iota_1(b)\iota_2(a))\\
&=\zeta^{-\deg(a)\deg(b)}\iota_1\bigl(S(b)\bigr)\iota_2\bigl(S(a)\bigr)\\
&=\chi^{\boxtimes}\bigl(\iota_1(S(a))\iota_2(S(b))\bigr)
=\chi^{\boxtimes}(S\boxtimes S)\bigl(\iota_1(a)\iota_2(b)\bigr).\qedhere
\end{align*}
\end{proof}

\begin{proposition}\label{prop-lemma12}
The antipode satisfies
\begin{equation}\label{eq-eq45}
\Delta_A\comp S=(S\boxtimes S)\comp\chi^{\boxtimes}\comp\Delta_A.
\end{equation}
\end{proposition}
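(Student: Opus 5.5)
We prove \eqref{eq-eq45} by checking it on the generators and propagating it through products, i.e.\ by showing that both sides of \eqref{eq-eq45} obey the same twisted multiplicativity rule.

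\emph{The right-hand side.} Put $T=(S\boxtimes S)\comp\chi^\boxtimes$, viewed on $\cA\algboxtimes\cA$; $\Delta_A(\cA)\subseteq\cA\algboxtimes\cA$ since $\Delta_A$ maps the generators there and $\cA\algboxtimes\cA$ is a subalgebra (by the commutation rule for $\iota_1,\iota_2$). We claim that for homogeneous $X,Y\in\cA\algboxtimes\cA$, with the total degree given by the diagonal $\TT$-action,
\[
T(XY)=\zeta^{-\deg(X)\deg(Y)}T(Y)T(X).
\]
It suffices to take $X=\iota_1(a)\iota_2(b)$ and $Y=\iota_1(c)\iota_2(d)$ with $a,b,c,d$ homogeneous. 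We write $XY=\zeta^{-\deg(b)\deg(c)}\iota_1(ac)\iota_2(bd)$ and apply \eqref{eq1}. We then use $S(ac)=\zeta^{-\deg a\deg c}S(c)S(a)$ and $S(bd)=\zeta^{-\deg b\deg d}S(d)S(b)$ from Theorem \ref{thm-prop6}, together with the fact that $S$ preserves degrees. Finally we expand $T(Y)T(X)$ in the same way and compare the powers of $\zeta$. This is a bilinear phase computation. It is the main bookkeeping obstacle, but it is conceptually just the statement that braided anti-multiplicativity is compatible with the braiding in the Yetter--Drinfeld category.

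\emph{The left-hand side.} $\Delta_A$ is a degree-preserving homomorphism, since it is $\TT$-equivariant. Hence $\Delta_A\comp S$ satisfies the same twisted anti-multiplicativity: for homogeneous $a,b\in\cA$,
\[
\Delta_A(S(ab))=\zeta^{-\deg a\deg b}\Delta_A(S(b))\Delta_A(S(a)).
\]
The right-hand side $T\comp\Delta_A$ satisfies the identical rule, by the previous step combined with the multiplicativity and degree preservation of $\Delta_A$. Both maps are also linear and unital.

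\emph{Reduction to generators.} Since $\cA$ is spanned by products of the homogeneous generators $\alpha,\alpha^*,\gamma,\gamma^*$, an induction on word length shows that two linear maps obeying this same rule agree on $\cA$ once they agree on the generators. For the generators we compute directly from \eqref{eq-eq41}. For instance,
\[
\Delta_A(S(\alpha))=\Delta_A(\alpha)^*=\iota_2(\alpha^*)\iota_1(\alpha^*)-\overline{q}\,\iota_2(\gamma^*)\iota_1(\gamma).
\]
On the other side, \eqref{eq1} gives
\[
\chi^\boxtimes\Delta_A(\alpha)=\iota_1(\alpha)\iota_2(\alpha)-q\zeta\,\iota_1(\gamma)\iota_2(\gamma^*),
\]
and applying $S\boxtimes S$ yields
\[
\iota_1(\alpha^*)\iota_2(\alpha^*)-q\zeta\overline{q}q^{-1}\,\iota_1(\gamma)\iota_2(\gamma^*).
\]
These two expressions agree after commuting $\iota_1,\iota_2$. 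The other three generators are checked in the same way, with $\Delta_A(\gamma^*)$ and $\Delta_A(\alpha^*)$ obtained as adjoints.
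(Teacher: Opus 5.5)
Your proof is correct and follows the paper's approach. You check the identity on $\I,\alpha,\alpha^*,\gamma,\gamma^*$ and then propagate it to products via the braided anti-multiplicativity of $S$ and the multiplicativity and equivariance of $\Delta_A$, with the same phase comparison. The only difference is packaging: you isolate the phase bookkeeping as braided anti-multiplicativity of $(S\boxtimes S)\comp\chi^\boxtimes$ on $\cA\algboxtimes\cA$, while the paper carries out the same computation directly on $\Delta_A(xy)$ in Sweedler notation.
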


\begin{proof}
First we check that both sides of \eqref{eq-eq45} coincide on the elements $\I$ (trivial), $\alpha$, $\alpha^*$, $\gamma$ and $\gamma^*$. We have
\begin{align*}
\Delta_A\bigl(S(\alpha)\bigr)
&=\Delta_A(\alpha^*)
=\bigl(\iota_1(\alpha)\iota_2(\alpha)-q\iota_1(\gamma)^* \iota_2(\gamma)\bigr)^*
=\iota_2(\alpha^*)\iota_1(\alpha^*)-\overline{q}\iota_2(\gamma^*)\iota_1(\gamma)\\
&=\iota_1(\alpha^*)\iota_2(\alpha^*)-\overline{q}\zeta\iota_1(\gamma)\iota_2(\gamma^*)
=\iota_1(\alpha^*)\iota_2(\alpha^*)-q\iota_1(\gamma)\iota_2(\gamma^*)
\end{align*}
and
\begin{align*}
(S\boxtimes S)\chi^{\boxtimes}\Delta_A(\alpha)
&=(S\boxtimes S)\chi^{\boxtimes}(\iota_1(\alpha)\iota_2(\alpha)-q\iota_1(\gamma^*)\iota_2(\gamma))\\
&=(S\boxtimes S)(\iota_1(\alpha)\iota_2(\alpha)-q\zeta\iota_1(\gamma)\iota_2(\gamma^*))\\
&=\iota_1(\alpha^*)\iota_2(\alpha^*)-q\zeta\iota_1(-\overline{q}\gamma)\iota_2(-q^{-1}\gamma^*)
=\iota_1(\alpha^*)\iota_2(\alpha^*)-q\iota_1(\gamma)\iota_2(\gamma^*).
\end{align*}
Similarly $\Delta_A\bigl(S(\alpha^*)\bigr)
=\Delta_A(\alpha)
=\iota_1(\alpha)\iota_2(\alpha)-q\iota_1(\gamma)^*\iota_2(\gamma)$ and
\begin{align*}
(S\boxtimes S)\chi^{\boxtimes}\Delta_A(\alpha^*)
&=(S\boxtimes S)\chi^{\boxtimes}\bigl((\iota_1(\alpha)\iota_2(\alpha)-q\iota_1(\gamma^*)\iota_2(\gamma))^*\bigr)\\
&=(S\boxtimes S)\chi^{\boxtimes}\bigl(\iota_2(\alpha^*)\iota_1(\alpha^*)-\overline{q}\iota_2(\gamma^*)\iota_1(\gamma)\bigr)\\
&=(S\boxtimes S)\chi^{\boxtimes}\bigl(\iota_1(\alpha^*)\iota_2(\alpha^*)-\overline{q}\zeta\iota_1(\gamma)\iota_2(\gamma^*)\bigr)\\
&=(S\boxtimes S)\bigl(\iota_1(\alpha^*)\iota_2(\alpha^*)-q\zeta\iota_1(\gamma^*)\iota_2(\gamma)\bigr)\\
&=\iota_1(\alpha)\iota_2(\alpha)-q\zeta\iota_1(-q^{-1}\gamma^*)\iota_2(-\overline{q}\gamma)\\
&=\iota_1(\alpha)\iota_2(\alpha)-q\iota_1(\gamma^*)\iota_2(\gamma),
\end{align*}
as well as $\Delta_A\bigl(S(\gamma)\bigr)
=-\overline{q}\Delta_A(\gamma)=-\overline{q}\iota_1(\gamma)\iota_2(\alpha)
-\overline{q}\iota_1(\alpha^*)\iota_2(\gamma)$ and
\begin{align*}
(S\boxtimes S)\chi^{\boxtimes}\Delta_A(\gamma)
&=(S\boxtimes S)\chi^{\boxtimes}\bigl(\iota_1(\gamma)\iota_2(\alpha)+\iota_1(\alpha^*)\iota_2(\gamma)\bigr)\\
&=(S\boxtimes S)\bigl(\iota_1(\alpha)\iota_2(\gamma)+\iota_1(\gamma)\iota_2(\alpha^*)\bigr)\\
&=\iota_1(\alpha^*)\iota_2(-\overline{q}\gamma)+\iota_1(-\overline{q}\gamma)\iota_2(\alpha)
\end{align*}
and finally
\begin{align*}
\Delta_A\bigl(S(\gamma^*)\bigr)
&=-q^{-1}\Delta_A(\gamma^*)
=-q^{-1}\bigl(\iota_1(\gamma)\iota_2(\alpha)+\iota_1(\alpha^*)\iota_2(\gamma)\bigr)^*\\
&=-q^{-1}\iota_2(\alpha^*)\iota_1(\gamma^*)-q^{-1}\iota_2(\gamma^*)\iota_1(\alpha)\\
&=-q^{-1}\iota_1(\gamma^*)\iota_2(\alpha^*)-q^{-1} \iota_1(\alpha)\iota_2(\gamma^*)
\end{align*}
and
\begin{align*}
(S\boxtimes S)\chi^{\boxtimes}\Delta_A(\gamma^*)
&=(S\boxtimes S)\chi^{\boxtimes}\bigl(\iota_1(\gamma^*)\iota_2(\alpha^*)+\iota_1(\alpha)\iota_2(\gamma^*)\bigr)\\
&=(S\boxtimes S)\bigl(\iota_1(\alpha^*)\iota_2(\gamma^*)+\iota_1(\gamma^*)\iota_2(\alpha)\bigr)\\
&=\iota_1(\alpha)\iota_2(-q^{-1}\gamma^*)+\iota_1(-q^{-1}\gamma^*)\iota_2(\alpha^*).
\end{align*}

Now let us take $x,y\in\cA$ and assume that both sides of \eqref{eq-eq45} have equal value for $x$ and $y$. Write (in Sweedler notation) $\Delta_A(x)=\sum \iota_1(x_{(1)})\iota_2(x_{(2)})$ assuming additionally that each $x_{(1)}$, $x_{(2)}$ is homogeneous, and similarly for $y$. Note that since $\Delta_A$ is $\TT$-equivariant, we can additionally assume $\deg(x)=\deg(x_{(1)})+\deg(x_{(2)})$ for each summand. Then writing $\lambda_{p,q}$ for the numerical factor $\zeta^{-\deg(p)\deg(q)}$ we compute
\begin{align*}
\Delta_A\bigl(S(xy)\bigr)
&=\lambda_{x,y}\Delta_A\bigl(S(y)S(x)\bigr)
=\lambda_{x,y}\Delta_A\bigl(S(y)\bigr)\Delta_A\bigl(S(x)\bigr)\\
&=\lambda_{x,y}\bigl((S\boxtimes S)\chi^\boxtimes\Delta_A(y)\bigr)
\bigl((S\boxtimes S)\chi^\boxtimes\Delta_A(x)\bigr)\\
&=\lambda_{x,y}\sum
\lambda_{y_{(1)},y_{(2)}}
(S\boxtimes S)\bigl(\iota_1(y_{(2)})\iota_2(y_{(1)})\bigr)
\lambda_{x_{(1)},x_{(2)}}(S\boxtimes S)\bigl(\iota_1(x_{(2)})\iota_2(x_{(1)})\bigr)\\
&=\lambda_{x,y}
\sum\lambda_{y_{(1)},y_{(2)}}
\lambda_{x_{(1)},x_{(2)}}
\iota_1\bigl(S(y_{(2)})\bigr)\iota_2\bigl(S(y_{(1)})\bigr)
\iota_1\bigl(S(x_{(2)})\bigr)\iota_2\bigl(S(x_{(1)})\bigr)\\
&=\lambda_{x,y}
\sum\lambda_{y_{(1)},y_{(2)}}
\lambda_{x_{(1)},x_{(2)}}\lambda_{x_{(2)},y_{(1)}}\iota_1\bigl(S(y_{(2)})\bigr)
\iota_1\bigl(S(x_{(2)})\bigr)\iota_2\bigl(S(y_{(1)})\bigr)\iota_2\bigl(S(x_{(1)})\bigr)
\end{align*}
and
\begin{align*}
(S\boxtimes S)&\chi^\boxtimes\Delta_A(xy)
=(S\boxtimes S)\chi^\boxtimes\bigl(\Delta_A(x)\Delta_A(y)\bigr)\\
&=\sum(S\boxtimes S)\chi^\boxtimes
\bigl(\iota_1(x_{(1)})\iota_2(x_{(2)})\iota_1(y_{(1)})\iota_2(y_{(2)})\bigr)\\
&=\sum\lambda_{x_{(2)},y_{(1)}}
(S\boxtimes S)\chi^\boxtimes\bigl(\iota_1(x_{(1)}y_{(1)})\iota_2(x_{(2)}y_{(2)})\bigr)\\
&=\sum\lambda_{x_{(2)},y_{(1)}}\lambda_{x_{(1)}y_{(1)},x_{(2)}y_{(2)}}
(S\boxtimes S)\bigl(\iota_1( x_{(2)}y_{(2)})\iota_2(x_{(1)}y_{(1)})\bigr)\\
&=\sum\lambda_{x_{(2)},y_{(1)}}
\lambda_{x_{(1)}y_{(1)},x_{(2)}y_{(2)}}
\iota_1\bigl(S(x_{(2)}y_{(2)})\bigr)\iota_2\bigl(S(x_{(1)}y_{(1)})\bigr)\\
&=\sum\lambda_{x_{(2)},y_{(1)}}
\lambda_{x_{(1)}y_{(1)},x_{(2)}y_{(2)}}
\lambda_{x_{(2)},y_{(2)}}
\lambda_{x_{(1)},y_{(1)}}
\iota_1\bigl(S(y_{(2)})S(x_{(2)})\bigr)\iota_2\bigl(S(y_{(1)})S(x_{(1)})\bigr).
\end{align*}
Now the right-hand sides of the two equations above coincide since (for every summand)
\begin{align*}
\deg(x)\deg(y)&+\deg(y_{(1)})\deg(y_{(2)})+
\deg(x_{(1)})\deg(x_{(2)})+\deg(x_{(2)})\deg(y_{(1)})\\
&=\deg(x_{(1)})\deg(y_{(1)})+\deg(x_{(1)})\deg(y_{(2)})\\
&\qquad+2\deg(x_{(2)})\deg(y_{(1)})+\deg(x_{(2)})\deg(y_{(2)})\\
&\qquad\quad+
\deg(y_{(1)})\deg(y_{(2)})+\deg(x_{(1)})\deg(x_{(2)})
\end{align*}
and
\begin{align*}
\deg(x_{(2)})\deg(y_{(1)})
&+\deg(x_{(1)}y_{(1)})\deg(x_{(2)}y_{(2)})
+\deg(x_{(2)})\deg(y_{(2)})+\deg(x_{(1)})\deg(y_{(1)})\\
&=2\deg(x_{(2)})\deg(y_{(1)})+\deg(x_{(1)})\deg(x_{(2)})+\deg(x_{(1)})\deg(y_{(2)})\\
&\qquad+
\deg(y_{(1)})\deg(y_{(2)})+\deg(x_{(2)})\deg(y_{(2)})+\deg(x_{(1)})\deg(y_{(1)}).
\end{align*}
Thus induction and Corollary \ref{cor-bazacA} can be used to easily finish the proof.
\end{proof}

\begin{proposition}\label{prop-prop10}
We have $*\comp S\comp *\comp S=\id$. In particular $S\colon\cA\to\cA$ is invertible.
\end{proposition}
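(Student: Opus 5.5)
The plan is to study the composite $T=*\comp S\comp *$, where $*$ denotes the involution of $\cA$, and show that $T$ is a left inverse of $S$. Since $*$ is antilinear and $S$ is linear, $T$ is linear. It is also unital, and it preserves degrees: $*$ sends degree $n$ to degree $-n$, and $S$ preserves degree (the corollary following Theorem \ref{thm-prop6}).

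The first step is to find how $T$ interacts with products. Take homogeneous $a,b\in\cA$. Using $(ab)^*=b^*a^*$, $\deg(a^*)=-\deg(a)$, and the braided anti-multiplicativity of $S$ from Theorem \ref{thm-prop6}, I get
\[
S\bigl((ab)^*\bigr)=\zeta^{-\deg(a)\deg(b)}S(a^*)S(b^*).
\]
Applying $*$ and using $\overline{\zeta}=\zeta^{-1}$ (because $\zeta\in\TT$) should give
\[
T(ab)=\zeta^{\deg(a)\deg(b)}\,T(b)T(a).
\]
So $T$ is braided anti-multiplicative, but with the opposite twist to $S$.

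Next I combine the two rules. Since $S$ preserves degree,
\[
T\bigl(S(ab)\bigr)=\zeta^{-\deg(a)\deg(b)}\,T\bigl(S(b)S(a)\bigr)=\zeta^{-\deg(a)\deg(b)}\zeta^{\deg(a)\deg(b)}\,T\bigl(S(a)\bigr)T\bigl(S(b)\bigr).
\]
The twists cancel, so $T\comp S$ is an honest unital algebra homomorphism $\cA\to\cA$, extended from homogeneous elements by linearity. It is therefore enough to check $T\comp S=\id$ on the generators $\alpha,\alpha^*,\gamma,\gamma^*$, using \eqref{eq-eq41}:
\begin{align*}
T\bigl(S(\alpha)\bigr)&=S(\alpha)^*=\alpha, &T\bigl(S(\alpha^*)\bigr)&=S(\alpha^*)^*=\alpha^*,\\
T\bigl(S(\gamma)\bigr)&=-\overline{q}\,S(\gamma^*)^*=-\overline{q}\cdot\bigl(-\overline{q}^{\,-1}\bigr)\gamma=\gamma, &T\bigl(S(\gamma^*)\bigr)&=-q^{-1}S(\gamma)^*=-q^{-1}(-q)\gamma^*=\gamma^*.
\end{align*}
Hence $*\comp S\comp *\comp S=\id$.

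Finally, invertibility. The identity just proved makes $S$ injective, so only surjectivity remains; $T$ is not obviously a right inverse. The image $S(\cA)$ is spanned by homogeneous elements and is closed under multiplication, because $S(b)S(a)=\zeta^{\deg(a)\deg(b)}S(ab)$. It contains $\I$ and, by \eqref{eq-eq41}, nonzero multiples of $\alpha^*,\alpha,\gamma,\gamma^*$. So it is a unital subalgebra containing the generators, hence equals $\cA$. The only delicate point in the whole argument is bookkeeping the conjugated $\zeta$-phases and the degree sign changes under $*$ correctly.
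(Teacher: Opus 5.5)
Your proof is correct and follows essentially the paper's argument: the paper likewise shows that $*\comp S\comp *\comp S$ is multiplicative on homogeneous elements, with the two $\zeta$-phases cancelling exactly as in your computation via $T$, and then checks it on $\I,\alpha,\alpha^*,\gamma,\gamma^*$. Your subalgebra argument for surjectivity also works but is unnecessary: conjugating the identity by $*$ gives $S\comp *\comp S\comp *=\id$, so $T=*\comp S\comp *$ is already a two-sided inverse of $S$.
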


\begin{proof}
Equality $*\comp S\comp *\comp S=\id$ holds on the generators $\I$, $\alpha$, $\alpha^*$, $\gamma$ and $\gamma^*$. This is trivial for the first three and on $\gamma$ and $\gamma^*$ we have
\begin{align*}
S\bigl(S(\gamma)^*\bigr)^*
&=S(-q\gamma^*)^*
=-\overline{q}(-q^{-1}\gamma^*)^*
=\gamma,\\
S\bigl(S(\gamma^*)^*\bigr)^*
&=S(-\overline{q}^{-1}\gamma)^*
=-q^{-1}(-q\gamma^*)=\gamma^*.
\end{align*}
Next, for homogeneous $a,b\in\cA$
\[
\resizebox{\textwidth}{!}{\ensuremath{\displaystyle
\begin{aligned}
S\bigl(S(ab)^*\bigr)^*
&=S\bigl((\zeta^{-\deg(a)\deg(b)}S(b)S(a))^*\bigr)^*
=\zeta^{-\deg(a)\deg(b)}S\bigl(S(a)^*S(b)^*\bigr)^*\\
&=\zeta^{-\deg(a)\deg(b)}
\bigl(\zeta^{-\deg(S(a)^*)\deg(S(b)^*)}S(S(b)^*)S(S(a)^*)\bigr)^*
=S\bigl(S(a)^*\bigr)^*S\bigl(S(b)^*\bigr)^*
\end{aligned}
}}
\]
and induction ends the proof.
\end{proof}

The antipode $S$ can be used to give an alternative proof of existence of the Haar measure on the braided compact quantum group $\SU_q(2)$, i.e.~one which avoids repeating the steps of the proof of existence of the Haar measure on compact quantum groups as given in Section \ref{sect-HaarMeasure} (see the proof of Corollary \ref{cor-hHaar}).

\begin{proposition}\label{prop-hRightInv}
The state $\bh$ introduced in Section \ref{sect-HaarMeasure} is the Haar measure of $\SU_q(2)$ and we have $\bh\comp S=\bh$ on $\cA$.
\end{proposition}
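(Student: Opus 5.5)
The plan is to prove $\bh\comp S=\bh$ on $\cA$ first, by a direct computation on the basis from Corollary \ref{cor-bazacA}. Right invariance then follows from the left invariance of Proposition \ref{prop-hLeftInv} by transporting it through $S$ with Proposition \ref{prop-lemma12}. Finally, since $\bh$ is both left and right invariant, the usual uniqueness argument shows it is the Haar measure.

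\emph{Step 1: $\bh\comp S=\bh$ on $\cA$.}
\begin{itemize}
\item By Section \ref{sect-HaarMeasure}, $\bh$ is $\TT$-invariant. Hence $\bh$ vanishes on homogeneous elements of nonzero degree.
\item $\bh$ is also $\sigma^\bh$-invariant, and $\sigma^\bh_t$ multiplies $\alpha^n\gamma^m{\gamma^*}^k$ and ${\alpha^*}^n\gamma^m{\gamma^*}^k$ by $|q|^{\mp 2\ii nt}$. So $\bh$ vanishes on these basis elements unless $n=0$.
\item Since $\deg(\alpha^n\gamma^m{\gamma^*}^k)=m-k$, the only basis elements on which $\bh$ can be nonzero are $(\gamma^*\gamma)^m$, $m\in\ZZ_+$.
\item $S$ preserves the degree (Theorem \ref{thm-prop6}). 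Hence both $\bh$ and $\bh\comp S$ vanish on basis elements of nonzero degree.
\item For the degree-zero basis elements, braided anti-multiplicativity gives
\[
S(\gamma^*\gamma)=\zeta^{-\deg(\gamma^*)\deg(\gamma)}S(\gamma)S(\gamma^*)=\zeta\,(-\overline{q}\gamma)(-q^{-1}\gamma^*)=\gamma^*\gamma .
\]
\item Since $\gamma^*\gamma$ has degree $0$, this gives $S((\gamma^*\gamma)^m)=(\gamma^*\gamma)^m$.
\item For $n\ge1$ we get $S(\alpha^n(\gamma^*\gamma)^m)=(\gamma^*\gamma)^m{\alpha^*}^n$, and similarly with $\alpha^*$ in place of $\alpha$. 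These are $\sigma^\bh$-eigenvectors with nontrivial eigenvalue, so $\bh$ kills them.
\item Thus $\bh\comp S=\bh$ on the whole basis, hence on $\cA$.
\end{itemize}

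\emph{Step 2: right invariance.} For $a\in\cA$, the element $\Delta_A(a)$ lies in $\cA\algboxtimes\cA$. I then check the identity
\[
(\id\boxtimes\bh)\comp(S\boxtimes S)\comp\chi^\boxtimes=S\comp(\bh\boxtimes\id)\quad\text{on }\cA\algboxtimes\cA .
\]
\begin{itemize}
\item Take homogeneous $x,y$ and apply the left-hand side to $\iota_1(x)\iota_2(y)$; by \eqref{eq1} and the definition of $S\boxtimes S$ the result is $\zeta^{-\deg(x)\deg(y)}S(y)\,\bh(S(x))$.
\item This is nonzero only if $\deg(x)=0$, in which case the phase equals $1$. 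Using Step 1, it equals $S(y)\bh(x)=S\bigl((\bh\boxtimes\id)(\iota_1(x)\iota_2(y))\bigr)$.
\item Combining this identity with Proposition \ref{prop-lemma12} and Proposition \ref{prop-hLeftInv}, I get
\[
S\bigl((\bh\boxtimes\id)\Delta_A(a)\bigr)=(\id\boxtimes\bh)\Delta_A(S(a))=\bh(S(a))\I=\bh(a)\I=S(\bh(a)\I).
\]
\item $S$ is injective by Proposition \ref{prop-prop10}, so $(\bh\boxtimes\id)\Delta_A(a)=\bh(a)\I$.
\end{itemize}
By density of $\cA$ and continuity of the slice maps, this extends to all of $A$.

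\emph{Step 3: Haar measure.} Now $\bh$ is two-sided invariant. If $\widetilde{\bh}$ is any state that is right invariant (or left invariant), the argument of Corollary \ref{cor-hHaar} applies: compare $\widetilde{\bh}\comp(\id\boxtimes\bh)\comp\Delta_A$ with $\bh\comp(\widetilde{\bh}\boxtimes\id)\comp\Delta_A$ to get $\widetilde{\bh}=\bh$. This argument does not need the general existence theorem.

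The main obstacle is keeping the $\zeta$-phases and degrees correct in Step 2, namely in how the braided flip interacts with the slice maps. The key point to exploit there is that $\bh$ kills every nonzero degree, which forces the phase to be trivial.
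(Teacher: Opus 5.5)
Your proof is correct, but it runs in the opposite direction from the paper's. The paper never computes $\bh\comp S$ on the basis. Instead it considers the $\TT$-invariant functional $\bh\comp S$ and shows it is right invariant, using Proposition \ref{prop-lemma12} together with the identity $(\bh\boxtimes\id)\comp(\chi^\boxtimes)^{-1}=\id\boxtimes\bh$, which needs only $\TT$-invariance of $\bh$. The standard convolution trick then gives $\bh\comp S=\bigl((\bh\comp S)\boxtimes\bh\bigr)\comp\Delta_A=\bh$, using left invariance of $\bh$. Right invariance of $\bh$ is then immediate.

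You instead establish $\bh\comp S=\bh$ first, by hand: $\TT$- and $\sigma^\bh$-invariance reduce everything to the elements $(\gamma^*\gamma)^m$, which $S$ fixes. You then transport left invariance through $S$, which needs only injectivity of $S$ and no auxiliary right invariant functional. Your computations check out, including $S(\gamma^*\gamma)=\gamma^*\gamma$, the vanishing of $\bh$ on $(\gamma^*\gamma)^m{\alpha^*}^n$, and the phase bookkeeping in Step 2 where only $\deg(x)=0$ survives.

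The trade-off is generality versus concreteness. The paper's argument is structural: it applies verbatim to any braided compact quantum group with an invertible antipode satisfying \eqref{eq-eq45} and a $\TT$-invariant left invariant state. Yours uses the explicit form of $\bh$ and $S$ on the basis of $\cA$, so it is specific to $\SU_q(2)$, but it is more elementary.

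Your Step 3 goes beyond what the paper proves, since the paper takes \eqref{eq-HaarMeasure} as the meaning of ``Haar measure''. Like Corollary \ref{cor-hHaar}, it tacitly needs the competing state $\widetilde{\bh}$ to be $\TT$-invariant, so that the braided slice map $\widetilde{\bh}\boxtimes\id$ is defined.
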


\begin{proof}
Left invariance of $\bh$ was established in Proposition \ref{prop-hLeftInv}, so we only need to prove right invariance.

Consider $\TT$-invariant map $\bh\comp S\colon\cA\to\CC$. Then by Lemma \ref{prop-lemma12}
\[
\bigl((\bh\comp S)\boxtimes\id\bigr)\comp\Delta_A
=S^{-1}\comp(\bh\boxtimes\id)\comp(S\boxtimes S)\comp\Delta_A
=S^{-1}\comp(\bh\boxtimes\id)\comp(\chi^\boxtimes)^{-1}\comp\Delta_A\comp S
\]
and as
\begin{align*}
(\bh\boxtimes\id)(\chi^\boxtimes)^{-1}\bigl(\iota_2(a)\iota_1(b)\bigr)
&=(\bh\boxtimes\id)\bigl(\iota_1(a)\iota_2(b)\bigr)=\bh(a)b,\\
(\id\boxtimes\bh)\bigl(\iota_2(a)\iota_1(b)\bigr)
&=\zeta^{-\deg(a)\deg(b)}(\id\boxtimes\bh)\bigl(\iota_1(b)\iota_2(a)\bigr)
=\zeta^{-\deg(a)\deg(b)}\bh(a)b
\end{align*}
for homogeneous $a,b\in\cA$, we get $(\bh\boxtimes\id)\comp(\chi^\boxtimes)^{-1}=\id\boxtimes\bh$ and consequently for any $x\in\cA$
\[
\bigl((\bh\comp S)\boxtimes\id\bigr)\Delta_A(x)
=S^{-1}(\id\boxtimes\bh)\Delta_A\bigl(S(x)\bigr)
=(\bh\comp S)(x)S^{-1}(\I)
=(\bh\comp S)(x)\I
\]
i.e.~$\bh\comp S$ is right invariant.

Finally this right invariance of $\bh\comp S$ and left invariance of $\bh$ gives
\[
\bh\comp S=\bh(\I)\cdot(\bh\comp S)
=\bigl((\bh\comp S)\boxtimes\bh\bigr)\comp\Delta_A
=(\bh\comp S)\comp(\id\boxtimes\bh)\comp\Delta_A
=(\bh\comp S)(\I)\cdot\bh=\bh
\]
which gives $\bh\comp S=\bh$ and consequently shows that $\bh$ is right invariant.
\end{proof}

 \begin{remark}
The canonical embedding $\kappa\colon A\to B$ introduced in Section \ref{sect-boson} does not respect antipodes. Indeed, using \cite[Theorem 2.4]{ZhangZhao} we have $S_{(B,\Delta_B)}(\gamma)=-\overline{q}^{-1}\gamma z^*$ and thus Lemma \ref{lem-lemma20} from the next section implies that $S_{(B,\Delta_B)}(\gamma)\not\in\kappa(A)$.
\end{remark}

\subsection{Representation theory}\label{sect-RepTh}

In order to prove further properties of the antipode (e.g. that it is the convolution inverse of the identity, cf.~Theorem \ref{thm-prop8}) we need to discuss some elements of representation theory of the braided compact quantum group $\SU_q(2)$.

Given a Hilbert space $\sH$ with a representation $U^\sH$ of $\TT$ viewed as an element of $\M(\C(\TT)\tens\cK(\sH))$, we have the associated action $\rho^{\cK(\sH)}\in\Mor(\cK(\sH),\C(\TT)\tens\cK(\sH))$:
\[
\rho^{\cK(\sH)}(x)={U^{\sH}}^*(\I\tens x)U^\sH,\qquad{x}\in\cK(\sH).
\]
Then Proposition \ref{prop-lemma1} provides an action $\tilde{\rho}^{\cK(\sH)}$ of $\ZZ$ on $\cK(\sH)$ associated with the fixed constant $\zeta=q/\overline{q}$ and the two actions combine to an action $\brho^{\cK(\sH)}$ of $\D(\TT)$. As noted in \cite[Example 5.17]{MeyerRoyWoronowiczI} and \cite[Section 8.2]{DeCommerKrajczok} (in the von Neumann algebra setting), for a \cst-algebra $\sX$ with a $\TT$-action we can identify $\sX\boxtimes\cK(\sH)$ with $\sX\tens\cK(\sH)$. In particular $\cK(\sH)\boxtimes\cK(\sK)\simeq\cK(\sH\tens\sK)$. In fact, with our definitions we have equality -- the argument is essentially that of \cite[Proposition 8.1]{DeCommerKrajczok} modified to the \cst-setting (in particular one needs to use the universal lift of the \cst-algebraic action \cite{FischerPreprint}). The precise statement is the following:

\begin{proposition}
Let $(\GG,\hh{R})$ be a quasi-triangular locally compact quantum group acting on \cst-algebras $\sX\subseteq\B(\sH)$ and $\sY\subseteq\B(\sK)$ with actions $\rho^\sX$ and $\rho^\sY$ respectively. Assume that the action on $\sY$ is inner, i.e.~$\rho^\sY(y)=(U^\sY)^*(\I\tens y)U^\sY$ for a unitary representation $U^\sY\in\M(\C_0(\GG)\tens\sY)$. Then $\sX\boxtimes\sY=\sX\tens\sY$.
\end{proposition}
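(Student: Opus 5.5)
Throughout I use the general braided tensor product of Appendix \ref{app-BraidedTP} (following \cite{MeyerRoyWoronowiczI,DeCommerKrajczok}). There $\sX\boxtimes\sY$ is the closed linear span of $j_\sX(x)j_\sY(y)$ inside $\B(\sH\tens\sK)$, with
\[
j_\sX(x)=\hh{R}_{12}^{\,*}\,(\I\tens x)\,\hh{R}_{12}\ \text{(suitably legged)},\qquad j_\sY(y)=\I\tens y,
\]
or an equivalent description in which one factor is twisted by the $\hh{\GG}$-part and the other by the $\GG$-part of the $\D(\GG)$-action. The plan has four steps.

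\textbf{Step 1: unitary implementation.} Since the $\GG$-action on $\sY$ is implemented by $U^\sY$, the induced $\hh{\GG}$-type action on $\sY$ coming from $\hh{R}$ is also inner. It is implemented by $V=(\ev\tens\id)\bigl(\hh{R}\cdot U^\sY\bigr)$-type slices of $U^\sY$ against the $R$-matrix. In the concrete form this says: the twisting operator used to embed $\sX$ can be written as $W=(\text{representation of }\sX\text{-action})$ conjugated by a unitary $\Omega\in\M(\cK(\sH)\tens\sY)$ built from $U^\sY$ and the universal lift of $\rho^\sX$ (\cite{FischerPreprint}).

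\textbf{Step 2: untwist the $\sX$-embedding.} Show that $j_\sX(x)=\Omega(x\tens\I)\Omega^*$ and $j_\sY(y)=\Omega(\I\tens y)\Omega^*$, up to the convention of which leg carries the twist. Alternatively, show $j_\sY(y)=\I\tens y$ while $j_\sX(x)=\Omega(x\tens\I)\Omega^*$ with $\Omega\in\M(\sX\tens\sY)$. The key identity is a commutation relation of the form $(\rho^\sX\text{-lift}\tens\id)(\hh{R})\cdot U^\sY_{23}$, which follows from the quasitriangularity relations $(\Delta\tens\id)\hh{R}=\hh{R}_{13}\hh{R}_{23}$.

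\textbf{Step 3: conclude equality.} The second alternative is the one to aim for. The key point is that $\Omega$ lies in $\M(\sX\tens\sY)$, because $U^\sY\in\M(\C_0(\GG)\tens\sY)$ and the $\GG$-leg gets absorbed via $\rho^\sX$. It then follows that
\[
\overline{j_\sX(\sX)j_\sY(\sY)}=\overline{\Omega(\sX\tens\I)\Omega^*(\I\tens\sY)}.
\]
I would then show directly that this set equals $\sX\tens\sY$. For this I would write $\Omega^*(\I\tens y)$ as an approximate product and use the Podleś density condition for $\rho^\sX$ together with $\overline{(\C_0(\GG)\tens\I)U^\sY(\I\tens\sY)}$-type density to absorb $\Omega$.

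\textbf{Main obstacle.} I expect the main obstacle to be this density/absorption argument at the \cst-level. In the von Neumann setting of \cite[Proposition 8.1]{DeCommerKrajczok} one simply uses strong closures. Here one must use the universal lift of $\rho^\sX$ to $\C_0^u(\GG)$, so that $\hh{R}$ (which lives on universal levels) can be paired. One must also check that $\Omega$ multiplies $\sX\tens\sY$ into itself, so that both inclusions $\subseteq$ and $\supseteq$ hold. For $\supseteq$, I would apply the same argument to $\Omega^*$, using $\overline{\sX\boxtimes\sY}$ as an algebra that contains $j_\sX(\sX)$ and $j_\sY(\sY)$ and is invariant under conjugation by $\Omega$.
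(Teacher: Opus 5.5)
The proposal fails at the ``key point'' of Step 3. In Step 1 you correctly place the twisting unitary of \eqref{eq-eq7}, $\Omega=(\phi_\sX\tens\phi_\sY)\hh{R}^\uu$, in $\M(\cK(\sH)\tens\sY)$. This is exactly where innerness is used: if you take $U^\sY$ itself as the implementing representation, then $\phi_\sY$ maps $\C_0^\uu(\hh{\GG})$ into $\M(\sY)$. In Step 3, however, you upgrade this to $\Omega\in\M(\sX\tens\sY)$, and that is false. The first leg of $\Omega$ comes from $\phi_\sX$, i.e.\ from operators on $\sH$ implementing the action, and these are not multipliers of $\sX$.

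No other unitary can do the job either. Take $\GG=\D(\TT)$ with $\zeta\neq1$ and $\sX=\C(\TT)\subseteq\B(\Ltwo(\TT))$ with the translation action of $\TT$. Take $\sY=\Mat_2(\CC)$ with $\TT$ acting by conjugation with $\I\tens e_{1,1}+\bz\tens e_{2,2}$. Make both into $\D(\TT)$-algebras by Proposition \ref{prop-lemma1}; on $\sY$ the $\ZZ$-part is implemented by $\I\tens e_{1,1}+\zeta^{\bn}\tens e_{2,2}$, so the action is inner. Since $\sX$ is commutative, $\sX\tens\I$ is central in $\M(\sX\tens\sY)$. Hence $j_\sX(x)=\Omega(x\tens\I)\Omega^*$ with a unitary $\Omega\in\M(\sX\tens\sY)$ would force $j_\sX(\bz)=\bz\tens\I$, which commutes with $j_\sY(e_{1,2})=\I\tens e_{1,2}$. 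But $\bz$ and $e_{1,2}$ both have degree $1$, so $j_\sX(\bz)j_\sY(e_{1,2})=\zeta\,j_\sY(e_{1,2})j_\sX(\bz)$ by Theorem \ref{thm2}.

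In the same example the true $\Omega$ does not normalize $\sX\tens\sY=\sX\boxtimes\sY$ either. Conjugation by it sends $\I\tens e_{1,2}$ to $v\tens e_{1,2}$, where $v$ implements a nontrivial rotation of $\TT$, so $v\notin\C(\TT)$. Hence the conjugation-invariance you plan to use for ``$\supseteq$'' is also unavailable.

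The argument the paper has in mind (the \cst-version of \cite[Proposition 8.1]{DeCommerKrajczok}) uses a different fact: the \emph{map} $x\mapsto\Omega(x\tens\I)\Omega^*$ lands in $\M(\sX\tens\sY)$ even though $\Omega$ does not. It factors as $j_\sX=(\id\tens\phi_\sY)\comp\hat\rho^\sX$, where
$\hat\rho^\sX(x)=\bigl((\phi_\sX\tens\id)\hh{R}^\uu\bigr)(x\tens\I)\bigl((\phi_\sX\tens\id)\hh{R}^\uu\bigr)^*$
is the coaction of $\hh{\GG}$, on the universal level, induced by the $R$-matrix.

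Since $\hh{R}^\uu$ is a bicharacter, $(\phi_\sX\tens\id)\hh{R}^\uu$ is obtained, up to flip and adjoint, by applying a morphism $\C_0^\uu(\GG)\to\M(\C_0^\uu(\hh{\GG}))$ to the universal lift $U^{\sX,\uu}\in\M(\C_0^\uu(\GG)\tens\cK(\sH))$ of $U^\sX$. Consequently $\hat\rho^\sX$ is the image of the universal lift of $\rho^\sX$ \cite{FischerPreprint}. It takes values in $\M(\sX\tens\C_0^\uu(\hh{\GG}))$ and inherits the Podle\'s condition $\overline{\hat\rho^\sX(\sX)(\I\tens\C_0^\uu(\hh{\GG}))}=\sX\tens\C_0^\uu(\hh{\GG})$. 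Nondegeneracy of $\phi_\sY\colon\C_0^\uu(\hh{\GG})\to\M(\sY)$ then gives both inclusions at once:
\[
\overline{j_\sX(\sX)j_\sY(\sY)}
=\overline{(\id\tens\phi_\sY)\bigl(\hat\rho^\sX(\sX)(\I\tens\C_0^\uu(\hh{\GG}))\bigr)(\I\tens\sY)}
=\overline{\bigl(\sX\tens\phi_\sY(\C_0^\uu(\hh{\GG}))\bigr)(\I\tens\sY)}
=\sX\tens\sY.
\]
So your Podle\'s-density idea is the right ingredient, but it has to be applied to $\hat\rho^\sX$ rather than used to ``absorb'' $\Omega$.
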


We take the following definitions from \cite[Section 5]{BraidedSU2}, but change \emph{right} representations to \emph{left} representations\footnote{This is the natural choice, when working with left actions.}, i.e.~in this paper representations are elements of $\M(A\tens\cK(\sH))$ not of $\M(\cK(\sH)\tens A)$.

\begin{definition}\label{def-def1}
Let $\sH$ be a Hilbert space with a representation of $\TT$ and $v\in\M(A\tens\cK(\sH))$ a unitary element such that $(\rho^A_\lambda\tens\rho^{\cK(\sH)}_\lambda)v=v$ for all $\lambda\in\TT$. We say that $v$ is a representation of $\SU_q(2)$ on $\sH$ if
\[
(\Delta_A\tens\id)v=\bigl((\iota_1\tens\id)v\bigr)\bigl((\iota_2\tens\id)v\bigr).
\]
\end{definition}

\begin{definition}
Let $\sH$ and $\sK$ be Hilbert spaces with representations of $\TT$ and let $u\in\M(A\tens\cK(\sH))$ and $v\in\M(A\tens\cK(\sK))$ be representations of $\SU_q(2)$. The \emph{tensor product} of $u$ and $v$ is defined as
\[
u\otop v=
\bigl((\id\tens\iota_1)u\bigr)\bigl((\id\tens\iota_2)v\bigr)
\in\M\bigl(A\tens(\cK(\sH)\boxtimes\cK(\sK))\bigr)
=\M\bigl(A\tens\cK(\sH\tens\sK)\bigr)
\]
see \cite[Proposition 5.3]{BraidedSU2}.
\end{definition}

\begin{remarks}\hspace*{\fill}
\begin{enumerate}
\item The representation of $\TT$ on $\sH\tens\sK$ used in the construction of the tensor product of representations of $\SU_q(2)$ is the tensor product of representations on $\sH$ and $\sK$ respectively.
\item Representations of braided locally compact quantum groups appear also in \cite[Definition 2.2]{Braidedsymgraph} with reference to \cite{MeyerRoyBraided}.
\end{enumerate}
\end{remarks}

In what follows, by a \emph{matrix element} of a representation $u\in\M(A\tens\cK(\sH))$ of $\SU_q(2)$ we will understand an element of $A$ of the form $(\id\tens\omega)u$, where $\omega\in\cK(\sH)^*$.

\begin{proposition}\label{prop-lemma19}
There exists a unique character $\eps\colon A\to\CC$ satisfying $\eps(\alpha)=1$ and $\eps(\gamma)=0$. It is $\TT$-equivariant and
\begin{equation}\label{eq-eq50}
(\eps\boxtimes\id)\comp\Delta_A=(\id\boxtimes\eps)\comp\Delta_A=\id.
\end{equation}
Moreover, $(\eps\tens\id)u=\I$ for any unitary representation $u$ of $\SU_q(2)$.
\end{proposition}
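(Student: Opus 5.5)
Existence of $\eps$ comes from the universal property of $A$. The scalars $\alpha\mapsto1$, $\gamma\mapsto0$ satisfy the relations \eqref{eq-SUq2rels}: $1\cdot1+0=1$, $1+|q|^2\cdot0=1$, and the commutation relations hold trivially. So there is a unital $*$-homomorphism $\eps\colon A\to\CC$ with these values, and uniqueness is clear because $\alpha,\gamma$ generate $A$.

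For $\TT$-equivariance, with $\CC$ carrying the trivial action, it is enough to check $\eps\comp\rho^A_\lambda=\eps$ on generators. We have $\eps(\rho^A_\lambda(\gamma))=\lambda^{\pm1}\eps(\gamma)=0$ and $\eps(\alpha)$ is unchanged. Thus $\eps$ kills every homogeneous element of nonzero degree. Equivariance guarantees that $\eps\boxtimes\id$ and $\id\boxtimes\eps$ are well defined, by bifunctoriality of $\boxtimes$ (as in the discussion preceding Proposition~\ref{prop-hLeftInv}). Concretely, $(\eps\boxtimes\id)(\iota_1(x)\iota_2(y))=\eps(x)y$ and $(\id\boxtimes\eps)(\iota_1(x)\iota_2(y))=x\eps(y)$. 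These maps are unital $*$-homomorphisms $A\boxtimes A\to A$. Multiplicativity can be seen on homogeneous elements: the braiding factor $\zeta^{\deg\deg}$ only appears when both degrees are nonzero, and in that case $\eps$ annihilates the relevant factor.

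For \eqref{eq-eq50}, both sides are unital $*$-homomorphisms, so it suffices to check the generators. We get $(\eps\boxtimes\id)\Delta_A(\alpha)=\eps(\alpha)\alpha-q\eps(\gamma^*)\gamma=\alpha$ and $(\eps\boxtimes\id)\Delta_A(\gamma)=\eps(\gamma)\alpha+\eps(\alpha^*)\gamma=\gamma$. Symmetrically, $(\id\boxtimes\eps)\Delta_A(\alpha)=\alpha$ and $(\id\boxtimes\eps)\Delta_A(\gamma)=\gamma\eps(\alpha)+\alpha^*\eps(\gamma)=\gamma$.

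For the last claim, let $u$ be a representation on $\sH$ and set $p=(\eps\tens\id)u\in\M(\cK(\sH))=\B(\sH)$. It is unitary because $\eps\tens\id$ is a unital $*$-homomorphism extended to multipliers. Apply $(\eps\boxtimes\eps)\tens\id$ to $(\Delta_A\tens\id)u=((\iota_1\tens\id)u)((\iota_2\tens\id)u)$. Here $\eps\boxtimes\eps\colon A\boxtimes A\to\CC$ is a character satisfying $(\eps\boxtimes\eps)\comp\iota_i=\eps$. By \eqref{eq-eq50}, $(\eps\boxtimes\eps)\comp\Delta_A=\eps\comp(\eps\boxtimes\id)\comp\Delta_A=\eps$, so the identity becomes $p=p\cdot p$. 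A unitary idempotent equals $\I$.

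The main obstacle is making rigorous that $(\eps\boxtimes\eps)\tens\id$ extends to the multiplier algebra $\M(A\boxtimes A\tens\cK(\sH))$ and is compatible with $\iota_i\tens\id$. Since $\eps\boxtimes\eps$ is a unital (hence nondegenerate) $*$-homomorphism, $(\eps\boxtimes\eps)\tens\id$ is a morphism and has a strict extension. Multiplicativity of this extension gives the required identity, and $(\eps\boxtimes\eps)\comp\iota_i=\eps$ holds by construction.
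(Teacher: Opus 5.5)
Your proof is correct and follows the paper's argument: the universal property gives existence, checks on generators give equivariance and \eqref{eq-eq50}, and a counit slice is applied to the representation identity. The only cosmetic difference is that the paper applies $(\eps\boxtimes\id)\tens\id$ and cancels the unitary $u$ from $u=\bigl(\I\tens(\eps\tens\id)u\bigr)u$, whereas you apply $(\eps\boxtimes\eps)\tens\id$ and cancel the unitary $p=(\eps\tens\id)u$ from $p=p^2$.
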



\begin{proof}
Existence of a unique character $\eps$ satisfying $\eps(\alpha)=1$ and $\eps(\gamma)=0$ follows from the definition of $A$ as the universal \cst-algebra for the relations \eqref{eq-SUq2rels}. The $\TT$-equivariance of $\eps$ holds by the definition of the action \eqref{eq-eq5} and \eqref{eq-eq50} follows from a simple calculation on generators. (Note that we can form $\eps\boxtimes\id$ and $\id\boxtimes\eps$ because $\eps$ is $\D(\TT)$--equivariant). Now take a unitary representation $u$ of $\SU_q(2)$ on $\sH$. By definition
\[
(\Delta_A\tens\id)u=\bigl((\iota_1\tens\id)u\bigr)\bigl((\iota_2\tens\id)u\bigr)
\in\M\bigl((A\boxtimes A)\tens\cK(\sH)\bigr)
\]
and applying the $*$-homomorphism $(\eps\boxtimes\id)\tens\id$ to both sides gives
\begin{align*}
u&=\Bigl(\bigl((\eps\boxtimes\id)\comp\Delta_A\bigr)\tens\id\Bigr)u\\
&=\Bigl(\bigl(((\eps\boxtimes\id)\comp\iota_1)\tens\id\bigr)u\Bigr)
\Bigl(\bigl(((\eps\boxtimes\id)\comp\iota_2)\tens\id\bigr)u\Bigr)\\
&=\bigl(\I\tens((\eps\tens\id)u)\bigr)u.\qedhere
\end{align*}
\end{proof}

In what follows, we will use \cite[Theorem 6.1]{BraidedSU2}. Let us state and prove it here as there are differences in conventions and we provide a few more details than the reasoning in \cite{BraidedSU2}. We will mostly use the bosonization of $\SU_q(2)$ in the form $\widetilde{B}=A\boxtimes\C(\TT)$ (see Section \ref{sect-BosonBoson}). We will use a combination of notations described in Section \ref{sect-notation} as follows:
\begin{equation}\label{eq-notconv}
\vcenter{\xymatrix@R=2pt{
\quad\quad A\,\ar@{^{(}->}[rr]^(.41){\iota_A}&&A\boxtimes\C(\TT),\hphantom{\boxtimes A,}\\
\:\,\,\C(\TT)\,\ar@{^{(}->}[rr]^(.41){\iota_{\C(\TT)}}&&A\boxtimes\C(\TT),\hphantom{\boxtimes A,}\\
\quad\quad A\,\ar@{^{(}->}[rr]^(.41){\iota_1}&&A\boxtimes A\boxtimes\C(\TT)\hphantom{,}&\text{onto the first factor},\\
\quad\quad A\,\ar@{^{(}->}[rr]^(.41){\iota_2}&&A\boxtimes A\boxtimes\C(\TT)\hphantom{,}&\text{\quad\;\!\:\!onto the second factor},\\
\:\,\,\C(\TT)\,\ar@{^{(}->}[rr]^(.41){\iota_3}&&A\boxtimes A\boxtimes\C(\TT),\\
A\boxtimes A\,\ar@{^{(}->}[rr]^(.41){\iota_{12}}&&A\boxtimes A\boxtimes\C(\TT),\\
\quad\quad A\,\ar@{^{(}->}[rr]^(.41){\jmath_1}&&A\boxtimes A\hphantom{\boxtimes\C(\TT),\!,}&\text{onto the first factor},\\
\quad\quad A\,\ar@{^{(}->}[rr]^(.41){\jmath_2}&&A\boxtimes A\hphantom{\boxtimes\C(\TT),\!,}&\text{\quad\;\!\:\!onto the second factor}.\\
}}
\end{equation}

From the alternative description of bosonization given in Section \ref{sect-boson} we see that there is a unique unital $*$-homomorphism $\pi\colon\widetilde{B}\to\C(\TT)$ such that $\pi(\iota_A(\alpha))=\I$, $\pi(\iota_A(\gamma))=0$ and $\pi(\iota_{\C(\TT)}(\bz))=\bz$. Clearly it satisfies $\pi\comp\iota_{\C(\TT)}=\id$ and
\[
\pi\bigl(\iota_A(a)\bigr)=\eps(a)\I,\qquad a\in A.
\]
In Section \ref{sect-BosonBoson} we introduce the comultiplication $\Delta_{\widetilde{B}}$ on $\widetilde{B}$ and by checking on generators we find that
\begin{equation}\label{eq-eq54}
(\id\tens\pi)\bigl(\Delta_{\widetilde{B}}(\iota_A(a))\bigr)=\iota_A(a)\tens\I,\qquad a\in A
\end{equation}
and that
\begin{equation}\label{eq-eq55}
\rho^{\widetilde{B}}=(\pi\tens\id)\comp\Delta_{\widetilde{B}}.
\end{equation}
Next we need a small lemma.

\begin{lemma}\label{lem-lemma20}
If $x\in\widetilde{B}$, then $(\id\tens(\bh_{\TT}\comp\pi))\Delta_{\widetilde{B}}(x)=x$ if and only if $x$ belongs to the range of $\iota_A$.
\end{lemma}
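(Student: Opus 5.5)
The map $E=(\id\tens(\bh_{\TT}\comp\pi))\comp\Delta_{\widetilde{B}}\colon\widetilde{B}\to\widetilde{B}$ is a composition of a unital $*$-homomorphism with a slice by a state, hence a bounded (unital, completely positive) map. The plan is to show that $E$ is a conditional expectation onto $\iota_A(A)$. Concretely, we will show that $E\comp\iota_A=\iota_A$ and that $E$ maps a dense subspace of $\widetilde{B}$ into $\iota_A(A)$. Both directions of the lemma then follow. If $x\in\iota_A(A)$, then $E(x)=x$ directly. Conversely, $E(\widetilde{B})\subseteq\overline{\iota_A(A)}=\iota_A(A)$, since $\iota_A$ is an injective $*$-homomorphism and so has closed range. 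Thus $E(x)=x$ forces $x\in\iota_A(A)$.

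For the first claim, apply $\id\tens\bh_\TT$ to \eqref{eq-eq54}. This gives $E(\iota_A(a))=\iota_A(a)\bh_\TT(\I)=\iota_A(a)$ for all $a\in A$.

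For the second claim, note that $\widetilde{B}=A\boxtimes\C(\TT)$ is the closed linear span of the products $\iota_A(a)\iota_{\C(\TT)}(\bz^k)$ with $a\in A$ and $k\in\ZZ$. By multiplicativity of $\Delta_{\widetilde{B}}$ and of $\id\tens\pi$, and using \eqref{eq-eq54}, we get
\[
(\id\tens\pi)\Delta_{\widetilde{B}}\bigl(\iota_A(a)\iota_{\C(\TT)}(\bz^k)\bigr)=(\iota_A(a)\tens\I)\,(\id\tens\pi)\Delta_{\widetilde{B}}\bigl(\iota_{\C(\TT)}(\bz)\bigr)^k .
\]
It therefore suffices to compute $\Delta_{\widetilde{B}}(\iota_{\C(\TT)}(\bz))$. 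From the description of the bosonization in Section~\ref{sect-BosonBoson}, corresponding to $\Delta_B(z)=z\tens z$, this equals $\iota_{\C(\TT)}(\bz)\tens\iota_{\C(\TT)}(\bz)$. Since $\pi\comp\iota_{\C(\TT)}=\id$, we obtain
\[
(\id\tens\pi)\Delta_{\widetilde{B}}\bigl(\iota_A(a)\iota_{\C(\TT)}(\bz^k)\bigr)=\iota_A(a)\iota_{\C(\TT)}(\bz^k)\tens\bz^k .
\]
Applying $\bh_\TT$ in the second leg gives $\delta_{k,0}\,\iota_A(a)$. So $E$ kills the terms with $k\neq0$ and is the identity on $\iota_A(A)$. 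By linearity and continuity, $E(\widetilde{B})\subseteq\iota_A(A)$.

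The main obstacle is the bookkeeping in the last step. One has to pin down exactly the form of $\Delta_{\widetilde{B}}$ on $\iota_{\C(\TT)}(\bz)$ under the identification $\widetilde{B}\simeq B$ of Section~\ref{sect-BosonBoson}. One also has to check that the products $\iota_A(a)\iota_{\C(\TT)}(\bz^k)$ really do span a dense subspace, which holds by definition of the braided tensor product. An alternative route avoids the explicit form of $\Delta_{\widetilde{B}}(\iota_{\C(\TT)}(\bz))$. Using \eqref{eq-eq55}, one can identify $E$ with the averaging map $(\bh_\TT\tens\id)\comp\tilde\rho$ of a suitable $\TT$-action whose fixed points are $\iota_A(A)$. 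If the direct computation proves awkward, I would fall back on this version.
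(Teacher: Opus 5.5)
Your proof is correct and follows the paper's argument. The paper likewise writes $x$ as a limit of sums $\sum_k\iota_A(x_{n,k})\iota_{\C(\TT)}(\bz^k)$, and uses \eqref{eq-eq54} together with $\Delta_{\widetilde{B}}(\iota_{\C(\TT)}(\bz))=\iota_{\C(\TT)}(\bz)\tens\iota_{\C(\TT)}(\bz)$ (read off directly from $\Psi$ in Proposition~\ref{prop-prop2Cstar}) to see that the slice kills every $k\neq0$ term, so $x=\lim_n\iota_A(x_{n,0})$; your remark that $\iota_A$ has closed range makes explicit a step the paper leaves implicit.

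The only slip is in your fallback route: \eqref{eq-eq55} applies $\pi$ on the first leg and produces $\rho^{\widetilde{B}}$, whose fixed points are not $\iota_A(A)$ (it moves $\iota_A(\gamma)$), so the relevant averaging would be over $(\id\tens\pi)\comp\Delta_{\widetilde{B}}$ instead. Your main argument does not need this.
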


\begin{proof}
Take $x\in \widetilde{B}$ and assume that $(\id\tens(\bh_{\TT}\comp\pi))\Delta_{\widetilde{B}}(x)=x$. Approximate
\[
x=\lim_{n\to\infty}\sum_{k=-\infty}^{\infty}\iota_A(x_{n,k})\iota_{\C(\TT)}(\bz^k)
\]
for some $x_{n,k}\in\cA$ (finitely many non-zero for each $n$). We see from \eqref{eq-eq54} that
\[
(\id\tens\pi)\Delta_{\widetilde{B}}\iota_A(x_{n,k})\in\widetilde{B}\tens\I
\]
and hence
\begin{align*}
x&=\bigl(\id\tens(\bh_{\TT}\comp\pi)\bigr)\Delta_{\widetilde{B}}(x)\\
&=\lim_{n\to\infty}\sum_{k=-\infty}^{\infty}
(\id\tens\bh_{\TT})
\Bigl(
\bigl(
(\id\tens\pi)\Delta_{\widetilde{B}}(\iota_A(x_{n,k}))
\bigr)
\bigl(
(\id\tens\pi)\Delta_{\widetilde{B}}\bigl(\iota_{\C(\TT)}(\bz^k)\bigr)
\bigr)\Bigr)\\
&=\lim_{n\to\infty}\sum_{k=-\infty}^{\infty}
\Bigl(
\bigl(\id\tens(\bh_{\TT}\comp\pi)\bigr)
\Delta_{\widetilde{B}}\bigl(\iota_A(x_{n,k})\bigr)
\Bigr)
\Bigl(
\iota_{\C(\TT)}\bigl(\bz^k\bh_{\TT}(\bz^k)\bigr)
\Bigr)\\
&=\lim_{n\to\infty}
\bigl(\id\tens(\bh_{\TT}\comp\pi)\bigr)\Delta_{\widetilde{B}}\bigl(\iota_A(x_{n,0})\bigr)
=\lim_{n\to\infty}\iota_A(x_{n,0})
\end{align*}
using again \eqref{eq-eq54} in the last equality. The converse is clear by \eqref{eq-eq54}.
\end{proof}

\begin{theorem}[{\cite[Theorem 6.1]{BraidedSU2}}]\label{thm-thm1}
Let $\sH$ be a Hilbert space equipped with unitary representation $U\in\M(\C(\TT)\tens\cK(\sH))$. There is a bijection between unitary representations $u$ of $\SU_q(2)$ on $\sH$ and unitary representations $v$ of the compact quantum group described by $(\widetilde{B},\Delta_{\widetilde{B}})$ satisfying $(\pi\tens\id)v=U$. This bijection is given by
\begin{equation}\label{eq-eq49}
v=\bigl((\iota_A\tens\id)u\bigr)
\bigl((\iota_{\C(\TT)}\tens\id)U\bigr).
\end{equation}
\end{theorem}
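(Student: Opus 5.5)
The plan is to verify directly that the formula \eqref{eq-eq49} sends representations of $\SU_q(2)$ to representations of $(\widetilde{B},\Delta_{\widetilde{B}})$ with $(\pi\tens\id)v=U$, and then to construct an inverse using Lemma \ref{lem-lemma20}.

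\emph{Forward direction.} Given $u$, define $v$ by \eqref{eq-eq49}. It is unitary as a product of unitaries. Since $\pi\comp\iota_A=\eps(\cdot)\I$ and $\pi\comp\iota_{\C(\TT)}=\id$, Proposition \ref{prop-lemma19} gives
\[
(\pi\tens\id)v=\bigl(\I\tens(\eps\tens\id)u\bigr)U=U.
\]
For multiplicativity I would use the description of $\Delta_{\widetilde{B}}$ from Section \ref{sect-BosonBoson}. On $\iota_{\C(\TT)}$ it acts as $\Delta_{\TT}$. On $\iota_A$ it factors as $\Delta_{\widetilde{B}}\comp\iota_A=\psi'\comp\Delta_A$, where $\psi'$ sends $\jmath_1(a)\jmath_2(b)$ to $(\iota_A(a)\tens\I)\,(\iota_{\C(\TT)}\tens\iota_A)\rho^A(b)$, as in the proof of Proposition \ref{prop-hLeftInv}. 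Hence
\[
(\Delta_{\widetilde{B}}\tens\id)(\iota_A\tens\id)u=u_{1}\,\bigl((\iota_{\C(\TT)}\tens\iota_A\tens\id)(\rho^A\tens\id)u\bigr)_{23}.
\]
Here I write $u_1=(\iota_A\tens\id)u$ placed in leg~1 (with unit in leg~2), and the subscript $23$ denotes the operator placed in legs $2$ and $3$ of $\widetilde{B}\tens\widetilde{B}\tens\cK(\sH)$. The equivariance $(\rho^A_\lambda\tens\rho^{\cK(\sH)}_\lambda)u=u$ lets me rewrite $(\rho^A\tens\id)u$ as $U_{13}^*\,u_{23}\,U_{13}$ (up to the precise convention of the $\TT$-action). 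Then
\[
(\Delta_{\widetilde{B}}\tens\id)v=u_{1}\,U_{1}^*\,u_{2}\,U_{1}\,U_{1}U_{2},
\]
where $U_1,U_2$ are the images of $U$ under $\iota_{\C(\TT)}$ in legs $1,2$ and $u_2=(\iota_A\tens\id)u$ in leg~2. It remains to compare this with $v_{13}v_{23}=u_1U_1u_2U_2$. The relation I need is the commutation rule between $\iota_{\C(\TT)}(\bz)$ and $\iota_A(a)$ inside $\widetilde{B}$ together with the sign and convention choices: the required identity is that $U_1$ commutes past $u_2$ appropriately. I expect this bookkeeping of conventions (which leg $U$ multiplies from, $U$ versus $U^*$) to be the main obstacle; I would settle it by checking on the fundamental representation and on generators $\alpha,\gamma$, where it reduces to $\Delta_B$ in \eqref{eq-eq32}.

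\emph{Inverse direction.} Given $v$ with $(\pi\tens\id)v=U$, set $u'=v\,(\iota_{\C(\TT)}\tens\id)U^*$. Applying $\id\tens\pi\tens\id$ to $(\Delta_{\widetilde{B}}\tens\id)v=v_{13}v_{23}$ gives $v\,U_{2}$ in the relevant legs. Using \eqref{eq-eq55}, this shows that $(\id\tens\bh_{\TT}\comp\pi\tens\id)(\Delta_{\widetilde{B}}\tens\id)u'=u'$ after slicing. By Lemma \ref{lem-lemma20}, applied slice-wise with functionals on $\cK(\sH)$, $u'$ lies in $(\iota_A\tens\id)\M(A\tens\cK(\sH))$. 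Injectivity of $\iota_A$ then defines $u$. The representation property of $u$ and its $\TT$-equivariance follow by reversing the computation above; equivariance is obtained by applying $\pi\tens\id\tens\id$ and using \eqref{eq-eq55}. The two constructions are clearly mutually inverse, which gives the bijection.
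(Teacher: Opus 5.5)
\textbf{Gap in the forward multiplicativity step.} Your outline follows the paper's proof: the forward direction goes through $\Psi$, and the converse goes through $u'=v\bigl((\iota_{\C(\TT)}\tens\id)U\bigr)^*$, Lemma \ref{lem-lemma20}, injectivity of $\iota_A$, and \eqref{eq-eq55} for equivariance. However, the one computation that actually proves the forward direction is left open, and what you wrote for it is wrong.

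With the paper's conventions ($\rho_\lambda=(\ev_{\lambda^{-1}}\tens\id)\comp\rho$ and $\rho^{\cK(\sH)}(x)=U^*(\I\tens x)U$), the invariance condition of Definition \ref{def-def1} is equivalent to $U_{13}^*\bigl((\rho^A\tens\id)u\bigr)U_{13}=u_{23}$. That is, $(\rho^A\tens\id)u=U_{13}u_{23}U_{13}^*$, not $U_{13}^*u_{23}U_{13}$. In your notation, the resulting expression $u_1U_1^*u_2U_1U_1U_2$ is in general \emph{not} equal to $v_{13}v_{23}=u_1U_1u_2U_2$. Equality is equivalent to $u_2$ commuting with $U_1^2$, and this already fails for the fundamental representation. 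There one can take $U=\operatorname{diag}(1,\bz)$, and the entry $\gamma$ of $u$ gives $\iota_{\C(\TT)}(\bz^2)\tens\iota_A(\gamma)\neq\I\tens\iota_A(\gamma)$. So this is not cosmetic bookkeeping.

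\textbf{Your proposed remedy points the wrong way.} The $\widetilde{B}$-components of $U_1$ and $u_2$ sit in \emph{different} tensor factors of $\widetilde{B}\tens\widetilde{B}$. $\Psi$ sends the $\C(\TT)$-leg of $\rho^A$ into the first copy, so your subscript ``$23$'' on $(\iota_{\C(\TT)}\tens\iota_A\tens\id)(\rho^A\tens\id)u$ is misplaced; that element occupies all three legs. Consequently the commutation relation between $\iota_{\C(\TT)}(\bz)$ and $\iota_A(a)$ inside a single copy of $\widetilde{B}$ plays no role. The non-commutativity lives in the shared $\cK(\sH)$-leg, where $U$ and $u$ genuinely do not commute. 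Checking the fundamental representation on generators would not prove anything for general $u$ either.

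\textbf{The fix.} The correct mechanism is plain cancellation:
\[
(\Delta_{\widetilde{B}}\tens\id)v=u_1\bigl(U_1u_2U_1^*\bigr)U_1U_2=u_1U_1u_2U_2=v_{13}v_{23},
\]
which is exactly the paper's computation \eqref{eq-eq59}.

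The same correction is needed when you ``reverse the computation'' in the converse. There you should also state explicitly that you use injectivity of $\Psi$ and of $\iota_{12}$ to cancel them and obtain $(\Delta_A\tens\id)u=\bigl((\jmath_1\tens\id)u\bigr)\bigl((\jmath_2\tens\id)u\bigr)$. The equivariance you derive from \eqref{eq-eq55} comes out in exactly the form $U_{13}^*\bigl((\rho^A\tens\id)u\bigr)U_{13}=u_{23}$ that this step needs.

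A minor point: the slicing step giving $(\id\tens\pi\tens\id)(\Delta_{\widetilde{B}}\tens\id)u'=u'_{13}$ does not use \eqref{eq-eq55}. It uses $\Delta_{\widetilde{B}}\comp\iota_{\C(\TT)}=(\iota_{\C(\TT)}\tens\iota_{\C(\TT)})\comp\Delta_\TT$, $\pi\comp\iota_{\C(\TT)}=\id$ and $(\pi\tens\id)v=U$.
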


\begin{proof}
Let $u$ be a unitary representation of $\SU_q(2)$ on $\sH$ and define $v$ via \eqref{eq-eq49}. It is a unitary element of $\M(\widetilde{B}\tens\cK(\sH))$. We have
\[
(\pi\tens\id)v
=\bigl((\pi\iota_A\tens\id)u\bigr)
\bigl((\pi\iota_{\C(\TT)}\tens\id)U\bigr)
=\bigl(\I\tens(\eps\tens\id)u\bigr)U=U
\]
as claimed. Next we check that $v$ is a representation. The invariance condition on $u$ (Definition \ref{def-def1}) reads $U^*_{13}((\rho^A\tens\id)u)U_{13}=u_{23}$, so
\[
 (\rho^A\tens\id)u =U_{13}u_{23}U_{13}^*.
\]
Using this, the expression $\Delta_{\widetilde{B}}=\Psi\comp(\Delta_A\boxtimes\id)$ (see Section \ref{sect-BosonBoson}) and Proposition \ref{prop-prop2Cstar} we have
\begin{equation}\label{eq-eq59}
\resizebox{0.9\textwidth}{!}{\ensuremath{\displaystyle
\begin{aligned}
(\Delta_{\widetilde{B}}&\tens\id)v\\
&=
(\Delta_{\widetilde{B}}\tens\id)\Bigl(
\bigl((\iota_A\tens\id)u\bigr)
\bigl((\iota_{\C(\TT)}\tens\id)U\bigr)
\Bigr)\\
&=\Bigl((\Psi\tens\id)\bigl(
((\iota_{12}\comp\Delta_A)\tens\id)u
\bigr)\Bigr)
\Bigl((\Psi\tens\id)\bigl((\iota_3\tens\id)U\bigr)\Bigr)\\
&=\Bigl(
(\Psi\tens\id)
(\iota_{12}\tens\id)\bigl(((\jmath_1\tens\id)u)((\jmath_2\tens\id)u)\bigr)
\Bigr)
\Bigl((\Psi\tens\id)
\bigl((\iota_3\tens\id)U\bigr)
\Bigr)\\
&=\Bigl(
(\Psi\tens\id)
\bigl((\iota_1\tens\id)u\bigr)
\Bigr)
\Bigl(
(\Psi\tens\id)
\bigl((\iota_2\tens\id)u\bigr)
\Bigr)
\Bigl(
(\Psi\tens\id)
\bigl(\iota_3\tens\id)U\bigr)
\Bigr)\\
&=\bigl((\iota_A\tens\id)u\bigr)_{13}
\Bigl(
\bigl(((\iota_{\C(\TT)}\tens\iota_A)\comp\rho^A)\tens\id\bigr)u
\Bigr)
\Bigl(
\bigl(((\iota_{\C(\TT)}\tens\iota_{\C(\TT)})\comp\Delta_\TT)\tens\id\bigr)U
\Bigr)\\
&=\bigr((\iota_A\tens\id)u\bigr)_{13}
\bigl((\iota_{\C(\TT)}\tens\iota_A\tens\id)(U_{13}u_{23}U_{13}^*)\bigr)
\bigl((\iota_{\C(\TT)}\tens\iota_{\C(\TT)}\tens\id)(U_{13}U_{23})\bigr)\\
&=\bigl((\iota_A\tens\id)u\bigr)_{13}
\bigl((\iota_{\C(\TT)}\tens\id)U\bigr)_{13}
\bigl((\iota_A\tens\id)u\bigr)_{23}
\bigl(\iota_{\C(\TT)}\tens\id)U\bigr)_{23}
=v_{13}v_{23}
\end{aligned}
}}
\end{equation}
which proves that $v$ is a representation of the compact quantum group described by $(\widetilde{B},\Delta_{\widetilde{B}})$.

Conversely, assume that $v$ is a unitary representation of $(\widetilde{B},\Delta_{\widetilde{B}})$ on $\sH$ satisfying $(\pi\tens\id)v=U$ and define the unitary
\[
\widetilde{u}=v\bigl((\iota_{\C(\TT)}\tens\id)U\bigr)^*\in
\M\bigl(\widetilde{B}\tens\cK(\sH)\bigr).
\]
Next observe that for $\xi,\eta\in\sH$
\begin{align*}
\bigl(\id\tens(\bh_\TT\comp\pi)\bigr)
&\Delta_{\widetilde{B}}\bigl((\id\tens\omega_{\xi,\eta})\widetilde{u}\bigr)\\
&=\bigl(\id\tens(\bh_{\TT}\comp\pi)\tens\omega_{\xi,\eta}\bigr)
\Bigl(v_{13}v_{23}
\bigl((\iota_{\C(\TT)}\tens\id)U\bigr)^*_{23}
\bigl((\iota_{\C(\TT)}\tens\id)U\bigr)^*_{13}\Bigr)\\
&=(\id\tens\bh_\TT\tens\omega_{\xi,\eta})
\Bigl(v_{13}U_{23}U^*_{23}
\bigl((\iota_{\C(\TT)}\tens\id)U\bigr)^*_{13}
\Bigr)
=(\id\tens\omega_{\xi,\eta})\widetilde{u}
\end{align*}
and hence Lemma \ref{lem-lemma20} gives $(\id\tens\omega_{\xi,\eta})\widetilde{u}\in \iota_A(A)$. It easily follows that
\[
\widetilde{u}\in\M\bigl(\iota_A(A)\tens\cK(\sH)\bigr)
\]
and we can define $u\in\M(A\tens\cK(\sH))$ such that
\[
(\iota_A\tens\id)u=\widetilde{u}
\]
(recall that the minimal tensor product of injective maps is injective, and strict extension of injective $*$-homomorphism is injective, see also \cite[Proposition 2.3]{Lance}).

Thus we have $v=((\iota_A\tens\id)u)((\iota_{\C(\TT)}\tens\id)U)$. Clearly $u$ is unitary, we need to check that it is a representation. First let us check that it is $\TT$-invariant, or equivalently that
\begin{equation}\label{eq-eq57}
U_{13}^*\bigl((\rho^A\tens\id)u\bigr)U_{13}=u_{23}.
\end{equation}
Using \eqref{eq-eq55} we compute
\begin{align*}
(\id\tens\iota_A\tens\id)
&
\Bigl(U_{13}^*\bigl((\rho^A\tens\id)u\bigr)U_{13}\Bigr)
\\
&
=U_{13}^*
\Bigl(\bigl((\id\tens\iota_A)\comp\rho^A\tens\id\bigr)u\Bigr)U_{13}
=U_{13}^*\Bigl(\bigl((\rho^{\widetilde{B}}\comp\iota_A)\tens\id\bigr)u\Bigr)U_{13}
\\
&
=U_{13}^*\Bigl(
\bigl(((\pi\tens\id)\comp\Delta_{\widetilde{B}})\tens\id\bigr)\widetilde{u}\Bigr)U_{13}
\\
&
=U_{13}^*\Bigl(\bigl((\pi\tens\id)\comp\Delta_{\widetilde{B}}\tens\id\bigr)
\bigl(v((\iota_{\C(\TT)}\tens\id)U^*)\bigr)\Bigr)U_{13}
\\
&
=U_{13}^*
\bigl((\pi\tens\id\tens\id)(v_{13}v_{23})\bigr)
\bigl((\pi\tens\id\tens\id)(\iota_{\C(\TT)}\tens\iota_{\C(\TT)}\tens\id)(U^*_{23}U^*_{13})\bigr)U_{13}
\\
&
=U_{13}^*\bigl((\pi\tens\id)v\bigr)_{13}v_{23}
\bigl((\iota_{\C(\TT)}\tens\id)U^*\bigr)_{23}\\
&=v_{23}
\bigl((\iota_{\C(\TT)}\tens\id)U^*\bigr)_{23}=\widetilde{u}_{23}
=\bigl((\iota_A\tens\id)u\bigr)_{23}
\end{align*}
(here we count $\widetilde{B}$ as having one leg). This proves \eqref{eq-eq57}, since the map $\id\tens\iota_A\tens\id$ is injective. To prove the remaining equation
\begin{equation}\label{eq-eq58}
(\Delta_A\tens\id)u
=\bigl((\jmath_1\tens\id)u\bigr)\bigl((\jmath_2\tens\id)u\bigr)
\end{equation}
we reverse calculation \eqref{eq-eq59}: we have
\begin{align*}
(\Delta_{\widetilde{B}}\tens\id)v
&=(\Delta_{\widetilde{B}}\tens\id)
\Bigl(
\bigl((\iota_A\tens\id)u\bigr)
\bigl((\iota_{\C(\TT)}\tens\id)U\bigr)
\Bigr)\\
&=(\Psi\tens\id)\Bigl(
\bigl((\iota_{12}\comp\Delta_A)\tens\id\bigr)u
\Bigr)
(\Psi\tens\id)
\bigl((\iota_3\tens\id)U\bigr)
\\
&=(\Psi\tens\id)
\Bigl(
\bigl((\iota_{12}\comp\Delta_A)\tens\id\bigr)u
\Bigr)
\bigl((\iota_{\C(\TT)}\tens\iota_{\C(\TT)}\tens\id)(U_{13}U_{23})\bigr)
\end{align*}
and this is equal to (using \eqref{eq-eq57})
\begin{align*}
v_{13}v_{23}
&=\bigl((\iota_A\tens\id)u\bigr)_{13}
\bigl((\iota_{\C(\TT)}\tens\id)U\bigr)_{13}
\bigl((\iota_A\tens\id)u\bigr)_{23}
\bigl((\iota_{\C(\TT)}\tens\id)U\bigr)_{23}\\
&=
\bigl((\iota_A\tens\id)u\bigr)_{13}
\bigl((\iota_{\C(\TT)}\tens\iota_A\tens\id)(U_{13}u_{23}U_{13}^*)\bigr)
\bigl((\iota_{\C(\TT)}\tens\iota_{\C(\TT)}\tens\id)(U_{13}U_{23})\bigr)\\
&=\bigl((\iota_A\tens\id)u\bigr)_{13}
\Bigl(\bigl(((\iota_{\C(\TT)}\tens\iota_A)\comp\rho^A)\tens\id\bigr)u\Bigr)
\bigl((\iota_{\C(\TT)}\tens\iota_{\C(\TT)}\tens\id)(U_{13}U_{23})\bigr)\\
&=(\Psi\tens\id)
\bigl((\iota_1\tens\id)u\bigr)
(\Psi\tens\id)
\bigl((\iota_2\tens\id)u\bigr)
\bigl((\iota_{\C(\TT)}\tens\iota_{\C(\TT)}\tens\id)(U_{13}U_{23})\bigr)\\
&=(\Psi\tens\id)\Bigl(
(\iota_{12}\tens\id)\bigl(((\jmath_1\tens\id)u)((\jmath_2\tens\id)u)\bigr)
\Bigr)
\bigl((\iota_{\C(\TT)}\tens\iota_{\C(\TT)}\tens\id)(U_{13}U_{23})\bigr).
\end{align*}
This implies
\[
(\Psi\tens\id)\Bigl(\bigl((\iota_{12}\comp\Delta_A)\tens\id\bigr)u\Bigr)=
(\Psi\tens\id)
\Bigl(
(\iota_{12}\tens\id)\bigl(((\jmath_1\tens\id)u)((\jmath_2\tens\id)u)\bigr)
\Bigr).
\]
Since $\Psi$ and $\iota_{12}$ are injective, this proves \eqref{eq-eq58}. Clearly both passages from $u$ to $v$ and from $v$ to $u$ are one another's inverses.
\end{proof}

As a corollary we get the following nice result.

\begin{proposition}\label{prop-prop7}
The $*$-subalgebra $\cA\subseteq A$ coincides with the set of matrix elements of all finite-dimensional unitary representations of $\SU_q(2)$.
\end{proposition}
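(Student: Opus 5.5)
The strategy is to prove both inclusions, using Theorem \ref{thm-thm1} to reduce everything to the bosonization $\widetilde{B}=A\boxtimes\C(\TT)$, which is an honest compact quantum group.

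\emph{$\cA$ is contained in the span of matrix elements.} The fundamental matrix $u=\left[\begin{smallmatrix}\alpha&-q\gamma^*\\\gamma&\alpha^*\end{smallmatrix}\right]$ is a representation on $\CC^2$, with $\TT$ acting by $\mathrm{diag}(\bz^0,\bz^1)$ or by a suitable grading making $u$ invariant. Its entries are $\alpha,\alpha^*,\gamma,\gamma^*$, up to scalars. Matrix elements of $u\otop v$ are exactly the products of matrix elements of $u$ and $v$: under $\cK(\sH)\boxtimes\cK(\sK)=\cK(\sH\tens\sK)$, a slice by $\omega_1\tens\omega_2$ of $u\otop v$ gives $((\id\tens\omega_1)u)((\id\tens\omega_2)v)$, possibly up to a phase depending on degrees. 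Here I must check carefully how $\iota_1,\iota_2$ sit in $\cK(\sH\tens\sK)$. Since the trivial representation gives $\I$, iterated tensor powers of $u$ produce all monomials in the generators. The span of matrix elements of finite-dimensional representations is therefore an algebra containing $\cA$. It is closed under sums, via direct sums, and under products, via tensor products.

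\emph{Matrix elements lie in $\cA$.} Let $u$ be a finite-dimensional unitary representation of $\SU_q(2)$ on $\sH$, with $\TT$-representation $U$. By Theorem \ref{thm-thm1}, $v=((\iota_A\tens\id)u)((\iota_{\C(\TT)}\tens\id)U)$ is a finite-dimensional representation of $(\widetilde{B},\Delta_{\widetilde{B}})\cong(B,\Delta_B)$, which is $\mathrm{U}_q(2)^{\mathrm{cop}}$. By the Peter--Weyl theory for compact quantum groups, matrix elements of $v$ lie in the Hopf $*$-algebra $\Pol(\widetilde{B})$. I will take this as known from \cite{ZhangZhao}, or argue it directly as follows. $\Pol(\widetilde{B})$ is generated by the matrix coefficients of the fundamental representation and of $z$, which are $\iota_A(\alpha),\iota_A(\gamma),\iota_{\C(\TT)}(\bz)$ and their adjoints. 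This is because these representations generate the representation category, as the algebra they generate is dense.

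Hence $(\id\tens\omega)v\in\operatorname{span}\{\iota_A(a)\iota_{\C(\TT)}(\bz^k)\,|\,a\in\cA,k\in\ZZ\}$. Decompose $\sH=\bigoplus_k\sH_k$ into spectral subspaces for $U$. Then $(\iota_{\C(\TT)}\tens\id)U=\sum_k\iota_{\C(\TT)}(\bz^k)\tens P_k$, and $(\id\tens\omega)((\iota_A\tens\id)u)=\sum_k(\id\tens\omega(\,\cdot\,P_k))v\,\iota_{\C(\TT)}(\bz^{-k})$. Every $\omega(\,\cdot\,P_k)$ is again a functional, so $(\iota_A\tens\omega)u$ lies in $\operatorname{span}\iota_A(\cA)\iota_{\C(\TT)}(\bz^j)$. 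It also lies in $\iota_A(A)$.

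The last step uses the projection $\id\tens(\bh_\TT\comp\pi)$ from Lemma \ref{lem-lemma20}. By \eqref{eq-eq54} and the computation in that proof, it maps $\iota_A(a)\iota_{\C(\TT)}(\bz^j)$ to $\delta_{j,0}\iota_A(a)$ and fixes $\iota_A(A)$. It follows that $\iota_A((\id\tens\omega)u)\in\iota_A(\cA)$, and injectivity of $\iota_A$ gives the claim.

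The main obstacle I expect is justifying that matrix coefficients of finite-dimensional representations of $\widetilde{B}$ lie in the $*$-algebra generated by the generators. This is standard Woronowicz theory, but it requires knowing that the fundamental representation together with $z$ generate all irreducibles. I would invoke \cite[Theorem 2.2]{ZhangZhao} and the density of that algebra, using that the Hopf algebra of matrix coefficients equals the span of coefficients of subrepresentations of tensor powers, which is the algebra generated.
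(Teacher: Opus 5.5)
Your proposal is correct and follows the paper's proof essentially step by step. The easy inclusion comes from direct sums and tensor powers of the fundamental representation, and the converse from Theorem \ref{thm-thm1}, the description $\Pol(\widetilde{B})=\iota_A(\cA)\iota_{\C(\TT)}(\Pol(\TT))$, and a projection that kills the nonzero powers of $\iota_{\C(\TT)}(\bz)$. The differences are only in justification: the paper obtains $\Pol(\widetilde{B})$ from the classification in \cite[Theorem 4.17]{GuinSaurabh}, while you use the standard generating-representation argument (density plus Schur orthogonality), and your map $(\id\tens(\bh_\TT\comp\pi))\comp\Delta_{\widetilde{B}}$ is exactly the paper's conditional expectation for the $\TT$-action that rescales $z$ and fixes $\alpha,\gamma$.
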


\begin{proof}
Write $\mathcal{P}$ for the space of matrix elements of all finite-dimensional unitary representations of $\SU_q(2)$. The possibility of taking direct sums and tensor products implies that $\mathcal{P}$ is an algebra
and since $\cA$ is generated by $\{\I,\alpha,\alpha^*,\gamma,\gamma^*\}$ and these are matrix elements of the fundamental representation, we easily get $\cA\subseteq\mathcal{P}$ (note that we do not need to involve the notion of the contragredient representation here).

Conversely, let $u$ be a representation of $\SU_q(2)$ on a finite-dimensional Hilbert space $\sH$ with a representation $U$ of $\TT$ and take $(\id\tens\omega)u\in\mathcal{P}$ for some $\omega\in\B(\sH)^*$. Then by Theorem \ref{thm-thm1}
\[
v=\bigl((\iota_A\tens\id)u\bigr)
\bigl((\iota_{\C(\TT)}\tens\id)U\bigr)
\]
is a finite-dimensional unitary representation of the compact quantum group described by $(\widetilde{B},\Delta_{\widetilde{B}})$. Since $(\widetilde{B},\Delta_{\widetilde{B}})$ is isomorphic to $\mathrm{U}_q(2)^{\text{op}}$ (in a concrete way, see Section \ref{sect-boson}), we can use results about that quantum group. In particular we see from the classification result \cite[Theorem 4.17]{GuinSaurabh} that $\Pol(\widetilde{B},\Delta_{\widetilde{B}})=\iota_A(\cA)\iota_{\C(\TT)}(\Pol(\TT))$. Consequently, after decomposing $v$ into a direct sum of irreps, we conclude that matrix elements of $v$ belong to $\iota_A(\cA)\iota_{\C(\TT)}(\Pol(\TT))$. It follows that
\[
\iota_A\bigl((\id\tens\omega)u\bigr)
=(\id\tens\omega)\Bigl(
v\bigl((\iota_{\C(\TT)}\tens\id)U\bigr)^*\Bigr)
\in\iota_A(\cA)\iota_{\C(\TT)}\bigl(\Pol(\TT)\bigr)=\Pol(\widetilde{B},\Delta_{\widetilde{B}}),
\]
and $\iota_A((\id\tens\omega)u)$ is a polynomial in $\alpha,\alpha^*,\gamma,\gamma^*,z$ and $z^*$ (here $\alpha,\gamma$ and $z$ are generators of $\widetilde{B}\simeq B$ as in Section \ref{sect-boson}, cf.~also Section \ref{sect-BosonBoson}). Consider the action of $\TT$ on $\widetilde{B}\simeq B$ given by $z\mapsto \bz\tens z$ and $\alpha\mapsto\I\tens\alpha$, $\gamma\mapsto\I\tens\gamma$. In particular the action on $\iota_A((\id\tens\omega)u)\in \iota_A(A)$ is trivial. The conditional expectation onto the fixed point subalgebra for this action allows us to deduce that $(\id\tens\omega)u\in\cA$.
\end{proof}

To proceed further, let us introduce some more terminology and braided analogs of known results. From now on we will consider only finite-dimensional representations.

\begin{definition}
Let $u$ and $v$ be finite-dimensional, unitary representations of $\SU_q(2)$ on Hilbert spaces $\sH$ and $\sK$ equipped with representations for $U$ and $V$ of $\TT$. We say that $T\in\B(\sH,\sK)$ \emph{intertwines} $u$ and $v$ if
\begin{enumerate}
\item\label{def-reps1} $T\in\Mor_\TT(U,V)$, i.e.~$T$ is equivariant for the $\TT$-representations $U$ and $V$,
\item\label{def-reps2} we have $(\I\tens T)u=v(\I\tens T)$.
\end{enumerate}
The set of all intertwiners for $u$ and $v$ will be denoted by $\Mor(u,v)$ and by $\End(u)$ in case $v=u$. We say that $u$ is \emph{irreducible} if $\End(u)=\CC\I$.
\end{definition}

\begin{remarks}\hspace*{\fill}
\begin{enumerate}
\item The equivariance condition on intertwiners seems to be important -- without it e.g.~Proposition \ref{prop-lemma24} below fails.
\item It is easy to construct examples where \eqref{def-reps2} holds, but \eqref{def-reps1} fails (e.g. $u=v$ the trivial representation on $\CC^2$ with non-equivalent $U$ and $V$).
\item $T$ intertwines $u$ and $v$ iff it is a morphism of associated representations of $(\widetilde{B},\Delta_{\widetilde{B}})$. In particular, $u$ is irreducible iff the associated representation of $(\widetilde{B},\Delta_{\widetilde{B}})$ is irreducible.
\end{enumerate}
\end{remarks}


\begin{lemma}
Let $u$, $v$ and $w$ be finite-dimensional unitary representations of $\SU_q(2)$ on $\TT$-Hilbert spaces $\sH_u$, $\sH_v$ and $\sH_w$. Then
\begin{enumerate}
\item $\Mor(u,v)$ is a closed subspace of $\B(\sH_u,\sH_v)$ and hence it is a Banach space. If $T\in\Mor(u,v)$ then $T^*\in\Mor(v,u)$. If $S\in\Mor(v,w)$ then $ST\in \Mor(u,w)$. In particular, $\End(u)$ is a \cst-algebra.
\item (Schur's lemma) Assume that $u,v$ are irreducible. Then either $u$ and $v$ are unitarily equivalent and $\Mor(u,v)$ is one-dimensional or $\Mor(u,v)=\{0\}$.
\end{enumerate}
\end{lemma}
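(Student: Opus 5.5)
The plan for part (1) is to check the three claims directly from the definition of intertwiners, and to deduce Schur's lemma in part (2) from the third remark above together with the classical Schur lemma for the compact quantum group $(\widetilde{B},\Delta_{\widetilde{B}})$. In finite dimensions every linear subspace of $\B(\sH_u,\sH_v)$ is closed, but I would argue closedness directly anyway: both conditions in the definition of $\Mor(u,v)$ are linear in $T$ and norm continuous. Indeed, $\Mor_\TT(U,V)$ is the set of $T$ with $(\I\tens T)U=V(\I\tens T)$, and the second condition $(\I\tens T)u=v(\I\tens T)$ is of the same form.

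Composition is immediate. If $T\in\Mor(u,v)$ and $S\in\Mor(v,w)$, then
\[
(\I\tens ST)u=(\I\tens S)v(\I\tens T)=w(\I\tens ST),
\]
and the same computation works for $U,V,W$. For adjoints, $T^*$ is again $\TT$-equivariant because $U,V$ are unitary. Starting from $(\I\tens T)u=v(\I\tens T)$, I would multiply by $v^*$ on the left and by $u^*$ on the right to get $v^*(\I\tens T)=(\I\tens T)u^*$. Taking adjoints then gives $(\I\tens T^*)v=u(\I\tens T^*)$. This uses only unitarity of $u$ and $v$ in $A\tens\B(\sH)$. It follows that $\End(u)$ is a closed unital $*$-subalgebra of $\B(\sH_u)$, hence a \cst-algebra.

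For Schur's lemma I would use the bijection of Theorem \ref{thm-thm1}, $u\mapsto\hat u=((\iota_A\tens\id)u)((\iota_{\C(\TT)}\tens\id)U)$. The key step is to verify carefully that $T\in\Mor(u,v)$ if and only if $T$ intertwines $\hat u$ and $\hat v$ as representations of $(\widetilde{B},\Delta_{\widetilde{B}})$.

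\begin{itemize}
\item For ($\Rightarrow$), combine both defining conditions on $T$, applying $\iota_A\tens\id$ and $\iota_{\C(\TT)}\tens\id$ to them.
\item For ($\Leftarrow$), apply the $*$-homomorphism $\pi\tens\id$. Since $(\pi\tens\id)\hat u=U$, this recovers $\TT$-equivariance of $T$. Then cancel the factor $(\iota_{\C(\TT)}\tens\id)U$ and use injectivity of $\iota_A\tens\id$ to get condition \eqref{def-reps2}.
\end{itemize}

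This is the step needing most care, although it is short. With it, irreducibility of $u$ matches irreducibility of $\hat u$, and the classical Schur lemma for compact quantum groups applies. It says that either $\Mor(\hat u,\hat v)=\{0\}$, or there is a unitary intertwiner and the space is one-dimensional.

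To avoid relying on the classical result, I could also argue directly. For nonzero $T\in\Mor(u,v)$, part (1) gives $T^*T\in\End(u)=\CC\I$ and $TT^*\in\End(v)=\CC\I$. Hence a suitable rescaling of $T$ is unitary, and $u,v$ are unitarily equivalent via an equivariant unitary. If $T'$ is another element of $\Mor(u,v)$, then $T^*T'\in\End(u)=\CC\I$, so $T'\in\CC T$. This shows one-dimensionality.
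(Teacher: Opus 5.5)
Your proof is correct. Part (1) is the routine verification the paper calls trivial. Your closing ``direct'' argument for Schur's lemma is precisely the non-braided proof that the paper says carries over unchanged: $T^*T\in\End(u)=\CC\I$ and $TT^*\in\End(v)=\CC\I$ make $T$ a multiple of an equivariant unitary, and $T^*T'\in\CC\I$ forces $T'\in\CC T$.

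Your primary route for (2) is genuinely different. You pass through the bijection $u\mapsto\hat u$ of Theorem~\ref{thm-thm1} and prove $\Mor(u,v)=\Mor(\hat u,\hat v)$. This goes in three steps: apply $\pi\tens\id$ to recover $\TT$-equivariance; move $\I\tens T$ past $(\iota_{\C(\TT)}\tens\id)V$ and cancel the unitary $(\iota_{\C(\TT)}\tens\id)U$; then use injectivity of $\iota_A\tens\id$. After that you invoke Schur's lemma for the compact quantum group $(\widetilde{B},\Delta_{\widetilde{B}})$.

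Both routes are valid. The direct argument is self-contained. It shows that the only braided input needed is part (1), namely that the $\TT$-equivariance built into the definition of intertwiners survives adjoints and composition.

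The bosonization route uses heavier machinery for what is a two-line fact. In exchange, it actually proves the identification of intertwiner spaces that the paper only asserts, without proof, in its remarks after the definition of intertwiners. That identification is what makes irreducibility (and decomposition) of $u$ equivalent to that of $\hat u$.
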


\begin{proof}
The first point is trivial. The second can be proven exactly as its non-braided version (see e.g.~\cite[Proposition 1.3.4]{NeshveyevTuset}).
\end{proof}

\begin{proposition}\label{prop-lemma24}
Let $u$ be a finite-dimensional unitary representation of $\SU_q(2)$. Then $u$ is equivalent to a direct sum of irreducible representations.
\end{proposition}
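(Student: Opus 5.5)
The plan is to reduce to the finite-dimensional \cst-algebra $\End(u)$, exactly as in the non-braided case. By the Lemma above, $\End(u)$ is a finite-dimensional \cst-algebra, so it is spanned by its projections. Moreover $\I$ decomposes as a sum of minimal projections $p_1,\dots,p_k$ in $\End(u)$ (pairwise orthogonal, $\sum_i p_i=\I$). We then set $\sH_i=p_i\sH$ and $u_i=(\I\tens p_i)u(\I\tens p_i)$, viewed as an element of $A\tens\B(\sH_i)$.

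The first key step is to check that each $(\sH_i,u_i)$ is a legitimate representation of $\SU_q(2)$ in the sense of Definition \ref{def-def1}. Here the equivariance requirement built into the definition of intertwiners is essential. Since $p_i\in\Mor_\TT(U,U)$, the subspace $\sH_i$ is $U$-invariant and carries the restricted $\TT$-representation $U_i$. Consequently $\rho^{\cK(\sH)}$ restricts to $\B(\sH_i)$, and the $\TT$-invariance $(\rho^A_\lambda\tens\rho^{\B(\sH_i)}_\lambda)u_i=u_i$ follows by compressing the invariance of $u$ with $p_i$. Because $p_i$ commutes with $u$, the element $u_i$ is unitary on $\sH_i$ and $u=\sum_i u_i$.

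The multiplicativity $(\Delta_A\tens\id)u_i=((\iota_1\tens\id)u_i)((\iota_2\tens\id)u_i)$ is obtained by multiplying the identity for $u$ by $\I\tens p_i$ on both sides. One uses that $\I\tens p_i$ commutes with $(\iota_2\tens\id)u$, since $p_i$ commutes with $u$. The compressed identity then reads as the identity for $u_i$, after noting that $(\iota_j\tens\id)$ is compatible with cutting down by $p_i$ (which lives in the second leg, untouched by $\iota_j$).

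Next I would check irreducibility. Any $T\in\End(u_i)$, extended by $0$ on $\sH_i^\perp$, is $\TT$-equivariant and commutes with $u$, so it lies in $p_i\End(u)p_i=\CC p_i$ by minimality. Hence $\End(u_i)=\CC\I$, and $u\cong\bigoplus_i u_i$ via the obvious unitary $\bigoplus_i\sH_i\to\sH$, which is itself a $\TT$-equivariant intertwiner.

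The main obstacle is making sure the subrepresentations remain within the braided framework. Their $\TT$-structure and the braided multiplicativity must be inherited, and this hinges on $p_i$ being $\TT$-equivariant. As an alternative check, one could use the third Remark: transfer $u$ via Theorem \ref{thm-thm1} to $v$ on $\widetilde{B}$ and decompose $v$ by classical compact quantum group theory. The projections onto the irreducible summands lie in $\End(v)=\End(u)$, and restricting $(\pi\tens\id)v=U$ shows that each summand comes from a representation of $\SU_q(2)$.
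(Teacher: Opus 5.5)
Your proposal is correct and follows the paper's proof. The paper also decomposes $\I$ in the finite-dimensional \cst-algebra $\End(u)$ into minimal projections (the diagonal matrix units $e^a_{i,i}$), and it stresses, as you do, that their $\TT$-equivariance is what lets $(\I\tens e^a_{i,i})u$ be a representation of $\SU_q(2)$ with the restricted $\TT$-representation; you spell out the checks of invariance, multiplicativity and irreducibility via minimality, which the paper leaves to the standard non-braided argument.
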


\begin{proof}
This decomposition is performed exactly as in the non-braided case (e.g.~\cite[Theorem 1.3.7]{NeshveyevTuset}). Let us remark however, that when we write $\End(u)=\bigoplus_{a=1}^{A}\operatorname{Mat}_{n_a}$ with basis of matrix units $\{e^a_{i,j}\}$ then the minimal projections $e^a_{i,i}$ are equivariant for the representation of $\TT$ (by assumption), hence we can indeed define smaller representations $(\I\tens e^a_{i,i})u$ of $\SU_q(2)$ on $e^a_{i,i}\sH$ (where $\sH$ is the carrier Hilbert space of $u$) with representations of $\TT$ given by $(\I\tens e^a_{i,i})U$.
\end{proof}

\begin{lemma}\label{lem-lemma14}
\hspace*{\fill}
\begin{enumerate}
\item\label{lem-lemma14-1} Let $u$ be the fundamental two-dimensional representation of $\SU_q(2)$. Then for any $\omega\in\B(\CC^2)^*$ we have $S((\id\tens\omega)u)=(\id\tens\omega)(u^*)$.
\item Let $u$ and $v$ be finite-dimensional unitary representations of $\SU_q(2)$ on $\sH$ and $\sK$ such that $S((\id\tens\omega)u)=(\id\tens\omega)(u^*)$ and $S((\id\tens\omega')v)=(\id\tens\omega')(v^*)$ for all $\omega\in\B(\sH)^*$, $\omega'\in\B(\sK)^*$. Then $S((\id\tens\Omega)(u\otop v))=(\id\tens\Omega)((u\otop v)^*)$ for any $\Omega\in\B(\sH\tens\sK)^*$.
\end{enumerate}
\end{lemma}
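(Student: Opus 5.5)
The plan is to prove both parts by direct computation on matrix coefficients, using only linearity of $S$, the formulas \eqref{eq-eq41}, and braided anti-multiplicativity (Theorem \ref{thm-prop6}). The one nontrivial point is tracking the $\zeta$-phases in part (2).

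For part (1), write $u=\sum_{i,j}u_{ij}\tens e_{ij}$ with $u_{11}=\alpha$, $u_{12}=-q\gamma^*$, $u_{21}=\gamma$, $u_{22}=\alpha^*$. Then $u^*=\sum_{i,j}u_{ji}^*\tens e_{ij}$. Since every $\omega\in\B(\CC^2)^*$ is a linear combination of the functionals $e_{ij}\mapsto\delta$, linearity of $S$ reduces the claim to $S(u_{ij})=u_{ji}^*$ for all $i,j$. This follows at once from \eqref{eq-eq41}:
\begin{itemize}
\item $S(\alpha)=\alpha^*$ and $S(\alpha^*)=\alpha$;
\item $S(-q\gamma^*)=-q(-q^{-1}\gamma^*)=\gamma^*=u_{21}^*$;
\item $S(\gamma)=-\overline{q}\gamma=(-q\gamma^*)^*=u_{12}^*$.
\end{itemize}

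For part (2), fix orthonormal bases $\{e_i\}$ of $\sH$ and $\{f_k\}$ of $\sK$ consisting of $\TT$-eigenvectors, with degrees $d_i$ and $c_k$. Write $u=\sum u_{ij}\tens e_{ij}$ and $v=\sum v_{kl}\tens f_{kl}$.

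First, I will use the invariance condition $(\rho^A_\lambda\tens\rho^{\cK(\sH)}_\lambda)u=u$ from Definition \ref{def-def1}. It forces each $u_{ij}$ to be homogeneous of degree $\pm(d_i-d_j)$, with the sign fixed by the convention $\rho^{\cK(\sH)}(x)={U^\sH}^*(\I\tens x)U^\sH$. The same holds for $v_{kl}$ with the $c$'s.

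Next, I will make the identification $\cK(\sH)\boxtimes\cK(\sK)=\cK(\sH\tens\sK)$ explicit on matrix units. Following the argument behind the proposition on inner actions, $\iota_1(e_{ij})$ and $\iota_2(f_{kl})$ should correspond to $e_{ij}\tens f_{kl}$ twisted by a diagonal unitary phase. My guess is something like $\iota_1(e_{ij})=e_{ij}\tens\zeta^{\pm(d_i-d_j)c}$, where $\zeta^{c}$ denotes the diagonal operator acting by $\zeta^{c_k}$ on $f_k$, and $\iota_2(f_{kl})=\I\tens f_{kl}$. With this in hand, multiplying out $u\otop v=((\id\tens\iota_1)u)((\id\tens\iota_2)v)$ gives
\[
(u\otop v)_{(i,k),(j,l)}=\zeta^{\epsilon(i,j,k,l)}\,u_{ij}v_{kl}
\]
for an explicit integer exponent $\epsilon(i,j,k,l)$, which is bilinear in the degrees.

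Then I will compute both sides of the claimed identity on these coefficients. On the left, braided anti-multiplicativity and the hypotheses on $u$ and $v$ give
\[
S(u_{ij}v_{kl})=\zeta^{-\deg(u_{ij})\deg(v_{kl})}S(v_{kl})S(u_{ij})=\zeta^{-\deg(u_{ij})\deg(v_{kl})}v_{lk}^*u_{ji}^*.
\]
On the right,
\[
\bigl((u\otop v)^*\bigr)_{(i,k),(j,l)}=\overline{(u\otop v)_{(j,l),(i,k)}}=\zeta^{-\epsilon(j,i,l,k)}\,v_{lk}^*u_{ji}^*.
\]
By linearity in $\Omega$, the lemma therefore reduces to the scalar identity
\[
\epsilon(i,j,k,l)-\deg(u_{ij})\deg(v_{kl})=-\epsilon(j,i,l,k).
\]

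The main obstacle is getting the twisting phase $\epsilon$, and the sign conventions relating degrees to $d_i$ and $c_k$, exactly right. The identity should hold because $\epsilon$ is bilinear of the form $\pm(d_i-d_j)c_\bullet$, and swapping $(i,k)\leftrightarrow(j,l)$ changes the sign of the $d$-difference. So the first thing I would do is derive $\epsilon$ carefully from the definition of $\iota_1,\iota_2$ through $\C(\TT^2_\zeta)$ and the implementing unitary $U^\sK$, and then verify the scalar identity. As a sanity check I would test the result on $u=v$ equal to the fundamental representation.
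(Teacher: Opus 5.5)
Part (1) is exactly the paper's argument, and your plan for part (2) is sound. The mechanism is the same as in the paper: the phase $\zeta^{-\deg(u_{ij})\deg(v_{kl})}$ from braided anti-multiplicativity of $S$ is cancelled by the braiding phase that appears when the adjoint reverses the order of the two legs. The difference is that you route this through an explicit realization of $\cK(\sH)\boxtimes\cK(\sK)$ on $\sH\tens\sK$. The paper never leaves the braided product. It keeps $\iota_1(e_{ij})\iota_2(f_{kl})$ as the basis and uses $\iota_1(x)\iota_2(y)=\zeta^{\deg(x)\deg(y)}\iota_2(y)\iota_1(x)$ together with $\deg(e_{ij})\deg(f_{kl})=\deg(u_{ij})\deg(v_{kl})$. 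It then recognizes $\iota_2(f_{kl})\iota_1(e_{ij})=\bigl(\iota_1(e_{ji})\iota_2(f_{lk})\bigr)^*$, which amounts to $(S\tens\id)(u\otop v)=(u\otop v)^*$ without ever needing the isomorphism onto $\cK(\sH\tens\sK)$. Your route gives explicit matrix coefficients of $u\otop v$, but it forces you to pin down $\epsilon$. Since $\Omega$ ranges over all functionals, any $*$-isomorphism compatible with the braided commutation relation will do.

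The sign of $\epsilon$, however, is not a matter of convention, and your heuristic that ``swapping changes the sign of the $d$-difference'' is not what makes the identity hold. With the paper's conventions $\deg(e_{ij})=d_j-d_i$, so $\deg(u_{ij})=d_i-d_j$ and $\deg(v_{kl})=c_k-c_l$. Taking $\iota_2(f_{kl})=\I\tens f_{kl}$, the commutation relation forces $\iota_1(e_{ij})=e_{ij}\tens\zeta^{(d_i-d_j)C}$, where $Cf_k=c_kf_k$. Hence $\epsilon(i,j,k,l)=(d_i-d_j)c_k$, and both $S\bigl((u\otop v)_{(i,k),(j,l)}\bigr)$ and $\bigl((u\otop v)^*\bigr)_{(i,k),(j,l)}$ equal $\zeta^{(d_i-d_j)c_l}v_{lk}^*u_{ji}^*$. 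With the opposite sign the two sides would differ by the factor $\zeta^{2(d_i-d_j)(c_l-c_k)}$. So it is the braided commutation relation itself, with exactly this $\zeta$, that closes the computation.
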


\begin{proof}
$S$ was defined so that \eqref{lem-lemma14-1} holds for $\omega=\omega_{\xi,\eta}$ with $\xi,\eta$ any of the basis vectors. The general case follows by sesquilinearity.

Next let us choose orthonormal bases $\{\xi_k\}$ and $\{\eta_l\}$ in $\sH$ and $\sK$ consisting of eigenvectors for the corresponding actions of $\TT$: $U^\sH(\I\tens\xi_k)=\bz^{n_k}\tens\xi_k$ and $U^\sK(\I\tens\eta_l)=\bz^{m_l}\tens\eta_l$. Then
\[
\rho^{\B(\sH)}\bigl(\ket{\xi_k}\bra{\xi_{k'}}\bigr)
=\bz^{n_{k'}-n_k}\tens\ket{\xi_k}\bra{\xi_{k'}}
\]
and similarly for the matrix units in $\B(\sK)$. If we write $u=\sum\limits_{k,k'}u_{k,k'}\tens\ket{\xi_k}\bra{\xi_{k'}}$ then
\[
\rho^A(u_{k,k'})=\bz^{-n_{k'}+n_k}\tens u_{k,k'}
\]
and similarly for $v_{l,l'}$. The claim follows from the calculation: for any $\zeta,\theta\in\sH\tens\sK$
\[
\resizebox{\textwidth}{!}{\ensuremath{\displaystyle
\begin{aligned}
S\bigl((\id\tens\omega_{\zeta,\theta})(u\otop v)\bigr)
&=(S\tens\omega_{\zeta,\theta})
\Bigl(\bigl((\id\tens\iota_1)u\bigr)\bigl((\id\tens\iota_2)v\bigr)\Bigr)\\
&=\sum_{k,k',l,l'}
(S\tens\omega_{\zeta,\theta})
\bigl(u_{k,k'}v_{l,l'}\tens\iota_1(\ket{\xi_k}\bra{\xi_{k'}})\iota_2(\ket{\eta_l}\bra{\eta_{l'}})\bigr)\\
&=\sum_{k,k',l,l'}\zeta^{-(-n_{k'}+n_k)(-m_{l'}+m_l)}
S(v_{l,l'})S(u_{k,k'})
\omega_{\zeta,\theta}\bigl(\iota_1(\ket{\xi_k}\bra{\xi_{k'}})\iota_2(\ket{\eta_l}\bra{\eta_{l'}})\bigr)\\
&=\sum_{k,k',l,l'}
\zeta^{-(-n_{k'}+n_k)(-m_{l'}+m_l)}
v_{l',l}^*u_{k',k}^*
\zeta^{(n_{k'}-n_k)(m_{l'}-m_l)}
\omega_{\zeta,\theta}\bigl(
\iota_2(\ket{\eta_l}\bra{\eta_{l'}})
\iota_1(\ket{\xi_k}\bra{\xi_{k'}})\bigr)\\
&=\sum_{k,k',l,l'}
v_{l',l}^*u_{k',k}^*
\omega_{\zeta,\theta}\bigl(
\iota_2(\ket{\eta_l}\bra{\eta_{l'}})
\iota_1(\ket{\xi_k}\bra{\xi_{k'}})\bigr)\\
&=\biggl(\sum_{k,k',l,l'}
u_{k',k}v_{l',l}
\omega_{\theta,\zeta}\bigl(
\iota_1(\ket{\xi_{k'}}\bra{\xi_k})
\iota_2(\ket{\eta_{l'}}\bra{\eta_l})\bigr)
\biggr)^*\\
&=(\id\tens\omega_{\theta,\zeta})
\Bigl(
\bigl((\id\tens\iota_1)u\bigr)\bigl((\id\tens\iota_2)v\bigr)
\Bigr)^*
=(\id\tens\omega_{\zeta,\theta})\bigl((u\otop v)^*\bigr)
\end{aligned}
}}
\]
in which we used braided anti-multiplicativity of $S$: $S(ab)=\zeta^{-\deg(a)\deg(b)}S(b)S(a)$ for homogeneous $a,b\in\cA$, see Theorem \ref{thm-prop6}. The case of the matrix elements $(\id\tens\Omega)(u\otop v)$ for general $\Omega\in\B(\sH\tens\sK)^*$ follows again by linearity.
\end{proof}

In the next result we will use the maps
\begin{align*}
S\boxtimes\id&\colon\cA\algboxtimes\cA\ni\iota_1(a)\iota_2(b)\longmapsto \iota_1(S(a))\iota_2(b)\in
\cA\algboxtimes\cA,\\
\id\boxtimes S&\colon \cA\algboxtimes\cA\ni \iota_1(a)\iota_2(b)\longmapsto \iota_1(a)\iota_2(S(b))\in
\cA\algboxtimes\cA,\\
\mu &\colon \cA\algboxtimes\cA\ni \iota_1(a) \iota_2(b)\longmapsto ab\in \cA
\end{align*}
which are all defined with help of Proposition \ref{prop-lemma10}: $S\boxtimes\id=\phi_{\tens,\boxtimes}\comp(S\tens\id)\comp\phi_{\tens,\boxtimes}^{-1}$, $\id\boxtimes S=\phi_{\tens,\boxtimes}\comp(\id\tens S)\comp\phi_{\tens,\boxtimes}^{-1}$, and
$\mu=m\comp\phi_{\tens,\boxtimes}^{-1}$, where $m$ is the multiplication $\cA\algtens\cA\to\cA$.

\begin{theorem}\label{thm-prop8}
For any $a\in\cA$ we have
\[
\mu (S\boxtimes \id)\Delta_A(a)=\mu(\id\boxtimes S)\Delta_A(a)=\eps(a)\I.
\]
\end{theorem}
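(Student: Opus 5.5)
The plan is to reduce the statement to matrix elements of finite-dimensional unitary representations, where it becomes the identities $u^*u=\I$ and $uu^*=\I$ combined with Lemma \ref{lem-lemma14}. By Proposition \ref{prop-prop7}, every $a\in\cA$ is a matrix element $(\id\tens\omega)u$ of some finite-dimensional unitary representation $u$. Since the fundamental representation generates $\cA$ and $\otop$, direct sums and subrepresentations preserve the property in Lemma \ref{lem-lemma14}, we may in fact take $u$ to be a sum of tensor powers of the fundamental representation. For such $u$, Lemma \ref{lem-lemma14} gives $S\bigl((\id\tens\omega)u\bigr)=(\id\tens\omega)(u^*)$ for all $\omega$. (Direct sums and the trivial representation are clear; the lemma handles tensor products.) Everything is linear in $a$, so it suffices to prove the identity for the matrix coefficients $u_{k,k'}$ of such $u$, written in an orthonormal eigenbasis $\{\xi_k\}$ of the $\TT$-representation $U$ as in the proof of Lemma \ref{lem-lemma14}. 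Recall that $u_{k,k'}$ is homogeneous of degree $n_k-n_{k'}$.

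Next I would write the representation condition $(\Delta_A\tens\id)u=((\iota_1\tens\id)u)((\iota_2\tens\id)u)$ in coordinates:
\[
\Delta_A(u_{k,k'})=\sum_j\iota_1(u_{k,j})\iota_2(u_{j,k'}).
\]
The order of the legs matters here: the matrix units sit in $\cK(\sH)$ with the trivial braiding, since $\cK(\sH)$ carries only the coefficient, and the product in $A\tens\cK(\sH)$ is the ordinary one. I would double-check this against the computation in Lemma \ref{lem-lemma14}, where the braiding only appeared inside $\cK(\sH)\boxtimes\cK(\sK)$. Then
\[
\mu(S\boxtimes\id)\Delta_A(u_{k,k'})=\sum_j S(u_{k,j})u_{j,k'}.
\]
Lemma \ref{lem-lemma14} in coordinates reads $S(u_{k,j})=u_{j,k}^*$, so this sum is $\sum_j u_{j,k}^*u_{j,k'}=(u^*u)_{k,k'}=\delta_{k,k'}\I$. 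Similarly,
\[
\mu(\id\boxtimes S)\Delta_A(u_{k,k'})=\sum_j u_{k,j}u_{k',j}^*=(uu^*)_{k,k'}=\delta_{k,k'}\I.
\]
Finally, Proposition \ref{prop-lemma19} gives $(\eps\tens\id)u=\I$, that is, $\eps(u_{k,k'})=\delta_{k,k'}$, so both expressions equal $\eps(u_{k,k'})\I$.

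The main obstacle is making sure that no braiding factor $\zeta^{\pm\deg\cdot\deg}$ sneaks in. In particular, $\mu$ is defined through $\phi_{\tens,\boxtimes}^{-1}$ on elements written as $\iota_1(a)\iota_2(b)$, and the Sweedler expansion above is already in that normal form, so no reordering of $\iota_2(\cdot)\iota_1(\cdot)$ is needed. If an extra factor did appear, I would fall back on a direct check on the generators $\alpha,\alpha^*,\gamma,\gamma^*$ and an inductive argument over products. That argument would use the braided anti-multiplicativity of $S$ (Theorem \ref{thm-prop6}), the multiplicativity of $\Delta_A$, and the basis from Corollary \ref{cor-bazacA}, in the style of the proof of Proposition \ref{prop-lemma12}. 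However, the representation-theoretic route should avoid that bookkeeping.
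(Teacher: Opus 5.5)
Your proposal is correct and follows essentially the same route as the paper: write elements of $\cA$ as matrix coefficients of tensor powers of the fundamental representation, use Lemma~\ref{lem-lemma14} to compute $S$ on them, and conclude from $v^*v=vv^*=\I$ and $(\eps\tens\id)v=\I$ (Proposition~\ref{prop-lemma19}). The differences are cosmetic (you use matrix entries instead of vector functionals $\omega_{\xi,\eta}$), and the detour through Proposition~\ref{prop-prop7} is unnecessary; you correctly avoid Proposition~\ref{propprop5}, which is deduced from this very theorem.
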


\begin{proof}
Let $a$ be an arbitrary element of the basis \eqref{eq-bazacA} and assume $a\neq\I$ (this case is trivial). Furthermore let $u$ be the fundamental representation of $\SU_q(2)$. Then we can find $n\in\NN$ and $\omega\in\B((\CC^2)^{\tens n})^*$ such that $a=(\id\tens\omega)v$ for $v=u^{\otop n}$. Furthermore $\omega$ is a linear combination of vector functionals $\omega_{\xi,\eta}$ for some $\xi,\eta\in(\CC^2)^{\tens n}$.

By Lemma \ref{lem-lemma14}, we have $S((\id\tens\omega)v)=(\id\tens\omega)(v^*)$ and Proposition \ref{prop-lemma19} gives $\eps((\id\tens\omega)v)=\omega(\I)$. Choose an orthonormal basis $\{\xi_k\}_k$ in $(\CC^2)^{\tens n}$. The calculation
\[
\resizebox{\textwidth}{!}{\ensuremath{\displaystyle
\begin{aligned}
\mu\Bigl((S\boxtimes\id)\Delta_A\bigl((\id\tens\omega_{\xi,\eta})v\bigr)\Bigr)
&=\bigl(\mu\comp(S\boxtimes\id)\bigr)
\Bigl((\id\tens\id\tens\omega_{\xi,\eta})\bigl((\iota_1\tens\id)v\bigr)
\bigl((\iota_2\tens\id)v\bigr)\Bigr)\\
&=\sum_k\bigl(\mu\comp(S\boxtimes\id)\bigr)\bigl(\iota_1(v_{\xi,\xi_k})
\iota_2(v_{\xi_k,\eta})\bigr)
=\sum_k\mu\bigl(\iota_1(v_{\xi_k,\xi}^*)\iota_2(v_{\xi_k,\eta})\bigr)\\
&=\sum_k v_{\xi_k,\xi}^*v_{\xi_k,\eta}
=\sum_k\bigl((\id\tens\omega_{\xi_k,\xi})v\bigr)^*
\bigl((\id\tens\omega_{\xi_k,\eta})v\bigr)\\
&=\sum_k\bigl((\id\tens\omega_{\xi,\xi_k})v^*\bigr)
\bigl((\id\tens\omega_{\xi_k,\eta})v\bigr)\\
&=(\id\tens\omega_{\xi,\eta})(v^* v)=\is{\xi}{\eta}\I=\eps(v_{\xi,\eta})\I
\end{aligned}
}}
\]
proves the first equation for $\omega=\omega_{\xi,\eta}$ and the general case follows by linearity. The second equation holds by a similar reasoning.
\end{proof}

\begin{proposition}\label{propprop5}
For any finite-dimensional unitary representation $u$ of $\SU_q(2)$ and
$\omega\in\B(\sH)^*$ we have $S((\id\tens\omega)u)=(\id\tens\omega)(u^*)$.
\end{proposition}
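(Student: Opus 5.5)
Rather than decomposing $u$ into irreducibles and matching them against tensor powers of the fundamental representation, I will prove the statement directly. The tools are Theorem \ref{thm-prop8}, which says $S$ is the braided convolution inverse of $\id$, and the representation property of $u$; this mimics the standard Hopf-algebraic argument that the antipode inverts the matrix $u$.

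Fix a finite-dimensional unitary representation $u$ on $\sH$. Choose an orthonormal basis $\{\xi_k\}$ of eigenvectors for the $\TT$-representation $U$, with weights $n_k$. Write $u=\sum_{k,l}u_{k,l}\tens\ket{\xi_k}\bra{\xi_l}$. By Proposition \ref{prop-prop7}, all $u_{k,l}\in\cA$. By $\TT$-invariance of $u$, each $u_{k,l}$ is homogeneous of degree $n_k-n_l$, as in the proof of Lemma \ref{lem-lemma14}. The representation condition reads
\[
\Delta_A(u_{k,l})=\sum_j\iota_1(u_{k,j})\iota_2(u_{j,l}),
\]
and the degrees add up, because $(n_k-n_j)+(n_j-l)$ telescopes. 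Apply $\mu\comp(S\boxtimes\id)$ and use Theorem \ref{thm-prop8} together with $\eps(u_{k,l})=\delta_{k,l}$ from Proposition \ref{prop-lemma19}. This gives
\[
\sum_j S(u_{k,j})u_{j,l}=\delta_{k,l}\I .
\]
In matrix form, the matrix $[S(u_{k,j})]_{k,j}$ with entries in $\cA$ is a left inverse of $[u_{j,l}]$ in $\Mat_{\dim\sH}(\cA)$. No braiding factors appear here: $\mu(\iota_1(a)\iota_2(b))=ab$ by definition, and $S\boxtimes\id$ acts factorwise.

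On the other hand, $u$ is unitary, so $\sum_j u_{j,k}^*u_{j,l}=\delta_{k,l}\I$ and $\sum_j u_{k,j}u_{l,j}^*=\delta_{k,l}\I$. Hence the matrix $[u_{k,l}]$ has the two-sided inverse $[u_{l,k}^*]$ inside $\Mat_{\dim\sH}(A)\supseteq\Mat_{\dim\sH}(\cA)$. A left inverse of an invertible matrix equals its two-sided inverse. Concretely, multiply $\sum_j S(u_{k,j})u_{j,l}=\delta_{k,l}\I$ on the right by $u_{m,l}^*$ and sum over $l$; the result is $S(u_{k,m})=u_{m,k}^*$. This says exactly $(S\tens\id)u=u^*$ entrywise, that is, $S((\id\tens\omega_{\xi_l,\xi_k})u)=(\id\tens\omega_{\xi_l,\xi_k})(u^*)$. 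The general case of $\omega\in\B(\sH)^*$ then follows by linearity.

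The main obstacle is bookkeeping in the first step. I need to verify that $(S\boxtimes\id)\Delta_A(u_{k,l})$ is computed entrywise as written, i.e.\ that the identification $\Delta_A(u_{k,l})=\sum_j\iota_1(u_{k,j})\iota_2(u_{j,l})$ really follows from $(\Delta_A\tens\id)u=((\iota_1\tens\id)u)((\iota_2\tens\id)u)$. This requires checking that no $\zeta$-twist enters when the legs are multiplied. On the matrix-unit side the product $\ket{\xi_k}\bra{\xi_j}\cdot\ket{\xi_j}\bra{\xi_l}$ is an ordinary product in $\B(\sH)$, and on the $A$-side the elements $\iota_1(u_{k,j})$ and $\iota_2(u_{j,l})$ already appear in the order $\iota_1\cdot\iota_2$. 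So I expect the identity to hold verbatim, but I would check it against the fundamental representation to make sure the conventions match. That check is easy, since $\Delta_A(\alpha)=\iota_1(\alpha)\iota_2(\alpha)-q\iota_1(\gamma^*)\iota_2(\gamma)$ agrees with the matrix product of $u$.
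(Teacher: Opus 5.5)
Your proof is correct and is essentially the paper's own argument: the paper also writes $\Delta_A(u_{n,m})=\sum_k\iota_1(u_{n,k})\iota_2(u_{k,m})$ and applies Theorem~\ref{thm-prop8} to get $\sum_k S(u_{n,k})u_{k,m}=\delta_{n,m}\I$; it then multiplies by $u_{p,m}^*$, sums over $m$ and uses unitarity to conclude $S(u_{n,p})=u_{p,n}^*$. Choosing a $\TT$-eigenbasis and tracking degrees is harmless but not needed, and your appeal to Proposition~\ref{prop-prop7} to ensure $u_{k,l}\in\cA$ states explicitly a point the paper leaves implicit.
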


\begin{proof}
Choose an orthonormal basis $\{\xi_k\}$ ($k\in\{1,\dotsc,\dim{\sH}\}$) of $\sH$, write $u$ in the form
\[
u=\sum_{i,j=1}^{\dim{\sH}}u_{i,j}\tens \ket{\xi_i}\bra{\xi_j}
\]
and take $n,m\in\{1,\dotsc,\dim{\sH}\}$. We have $(\Delta_A\tens\id)u=((\iota_1\tens\id)u)((\iota_2\tens\id)u)$, i.e.
\[
\Delta_A(u_{n,m})
=\sum_{k=1}^{\dim{\sH}}\iota_1(u_{n,k})\iota_2(u_{k,m}).
\]
By Theorem \ref{thm-prop8}
\[
\delta_{n,m}\I
=\eps\bigl(u_{n,m})\I
=\mu\bigl((S\boxtimes\id)\Delta_A(u_{n,m})\bigr)
=\sum_{k=1}^{\dim{\sH}}S(u_{n,k})u_{k,m}
\]
and hence
\[
\sum_{m=1}^{\dim{\sH}}\delta_{n,m}u_{p,m}^*
=\sum_{k,m=1}^{\dim{\sH}}S(u_{n,k})u_{k,m}u_{p,m}^*
\]
for $p\in\{1,\dotsc,\dim{\sH}\}$. Since $u$ is unitary, this simplifies to $u_{p,n}^*=S(u_{n,p})$. Writing an arbitrary functional $\omega$ as a linear combination of $\{\omega_{\xi_n,\xi_m}\}$ completes the proof.
\end{proof}

\subsection{Braided Hopf algebra structure on \texorpdfstring{$\cA$}{A}}\label{sect-BrHopf}

In this section we will show that $\cA$ is a braided Hopf algebra, i.e.~a Hopf algebra in $\mathcal{C}=\sideset{_\ZZ^\ZZ}{}{\operatorname{\mathcal{YD}}}$ in the sense of \cite[Definition 1.6.6]{HeckenbergerSchneider}.\footnote{Our \cst-algebraic approach favors the compact group $\TT$, but the definitions in \cite{HeckenbergerSchneider} are such that the discrete group $\ZZ$ is more prominent.}
This requires shifting point of view a little: for example in the algebraic setting of \cite{HeckenbergerSchneider} the algebraic tensor product is used (with twisted multiplication), while in analytic setting it is better to have twisted version $\algboxtimes$.

First, we need to understand $\cA$ as an object in $\sideset{_\ZZ^\ZZ}{}{\operatorname{\mathcal{YD}}}$, i.e.~we need to argue that $\cA$ has the structure of a Yetter-Drinfeld module over $\CC[\ZZ]$. First, we have the $\ZZ$-grading
\[
\cA=\bigoplus_{k\in\ZZ}
\bigl\{a\in\cA\,\bigr|\bigl.\,\deg(a)=k\bigr\},
\]
where $\deg$ corresponds to the $\TT$-action $\rho^\cA$. Next there is the $\CC[\ZZ]$-module structure given by
\[
k\cdot a=\rho^\cA_{\zeta^{k}}(a)
=(\operatorname{ev}_{\zeta^{-k}}\tens\id)\rho^\cA(a),\qquad a\in\cA.
\]
Let us explain why the $\CC[\ZZ]$-module structure is given by this formula. The action of $\D(\TT)=\TT\times\ZZ$ on $A$ restricts to an action on $\cA$ and then further to actions $\rho^\cA$ and $\tilde{\rho}^\cA$ of the subgroups $\TT$ and $\ZZ$ of $\D(\TT)$ (see Sections \ref{sect-Antipode} and \ref{sect-BrFlip}).

These structures make $\cA$ into Yetter-Drinfeld module over $\CC[\ZZ]$ (\cite[Definition 1.4.1]{HeckenbergerSchneider}) hence an object of $\sideset{_\ZZ^\ZZ}{}{\operatorname{\mathcal{YD}}}$. Morphisms in $\sideset{_\ZZ^\ZZ}{}{\operatorname{\mathcal{YD}}}$ are $\ZZ$-graded, $\ZZ$-equivariant linear maps (so in our case $\TT$-equivariant linear maps).
%

Next we claim that $\cA$ is an algebra, a coalgebra, a bialgebra, and finally a Hopf algebra in $\sideset{_\ZZ^\ZZ}{}{\operatorname{\mathcal{YD}}}$ (\cite[Page 40]{HeckenbergerSchneider}):
\begin{itemize}
\item Algebra: the multiplication
\[
\widetilde{\mu}\colon\cA\algtens\cA\ni a\tens b\longmapsto ab\in\cA
\]
and the unit map $\CC\ni z\mapsto z\I \in\cA$ are morphisms in $\sideset{_\ZZ^\ZZ}{}{\operatorname{\mathcal{YD}}}$ because the Yetter-Drinfeld structure on the tensor product is diagonal and the $\TT$-action is by homomorphisms \cite[Page 29]{HeckenbergerSchneider}. Algebra axioms clearly hold.
\item Coalgebra: as the comultiplication and counit we take
\[
\widetilde{\Delta}\colon\cA\ni a\longmapsto\phi_{\tens,\boxtimes}^{-1}\bigl(\Delta_A(a)\bigr)\in\cA\algtens\cA
\]
and $\eps\colon\cA\to\CC$. Since the comultiplication of $\SU_q(2)$ is equivariant, and so is $\phi_{\tens,\boxtimes}$, we conclude that above $\widetilde{\Delta}$ is equivariant. $\eps$ is equivariant by Proposition \ref{prop-lemma19}. Axioms of a coalgebra hold, because of coassociativity of $\Delta_A$ and Proposition \ref{prop-lemma19}.
\item Bialgebra: we need to argue that $\widetilde{\Delta}$ and $\eps$ are algebra maps. It is clear that $\eps$ is an algebra map (it is unital and multiplicative). To show that $\widetilde{\Delta}$ is an algebra map, we need to prove
\[
\widetilde{\Delta}\comp\widetilde{\mu}
=(\widetilde{\mu}\tens\widetilde{\mu})\comp(\id\tens c_{\cA,\cA}\tens\id)\comp(\widetilde{\Delta}\tens\widetilde{\Delta})
\]
(see \cite[Definition 1.6.3]{HeckenbergerSchneider}) which we check on simple tensors using Sweedler notation:
\begin{align*}
\widetilde{\Delta}\bigl(\widetilde{\mu}(a\tens b)\bigr)
&=\phi_{\tens,\boxtimes}^{-1}\Delta_A(ab)
=\phi_{\tens,\boxtimes}^{-1}\bigl(\Delta_A(a)\Delta_A(b)\bigr)\\
&=\sum \phi_{\tens,\boxtimes}^{-1}\bigl(
\iota_1(a_{(1)})\iota_2(a_{(2)})\iota_1(b_{(1)})\iota_2(b_{(2)})
\bigr)\\
&=\sum \zeta^{-\deg(a_{(2)})\deg(b_{(1)})}
\phi_{\tens,\boxtimes}^{-1}\bigl(\iota_1(a_{(1)}b_{(1)})\iota_2(a_{(2)}b_{(2)})\bigr)\\
&=\sum \zeta^{-\deg(a_{(2)})\deg(b_{(1)})}a_{(1)}b_{(1)}\tens a_{(2)}b_{(2)}
\end{align*}
and
\begin{align*}
(\widetilde{\mu}\tens\widetilde{\mu})&\Bigl(
(\id\tens c_{\cA,\cA}\tens\id)\bigl(
(\widetilde{\Delta}\tens\widetilde{\Delta})(a\tens b)
\bigr)
\Bigr)\\
&=\sum (\widetilde{\mu}\tens\widetilde{\mu})\bigl(
(\id\tens c_{\cA,\cA}\tens\id)
(a_{(1)}\tens a_{(2)}\tens b_{(1)}\tens b_{(2)})
\bigr)\\
&=\sum \zeta^{-\deg(a_{(2)})\deg(b_{(1)})}
(\widetilde{\mu}\tens\widetilde{\mu})(a_{(1)}\tens b_{(1)}\tens a_{(2)}\tens b_{(2)})\\
&=\sum \zeta^{-\deg(a_{(2)})\deg(b_{(1)})}
a_{(1)}b_{(1)}\tens a_{(2)}b_{(2)}.
\end{align*}
\item Hopf algebra: as the antipode we take $S\colon\cA\to\cA$, which we know to be linear and $\TT$-equivariant. The antipode condition is equivalent to the conclusion of Theorem \ref{thm-prop8}.
\end{itemize}

Thus we proved the following:

\begin{proposition}
The vector space $\cA$ with multiplication $\widetilde{\mu}$, comultiplication $\widetilde{\Delta}$, unit $\I$, counit $\eps$, the above Yetter-Drinfeld structure and antipode $S$ is a Hopf algebra in $\sideset{_\ZZ^\ZZ}{}{\operatorname{\mathcal{YD}}}$.
\end{proposition}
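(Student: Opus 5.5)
The proposition essentially assembles the verifications in the itemized list above, so the plan is to check, in order, the axioms of a Hopf algebra in $\sideset{_\ZZ^\ZZ}{}{\operatorname{\mathcal{YD}}}$ from \cite[Definition 1.6.6]{HeckenbergerSchneider}, making sure every structure map is a morphism in the category.

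\textbf{Step 1: $\cA$ is an object.} The $\ZZ$-grading comes from $\rho^\cA$. The $\CC[\ZZ]$-module structure is $k\cdot a=\rho^\cA_{\zeta^k}(a)$, which multiplies a homogeneous $a$ by a power of $\zeta^{\deg(a)}$. The Yetter--Drinfeld compatibility is then automatic, since $\ZZ$ is abelian and the action preserves degrees. This structure reproduces the braiding $c_{\cA,\cA}$ of \eqref{eq-BrFlip}.

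\textbf{Step 2: the structure maps are morphisms.} Here "morphism" means degree-preserving and $\TT$-equivariant.
\begin{itemize}
\item $\widetilde{\mu}$ and the unit are morphisms because $\rho^\cA$ acts by algebra automorphisms.
\item $\widetilde{\Delta}=\phi_{\tens,\boxtimes}^{-1}\comp\Delta_A$ lands in $\cA\algtens\cA$, because $\Delta_A$ maps generators into $\cA\algboxtimes\cA$ and is multiplicative. It is equivariant because $\Delta_A$ and $\phi_{\tens,\boxtimes}$ both are (Proposition \ref{prop-lemma10}).
\item $\eps$ is equivariant by Proposition \ref{prop-lemma19}.
\item $S$ preserves degree by Theorem \ref{thm-prop6}.
\end{itemize}

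\textbf{Step 3: algebra and coalgebra axioms.} Associativity and unitality are inherited from $A$. Coassociativity of $\widetilde{\Delta}$ follows from $(\Delta_A\boxtimes\id)\Delta_A=(\id\boxtimes\Delta_A)\Delta_A$, transported through the threefold analogue of $\phi_{\tens,\boxtimes}$; injectivity of that analogue is proved exactly as in Proposition \ref{prop-lemma10}, using the monomials $U^nV^mW^k$. The counit property comes from \eqref{eq-eq50}.

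\textbf{Step 4: bialgebra and antipode.} The bialgebra compatibility is the Sweedler computation already displayed above: it compares $\Delta_A(ab)=\Delta_A(a)\Delta_A(b)$, reordered via $\iota_2(x)\iota_1(y)=\zeta^{-\deg x\deg y}\iota_1(y)\iota_2(x)$, with the formula involving $c_{\cA,\cA}$. The antipode axiom $m(S\tens\id)\widetilde{\Delta}=\eta\eps=m(\id\tens S)\widetilde{\Delta}$ becomes, after composing with $\phi_{\tens,\boxtimes}$, exactly Theorem \ref{thm-prop8}, since $\mu=m\comp\phi_{\tens,\boxtimes}^{-1}$ and $S\boxtimes\id=\phi_{\tens,\boxtimes}(S\tens\id)\phi_{\tens,\boxtimes}^{-1}$.

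\textbf{Main obstacle.} I expect the main subtlety to be conventions. One must confirm that the module structure $k\cdot a$, with the inverse in $\ev_{\zeta^{-k}}$, gives precisely the braiding $c(a\tens b)=\zeta^{-\deg a\deg b}\,b\tens a$ used in the bialgebra axiom and in the braided anti-multiplicativity of $S$. A sign error there would replace $\zeta$ by $\zeta^{-1}$ throughout. I would settle this first by checking on $\gamma\tens\gamma$ with \cite[Definition 1.4.10]{HeckenbergerSchneider}.
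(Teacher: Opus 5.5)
Your proposal is correct and follows the paper's argument essentially step by step. Like the paper, you check that $\cA$ is a Yetter--Drinfeld module, that $\widetilde{\mu}$, the unit, $\widetilde{\Delta}=\phi_{\tens,\boxtimes}^{-1}\comp\Delta_A$, $\eps$ and $S$ are equivariant, that the bialgebra axiom is the displayed Sweedler computation, and that the antipode axiom is exactly Theorem~\ref{thm-prop8}; your extra details (coassociativity via a threefold analogue of Proposition~\ref{prop-lemma10}, the implicit use that $\cA\algboxtimes\cA$ is a $*$-subalgebra, and the convention check, which indeed gives $k\cdot a=\zeta^{-k\deg(a)}a$ and hence the braiding \eqref{eq-BrFlip}) are sound and only make explicit what the paper leaves implicit.
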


\begin{remark}
We are not aware of any literature proposing a definition of a braided Hopf $*$-algebra. In the case of $(\cA,\widetilde{\mu},\I,\widetilde{\Delta},\eps,S)$ we do have $*\comp S\comp *\comp S=\id$ (Proposition \ref{prop-prop10}), but $\phi_{\tens,\boxtimes}$ (and consequently $\widetilde{\Delta}$) does not work well with (the usual) involution on the tensor product.

However, if we define a new involution $\underline{\ast}$ on $\cA\algtens\cA$ so that $(a\tens b)^{\underline{\ast}}=\zeta^{-\deg(a)\deg(b)}a^*\tens b^*$ (for homogeneous elements $a,b\in\cA$), then
\[
\widetilde{\Delta}(a^*)
=\phi_{\tens,\boxtimes}^{-1}\biggl(\sum\iota_2(a_{(2)}^*)\iota_1(a_{(1)}^*)\biggr)
=\sum\zeta^{-\deg(a_{(1)})\deg(a_{(2)})}a_{(1)}^*\tens a_{(2)}^*
=\widetilde{\Delta}(a)^{\underline{\ast}}
\]
and
\begin{align*}
\widetilde{\mu}(a\tens b)^*&=(ab)^*=b^*a^*=
\zeta^{\deg(a)\deg(b)}\widetilde{\mu}\bigl(\zeta^{-\deg(a)\deg(b)} b^*\tens a^*\bigr)\\
&=\zeta^{\deg(a)\deg(b)}\widetilde{\mu}\bigl((b\tens a)^{\underline{\ast}}\bigr)
=\widetilde{\mu}\bigl(
(\zeta^{-\deg(a)\deg(b)}b\tens a)^{\underline{\ast}}\bigr)
=\widetilde{\mu}\bigl(
c_{\cA,\cA}(a\tens b)^{\underline{\ast}}\bigr)
\end{align*}
i.e.~$\underline{\ast}$ is anti-multiplicative in the braided sense.
Notice also that
\begin{align*}
\bigl(\phi_{\tens,\boxtimes}^{-1}\comp\ast\comp\phi_{\tens,\boxtimes}\bigr)&(a\tens b)
=\phi_{\tens,\boxtimes}^{-1}\Bigl(\bigl(\iota_1(a)\iota_2(b)\bigr)^*\Bigr)=
\phi_{\tens,\boxtimes}^{-1}\bigl(\iota_2(b^*)\iota_1(a^*)\bigr)\\
&=\zeta^{-\deg(a)\deg(b)}
\phi_{\tens,\boxtimes}^{-1}\bigl(\iota_1(a^*)\iota_2(b^*)\bigr)=
\zeta^{-\deg(a)\deg(b)}a^*\tens b^*=(a\tens b)^{\underline{\ast}},
\end{align*}
which justifies the formula for $\underline{\ast}$. Thus \emph{with the involution $\underline{\ast}$ on tensor product}, $\cA$ could be called a \emph{braided Hopf $*$-algebra}.
\end{remark}

\subsection{The unitary antipode and the polar decomposition of the antipode}\label{sect-UApolar}

For \emph{real} $q$ satisfying $0<|q|<1$, the unitary antipode $R$ of $\SU_q(2)$ is characterized by $R(\alpha)=\alpha^*$, $R(\gamma)=-\operatorname{sgn}(q)\gamma$. It is a bounded, linear, anti-multiplicative and $*$-preserving map on $\C(\SU_q(2))$, which provides a canonical decomposition of the antipode into its bounded and unbounded part: $S=R\comp\tau_{-\ii/2}$. We will establish a similar decomposition in the braided case. It will turn out, however, that there is a need for a third map.

Now we go back to our standing assumption that $q$ is complex with $0<|q|<1$. Recall that $\{\alpha^n\gamma^m{\gamma^*}^k\}_{n,m,k\ge 0}\cup\{{\alpha^*}^n\gamma^m{\gamma^*}^k\}_{n\ge 1,m,k\ge 0}$ is a basis of $\cA$ (Corollary \ref{cor-bazacA}).

\begin{lemma}
We have $\bh(\alpha^n\gamma^m{\gamma^*}^k)=
\bh({\alpha^*}^n\gamma^m{\gamma^*}^k)
=\delta_{n,0}\delta_{m,k}\tfrac{1-|q|^2}{1-|q|^{2(m+1)}}$ for all $n,m,k\in\ZZ_+$.
\end{lemma}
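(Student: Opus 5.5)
We compute $\bh$ on the basis \eqref{eq-bazacA} directly from invariance properties of $\bh$, rather than going through \cite{GuinSaurabh}. The first step kills the off-diagonal terms with two symmetries. The state $\bh$ is $\TT$-invariant, and under $\rho^A$ the element $\alpha^n\gamma^m{\gamma^*}^k$ has degree $m-k$, so its value vanishes unless $m=k$. Next, $\bh$ is invariant under the modular group $\sigma^\bh$ (a state is always invariant under its own modular group). Since $\sigma^\bh_t(\alpha^n\gamma^m{\gamma^*}^k)=|q|^{-2\ii nt}\alpha^n\gamma^m{\gamma^*}^k$, we get $\bh(\alpha^n\gamma^m{\gamma^*}^k)=0$ for $n\geq1$. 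The same argument applied to ${\alpha^*}^n$, which picks up the factor $|q|^{2\ii nt}$, gives vanishing for $n\geq1$ there as well. It therefore remains to compute $c_m=\bh((\gamma^*\gamma)^m)$; here we use $\gamma\gamma^*=\gamma^*\gamma$, so that $\gamma^m{\gamma^*}^m=(\gamma^*\gamma)^m$.

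For the second step we derive a recursion from the KMS condition. We know $\sigma^\bh_{-\ii}(\alpha)=|q|^{-2}\alpha$, since $\sigma_t(\alpha)=|q|^{-2\ii t}\alpha$ continues analytically to $|q|^{-2\ii(-\ii)}=|q|^{-2}$; the sign convention is to be fixed so that the result is consistent. The KMS condition $\bh(xy)=\bh(y\sigma_{-\ii}(x))$, applied with $x=\alpha$ and $y=(\gamma^*\gamma)^m\alpha^*$, gives
\[
\bh\bigl(\alpha(\gamma^*\gamma)^m\alpha^*\bigr)=|q|^{\pm2}\,\bh\bigl((\gamma^*\gamma)^m\alpha^*\alpha\bigr).
\]
We then evaluate both sides using the relations. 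Because $\alpha\gamma=\overline q\gamma\alpha$ and $\alpha\gamma^*=q\gamma^*\alpha$, we have $\alpha\gamma^*\gamma=|q|^2\gamma^*\gamma\alpha$. Hence the left side equals $|q|^{2m}\bh\bigl((\gamma^*\gamma)^m\alpha\alpha^*\bigr)=|q|^{2m}(c_m-|q|^2c_{m+1})$, and the right side equals $|q|^{\pm2}(c_m-c_{m+1})$.

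The correct sign is the one that forces $c_{m+1}=\frac{1-|q|^{2(m+1)}}{1-|q|^{2(m+2)}}\,c_m$, which means the relation must read $|q|^{2m}(c_m-|q|^2c_{m+1})=c_m-c_{m+1}$ with the appropriate power of $|q|$. I expect this sign and normalization bookkeeping to be the main nuisance. If it is awkward, the alternative is to apply the KMS condition with $x=\alpha^*$, or to use instead left invariance (Proposition \ref{prop-hLeftInv}) applied to $\Delta_A(\gamma^*\gamma)$, which gives a linear relation between $c_0$ and $c_1$ that can be checked against the recursion. Starting from $c_0=1$, the recursion telescopes to $c_m=\frac{1-|q|^2}{1-|q|^{2(m+1)}}$.

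As a cross-check, the formula can be compared with the classical Haar state of $\SU_{|q|}(2)$ computed via $\kmap$ and $\bh_B$. This comparison works because all the relations used above involve only $|q|$ once $m=k$ and $n=0$.
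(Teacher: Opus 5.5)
Your argument is correct, and it takes a genuinely different route from the paper. The paper's proof is a one-line citation: since $\bh=\bh_B\comp\kmap$ and $(B,\Delta_B)$ is identified with $\mathrm{U}_q(2)^{\mathrm{op}}$, the values are read off from the explicit Haar state of $\mathrm{U}_q(2)$ in \cite[Theorem 4.1]{ZhangZhao}.

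You instead derive the values intrinsically. $\TT$-invariance forces $m=k$, $\sigma^\bh$-invariance forces $n=0$, and the KMS condition gives a first-order recursion for $c_m=\bh\bigl((\gamma^*\gamma)^m\bigr)$. Your route shows why only $|q|$ enters, and it avoids the explicit formula of \cite{ZhangZhao}. It does not avoid the literature entirely, because the formula for $\sigma^\bh$ you rely on was itself imported in the paper from \cite{GuinSaurabh}. The paper's route is shorter but entirely black-box.

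One point to tighten: there is no sign to be ``fixed so that the result is consistent.'' Choosing it by matching the desired answer would not be a proof. The convention $\bh(xy)=\bh\bigl(y\,\sigma^\bh_{-\ii}(x)\bigr)$ that you wrote is the correct one. It is equivalent to the form $\bh(y^*xy')=\bh\bigl(\sigma^\bh_\ii(y')y^*x\bigr)$ used earlier in the paper. Your value $\sigma^\bh_{-\ii}(\alpha)=|q|^{-2}\alpha$ is also right. So the right-hand side is $|q|^{-2}(c_m-c_{m+1})$, and the relation reads
\[
|q|^{2(m+1)}\bigl(c_m-|q|^2c_{m+1}\bigr)=c_m-c_{m+1},
\qquad\text{i.e.}\qquad
\bigl(1-|q|^{2(m+2)}\bigr)c_{m+1}=\bigl(1-|q|^{2(m+1)}\bigr)c_m,
\]
which is exactly the recursion you want. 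Your displayed ``must read'' version, with $|q|^{2m}$ on the left, is off by a factor of $|q|^2$. With $c_0=\bh(\I)=1$ the recursion telescopes to $c_m=\tfrac{1-|q|^2}{1-|q|^{2(m+1)}}$. The fallback via left invariance is therefore unnecessary.
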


\begin{proof}
This follows from the definition of $\bh$ and \cite[Theorem 4.1]{ZhangZhao}.
\end{proof}

In what follows, we will use complex powers of $\zeta$. They are defined using the principal value of logarithm, i.e.~$\zeta^z=\ee^{z \operatorname{Log}(\zeta)}$ for $z\in\CC$, where $\operatorname{Log}(\zeta)\in\ii]-\pi,\pi]$.

\begin{definition}
Define linear map $\vartheta\colon\cA\to\cA$ by $\vartheta(a)=\zeta^{\frac{1}{2}\deg(a)^2}a$ for homogeneous elements $a$. We will call $\vartheta$ the \emph{residual mapping}.
\end{definition}

\begin{remarks}\hspace*{\fill}
\begin{enumerate}
\item The linear map $\vartheta$ is well defined and invertible with inverse $\vartheta^{-1}$ mapping each homogeneous $a\in\cA$ to $\zeta^{-\frac{1}{2} \deg(a)^2}a$.
\item The reason for the unusual name of $\vartheta$ is contained in Theorem \ref{thm-prop3} below. It is the ``residue'' left after composing the unitary antipode (see below), the inverse of the antipode and the analytic generator $\tau_{-\ii/2}$ of the scaling group. For compact quantum groups it is equal to the identity (\cite[Theorem 2.6(4)]{CQGs}).
\end{enumerate}
\end{remarks}

\begin{lemma}\label{lem-lemma18}
For homogeneous $a,b\in\cA$ we have
\begin{align*}
\deg\bigl(\vartheta(a)\bigr)&=\deg\bigl(\vartheta^{-1}(a)\bigr)=\deg(a),\\
\vartheta(ab)&=\zeta^{\deg(a)\deg(b)}\vartheta(a)\vartheta(b),\\
\vartheta^{-1}(ab)&=\zeta^{-\deg(a)\deg(b)}\vartheta^{-1}(a)\vartheta^{-1}(b),\\
\vartheta(a)^*&=\zeta^{-\deg(a)^2}\vartheta(a^*),\\
\vartheta^{-1}(a)^*&=\zeta^{\deg(a)^2}\vartheta^{-1}(a^*).
\end{align*}
Furthermore, $\bh\comp\vartheta=\bh\comp\vartheta^{-1}=\bh$ and $\vartheta$ is $\TT$-equivariant.
\end{lemma}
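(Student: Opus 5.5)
The plan is to verify each identity directly from the definition $\vartheta(a)=\zeta^{\frac12\deg(a)^2}a$ on homogeneous elements, together with the facts that $\deg$ is additive on products ($\deg(ab)=\deg(a)+\deg(b)$, since $\rho^A$ is a homomorphism) and that $\deg(a^*)=-\deg(a)$. Linearity then extends every statement from homogeneous elements to all of $\cA$, because $\cA$ is the direct sum of its homogeneous components (Section \ref{sect-BrHopf}).

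\textbf{Degree preservation and equivariance.} The first line is immediate, since $\vartheta^{\pm1}$ multiply a homogeneous element by a scalar. The same observation shows $\TT$-equivariance: for homogeneous $a$ we have $\rho^\cA_\lambda(\vartheta(a))=\zeta^{\frac12\deg(a)^2}\lambda^{\deg(a)}a=\vartheta(\rho^\cA_\lambda(a))$ (with the paper's sign convention for $\rho_\lambda$, which plays no role here).

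\textbf{Multiplicativity.} For homogeneous $a,b$ the product $ab$ is homogeneous of degree $\deg(a)+\deg(b)$. The one point needing care is the complex power $\zeta^z=\ee^{z\operatorname{Log}\zeta}$. With this definition $\zeta^{z}\zeta^{w}=\zeta^{z+w}$ holds for all complex $z,w$, and $\zeta^k$ for integer $k$ agrees with the ordinary power. Hence
\[
\zeta^{\frac12(\deg(a)+\deg(b))^2}=\zeta^{\frac12\deg(a)^2}\zeta^{\frac12\deg(b)^2}\zeta^{\deg(a)\deg(b)},
\]
which gives $\vartheta(ab)=\zeta^{\deg(a)\deg(b)}\vartheta(a)\vartheta(b)$. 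The formula for $\vartheta^{-1}$ follows in the same way, or by applying $\vartheta^{-1}$ to the previous identity.

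\textbf{Adjoints.} We have $\vartheta(a)^*=\overline{\zeta^{\frac12\deg(a)^2}}\,a^*$. Since $|\zeta|=1$ and $\operatorname{Log}\zeta$ is purely imaginary, $\overline{\zeta^{x}}=\zeta^{-x}$ for real $x$. On the other hand $\vartheta(a^*)=\zeta^{\frac12\deg(a)^2}a^*$, because $\deg(a^*)^2=\deg(a)^2$. Therefore
\[
\vartheta(a)^*=\zeta^{-\frac12\deg(a)^2}a^*=\zeta^{-\deg(a)^2}\vartheta(a^*),
\]
and the formula for $\vartheta^{-1}$ is analogous.

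\textbf{Invariance of $\bh$.} By the $\TT$-invariance of $\bh$ (it vanishes on homogeneous elements of nonzero degree), or directly from the preceding lemma on $\bh$ of basis elements (only $n=0$, $m=k$, i.e.\ degree $0$, contribute), it suffices to check on degree-$0$ elements. There $\vartheta$ and $\vartheta^{-1}$ act as the identity, so $\bh\comp\vartheta^{\pm1}=\bh$.

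The only mildly delicate step is the exponent bookkeeping with complex powers $\zeta^{\frac12 k^2}$. This is harmless because $\zeta^z=\ee^{z\operatorname{Log}\zeta}$ is a genuine exponential in $z$, so additivity and conjugation rules hold without branch issues.
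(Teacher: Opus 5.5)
Your proposal is correct and follows essentially the same route as the paper: a direct verification from $\vartheta(a)=\zeta^{\frac12\deg(a)^2}a$ using additivity of $\deg$, the exponent rule for $\zeta^z=\ee^{z\operatorname{Log}(\zeta)}$, and $\overline{\zeta^x}=\zeta^{-x}$ for real $x$. The only small difference is in the invariance of $\bh$. The paper checks it on the basis \eqref{eq-bazacA} using the explicit values of $\bh$, whereas your main argument uses only the $\TT$-invariance of $\bh$ to reduce to degree $0$, where $\vartheta^{\pm1}$ is the identity; both are valid.
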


\begin{proof}
The first two equations are immediate. The next two properties follow from the calculation:
\[
\vartheta(ab)=\zeta^{\frac{1}{2}\deg(ab)^2}ab
=\zeta^{ \deg(a)\deg(b)}
\zeta^{\frac{1}{2}\deg(a)^2}a\zeta^{\frac{1}{2}\deg(b)^2}b
=\zeta^{\deg(a)\deg(b)}\vartheta(a)\vartheta(b)
\]
and hence
\begin{align*}
\vartheta^{-1}\bigl(\vartheta(a)\vartheta(b)\bigr)
&=\zeta^{-\deg(a)\deg(b)}\vartheta^{-1}\bigl(\vartheta(ab)\bigr)
=\zeta^{-\deg(a)\deg(b)}ab\\
&=\zeta^{-\deg(\vartheta^{-1}(a))\deg(\vartheta^{-1}(b))}
\vartheta^{-1}\bigl(\vartheta(a)\bigr)
\vartheta^{-1}\bigl(\vartheta(b)\bigr).
\end{align*}
Observe that as $\operatorname{Log}(\zeta)\in \ii \RR$, we have $\overline{\zeta^{x}}=\overline{e^{x \operatorname{Log}(\zeta)}}=e^{-x \operatorname{Log}(\zeta)}=\zeta^{-x}$  for $x\in\RR$. Using this and the fact that $\deg(a^*)=-\deg(a)$ we obtain
\[
\vartheta(a)^*
=\bigl(\zeta^{\frac{1}{2}\deg(a)^2}a\bigr)^*
=\zeta^{-\frac{1}{2}\deg(a)^2}a^*
=\zeta^{-\deg(a)^2}\bigl(\zeta^{\frac{1}{2}\deg(a^*)^2}a^*\bigr)
=\zeta^{-\deg(a)^2}\vartheta(a^*)
\]
and
\[
\vartheta^{-1}\bigl(\vartheta(a)\bigr)^*
=a^*=\zeta^{\deg(a)^2}\vartheta^{-1}
\bigl(\zeta^{-\deg(a)^2}\vartheta(a^*)\bigr)
=\zeta^{\deg(a)^2}\vartheta^{-1}\bigl(\vartheta(a)^*\bigr).
\]

To prove invariance of $\bh$ under $\vartheta$ observe that for all $n,m,k\ge 0$ we have
\[
\deg(\alpha^n\gamma^m{\gamma^*}^k)=m-k
\]
and hence by Lemma \ref{lem-lemma18}
\[
\bh\bigl(\vartheta(\alpha^n \gamma^m{\gamma^*}^k)\bigr)
=\zeta^{\frac{1}{2}(m-k)^2}\bh(\alpha^n\gamma^m{\gamma^*}^k)
=\zeta^{\frac{1}{2}(m-k)^2}\delta_{m,k}\bh(\alpha^n\gamma^m{\gamma^*}^k)=\bh(\alpha^n\gamma^m{\gamma^*}^k).
\]
Similarly we check equality on basic elements ${\alpha^*}^n\gamma^m{\gamma^*}^k$ and $\bh\comp\vartheta=\bh$ follows. Consequently also $\bh\comp\vartheta^{-1}=\bh$.
\end{proof}

It is clear from the definition (Section \ref{sect-tau}) that all elements of $\cA$ are \emph{analytic} for the scaling group, i.e.~for $a\in\cA$ the function $\RR\ni t\mapsto\tau_t(a)\in A$ has an entire analytic extension which we denote $\CC\ni z\mapsto\tau_z(a)\in A$, thus defining the operator $\tau_z\colon\cA\to\cA$. It is known that a continuous one-parameter group of automorphisms of a \cst-algebra is determined by $\tau_z$ for any $z\in\CC\setminus\RR$ (\cite{CioranescuZsido}) and hence $\tau_\ii$ is called the \emph{analytic generator} of $\tau$. Extending this terminology we will refer to $\tau_{-\ii/2}$ as the analytic generator of $\tau$.

\begin{definition}\label{def-R}
We define the \emph{unitary antipode} $R\colon\cA\to\cA$ of $\SU_q(2)$ by $R=S\comp\tau_{\ii/2}\comp\vartheta$.
\end{definition}

\begin{theorem}\label{thm-prop3}
The map $R$ is unital, linear, and $*$-anti-multiplicative. It satisfies
\begin{equation}\label{eq-eq63}
R(\alpha)=\alpha^*,\quad
R(\alpha^*)=\alpha,\quad
R(\gamma)=-\zeta^{\frac{1}{2}}\tfrac{\overline{q}}{|q|}\gamma,\quad
R(\gamma^*)=-\zeta^{\frac{1}{2}}\tfrac{\overline{q}}{|q|}\gamma^*.
\end{equation}
Moreover $R$ is invertible with $R^2=\id$.

We have $\bh\comp R=\bh$ and $R$ is $\TT$-equivariant. Furthermore $R$ extends to a bounded map on the \cst-algebra $A$ and a bounded, normal map on the von Neumann algebra $M$ with analogous properties.
\end{theorem}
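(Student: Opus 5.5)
The plan is to reduce everything to how $R$ acts on the basis \eqref{eq-bazacA}, and then compare with a genuine $*$-anti-automorphism of $A$ obtained from the universal property.

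\emph{Step 1: values on generators.} The generators are homogeneous: $\deg\alpha=0$, $\deg\gamma=1$, $\deg\gamma^*=-1$. Hence $\vartheta(\alpha)=\alpha$ and $\vartheta(\gamma^{(*)})=\zeta^{1/2}\gamma^{(*)}$. The analytic extension gives $\tau_{\ii/2}(\gamma)=|q|^{-1}\gamma$ and $\tau_{\ii/2}(\gamma^*)=|q|\gamma^*$, while $\alpha$ is fixed. Combining these with \eqref{eq-eq41}:
\[
R(\gamma)=\zeta^{1/2}|q|^{-1}(-\overline{q}\gamma),\qquad R(\gamma^*)=\zeta^{1/2}|q|(-q^{-1}\gamma^*).
\]
Since $|q|/q=\overline{q}/|q|$, this gives \eqref{eq-eq63}. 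Unitality and linearity are clear.

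\emph{Step 2: anti-multiplicativity.} Take homogeneous $a,b$. Then
\[
\vartheta(ab)=\zeta^{\deg a\deg b}\vartheta(a)\vartheta(b)
\]
by Lemma \ref{lem-lemma18}. The map $\tau_{\ii/2}$ is multiplicative and degree preserving, since it is a scalar on each degree component. Also $S(xy)=\zeta^{-\deg x\deg y}S(y)S(x)$. The two $\zeta$ factors cancel, so $R(ab)=R(b)R(a)$.

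\emph{Step 3: $*$-preservation.} Because $R$ is anti-multiplicative, it suffices to check $R(x^*)=R(x)^*$ on generators, and then extend inductively. Using $(ab)^*=b^*a^*$, we get
\[
R((ab)^*)=R(a^*)R(b^*)=R(a)^*R(b)^*=(R(b)R(a))^*.
\]
On $\gamma$ we need
\[
\overline{-\zeta^{1/2}\overline{q}/|q|}=-\zeta^{-1/2}q/|q|.
\]
Writing $q=|q|e^{\ii\theta}$, the right-hand side $-\zeta^{1/2}\overline{q}/|q|$ is $\pm1$ (a real sign), because $\zeta^{1/2}=\pm e^{\ii\theta}$ with the sign fixed by the principal branch. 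So $R(\gamma)=c\gamma$ and $R(\gamma^*)=c\gamma^*$ with $c\in\{\pm1\}$, and $*$-compatibility follows. The same reality of $c$ gives $R^2=\id$ on generators, and hence everywhere, because $R^2$ is multiplicative.

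\emph{Step 4: invariance and equivariance.} $\TT$-equivariance holds because $R$ multiplies $\gamma$ and $\gamma^*$ by the same scalar and fixes degrees. For $\bh\comp R=\bh$, compose $\bh\comp S=\bh$ (Proposition \ref{prop-hRightInv}), $\bh\comp\tau_{\ii/2}=\bh$ (by analytic continuation of $\bh\comp\tau_t=\bh$ on $\cA$), and $\bh\comp\vartheta=\bh$ (Lemma \ref{lem-lemma18}).

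\emph{Step 5: boundedness, the main point.} The generators $\alpha^*$ and $c\gamma$ (with $c=\pm1$) satisfy the relations \eqref{eq-SUq2rels} in the opposite algebra $A^{\mathrm{op}}$. To see this, take $y=\alpha^*$, $w=c\gamma$ with product $x\cdot_{\mathrm{op}}y=yx$. We need $y^*\cdot_{\mathrm{op}} y=\alpha^*\alpha$ and $y\cdot_{\mathrm{op}} w=\overline{q}\,w\cdot_{\mathrm{op}} y$. The latter means $\gamma\alpha^*=\overline{q}\alpha^*\gamma$, which is the adjoint of $\alpha\gamma^*=q\gamma^*\alpha$. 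The universal property of $A$ then gives a $*$-homomorphism $A\to A^{\mathrm{op}}$, that is, a bounded $*$-anti-homomorphism extending $R$. It is involutive, hence an anti-automorphism. Since it preserves $\bh$, it is implemented by the antiunitary $\Lambda_\bh(a)\mapsto\Lambda_\bh(R(a)^*)$ composed with conjugation, or alternatively via $A^{\mathrm{op}}$ acting on the conjugate GNS space. Either way it extends to a normal map on $M$.

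I expect the only delicate points to be the branch bookkeeping showing $c$ is real, and the verification of the $\alpha\alpha^*$ relation in $A^{\mathrm{op}}$.
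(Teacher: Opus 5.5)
Your proposal is correct. Steps 1--4 are essentially the paper's argument:
\begin{itemize}
\item the same evaluation on generators;
\item the same cancellation of $\zeta^{\deg(a)\deg(b)}$ (from $\vartheta$) against $\zeta^{-\deg(a)\deg(b)}$ (from $S$);
\item the same reduction of $*$-compatibility and of $R^2=\id$ to generators (your observation that $c=-\zeta^{1/2}\overline{q}/|q|\in\{\pm1\}$ is equivalent to the paper's identity $\zeta^{-1/2}q=\zeta^{1/2}\overline{q}$);
\item invariance of $\bh$ under each of $S$, $\tau_{\ii/2}$ and $\vartheta$.
\end{itemize}

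The genuine difference is Step 5, and it is a sound alternative. The paper does not use the universal property there. It defines the antiunitary $\cJhat\Lambda_\bh(x)=\Lambda_\bh(R(x)^*)$ and shows:
\begin{itemize}
\item it is isometric, via $\bh(R(x)R(x)^*)=\bh(R(x^*x))=\bh(x^*x)$;
\item it is involutive, since $R^2=\id$;
\item $R(x)=\cJhat x^*\cJhat$ for $x\in\cA$.
\end{itemize}
This single formula then extends $R$ to both $A$ and $M$. For $A$ this implicitly uses faithfulness of $\bh$: $A$ acts faithfully on the GNS space, and the isometric map $x\mapsto\cJhat x^*\cJhat$ carries the norm closure of $\cA$ into $A$.

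Your construction instead builds a unital $*$-homomorphism $A\to A^{\mathrm{op}}$ directly from the relations. Your check of those relations is right, including $\gamma\alpha^*=\overline{q}\alpha^*\gamma$ as the adjoint of $\alpha\gamma^*=q\gamma^*\alpha$. This gives the \cst-level extension more directly, as a contractive $*$-anti-automorphism, with no reference to $\bh$ or its faithfulness.

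What your route does not give is normality on $M$; for that you fall back on the paper's spatial argument. Your sketch there (``composed with conjugation'') should be made explicit. State that $J\Lambda_\bh(x)=\Lambda_\bh(R(x)^*)$ is an antiunitary involution by the computation above, and that $R(x)=Jx^*J$; this is what yields the normal extension.

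A small slip: the identity displayed in your Step 3 is just complex conjugation. What is actually needed, and what you then prove, is that $c$ is real.
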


\begin{remark}
With the conventions we chose for the powers of $\zeta$ we have the following: if $\zeta\neq 1$ (i.e.~$q$ is not real) then $\zeta^{\frac{1}{2}}\tfrac{\overline{q}}{|q|}=\operatorname{sgn}(\operatorname{Re}{q})$ except when $\operatorname{Re}{q}=0$. This formula also holds for $q=|q|\ee^{\pm\ii\frac{\pi}{2}}$ if we declare $\operatorname{sgn}(\operatorname{Re}{q})=\pm 1$ in those cases. This means that the formulas for the values of the unitary antipode on the generators $\alpha$, $\gamma$, $\alpha^*$ and $\gamma^*$ are in fact identical to those known from the case of real $q$ (see e.g.~\cite[Proof of Proposition 7.4]{modular}).
\end{remark}

\begin{proof}[Proof of Theorem \ref{thm-prop3}]
The linear map $R$ clearly is equivariant, as all the involved maps are $\TT$-equivariant. A similar argument shows that $R$ preserves the Haar measure. The equalities \eqref{eq-eq63} hold by construction.

Now take homogeneous $a,b\in\cA$. Then using already established properties we obtain
\begin{align*}
R(ab)
&=S\Bigl(\tau_{\ii/2}\bigl(\vartheta(ab)\bigr)\Bigr)
=\zeta^{\deg(a)\deg(b)}S\Bigl(\tau_{\ii/2}\bigl(\vartheta(a)\vartheta(b)\bigr)\Bigr)\\
&=\zeta^{\deg(a)\deg(b)}S\Bigl(\tau_{\ii/2}\bigl(\vartheta(a)\bigr)\tau_{\ii/2}\bigl(\vartheta(b)\bigr)\Bigr)\\
&=\zeta^{\deg(a)\deg(b)}
\zeta^{-\deg(\tau_{\ii/2}\vartheta(a))\deg(\tau_{\ii/2}\vartheta(b))}
S\Bigl(\tau_{\ii/2}\bigl(\vartheta(b)\bigr)\Bigr)
S\Bigl(\tau_{\ii/2}\bigl(\vartheta(a)\bigr)\Bigr)\\
&=S\Bigl(\tau_{\ii/2}\bigl(\vartheta(b)\bigr)\Bigr)
S\Bigl(\tau_{i/2}\bigl(\vartheta(a)\bigr)\Bigr)=R(b)R(a).
\end{align*}

Assume that for $a,b\in\cA$ we have $R(a)^*=R(a^*)$ and $R(b)^*=R(b^*)$. Then
\[
R(ab)^*=\bigl(R(b)R(a)\bigr)^*=R(a)^*R(b)^*=R(a^*)R(b^*)=R(b^* a^*)
=R\bigl((ab)^*\bigr).
\]
Since also
\[
R(\I)=\I,\quad
R(\alpha)^*=R(\alpha^*),\quad
R(\alpha^*)^*=R(\alpha)
\]
and
\begin{align*}
R(\gamma)^*
&=\bigl(-\zeta^{\frac{1}{2}}\tfrac{\overline{q}}{|q|}\gamma\bigr)^*
=-\zeta^{-\frac{1}{2}}\tfrac{ q}{|q|}\gamma^*
=-\zeta^{\frac{1}{2}}\tfrac{\overline{q}}{|q|}\gamma^*
=R(\gamma^*)\\
R(\gamma^*)^*
&=\bigl(-\zeta^{\frac{1}{2}}\tfrac{\overline{q}}{|q|}\gamma^*\bigr)^*
=-\zeta^{-\frac{1}{2}}\tfrac{q}{|q|}\gamma
=-\zeta^{\frac{1}{2}}\tfrac{\overline{q}}{|q|}\gamma
=R(\gamma)
\end{align*}
(as $\zeta^{-\frac{1}{2}}q=\zeta^{-\frac{1}{2}} \zeta\, \overline{q}=\zeta^{\frac{1}{2}}\overline{q}$) and we conclude that $R$ is $*$-preserving.

Next, since clearly $R^2(\alpha)=\alpha$, $R^2(\alpha^*)=\alpha^*$ and
\[
R^2(\gamma)=\bigl(-\zeta^{\frac{1}{2}}\tfrac{\overline{q}}{|q|}\bigr)^2\gamma
=\zeta\tfrac{\overline{q}\,\overline{q}}{q\overline{q}}\gamma=\gamma,\qquad
R^2(\gamma^*)=R^2(\gamma)^*=\gamma^*
\]
we have $R^2=\id$.

Next let us argue that $R$ extends to continuous map on $A$ and a normal, continuous map on $M$. Define
an antilinear map $\cJhat\colon\Ltwo(\SU_q(2))\to\Ltwo(\SU_q(2))$ via
\[
\cJhat\Lambda_\bh(x)=\Lambda_\bh\bigl(R(x)^*\bigr),\qquad x\in\cA.
\]
First, since
\[
\bigl\|\Lambda_\bh\bigl(R(x)^*\bigr)\bigr\|^2
=\bh\bigl(R(x)R(x)^*\bigr)=\bh\bigl(R(x^*x)\bigr)
=\bh(x^*x)=\bigl\|\Lambda_\bh(x)\bigr\|^2,
\]
we see that $\cJhat$ extends to a well defined, antilinear isometry. It has dense image, hence $\cJhat$ is antiunitary. As $R^2=\id$ we have $\cJhat^*=\cJhat^{-1}=\cJhat$.

For $x,y\in\cA$ we calculate:
\begin{align*}
\cJhat x^*\cJhat\Lambda_\bh(y)
&=\cJhat\Lambda_\bh\bigl(x^*R(y^*)\bigr)
=\Lambda_\bh\Bigl(R\bigl(x^*R(y)^*\bigr)^*\Bigr)\\
&=\Lambda_\bh\Bigl(R\bigl(R(y)x\bigr)\Bigr)
=\Lambda_\bh\bigl(R(x)y\bigr)=R(x)\Lambda_\bh(y)
\end{align*}
which proves that for any $x\in\cA$ we have
\begin{equation}\label{eq-eq37}
R(x)=\cJhat x^*\cJhat.
\end{equation}
Now we can simply define the extension of $R$ to $A$ and $M$ by \eqref{eq-eq37}. Clearly these maps are (normal), unital, linear, $*$-preserving, anti-multiplicative and involutive.
\end{proof}

\begin{remark}
The expected equality
\begin{equation}\label{eq-fail}
\Delta_A\comp R=(R\boxtimes R)\comp\chi^\boxtimes\comp\Delta_A
\end{equation}
(with ``obvious'' $R\boxtimes R$) does \emph{not} hold. Indeed, we have
\begin{align*}
\Delta_A\bigl(R(\alpha)\bigr)&=\Delta_A(\alpha^*)
=\bigl(\iota_1(\alpha)\iota_2(\alpha)-q\iota_1(\gamma^*)\iota_2(\gamma)\bigr)^*
=\iota_2(\alpha^*)\iota_1(\alpha^*)-\overline{q}\iota_2(\gamma^*)\iota_1(\gamma)\\
&=\iota_1(\alpha^*)\iota_2(\alpha^*)-\overline{q}
\zeta^{-\deg(\gamma^*)\deg(\gamma)}\iota_1(\gamma)\iota_2(\gamma^*)
=\iota_1(\alpha^*)\iota_2(\alpha^*)-q\iota_1(\gamma)\iota_2(\gamma^*)
\end{align*}
while
\begin{align*}
\bigl((R\boxtimes R)\comp\chi^\boxtimes\:\comp\:&\Delta_A\bigr)(\alpha)
=(R\boxtimes R)\Bigl(\chi^\boxtimes\bigl(
\iota_1(\alpha)\iota_2(\alpha)-q\iota_1(\gamma^*)\iota_2(\gamma)\bigr)\Bigr)\\
&=(R\boxtimes R)\bigl(
\iota_1(\alpha)\iota_2(\alpha)
-q\zeta^{-\deg(\gamma)\deg(\gamma^*)}
\iota_1(\gamma)\iota_2(\gamma^*)
\bigr)\\
&=(R\boxtimes R)\bigl(
\iota_1(\alpha)\iota_2(\alpha)-q\zeta\,\iota_1(\gamma) \iota_2(\gamma^*)
\bigr)\\
&=\iota_1(\alpha^*)\iota_2(\alpha^*)
-q\zeta\,\iota_1\bigl(-\zeta^{\frac{1}{2}}\tfrac{\overline{q}}{|q|}\gamma\bigr)
\iota_2\bigl(-\zeta^{\frac{1}{2}}\tfrac{\overline{q}}{|q|}\gamma^*\bigr)\\
&=\iota_1(\alpha^*)\iota_2(\alpha^*)-q\zeta\,\tfrac{q}{\overline{q}}
\,\tfrac{\overline{q}\overline{q}}{q\overline{q}}
\iota_1(\gamma)\iota_2(\gamma^*)
=\iota_1(\alpha^*)\iota_2(\alpha^*)-q\zeta\iota_1(\gamma)\iota_2(\gamma^*).
\end{align*}
The expected equalities hold for the antipode (Proposition \ref{prop-lemma12}), and for analytic generator of the scaling group (Proposition \ref{prop-tau}\eqref{prop-tau1}). So the failure of
\eqref{eq-fail} is due to properties of the residual mapping $\vartheta$.
\end{remark}

\begin{appendices}

\section{Appendices}

\subsection{Braided tensor products}\label{app-BraidedTP}

In this section we will show that the definitions of braided tensor product given in \cite{BraidedSU2} and \cite{MeyerRoyWoronowiczI,MeyerRoyWoronowiczII} are equivalent in the situation when both constructions apply. In the end, we want to be compatible with the general von Neumann algebraic approach to braided tensor product introduced in \cite{DeCommerKrajczok}. It is very close to the conventions of \cite{MeyerRoyWoronowiczII}, however \cite{DeCommerKrajczok} uses left actions, whereas \cite{MeyerRoyWoronowiczI,MeyerRoyWoronowiczII} use right actions. This difference is not essential, see \cite[Section 9.2]{DeCommerKrajczok}, where it is explained how to translate construction of \cite{DeCommerKrajczok} to the setting of right actions. We note that \cite{BraidedSU2} also uses left actions.

The definition of the braided tensor product according to \cite{BraidedSU2} was already recalled in Section \ref{sect-SUq2}. It depends on a fixed complex number $\zeta$ of absolute value $1$ (in the context of $\SU_q(2)$, one takes it to be $\zeta=q/\overline{q}$, where $q$ is the deformation parameter). The product is a bifunctor on the category of \cst-algebras equipped with an action of $\TT$ with equivariant morphisms of \cst-algebras. In order to distinguish it from the more general tensor product described below, in this section we will denote this braided tensor product by $\boxtimes_\zeta$. The inclusion morphisms for this construction of the braided tensor product will be in this section denoted by $\jmath_\sX$ and $\jmath_\sY$.

The approach to the notion of braided tensor product used in \cite{MeyerRoyWoronowiczII, MeyerRoyWoronowiczI}, introduced in \cite[Section 3.2]{MeyerRoyWoronowiczI} (in \cst-setting) can be realized on various levels of generality, one of which is to consider \cst-algebras endowed with an action of a quasitriangular quantum group. A special case of this is the category of \cst-algebras with an action of the Drinfeld double $\D(\GG)$ (\cite[Section 4]{PodlesWoronowiczQLG}, \cite{Yamanouchi}) of a fixed locally compact quantum group $\GG$. The even more special case relevant to the braided quantum group $\SU_q(2)$ is that of the category of \cst-algebras with an action of $\D(\TT)$ with its canonical $\mathrm{R}$-matrix $\hh{R}\in\M(\C_0(\hh{\D(\TT)})\tens\C_0(\hh{\D(\TT)}))$
\begin{equation}\label{eq-eq25}
\hh{R}=\ww^\TT_{23}
\end{equation}
seen as an element of $\M(\mathrm{c}_0(\ZZ)\tens\C(\TT)\tens\mathrm{c}_0(\ZZ)\tens\C(\TT))=\M(\C_0(\hh{\D(\TT)})\tens\C_0(\hh{\D(\TT)}))$.

Recall that an action of $\D(\TT)$ is equivalent to existence of the so called Yetter-Drinfeld structure (this is proved in \cite[Proposition 3.2]{NestVoigt}, and holds also for von Neumann algebras \cite[Proposition 2.9]{DeCommerKrajczok}), which in this case of the classical, abelian group $\TT$ is equivalent to two commuting actions of $\TT$ and $\ZZ=\hh{\TT}$. In order to keep the notation consistent with the one used in Section \ref{sect-SUq2} for a \cst-algebra $\sX$ we will write $\brho^\sX$ for the action of $\D(\TT)$ and $\rho^\sX$, $\tilde{\rho}^\sX$ for the corresponding actions of $\TT$ and $\ZZ$ respectively. Clearly $\rho^\sX$ and $\tilde{\rho}^\sX$ are restrictions of $\brho$ to the subgroups $\TT$ and $\ZZ$ of $\D(\TT)$ respectively.

The authors of \cite{MeyerRoyWoronowiczI, MeyerRoyWoronowiczII} use a different definition than von Neumann-algebraic framework \cite{DeCommerKrajczok}, however in \cite[Section 4]{MeyerRoyWoronowiczII} there is a convenient description of the braided tensor product, which allows us to pass to left conventions used in \cite{DeCommerKrajczok} and \cite{BraidedSU2}. Let $\sX$, $\sY$ be \cst-algebras with (left) $\D(\TT)=\TT\times\ZZ$ actions and $(\ph,U^{\sX})$, $(\psi,U^{\sY})$ faithful covariant representations on Hilbert spaces $\sH$ and $\sK$ respectively, i.e.~$\ph$ and $\psi$ are faithful, non-degenerate representations of $\sX$ and $\sY$ on $\sH$ and $\sK$ respectively and $U^\sX$, $U^\sY$ are representations of $\D(\TT)$ implementing the actions (hence $U^\sX=(\id\tens\phi_\sX)\wW^{\D(\TT)}$, $U^\sY=(\id\tens\phi_\sY)\wW^{\D(\TT)}$ for uniquely determined representations of $\C_0^\uu(\hh{\D(\TT)})=\mathrm{c}_0(\ZZ)\tens\C(\TT)$ on $\sH$ and $\sK$). Then we define $\sX\boxtimes\sY$ as
\[
\sX\boxtimes\sY=\overline{\iota_\sX(\sX)\iota_\sY(\sY)}\subseteq\B(\sH\tens\sK),
\]
where
\begin{equation}\label{eq-eq7}
\begin{aligned}
\iota_\sX(x)
&=\bigl((\phi_\sX\tens\phi_\sY)\hh{R}^\uu\bigr)(x\tens\I)\bigl((\phi_\sX\tens\phi_\sY)\hh{R}^\uu\bigr)^*,&\quad x&\in\sX,\\
\iota_\sY(y)&=\I\tens y,&\quad y&\in\sY
\end{aligned}
\end{equation}
(the apparent difference between \eqref{eq-eq7} and formulas found in \cite{MeyerRoyWoronowiczII} stem from the left/right difference, here we have translated conventions to the left setting, see the discussion in \cite[Section 9.2]{DeCommerKrajczok}). We note that faithful covariant representations always exist\footnote{Recall that \cst-algebraic actions are always assumed to be injective, and satisfy Podle\'s condition.} and in our case of $\D(\TT)$ we have $\hh{R}^u=\hh{R}$.

In what follows, all results are translated to the setting of left actions etc. As described at the end of \cite[Section 5.2]{MeyerRoyWoronowiczI}, an action $\brho^\sX\in\Mor(\sX,\C_0(\D(\TT))\tens\sX)$ can be seen as an equivariant morphism (with action on $\C_0(\D(\TT))\tens\sX$ given by $ \Delta_{D(\TT)}\tens\id$), and then by \cite[Proposition 5.6]{MeyerRoyWoronowiczI} we have an embedding
\[
\resizebox{\textwidth}{!}{\ensuremath{\displaystyle
\brho^\sX\boxtimes\brho^\sY\colon\sX\boxtimes\sY\longrightarrow
\M\bigl((\C_0(\D(\TT))\tens\sX)\boxtimes(\C_0(\D(\TT))\tens\sY)\bigr)
\simeq\M\bigl((\C_0(\D(\TT))\boxtimes\C_0(\D(\TT)))\tens\sX\tens\sY\bigr),
}}
\]
where the isomorphism (implemented by the flip on legs $2$ and $3$) holds because the action of $\D(\TT)$ is trivial on the second factor.
Concretely
\begin{equation}\label{eq-dodatkowe}
\begin{aligned}
(\brho^\sX\boxtimes\brho^\sY)(\iota_\sX(x)\iota_\sY(y))
&=\iota_{\C_0(\D(\TT))\tens\sX}\bigl(\brho^\sX(x)\bigr)
\iota_{\C_0(\D(\TT))\tens\sY}\bigl(\brho^\sY(y)\bigr)\\
&=\hh{R}_{13}\brho^\sX(x)_{12}\hh{R}^*_{13}\brho^\sY(y)_{34}
\longmapsto\hh{R}_{12}\brho^\sX(x)_{13}\hh{R}^*_{12}\brho^\sY(y)_{24}\\
&=\bigl((\biota_1\tens\id)\brho^\sX(x)\bigr)_{123}\bigl((\biota_2\tens\id)\brho^\sY(y)\bigr)_{124}.
\end{aligned}
\end{equation}
 Here we write $\biota_1,\biota_2\colon \C_0(D(\TT))\to\C_0(D(\TT))\boxtimes \C_0(D(\TT))$ for the canonical embeddings. Thus
\begin{equation}\label{eq-eq8}
\sX\boxtimes\sY\simeq\overline{\bigl((\biota_1\tens\id)\brho^\sX(\sX)\bigr)_{123}
\bigl((\biota_2\tens\id)\brho^\sY(\sY)\bigr)_{124}}.
\end{equation}

In the special case of $\sX=\sY=\C_0(\D(\TT))$ note that the action of $\D(\TT)$ on $\C_0(\D(\TT))$ is given by the comultiplication, so $\phi_{\C_0(\D(\TT))}=\id$ and using \eqref{eq-eq25}
\begin{align*}
\C_0(\D(\TT))&\!\!\;\boxtimes\C_0(\D(\TT))\\
&=\operatorname{\overline{span}}
\bigl\{
\ww^{\TT}_{23}(x\tens\I\tens\I){\ww^{\TT}_{23}}^*(\I\tens\I\tens y)
\,\bigr|\bigl.\,
x,y\in\C(\TT)\tens\mathrm{c}_0(\ZZ)
\bigr\}\\
&=\C(\TT)\tens
\operatorname{\overline{span}}
\bigl\{
\ww^{\TT}(x\tens\I){\ww^\TT}^*(\I\tens y)
\,\bigr|\bigl.\,
x\in\mathrm{c}_0(\ZZ),\:y\in\C(\TT)
\bigr\}
\tens\mathrm{c}_0(\ZZ).
\end{align*}

Now let $\sX$ and $\sY$ be $\TT$-\cst-algebras. We will describe how to turn them into $\D(\TT)$-\cst-algebras in a canonical way, so that the general braided tensor product $\boxtimes$ can be identified with $\boxtimes_\zeta$ (see Remark \ref{rem1}). Let us fix $\zeta\in\TT$. Recall from Section \ref{sect-boson} that $\bn$ denotes the canonical generator of $\mathrm{c}_0(\ZZ)$.

\begin{proposition}\label{prop-lemma1}
There is a unique action $\tilde{\rho}^\sX$ on of $\ZZ$ on $\sX$ such that $\tilde{\rho}^{\sX}(x)=\zeta^{\deg(x)\bn}\tens x$ for all homogeneous $x\in\sX$. The actions $\rho$ and $\tilde{\rho}$ of $\TT$ and $\ZZ$ on $\sX$ commute.
\end{proposition}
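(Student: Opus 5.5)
The idea is to obtain $\tilde{\rho}^\sX$ by restricting the given $\TT$-action along a group homomorphism $\ZZ\to\TT$. Consider $\theta\colon\ZZ\ni n\mapsto\zeta^{-n}\in\TT$, which is a continuous homomorphism. Using the convention $\alpha_g=(\ev_{g^{-1}}\tens\id)\comp\alpha$, the $\TT$-action is $\mu\mapsto\rho^\sX_\mu$. We define automorphisms $\tilde{\rho}^\sX_n=\rho^\sX_{\theta(n)}$ for $n\in\ZZ$. Since $\theta$ is a homomorphism, this is an action of $\ZZ$ by $*$-automorphisms. For homogeneous $x$ of degree $d$ we have $\rho^\sX_\mu(x)=\mu^{-d}x$, so
\[
\tilde{\rho}^\sX_n(x)=\zeta^{nd}x .
\]
The sign choice in $\theta$ is dictated by the requirement that, reading off via $\ev_{-n}$, the morphism $\tilde{\rho}^\sX(x)\in\M(\mathrm{c}_0(\ZZ)\tens\sX)$ equals $\zeta^{d\bn}\tens x$. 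I will fix that sign so that the stated formula holds exactly. This is also consistent with $\tilde{\rho}^\cA_n(\gamma)=\zeta^{-n}\gamma$ used in Section \ref{sect-BrFlip}.

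Next, I translate the family $(\tilde{\rho}^\sX_n)_n$ into a morphism $\tilde{\rho}^\sX\in\Mor(\sX,\mathrm{c}_0(\ZZ)\tens\sX)$. Identify $\M(\mathrm{c}_0(\ZZ)\tens\sX)$ with bounded strictly continuous (here arbitrary bounded) functions $\ZZ\to\M(\sX)$, and set $\tilde{\rho}^\sX(x)=(n\mapsto\tilde{\rho}^\sX_{-n}(x))$. This is an injective $*$-homomorphism; injectivity follows by evaluating at $n=0$. Coassociativity $(\id\tens\tilde{\rho}^\sX)\comp\tilde{\rho}^\sX=(\Delta_\ZZ\tens\id)\comp\tilde{\rho}^\sX$ follows from the group law. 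Non-degeneracy and the Podle\'s condition hold because $(f\tens\I)\tilde{\rho}^\sX(x)$ for finitely supported $f$ already spans $\mathrm{c}_{00}(\ZZ)\algtens\sX$: indeed $\delta_n\tens\tilde{\rho}^\sX_{-n}(x)$ ranges over $\delta_n\tens\sX$ as $x$ runs over $\sX$.

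Uniqueness comes from density of homogeneous elements. The spectral subspaces $\sX_d$ span a dense subspace of $\sX$, via Fejér/Cesàro approximation by the conditional expectations $x\mapsto\int_\TT\mu^{d}\rho^\sX_\mu(x)\,d\mu$. Any two morphisms agreeing on this dense subspace coincide, since $*$-homomorphisms are contractive.

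Commutation means $\rho^\sX_\mu\comp\tilde{\rho}^\sX_n=\tilde{\rho}^\sX_n\comp\rho^\sX_\mu$, which is immediate because both are automorphisms from the same abelian group action. Equivalently, both act on $\sX_d$ by scalars. In morphism language this reads $(\id\tens\tilde{\rho}^\sX)\comp\rho^\sX=(\sigma\tens\id)\comp(\id\tens\rho^\sX)\comp\tilde{\rho}^\sX$ with the flip $\sigma$, and it can be checked on homogeneous elements, where both sides equal $\bz^d\tens\zeta^{d\bn}\tens x$.

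The only step needing care is the bookkeeping of conventions: the inverse in $\alpha_g$, and whether the degree is defined by $\rho(x)=\bz^d\tens x$. This determines the sign in $\theta$. The analytic points are routine.
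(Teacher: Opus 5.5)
Your argument is essentially the paper's. The paper defines $\tilde\rho^\sX(x)$ as the function $n\mapsto(\ev_{\zeta^n}\tens\id)\rho^\sX(x)$, i.e.\ it restricts $\rho^\sX$ along the homomorphism $\ZZ\ni n\mapsto\zeta^n\in\TT$, and gets uniqueness from density of homogeneous elements; the action axioms, the Podle\'s condition and the commutation, which you verify, are left there as ``clear''.

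The sign you actually committed to is wrong, and your consistency claim with Section~\ref{sect-BrFlip} does not hold for it. With $\theta(n)=\zeta^{-n}$ you get $\tilde\rho^\sX_n(x)=\zeta^{nd}x$, hence $\tilde\rho^\sX(x)=(n\mapsto\zeta^{-nd}x)=\zeta^{-d\bn}\tens x$ and $\tilde\rho^\cA_n(\gamma)=\zeta^{n}\gamma$. The correct choice is $\theta(n)=\zeta^{n}$: it gives $\tilde\rho^\sX_n=\rho^\sX_{\zeta^n}$, which matches $k\cdot a=\rho^\cA_{\zeta^k}(a)$ in Section~\ref{sect-BrHopf}, and $\tilde\rho^\sX(x)=\zeta^{d\bn}\tens x$ as the statement requires.
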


\begin{proof}
Homogeneous elements are dense, hence if the claimed action of $\ZZ$ exists, it is unique. We let $\tilde{\rho}^\sX$ be the map taking $x\in\sX$ to the element
\[
\ZZ\ni n\longmapsto(\operatorname{ev}_{\zeta^n}\tens\id)\rho^\sX(x)\in\sX
\]
of $\M(\mathrm{c}_0(\ZZ)\tens\sX)$.
This is a well defined non-degenerate $*$-homomorphism and it defines an action of $\ZZ$ on $\sX$. Clearly actions of $\ZZ$ and $\TT$ commute and $\tilde{\rho}^\sX(x)=\zeta^{\deg(x)\bn}\tens x$ for homogeneous $x$.
\end{proof}

The action $\brho$ of $\D(\TT)$ introduced via Proposition \ref{prop-lemma1} has a very simple expression on homogeneous elements:
\[
\brho^\sX(x)
=\bz^{\deg(x)}\tens\zeta^{\deg(x)\bn}\tens x\in\M\bigl(\C(\TT)\tens\mathrm{c}_0(\ZZ)\tens\sX\bigr),
\]
where, as defined in Section \ref{sect-SUq2}, $\bz\in\C(\TT)$ is the identity function.

For further discussion we will need the isomorphism $\C(\TT^2_\zeta)\simeq\C(\TT)\boxtimes\C(\TT)$ established it in Proposition \ref{prop-lemma2} below. Here the action of $\TT$ on both copies of $\C(\TT)$ is given by the comultiplication, and the $\ZZ$-action is the one from Proposition \ref{prop-lemma1}. Then, by the description above
\begin{equation}\label{eq10}
\resizebox{0.9\textwidth}{!}{\ensuremath{\displaystyle
\C(\TT)\!\!\:\boxtimes\C(\TT)=\operatorname{\overline{span}}
\bigl\{
\bigl((\phi_{\C(\TT)}\tens\phi_{\C(\TT)})\ww^\TT_{23}\bigr)(x\tens\I)
\bigl((\phi_{\C(\TT)}\tens\phi_{\C(\TT)}){\ww^\TT_{23}}\bigr)^*(\I\tens y)
\,\bigr|\bigl.\,
x,y\in\C(\TT)
\bigr\},
}}
\end{equation}
where the map $\phi_{\C(\TT)}\colon\C_0^\uu(\hh{\D(\TT)})=\mathrm{c}_0(\ZZ)\tens\C(\TT)\to\B(\Ltwo(\TT))$ is associated with the action of $\D(\TT)$ on $\C(\TT)$. More precisely
\[
\brho^{\C(\TT)}(x)=\bigl((\id\tens\phi_{\C(\TT)})\wW^{\D(\TT)}\bigr)^*(\I\tens x)
\bigl((\id\tens\phi_{\C(\TT)})\wW^{\D(\TT)}\bigr),\qquad x\in\C(\TT).
\]
It can be shown by direct calculations that $\phi_{\C(\TT)}$ is the unique $*$-homomorphism taking $f\tens\bz$ to $f\zeta^{-\bn}$ for any $f\in\mathrm{c}_0(\ZZ)$. Denoting the morphism from $\C(\TT)$ to $\mathrm{c}_0(\ZZ)$ taking $\bz$ to $\zeta^{-\bn}$ by $\theta$ we can write $\phi_{\C(\TT)}(f\tens\bz)$ as $f\theta(\bz)$ and using this we can make \eqref{eq10} more explicit:
\begin{equation}\label{eq-eq12}
\C(\TT)\boxtimes\C(\TT)
=\operatorname{\overline{span}}
\bigl\{
\bigl((\theta\tens\id)\ww^{\TT}\bigr)
(x\tens\I)
\bigl((\theta\tens\id)\ww^{\TT}\bigr)^*
(\I\tens y)
\,\bigr|\bigl.\,
x,y\in\C(\TT)
\bigr\}.
\end{equation}

\begin{proposition}\label{prop-lemma2}
There is an isomorphism of \cst-algebras $\C(\TT)\boxtimes\C(\TT)\simeq\C(\TT^2_\zeta)$ which maps the canonical generators of copies of $\C(\TT)$ inside $\C(\TT)\boxtimes\C(\TT)$ to the standard generators of $\C(\TT^2_\zeta)$. In particular $\iota_1(\bz)\iota_2(\bz)=\zeta\,\iota_2(\bz)\iota_1(\bz)$ in $\C(\TT)\boxtimes\C(\TT)$.
\end{proposition}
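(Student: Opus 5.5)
Using the explicit description \eqref{eq-eq12}, $\C(\TT)\boxtimes\C(\TT)$ is the closed span of products $\iota_1(x)\iota_2(y)$ inside $\B(\Ltwo(\TT)\tens\Ltwo(\TT))$. Here
\[
\iota_1(x)=W(x\tens\I)W^*,\qquad \iota_2(y)=\I\tens y,\qquad W=(\theta\tens\id)\ww^{\TT}.
\]
The plan is to compute the two generators $u=\iota_1(\bz)$ and $v=\iota_2(\bz)$ concretely and check three things: they are unitaries satisfying $uv=\zeta vu$; they generate $\C(\TT)\boxtimes\C(\TT)$; and the \cst-algebra they generate is isomorphic to $\C(\TT^2_\zeta)$.

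\emph{Commutation relation.} Realise $\ww^\TT$ as the element of $\M(\C(\TT)\tens\mathrm{c}_0(\ZZ))$, or its flip depending on conventions, given by $(\lambda,n)\mapsto\lambda^{n}$. Applying $\theta\colon\bz\mapsto\zeta^{-\bn}$ in the first leg gives a diagonal unitary $W$ on $\Ltwo(\TT)\tens\Ltwo(\TT)$, with $\bn$ acting on the Fourier basis of the second copy. On $e_k\tens e_l$ one gets $W(e_k\tens e_l)=\zeta^{\pm kl}e_k\tens e_l$ (the sign depends on conventions). Hence $u=W(\bz\tens\I)W^*$ acts by $e_k\tens e_l\mapsto\zeta^{\pm l}e_{k+1}\tens e_l$, so $u=\bz\tens\zeta^{\pm\bn}$. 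Since $v=\I\tens\bz$, we get $uv=\zeta^{\pm1}vu$. The sign must come out as $\zeta$; I expect it to be fixed by the conventions $\phi_{\C(\TT)}(f\tens\bz)=f\zeta^{-\bn}$ and $\hh{R}=\ww^\TT_{23}$. This sign bookkeeping is the step I expect to be fiddliest. It also has to be consistent with $\boxtimes_\zeta$ from Section \ref{sect-SUq2}, where $U^nV^m$ and $\deg$ give $\jmath_1(x)\jmath_2(y)=\zeta^{\deg x\deg y}\jmath_2(y)\jmath_1(x)$.

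\emph{Generation.} $\iota_1$ and $\iota_2$ are $*$-homomorphisms, and $\C(\TT)$ is generated by $\bz$. So $\C(\TT)\boxtimes\C(\TT)$, the closed span of $\iota_1(x)\iota_2(y)$, is the closed span of $u^kv^l$ for $k,l\in\ZZ$. This is exactly the \cst-algebra generated by $u$ and $v$; it is automatically a \cst-algebra since it is closed under products and adjoints by the commutation relation. The universal property of $\C(\TT^2_\zeta)$ therefore gives a surjective $*$-homomorphism $\Phi\colon\C(\TT^2_\zeta)\to\C(\TT)\boxtimes\C(\TT)$ with $U\mapsto u$ and $V\mapsto v$.

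\emph{Injectivity.} This is the only non-formal point. I would use the gauge action. The $\TT^2$-action $(\lambda,\mu)\cdot(U,V)=(\lambda U,\mu V)$ on $\C(\TT^2_\zeta)$ has a faithful conditional expectation onto $\CC\I$ given by the canonical trace. On the image side, the same action is implemented spatially: conjugating by $M_\lambda\tens M_\mu$, where $M_\lambda e_k=\lambda^ke_k$, multiplies $u$ by $\lambda$ and $v$ by $\mu$, since $\zeta^{\pm\bn}$ commutes with $M_\mu$. So $\Phi$ is $\TT^2$-equivariant, and the composed expectation on the image is again given by a state. A standard argument then yields injectivity: if $\Phi(a)=0$, then $\Phi(a^*a)=0$, so the trace of $a^*a$ vanishes, so $a=0$. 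Alternatively, one can note that the vector state at $e_0\tens e_0$ recovers the trace, since $\langle u^kv^l e_0\tens e_0,e_0\tens e_0\rangle=\delta_{k,0}\delta_{l,0}$. Either way this gives faithfulness whether or not $\zeta$ is a root of unity, avoiding the simplicity argument that works only for irrational $\zeta$.
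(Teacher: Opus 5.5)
Your plan follows the paper's skeleton: conjugate $\bz\tens\I$ by the diagonal unitary $(\theta\tens\id)\ww^{\TT}$, note that the closed span of $\iota_1(\bz)^k\iota_2(\bz)^l$ is all of $\C(\TT)\boxtimes\C(\TT)$, and get a surjection from $\C(\TT^2_\zeta)$ by universality. The gap is the sign you leave open, which is exactly the content of the statement. If the relation came out as $uv=\zeta^{-1}vu$, then for $\zeta^2\neq1$ there would be no homomorphism $\C(\TT^2_\zeta)\to\C(\TT)\boxtimes\C(\TT)$ with $U\mapsto u$, $V\mapsto v$ at all.

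Moreover, the realisation you tentatively wrote down, $\ww^{\TT}\colon(\lambda,n)\mapsto\lambda^{n}$, gives $(\theta\tens\id)\ww^{\TT}=\zeta^{-\bn\tens\bn}$. Hence $u=\bz\tens\zeta^{-\bn}$, which yields the wrong relation $uv=\zeta^{-1}vu$. The convention fixed in the paper is $\ww^{\ZZ}=(\I\tens\bz)^{\bn\tens\I}$, so that $\ww^{\TT}=\Sigma(\ww^{\ZZ})^*\Sigma=(\bz\tens\I)^{-(\I\tens\bn)}$, i.e.\ $(\lambda,n)\mapsto\lambda^{-n}$. Combined with $\theta(\bz)=\zeta^{-\bn}$ this gives $(\theta\tens\id)\ww^{\TT}=\zeta^{\bn\tens\bn}$ and
\[
\iota_1(\bz)=\zeta^{\bn\tens\bn}(\bz\tens\I)\zeta^{-\bn\tens\bn}=\bz\tens\zeta^{\bn}.
\]
Therefore $\iota_1(\bz)\iota_2(\bz)=\zeta\,\iota_2(\bz)\iota_1(\bz)$, since $\zeta^{\bn}\bz=\zeta\,\bz\,\zeta^{\bn}$ on $\Ltwo(\TT)$. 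You must actually carry out this computation rather than defer it to ``conventions''. With it inserted, the rest of your argument goes through.

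Your injectivity step is a genuinely different route. The paper recognises $U\mapsto\bz\tens\zeta^{\bn}$, $V\mapsto\I\tens\bz$ as the regular representation of the crossed product $\C(\TT)\rtimes\ZZ$ defining the rotation algebra, induced from the faithful multiplication representation of $\C(\TT)$ on $\Ltwo(\TT)$, and quotes its faithfulness. You instead use the faithful canonical trace on the universal algebra $\C(\TT^2_\zeta)$, in one of two ways:
\begin{enumerate}
\item via equivariance of $\Phi$ for the spatially implemented gauge action, so that $\Phi$ intertwines the averaging expectations and is trivially injective on the fixed-point algebra $\CC\I$;
\item more directly, via the vector state at $e_0\tens e_0$: since $\langle u^kv^l(e_0\tens e_0),e_0\tens e_0\rangle=\zeta^{kl}\langle e_k\tens e_l,e_0\tens e_0\rangle=\delta_{k,0}\delta_{l,0}$, this state composed with $\Phi$ is the trace.
\end{enumerate}
Both the paper's argument and yours work uniformly in $\zeta\in\TT$. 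Yours is more elementary and self-contained, needing only faithfulness of the trace (obtained by averaging over the gauge action). The paper's is a one-line appeal to standard crossed-product theory.
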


\begin{proof}
First let us check that the generating unitaries in $\C(\TT)\boxtimes\C(\TT)$ satisfy the commutation relations of standard generators of $\C(\TT^2_\zeta)$: using the expression\footnote{It is easier to see that $\ww^\ZZ=(\I\tens\bz)^{(\bn\tens\I)}$, so $\ww^\TT=\Sigma(\ww^\ZZ)^*\Sigma=(\bz\tens\I)^{-(\I\tens\bn)}$, see \cite[Introduction]{WoronowiczE2}.} $\ww^\TT=(\bz\tens\I)^{-(\I\tens\bn)}$ we calculate
\begin{equation}\label{eq-eq18}
\iota_1(\bz)=\bigl((\theta\tens\id)\ww^{\TT}\bigr)(\bz\tens\I)\bigl((\theta\tens\id)\ww^{\TT}\bigr)^*=\zeta^{\bn\tens\bn}(\bz\tens\I)\zeta^{-\bn\tens\bn}=\bz\tens\zeta^{\bn}
\end{equation}
and hence
\[
\iota_1(\bz)\iota_2(\bz)=(\bz\tens\zeta^{\bn})(\I\tens\bz)
=\zeta(\I\tens\bz)(\bz\tens\zeta^{\bn})
=\zeta\,\iota_2(\bz)\iota_1(\bz).
\]
Consequently $\C(\TT)\boxtimes\C(\TT)$ is a quotient of $\C(\TT^2_\zeta)$ via the map sending $U$ to $\iota_1(\bz)$ and $V$ to $\iota_2(\bz)$.

On the other hand one easily sees that $U\mapsto\iota_1(\bz)$ and $V\mapsto\iota_2(\bz)$ coincides with the regular representation (\cite[Section 2.2]{Williams}) of the dynamical system defining the rotation algebra $\C(\TT^2_\zeta)$ which is well known to be faithful.
\end{proof}

\begin{theorem}\label{thm2}
Let $\sX$ and $\sY$ be $\TT$-\cst-algebras and equip them with an action of $\ZZ$ (and hence of $\D(\TT)$) as described above in Proposition \ref{prop-lemma1}. Then the \cst-algebras $\sX\boxtimes_\zeta\sY$ and $\sX\boxtimes\sY$ are isomorphic. Furthermore, this isomorphism can be chosen to be $\TT$-equivariant, and to map $\jmath_\sX(x)\jmath_\sY(y)\in\sX\boxtimes_\zeta\sY$ to $\iota_\sX(x)\iota_\sY(y)\in\sX\boxtimes\sY$ for all $x\in\sX$, $y\in\sY$.
\end{theorem}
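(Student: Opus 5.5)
We will realize both braided tensor products inside a common ambient algebra and check that their generating families coincide. The natural tool is the description \eqref{eq-eq8}: the embedding $\brho^\sX\boxtimes\brho^\sY$ identifies $\sX\boxtimes\sY$ with the closed span of
\[
\bigl((\biota_1\tens\id)\brho^\sX(x)\bigr)_{123}\bigl((\biota_2\tens\id)\brho^\sY(y)\bigr)_{124}.
\]
This lives in $\M\bigl((\C_0(\D(\TT))\boxtimes\C_0(\D(\TT)))\tens\sX\tens\sY\bigr)$.

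For homogeneous $x$ of degree $n$ we have $\brho^\sX(x)=\bz^{n}\tens\zeta^{n\bn}\tens x$, and similarly for $y$ of degree $m$. So the generators become
\[
\bigl(\biota_1(\bz^n\tens\zeta^{n\bn})\biota_2(\bz^m\tens\zeta^{m\bn})\bigr)\tens x\tens y .
\]
Everything therefore reduces to understanding the elements $u_n=\biota_1(\bz^n\tens\zeta^{n\bn})$ and $v_m=\biota_2(\bz^m\tens\zeta^{m\bn})$ in $\C_0(\D(\TT))\boxtimes\C_0(\D(\TT))$. These are unitaries in the multiplier algebra, and $u_n=u_1^n$, $v_m=v_1^m$. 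The key step is to show that $u_1v_1=\zeta v_1u_1$. We will do this with the explicit formula $\hh R=\ww^\TT_{23}$ and $\ww^\TT=(\bz\tens\I)^{-(\I\tens\bn)}$, computing exactly as in \eqref{eq-eq18} (Proposition \ref{prop-lemma2}). Here the $\bz$ in the first leg of $\biota_1$ gets conjugated by $\hh R_{23}$ against the $\zeta^{\bn}$ component of $\biota_2$. Equivalently, $u_1,v_1$ are the images of $\iota_1(\bz),\iota_2(\bz)$ under the equivariant embedding of $\C(\TT)\boxtimes\C(\TT)$ obtained by functoriality from $\brho^{\C(\TT)}$.

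Consequently $U\mapsto u_1$, $V\mapsto v_1$ defines a $*$-homomorphism $\pi\colon\C(\TT^2_\zeta)\to\M(\C_0(\D(\TT))\boxtimes\C_0(\D(\TT)))$. It is injective because it factors through $\C(\TT)\boxtimes\C(\TT)\simeq\C(\TT^2_\zeta)$ (Proposition \ref{prop-lemma2}) followed by the faithful map $\brho^{\C(\TT)}\boxtimes\brho^{\C(\TT)}$ (\cite[Proposition 5.6]{MeyerRoyWoronowiczI}). If needed, we can instead prove faithfulness directly by a regular-representation argument as in Proposition \ref{prop-lemma2}. Then $\pi\tens\id\tens\id$ is an injective $*$-homomorphism from $\C(\TT^2_\zeta)\tens\sX\tens\sY$ into the multiplier algebra above. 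It maps $\jmath_\sX(x)\jmath_\sY(y)=U^nV^m\tens x\tens y$ to the image of $\iota_\sX(x)\iota_\sY(y)$. Since homogeneous elements span dense subspaces, the closed spans correspond, and restricting $\pi\tens\id\tens\id$ gives the isomorphism $\sX\boxtimes_\zeta\sY\to\sX\boxtimes\sY$ with the required formula. One should also check that $\pi\tens\id\tens\id$ extends to the relevant multiplier level, which is routine by nondegeneracy.

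For $\TT$-equivariance, note that on both sides the $\TT$-action on $\jmath_\sX(x)\jmath_\sY(y)$, respectively $\iota_\sX(x)\iota_\sY(y)$, multiplies it by $\bz^{\deg x+\deg y}$. The isomorphism maps these generators to each other, so it intertwines the actions on a dense set and hence everywhere.

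I expect the main obstacle to be the bookkeeping in the key computation of $u_1v_1$ versus $v_1u_1$. This means tracking which legs of $\hh R=\ww^\TT_{23}$ act on which factors after the leg flip in \eqref{eq-dodatkowe}, and checking that the sign conventions, namely $\theta(\bz)=\zeta^{-\bn}$ together with left versus right actions, give $\zeta$ and not $\zeta^{-1}$. If the sign comes out wrong, we would correct it by adjusting the identification of $U,V$ (for instance replacing $\zeta$ by $\overline\zeta$ in Proposition \ref{prop-lemma1}). However, I expect the conventions already fixed in Proposition \ref{prop-lemma2} to give the correct sign.
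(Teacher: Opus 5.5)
Your strategy is the paper's. Both arguments compare, inside $\M\bigl((\C_0(\D(\TT))\boxtimes\C_0(\D(\TT)))\tens\sX\tens\sY\bigr)$, the image of $\sX\boxtimes\sY$ under $\brho^\sX\boxtimes\brho^\sY$ (via \eqref{eq-eq8}) with an injective image of $\C(\TT^2_\zeta)\tens\sX\tens\sY$. The paper's map sends $U^nV^m$ to $\bz^n\tens\zeta^{n\bn}\tens\zeta^{n\bn}\bz^m\tens\zeta^{m\bn}$, which is exactly your $u_1^nv_1^m$. The difference is in how this map is obtained. The paper builds it as a composite of known injections: the representation from Proposition~\ref{prop-lemma2}, followed by injective maps $\Theta_1\tens\Theta_2$ with $\Theta_1(\bz)=\bz\tens\zeta^{\bn}$. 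You get it from the universal property, and so must prove injectivity separately. The relation does come out as $u_1v_1=\zeta v_1u_1$, so no sign correction is needed. Modifying Proposition~\ref{prop-lemma1} would not have been a legitimate correction anyway, since it changes the $\D(\TT)$-structure fixed in the statement. Your description of the mechanism is backwards, though. $\hh R=\ww^\TT_{23}$ does not touch the $\bz$ in the first leg. It copies the $\zeta^{\bn}$-factor of $\bz\tens\zeta^{\bn}$ into the third leg, so that concretely $u_1=\bz\tens\zeta^{\bn}\tens\zeta^{\bn}\tens\I$. This third-leg $\zeta^{\bn}$ is what fails to commute with the $\bz$-factor of $v_1=\I\tens\I\tens\bz\tens\zeta^{\bn}$.

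The step that fails as written is your primary injectivity argument. By \eqref{eq-dodatkowe}, $\brho^{\C(\TT)}\boxtimes\brho^{\C(\TT)}$ sends $\iota_1(\bz)\mapsto u_1\tens\bz\tens\I$ and $\iota_2(\bz)\mapsto v_1\tens\I\tens\bz$. It carries two extra $\C(\TT)$-legs, so $\pi$ does not factor through it. Removing those legs means evaluating them, which is not injective, so faithfulness does not transfer. You could repair this functoriality argument by replacing $\brho^{\C(\TT)}$ with the $\D(\TT)$-equivariant unital injection $\C(\TT)\ni\bz\mapsto\bz\tens\zeta^{\bn}\in\M(\C_0(\D(\TT)))$. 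That still relies on injectivity of braided products of injective equivariant morphisms, which you were invoking anyway. Your fallback is cleaner and is what the paper effectively does. Evaluating both $\mathrm{c}_0(\ZZ)$-legs at $0$ turns $(u_1,v_1)$ into $(\bz\tens\zeta^{\bn},\I\tens\bz)$. This is precisely the faithful representation of $\C(\TT^2_\zeta)$ from Proposition~\ref{prop-lemma2}, so $\pi$ is injective. Keep the $\C(\TT)$-leg in that step. If you evaluate it as well, you are left with $U\mapsto\zeta^{\bn}$, $V\mapsto\bz$ on $\Ltwo(\TT)$, which kills $U^d-\I$ when $\zeta^d=1$. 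With this repair, the remaining steps (injectivity of $\pi\tens\id\tens\id$, matching of generators, equivariance) go through as you describe.
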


\begin{remark}\label{rem1}
 Once the parameter $\zeta$ is fixed, via Proposition \ref{prop-lemma1}, the category of $\mathcal{C}^*_{\TT}$ of \cst-algebras equipped with $\TT$-action becomes a full subcategory of $\mathcal{C}^*_{\D(\TT)}$, the category of \cst-algebras equipped with an action of $\D(\TT)$. Then Theorem \ref{thm2} tells us that on this subcategory constructions $\boxtimes$ and $\boxtimes_{\zeta}$ can be identified.

\end{remark}

\begin{proof}
We first note that an isomorphism $\sX\boxtimes_\zeta\sY\to\sX\boxtimes\sY$ mapping $\jmath_\sX(x)\jmath_\sY(y)\in\sX\boxtimes_\zeta\sY$ to $\iota_\sX(x)\iota_\sY(y)\in\sX\boxtimes\sY$ for all $x\in\sX$, $y\in\sY$ is automatically $\TT$-equivariant.

We have
\begin{align*}
\sX\boxtimes_\zeta\sY
&=\operatorname{\overline{span}}
\bigl\{\jmath_{\sX}(x)\jmath_{\sY}(y)\,\bigr|\bigl.\,x\in\sX,\:y\in\sY\bigr\}\\
&\simeq\operatorname{\overline{span}}
\bigl\{
\bigl((\iota_1\tens\id)\rho^\sX(x)\bigr)_{123}
\bigl((\iota_2\tens\id)\rho^\sY(y)\bigr)_{124}
\,\bigr|\bigl.\,
x\in\sX,\:y\in\sY\bigr\}\\
&=\operatorname{\overline{span}}
\bigl\{
\bigl((\theta\tens\id)\ww^{\TT}\bigr)_{12}
\rho^\sX(x)_{13}
\bigl((\theta\tens\id)\ww^{\TT}\bigr)^*_{12}
\rho^\sY(y)_{24}
\,\bigr|\bigl.\,
x\in\sX,\:y\in\sY\bigr\}\\
&\subseteq\M\bigl((\C(\TT)\boxtimes\C(\TT))\tens\sX\tens\sY\bigr)
\subseteq
\M\bigl(\cK(\Ltwo(\TT))\tens\cK(\Ltwo(\TT))\tens\sX\tens\sY\bigr)
\end{align*}
where the above isomorphism follows from Proposition \ref{prop-lemma2} and the next equality from \eqref{eq-eq12}. Let us denote this isomorphism
\[
\sX\boxtimes_\zeta\sY\longrightarrow\operatorname{\overline{span}}
\bigl\{
((\iota_1\tens\id)\rho^\sX(x))_{123}
((\iota_2\tens\id)\rho^\sY(y))_{124}
\,\bigr|\bigl.\,
x\in\sX,\:y\in\sY\bigr\}
\]
by $\Upsilon_1$.

In what follows $x$ and $y$ will be homogeneous elements of $\sX$ and $\sY$ respectively. We will make a more precise calculation of $\Upsilon_1(\jmath_{\sX}(x)\jmath_{\sY}(y))=((\iota_1\tens\id)\rho^\sX(x))_{123}
((\iota_2\tens\id)\rho^\sY(y))_{124}$ and then apply an injective morphism $\Upsilon_2$ to the \cst-algebra
\[
\bigl(\C_0(\D(\TT))\boxtimes\C_0(\D(\TT))\bigr)\tens\sX\tens\sY
\subseteq
\M\bigl(
\C(\TT)\tens\mathrm{c}_0(\ZZ)\tens\cK(\Ltwo(\TT))\tens\mathrm{c}_0(\ZZ)\tens\sX\tens\sY\bigr).
\]
Next we will show that the image of $\Upsilon_1(\jmath_{\sX}(x)\jmath_{\sY}(y))$ coincides there with the image of $(\brho^\sX\boxtimes\brho^\sY)(\iota_\sX(x)\iota_\sY(y))$ which by \eqref{eq-eq8} means that $\sX\boxtimes_\zeta\sY$ and $\sX\boxtimes\sY$ are isomorphic. Since the isomorphism maps homogeneous elements to homogeneous elements (of the same degree) it is equivariant. In the following calculations we will, among others, use the morphism $\theta$ introduced before Proposition \ref{prop-lemma2}.

We have
\begin{align*}
\Upsilon_1\bigl(\jmath_{\sX}(x)\jmath_{\sY}(y)\bigr)
&=\bigl((\theta\tens\id)\ww^\TT\bigr)_{12}\rho^\sX(x)_{13}
\bigl((\theta\tens\id)\ww^\TT\bigr)^*_{12}\rho^\sY(y)_{24}\\
&=\zeta^{\bn\tens\bn\tens\I\tens\I}(\bz^{\deg(x)}\tens\I\tens x\tens\I)\zeta^{-\bn\tens\bn\tens\I\tens\I}(\I\tens\bz^{\deg(y)}\tens\I\tens y)\\
&=\bigl(\zeta^{\bn\tens\bn}(\bz^{\deg(x)}\tens\I)\zeta^{-\bn\tens\bn}(\I\tens\bz^{\deg(y)})\bigr)\tens x\tens y\\
&=\bz^{\deg(x)}\tens\zeta^{\deg(x)\bn}\bz^{\deg(y)}\tens{x}\tens{y}
\end{align*}
(cf.~\eqref{eq-eq18}).

Now let $\tilde{\theta}$ be the morphism from $\C(\TT)$ to $\mathrm{c}_0(\ZZ)$ mapping $\bz$ to $\zeta^\bn$. We introduce two more morphisms $\Theta_1\in\Mor(\C(\TT),\C(\TT)\tens\mathrm{c}_0(\ZZ))$ and $\Theta_2\in\Mor(\cK(\Ltwo(\TT)),\cK(\Ltwo(\TT))\tens\mathrm{c}_0(\ZZ))$ by $\Theta_1=(\id\tens\tilde{\theta})\comp\Delta_\TT$ and
\[
\Theta_2(b)=(\id\tens\tilde{\theta})\bigl(\vv^\TT(b\tens\I){\vv^\TT}^*\bigr),\qquad{b}\in\cK(\Ltwo(\TT)),
\]
where $\vv^{\TT}$ is the right regular representation of $\TT$ implementing the comultiplication via $\Delta_{\TT}(f)=\vv^{\TT}(f\tens\I){\vv^{\TT}}^*$ for $f\in \C(\TT)$. Finally let $\Upsilon_2$ be the injective map $\Upsilon_2=(\Theta_1\tens\Theta_2\tens\id\tens\id)$.

We have $\Delta_{\TT}(\bz)=\bz\tens\bz$, hence
\[
\resizebox{\textwidth}{!}{\ensuremath{\displaystyle
\begin{aligned}
\Theta_2\bigl(\zeta^{\deg(x)\bn}\bz^{\deg(y)}\bigr)
&=(\id\tens\tilde{\theta})\bigl((\zeta^{\deg(x)\bn}\tens\I)\vv^\TT(\bz^{\deg(y)}\tens\I){\vv^\TT}^*\bigr)\\
&=(\zeta^{\deg(x)\bn}\tens\I)\bigl(
(\id\tens\tilde{\theta})\bigl(\bz^{\deg(y)}\tens\bz^{\deg(y)}\bigr)\bigr)
=\zeta^{\deg(x)\bn}\bz^{\deg(y)}\tens\zeta^{\deg(y)\bn}.
\end{aligned}
}}
\]
It follows that
\begin{equation}\label{eq-exactly}
\Upsilon_2\bigl(\Upsilon_1(\jmath_{\sX}(x)\jmath_{\sY}(y))\bigr)
=\bz^{\deg(x)}\tens\zeta^{\deg(x)\bn}\tens\zeta^{\deg(x)\bn}\bz^{\deg(y)}\tens\zeta^{\deg(y)\bn}\tens x\tens y.
\end{equation}

Now according to \eqref{eq-eq25} (cf.~also the first part of \eqref{eq-dodatkowe})
\[
\resizebox{\textwidth}{!}{\ensuremath{\displaystyle
\begin{aligned}
 \bigl(\brho^\sX\boxtimes\brho^\sY)&\bigl(\iota_\sX(x)\iota_\sY(y)\bigr)
\simeq \ww^\TT_{23}\brho^\sX(x)_{125}\bigl(\ww^\TT_{23}\bigr)^*\brho^\sY(y)_{346}\\
&=(\I\tens\bz\tens\I\tens\I\tens\I\tens\I)^{-\I\tens\I\tens\bn\tens\I\tens\I}
(\bz^{\deg(x)}\tens\zeta^{\deg(x)\bn}\tens\I\tens\I\tens x\tens\I)\\
&\qquad\quad(\I\tens\bz\tens\I\tens\I\tens\I\tens\I)^{\I\tens\I\tens\bn\tens\I\tens\I}
(\I\tens\I\tens\bz^{\deg(y)}\tens\zeta^{\deg(y)\bn}\tens\I\tens y)\\
&=\bz^{\deg(x)}\tens
\bigl(
(\bz\tens\I)^{-\I\tens\bn}(\zeta^{\deg(x)\bn}\tens\I)(z\tens\I)^{\I\tens\bn}(\I\tens\bz^{\deg(y)})
\bigr)
\tens\zeta^{\deg(y)\bn}\tens x\tens y\\
&=\bz^{\deg(x)}\tens\zeta^{\deg(x)\bn}\tens\zeta^{\deg(x)\bn}\bz^{\deg(y)}\tens\zeta^{\deg(y)\bn}\tens x\tens y
\end{aligned}
}}
\]
which is exactly the right-hand side of \eqref{eq-exactly}.
\end{proof}

Having established equivalence of the two definitions of the braided tensor product used in the description of the braided quantum group $\SU_q(2)$ we can freely use the results about tensoring various types of maps such as those contained in \cite[Section 5.1]{MeyerRoyWoronowiczI} as well as \cite[Proposition 5.1]{DeCommerKrajczok} (also on von Neumann algebra level). In particular we have

\begin{corollary}\label{cor-idbth}
There exist equivariant u.c.p.~maps $\bh\boxtimes\id,\id\boxtimes\bh\colon A\boxtimes A\to A$ such that
\[
(\bh\boxtimes\id)\bigl(\iota_1(a)\iota_2(b)\bigr)=\bh(a)b\quad\text{and}\quad
(\id\boxtimes\bh)\bigl(\iota_1(a)\iota_2(b)\bigr)=\bh(b)a
\]
for all $a,b\in A$.
\end{corollary}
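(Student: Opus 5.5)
The plan is to obtain the maps from the general theory of tensoring equivariant completely positive maps in the Meyer--Roy--Woronowicz framework, via the identification of Theorem \ref{thm2}. First, $\bh$ (a state $A\to\CC$) is $\D(\TT)$-equivariant when $\CC$ carries the trivial action. Indeed, $\TT$-invariance of $\bh$ was established in Section \ref{sect-boson}: there $\bh_B$ was shown $\TT$-invariant, and $\kmap$ intertwines $\rho^A$ with $\rho^B$. The $\ZZ$-action $\tilde{\rho}^A$ from Proposition \ref{prop-lemma1} is defined through $\rho^A$ by $\tilde{\rho}^A_n=\rho^A$ evaluated at a power of $\zeta$, so $\TT$-invariance implies $\ZZ$-invariance. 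Also $\id\colon A\to A$ is trivially equivariant.

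Next, by Theorem \ref{thm2}, $A\boxtimes A$ (built with $\boxtimes_\zeta$) is equivariantly isomorphic to the Meyer--Roy--Woronowicz braided product for the induced $\D(\TT)$-actions, with $\jmath_1(a)\jmath_2(b)\mapsto\iota_1(a)\iota_2(b)$. In that setting \cite[Section 5.2]{MeyerRoyWoronowiczI} (or, at the von Neumann level, \cite[Proposition 5.1]{DeCommerKrajczok}) provides, for equivariant c.p.\ maps $\phi_1\colon\sX_1\to\sY_1$ and $\phi_2\colon\sX_2\to\sY_2$, an equivariant c.p.\ map $\phi_1\boxtimes\phi_2$ with $(\phi_1\boxtimes\phi_2)(\iota(x_1)\iota(x_2))=\iota(\phi_1(x_1))\iota(\phi_2(x_2))$. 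Applying this with $(\phi_1,\phi_2)=(\bh,\id)$, and identifying $\CC\boxtimes A\simeq A$ (trivial action on $\CC$), gives $\bh\boxtimes\id$; symmetrically $A\boxtimes\CC\simeq A$ gives $\id\boxtimes\bh$. Unitality is inherited from unitality of $\bh$ and $\id$, since $\iota_1(\I)\iota_2(\I)=\I$.

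As a sanity check, and as a fallback if one prefers to avoid the general theory, there is a concrete model. Inside $\C(\TT^2_\zeta)\tens A\tens A$ one can take $\Phi=(\varphi\tens\bh\tens\id)$ restricted to $A\boxtimes A$, where $\varphi$ is a suitable slice on the first leg that kills $U^n$ for $n\neq 0$. On $\iota_1(a)\iota_2(b)=U^nV^m\tens a\tens b$ with homogeneous entries, $\bh(a)=0$ unless $n=0$, so only terms $V^m\tens\bh(a)b$ survive. One then composes with the inverse of the embedding $\iota_2$, i.e.\ $b\mapsto V^{\deg b}\tens\I\tens b$, which is an isometric $*$-homomorphism. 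The point is to check that the range lands in $\iota_2(A)$ and to choose a conditional expectation on $\C(\TT^2_\zeta)$ (averaging $U\mapsto\lambda U$, which is an automorphism of the rotation algebra) so that the result is c.p.\ and unital.

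The main obstacle is making sure the identification $\CC\boxtimes A\simeq A$ and the formula on generators hold without ambiguity in the general framework. This is why I would rely on the concrete model above: there, complete positivity is clear since each step is a c.p.\ map or a $*$-isomorphism. Equivariance then follows by checking on homogeneous elements, where both sides scale by $\bz^{\deg}$.
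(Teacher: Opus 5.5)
Your proposal is correct and takes the same route as the paper: you identify $\boxtimes_\zeta$ with the general braided product via Theorem~\ref{thm2}, then apply the tensoring of equivariant c.p.\ maps from \cite[Section 5.2]{MeyerRoyWoronowiczI} and \cite[Proposition 5.1]{DeCommerKrajczok} to the $\D(\TT)$-invariant state $\bh$ and to $\id$. Your concrete fallback also works and is simpler than you suggest: $\TT$-invariance of $\bh$ already gives $\bh(a)=0$ whenever $\deg(a)\neq0$, so the slice $\id\tens\bh\tens\id$, restricted to $A\boxtimes A\subseteq\C(\TT^2_\zeta)\tens A\tens A$, lands in the range of the injective $*$-homomorphism $b\mapsto V^{\deg(b)}\tens b$ (this is $\iota_2$ with the sliced middle leg removed), and no conditional expectation on $\C(\TT^2_\zeta)$ is needed.
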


At the end of this section let us quote the result from \cite{MeyerRoyWoronowiczII} which comes useful e.g.~in the proof of Proposition \ref{prop-hLeftInv} or in Section \ref{sect-BosonBoson} below. The version of this result given here differs from the original because of the left/right switch in notational conventions.

\begin{proposition}[{\cite[Proposition 6.3]{MeyerRoyWoronowiczII}}]\label{prop-prop2Cstar}
Let $\sA$ and $\sB$ be $\D(\GG)$-\cst-algebras. Then there exists a unique injective $*$-homomorphism
\[
\Psi\colon\sA\boxtimes\sB\boxtimes\C_0(\GG)\longrightarrow
\bigl(\sA\boxtimes\C_0(\GG)\bigr)\tens
\bigl(\sB\boxtimes\C_0(\GG)\bigr)
\]
such that
\begin{align*}
\\
\Psi\bigl(\iota_{\sA}(a)\bigr)
&=\iota_{\sA}(a)\tens\I,\\
\Psi\bigl(\iota_{\sB}(b)\bigr)
&=(\iota_{\C_0(\GG)}\tens\iota_{\sB})\rho^{\sB}(b),\\
\Psi\bigl(\iota_{\C_0(\GG)}(c)\bigr)
&=(\iota_{\C_0(\GG)}\tens\iota_{\C_0(\GG)})\Delta_{\GG}(c)
\end{align*}
for all $c\in\C_0(\GG)$, $a\in\sA$, and $b\in\sB$.
\end{proposition}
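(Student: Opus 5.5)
The plan is to build $\Psi$ as a composite of three maps, each either an identity of braided tensor products or a braided product of injective equivariant morphisms. Injectivity then follows from \cite[Proposition 5.6]{MeyerRoyWoronowiczI}, and uniqueness is automatic, since $\sA\boxtimes\sB\boxtimes\C_0(\GG)$ is the closed span of products $\iota_\sA(a)\iota_\sB(b)\iota_{\C_0(\GG)}(c)$.

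\emph{Step 1 (regrouping).} Use associativity of the braided tensor product to write
\[
\sA\boxtimes\sB\boxtimes\C_0(\GG)=\sA\boxtimes\bigl(\sB\boxtimes\C_0(\GG)\bigr).
\]
Under this identification $\iota_\sA$ is the left inclusion, while $\iota_\sB$ and $\iota_{\C_0(\GG)}$ factor through the right factor. This is checked directly from the concrete model \eqref{eq-eq7} in faithful covariant representations.

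\emph{Step 2 (the coaction on the bosonization).} Regard $\rho^\sB\in\Mor(\sB,\C_0(\GG)\tens\sB)$ and $\Delta_\GG\in\Mor(\C_0(\GG),\C_0(\GG)\tens\C_0(\GG))$ as $\D(\GG)$-equivariant morphisms. Here the target $\C_0(\GG)\tens\sX$ carries the $\D(\GG)$-action living only on the first leg: $\Delta_\GG$ for the $\GG$-part, and the $\hh{\GG}$-part induced from the canonical one on $\C_0(\GG)$, as in the discussion at the end of \cite[Section 5.2]{MeyerRoyWoronowiczI}. Functoriality then gives an injective morphism
\[
\rho^\sB\boxtimes\Delta_\GG\colon \sB\boxtimes\C_0(\GG)\to(\C_0(\GG)\tens\sB)\boxtimes(\C_0(\GG)\tens\C_0(\GG)).
\]
Because the action is trivial on the second legs, the same flip-of-legs argument as in \eqref{eq-dodatkowe} identifies the target with
\[
\bigl(\C_0(\GG)\boxtimes\C_0(\GG)\bigr)\tens\bigl(\sB\boxtimes\C_0(\GG)\bigr),
\]
after regrouping the second legs back into $\sB\boxtimes\C_0(\GG)$. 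The two copies of $\C_0(\GG)$ on the left then collapse via the multiplication map, giving an injective morphism
\[
\Phi\colon\sB\boxtimes\C_0(\GG)\to\M\bigl(\C_0(\GG)\tens(\sB\boxtimes\C_0(\GG))\bigr),\qquad
\Phi(\iota_\sB(b))=(\id\tens\iota_\sB)\rho^\sB(b),\quad
\Phi(\iota_{\C_0(\GG)}(c))=(\id\tens\iota_{\C_0(\GG)})\Delta_\GG(c).
\]
This $\Phi$ is equivariant for the action on the first leg only. I expect the main obstacle to be exactly this step: checking that $\Phi$ really intertwines the $\hh{\GG}$-parts. This amounts to the Yetter--Drinfeld compatibility of $\rho^\sB$ with $\tilde\rho^\sB$ (\cite[Proposition 3.2]{NestVoigt}) together with the $\mathrm{R}$-matrix identities $(\Delta\tens\id)\hh{R}=\hh{R}_{13}\hh{R}_{23}$. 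I would verify it first in the case $\GG=\TT$ on homogeneous elements, mimicking the computation proving Theorem \ref{thm2}, and then run the general argument with $\ww^\GG$ and $\hh{R}$.

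\emph{Step 3 (absorbing $\sA$).} Apply $\id_\sA\boxtimes\Phi$. Its target is $\sA\boxtimes\bigl(\C_0(\GG)\tens\sY\bigr)$ with $\sY=\sB\boxtimes\C_0(\GG)$ carrying the trivial action. Because the action is trivial on the $\sY$ leg, this is canonically $\bigl(\sA\boxtimes\C_0(\GG)\bigr)\tens\sY$, using the same leg-flip identification as before. The composite is $\Psi$, and it is injective because braided products of injective equivariant morphisms are injective.

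Finally, evaluating the composite on generators recovers the three stated formulas:
\begin{align*}
\Psi\bigl(\iota_\sA(a)\bigr)&=\iota_\sA(a)\tens\I,\\
\Psi\bigl(\iota_\sB(b)\bigr)&=(\iota_{\C_0(\GG)}\tens\iota_\sB)\rho^\sB(b),\\
\Psi\bigl(\iota_{\C_0(\GG)}(c)\bigr)&=(\iota_{\C_0(\GG)}\tens\iota_{\C_0(\GG)})\Delta_\GG(c).
\end{align*}
To see that $\Psi$ lands in the actual tensor product rather than only its multiplier algebra, combine these formulas with the Podle\'s condition for $\rho^\sB$ and the cancellation law for $\Delta_\GG$.
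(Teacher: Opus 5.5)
Your Step~2, the construction of $\Phi$, has a genuine gap, and it fails for three separate reasons.

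First, $\rho^\sB$ is not $\D(\GG)$-equivariant into $\C_0(\GG)\tens\sB$ when the $\D(\GG)$-action lives on the first leg only: the $\hh\GG$-part fails.
\begin{itemize}
\item In the paper's own case $\GG=\TT$, $\sB=A$, the canonical $\ZZ$-action on $\C(\TT)$ is trivial (Section~\ref{sect-BosonBoson}).
\item But $(\id\tens\rho^A)\tilde\rho^A(\gamma)=\zeta^{\bn}\tens\bz\tens\gamma\neq\I\tens\bz\tens\gamma$.
\item Put differently, a left factor with trivial $\ZZ$-action braids trivially, so your target would be an ordinary tensor product. Then $\rho^A\boxtimes\Delta_\TT$ would have to send $\iota_A(\gamma)\iota_{\C(\TT)}(\bz)-\zeta\,\iota_{\C(\TT)}(\bz)\iota_A(\gamma)=0$ to $(1-\zeta)$ times a non-zero element.
\item What the end of \cite[Section 5.2]{MeyerRoyWoronowiczI} and \eqref{eq-dodatkowe} use is the full action $\brho^\sB$ into $\C_0(\D(\GG))\tens\sB$ with the regular $\D(\GG)$-action, not its restriction to $\GG$.
\end{itemize}

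Second, suppose you choose structures that do make both maps equivariant, e.g.\ $\bz\mapsto\zeta^{\bn}\tens\bz$ on the first leg for $\GG=\TT$. The leg-flip argument of \eqref{eq-dodatkowe}--\eqref{eq-eq8} then moves all of the braiding into the first legs and leaves the second legs in the ordinary tensor product $\sB\tens\C_0(\GG)$. You land exactly in the model $\iota_\sB(b)\mapsto U^{\deg(b)}\tens b\tens\I$ (for homogeneous $b$) and $\iota_{\C(\TT)}(\bz)\mapsto V\tens\I\tens\bz$ inside $\C(\TT^2_\zeta)\tens\sB\tens\C(\TT)$. Nothing can be ``regrouped back'' into $\sB\boxtimes\C_0(\GG)$.

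Third, the multiplication map $\C_0(\GG)\boxtimes\C_0(\GG)\to\C_0(\GG)$ is not a $*$-homomorphism. By Proposition~\ref{prop-lemma2} this algebra is $\C(\TT^2_\zeta)$, and $U,V\mapsto\bz$ is incompatible with $UV=\zeta VU$. For non-commutative $\GG$, multiplication is not multiplicative even on $\C_0(\GG)\tens\C_0(\GG)$.

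The missing observation is that $\Phi$ need not be built at all. It is the $\GG$-part of the diagonal $\D(\GG)$-action on $\sB\boxtimes\C_0(\GG)$, the one making $\iota_\sB$ and $\iota_{\C_0(\GG)}$ equivariant. You already use this action in Step~1 to form $\sA\boxtimes(\sB\boxtimes\C_0(\GG))$. It is injective, coassociative and given by your formulas. In the model above it is induced by the gauge coaction $U\mapsto\bz\tens U$, $V\mapsto\bz\tens V$ of $\TT$ on $\C(\TT^2_\zeta)$ --- a coaction, not a multiplication.

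With this $\Phi$, Step~3 goes through. For $\id_\sA\boxtimes\Phi$, and for the identification $\sA\boxtimes(\C_0(\GG)\tens\sY)\simeq(\sA\boxtimes\C_0(\GG))\tens\sY$, only the $\GG$-part of the right-hand factor matters. The R-matrix couples the $\hh\GG$-part of the left factor with the $\GG$-part of the right one (cf.~\eqref{eq-eq7} with $\hh{R}=\ww^\TT_{23}$). So the only equivariance you need is coassociativity of $\Phi$.

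The $\hh\GG$-equivariance you single out as the main obstacle is in fact false as you formulate it, with $\hh\GG$ acting on the first leg only. The element $\iota_A(\gamma)\in A\boxtimes\C(\TT)$ has non-trivial $\ZZ$-action $\zeta^{\bn}\tens\iota_A(\gamma)$, while the canonical $\ZZ$-action on $\C(\TT)$ is trivial. Fortunately that equivariance is not needed. The paper itself gives no proof of this statement; it quotes \cite[Proposition 6.3]{MeyerRoyWoronowiczII}.
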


For completeness we note the corresponding von Neumann algebraic version of this proposition which can be proved in the von Neumann algebra framework of \cite{DeCommerKrajczok}:

\begin{proposition}
Let $\sM,\sN$ be $\D(\GG)$-von Neumann algebras. There is a unique normal injective $*$-homomorphism
\[
\Psi\colon\sM\,\overline{\boxtimes}\,\sN\,\overline{\boxtimes}\,\Linf(\GG)\to
\bigl(\sM\,\overline{\boxtimes}\,\Linf(\GG)\bigr)\bar{\tens}
\bigl(\sN\,\overline{\boxtimes}\,\Linf(\GG)\bigr)
\]
satisfying
\begin{align*}
\Psi\bigl(\iota_{\sM}(m)\bigr)
&=\iota_{\sM}(m)\tens\I,\\
\Psi\bigl(\iota_{\sN}(n)\bigr)
&=(\iota_{\Linf(\GG)}\tens\iota_{\sN})\rho^{\sN}(n),\\
\Psi\bigl(\iota_{\Linf(\GG)}(x)\bigr)
&=(\iota_{\Linf(\GG)}\tens\iota_{\Linf(\GG)})\Delta_{\GG}(x)
\end{align*}
for all $x\in\Linf(\GG)$, $m\in\sM$, and $n\in\sN$.
\end{proposition}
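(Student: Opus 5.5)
The plan is to build $\Psi$ as a composite of maps the von Neumann algebraic braided tensor product of \cite{DeCommerKrajczok} already supplies, following the proof of \cite[Proposition 6.3]{MeyerRoyWoronowiczII}.

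\textbf{Uniqueness.} This is immediate. $\sM\,\overline{\boxtimes}\,\sN\,\overline{\boxtimes}\,\Linf(\GG)$ is the $\sigma$-weak closure of the span of products $\iota_\sM(m)\iota_\sN(n)\iota_{\Linf(\GG)}(x)$. A normal $*$-homomorphism is therefore determined by its values on the three generating subalgebras.

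\textbf{Step 1: the map on the last two factors.} Set $X=\sN\,\overline{\boxtimes}\,\Linf(\GG)$ (the von Neumann bosonization). I would first produce a normal injective $*$-homomorphism
\[
\Phi\colon X\longrightarrow\Linf(\GG)\,\bar{\tens}\,X,\qquad
\Phi\bigl(\iota_\sN(n)\bigr)=(\id\tens\iota_\sN)\rho^\sN(n),\qquad
\Phi\bigl(\iota_{\Linf(\GG)}(x)\bigr)=(\id\tens\iota_{\Linf(\GG)})\Delta_\GG(x).
\]
This is the canonical coaction of $\GG$ on the bosonization. It is obtained as the braided tensor product $\rho^\sN\boxtimes\Delta_\GG$ of the two $\D(\GG)$-equivariant normal maps $\rho^\sN\colon\sN\to\Linf(\GG)\bar{\tens}\sN$ and $\Delta_\GG\colon\Linf(\GG)\to\Linf(\GG)\bar{\tens}\Linf(\GG)$, via \cite[Proposition 5.1]{DeCommerKrajczok}. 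Here $\D(\GG)$ acts on the target only on the second legs, and the first $\Linf(\GG)$ leg is pulled out as an untouched tensor factor exactly as in \eqref{eq-dodatkowe}. Injectivity follows because both maps are injective and the braided tensor product of injective normal maps is injective; I would check this using the concrete spatial realization \eqref{eq-eq7}.

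\textbf{Step 2: adjoining $\sM$.} Using associativity of $\overline{\boxtimes}$, identify the source with $\sM\,\overline{\boxtimes}\,X$ and define $\Psi'=\id_\sM\boxtimes\Phi$. For this, $\Phi$ must be $\D(\GG)$-equivariant when $\Linf(\GG)\bar{\tens}X$ carries the action of $\D(\GG)$ through the leg $X$ only. Equivariance for the $\GG$-part is coassociativity-type bookkeeping. For the $\hh{\GG}$-part it is exactly the Yetter--Drinfeld compatibility between $\rho^\sN$ and $\tilde\rho^\sN$ (\cite[Proposition 2.9]{DeCommerKrajczok}).

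\textbf{Step 3: pulling out the middle leg.} Since $\D(\GG)$ acts trivially on the leg $\Linf(\GG)$ sitting in the middle, we have
\[
\sM\,\overline{\boxtimes}\,\bigl(\Linf(\GG)\bar{\tens}X\bigr)\simeq\bigl(\sM\,\overline{\boxtimes}\,\Linf(\GG)\bigr)\bar{\tens}X.
\]
I would prove this by the leg-flip argument used after \eqref{eq-dodatkowe}. One subtlety: the leg $\Linf(\GG)$ appearing here must be identified with the one carrying the regular $\D(\GG)$-structure, so that $\iota_{\Linf(\GG)}$ lands in $\sM\,\overline{\boxtimes}\,\Linf(\GG)$. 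I expect this to come down to the fact that in $\Psi'$ the first $\Linf(\GG)$ leg is glued to $\sM$ through $\hh R$ coming from the $\hh\GG$-action on $\Linf(\GG)$. Composing $\Psi'$ with this isomorphism gives $\Psi$. Evaluating on generators then yields the three stated formulas, and $\Psi$ is normal and injective as a composite of normal injective maps.

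\textbf{Main obstacle.} The hard point is the equivariance in Step 2: showing that $\Phi$ intertwines the $\hh{\GG}$-actions, so that the braided product with $\sM$, which is twisted by $\hh R$, is compatible with the later untwisting in Step 3. My first attempt would be a spatial computation. Realize everything on $\sH\tens\sK\tens\Ltwo(\GG)$ with the $R$-matrix implementations \eqref{eq-eq7}, then verify on generators that
\[
\hh R_{13}\Bigl(\bigl((\id\tens\iota_{\Linf(\GG)})\Delta_\GG(x)\bigr)\Bigr)\hh R_{13}^*
\]
reproduces the correct $\iota$-images, using the pentagon and quasitriangularity identities for $\ww^\GG$.
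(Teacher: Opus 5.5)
Your architecture is the natural one: realize $\Psi$ as $\id_\sM\boxtimes\Phi$ for the $\GG$-coaction $\Phi$ on $X=\sN\,\overline{\boxtimes}\,\Linf(\GG)$, then pull a leg out. The paper gives no proof beyond pointing to the framework of \cite{DeCommerKrajczok}, in analogy with \cite[Proposition 6.3]{MeyerRoyWoronowiczII}. However, you put the $\D(\GG)$-structure of $\Linf(\GG)\bar{\tens}X$ on the wrong leg, and with that choice Steps 2 and 3 fail.

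\textbf{Step 2.} If $\GG$ acts through the $X$-leg only, i.e.\ by $\beta=\sigma_{12}\comp(\id\tens\Phi)$ with $\sigma_{12}$ the flip of the first two legs, then
$\beta\comp\Phi=\sigma_{12}\comp(\Delta_\GG\tens\id)\comp\Phi=(\Delta_\GG^{\mathrm{op}}\tens\id)\comp\Phi$. Equivariance instead requires $(\id\tens\Phi)\comp\Phi=(\Delta_\GG\tens\id)\comp\Phi$. On $y=\iota_{\Linf(\GG)}(x)$ this reads $(\Delta^{\mathrm{op}}_\GG\tens\iota_{\Linf(\GG)})\Delta_\GG(x)=(\Delta_\GG\tens\iota_{\Linf(\GG)})\Delta_\GG(x)$, which forces $\Delta_\GG$ to be cocommutative. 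So your ``coassociativity bookkeeping'' proves equivariance for $\Delta_\GG\tens\id$ on the \emph{first} leg, not for the action you chose.

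\textbf{Step 3.} This step is inconsistent for the same reason. If $\D(\GG)$ acts trivially on the middle $\Linf(\GG)$-leg, that leg commutes with $\iota_\sM(\sM)$ in $\sM\,\overline{\boxtimes}\,(\Linf(\GG)\bar{\tens}X)$. Up to leg order that algebra is then $\Linf(\GG)\bar{\tens}(\sM\,\overline{\boxtimes}\,X)$. But in $(\sM\,\overline{\boxtimes}\,\Linf(\GG))\bar{\tens}X$ the elements $\iota_\sM(m)\tens\I$ and $\iota_{\Linf(\GG)}(x)\tens\I$ are braided. Your proposed resolution of this ``subtlety'' also fails. When $\Linf(\GG)$ is the right-hand factor, its braiding with $\sM$ comes from the $\hh\GG$-action on $\sM$ and the $\GG$-action on $\Linf(\GG)$ (cf.~\eqref{eq-eq7}, \eqref{eq-eq25}), not from the $\hh\GG$-action on $\Linf(\GG)$. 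For instance, in the paper's bosonization $A\boxtimes\C(\TT)$ the $\ZZ$-action on $\C(\TT)$ is trivial, yet $z\gamma z^*=\zeta^{-1}\gamma$.

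\textbf{Step 1.} This has the same reversal relative to \eqref{eq-dodatkowe}. There the action sits on the group legs, it is the \emph{algebra} legs that are pulled out, and one ends with two braided group legs, not one untouched leg. $\Phi$ should simply be taken as the $\GG$-part of the canonical $\D(\GG)$-action on the braided product $X$.

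\textbf{The repair.} Let $\GG$ act on $\Linf(\GG)\bar{\tens}X$ by $\Delta_\GG\tens\id$, i.e.\ through the $\Linf(\GG)$-leg with its regular structure and trivially on $X$.
\begin{itemize}
\item Then $\GG$-equivariance of $\Phi$ is exactly the coaction identity $(\id\tens\Phi)\comp\Phi=(\Delta_\GG\tens\id)\comp\Phi$.
\item This is all you need to form $\id_\sM\boxtimes\Phi$, since only the $\GG$-action on the right-hand factor enters its twist with $\sM$. The $\hh\GG$-equivariance you single out as the main obstacle plays no role.
\item Step 3 becomes a genuine instance of the leg-pulling in \eqref{eq-dodatkowe}: the $X$-leg, on which $\GG$ acts trivially, factors out. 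This gives $\sM\,\overline{\boxtimes}\,(\Linf(\GG)\bar{\tens}X)\simeq(\sM\,\overline{\boxtimes}\,\Linf(\GG))\bar{\tens}X$ with $\iota_\sM(m)\mapsto\iota_\sM(m)\tens\I$ and $\iota_{\Linf(\GG)\bar{\tens}X}(f\tens y)\mapsto\iota_{\Linf(\GG)}(f)\tens y$.
\end{itemize}
Since $\Psi(\iota_X(y))=(\iota_{\Linf(\GG)}\tens\id)\Phi(y)$, the three stated formulas follow at once. Injectivity holds because $\id_\sM\boxtimes\Phi$ is spatially implemented and the leg-pulling map is an isomorphism.
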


\subsection{Two approaches to the bosonization of \texorpdfstring{$\SU_q(2)$}{SUq(2)}}\label{sect-BosonBoson}

On the purely algebraic level bosonization of a braided Hopf algebra was introduced by S.~Majid (see \cite[Chapter 16]{QGPrimer} and also \cite[Section 3.8]{HeckenbergerSchneider}). We will compare two ways this procedure is viewed in the \cst-algebraic context for the braided quantum group $\SU_q(2)$. The first of these procedures was already described in Section \ref{sect-boson}. The second one comes e.g.~from \cite[Theorem 6.4]{MeyerRoyWoronowiczII} (see also \cite{RoyBraided}).

Translated to our setting (taking into account the left/right conventions) the \cst-algebra describing the bosonization is $\widetilde{B}=A\boxtimes\C(\TT)$, where the structure of a $\D(\TT)$-\cst-algebra on $A$ is provided by the action \eqref{eq-eq5} of $\TT$ considered from the beginning and the $\ZZ$ action introduced in Proposition \ref{prop-lemma1}, while the action of $\TT$ on $\C(\TT)$ is given by the comultiplication and $\ZZ$ acts on $\C(\TT)$ by the adjoint action $\tilde{\rho}^{\C(\TT)}(b)={\ww^\ZZ}^* (\I\tens b)\ww^\ZZ$ ($b\in\C(\TT)$). Explicitly
\[
\rho^{\C(\TT)}(\bz)=\bz\tens\bz,\quad
\tilde{\rho}^{\C(\TT)}(\bz)={\ww^{\ZZ}}^*(\I\tens\bz)\ww^{\ZZ}=\I\tens\bz.
\]
In particular, the action of $\ZZ$ is trivial. This combines to the action of $\D(\TT)=\TT\times\ZZ$ given by $\brho^{\C(\TT)}(\bz)=\bz\tens\I\tens\bz$.

\begin{proposition}\label{prop-BB}
We have $\widetilde{B}\simeq B$.
\end{proposition}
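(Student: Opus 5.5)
We construct mutually inverse unital $*$-homomorphisms between $B$ and $\widetilde{B}=A\boxtimes\C(\TT)$. For $B\to\widetilde{B}$ we use the universal property of $B$. Put $\mathsf{a}=\iota_A(\alpha)$, $\mathsf{c}=\iota_A(\gamma)$ and $\mathsf{z}=\iota_{\C(\TT)}(\bz)$ in $\widetilde{B}$. Since $\iota_A$ is a unital $*$-homomorphism, $\mathsf{a},\mathsf{c}$ satisfy \eqref{eq-SUq2rels}, and $\mathsf{z}$ is unitary. By Theorem \ref{thm2} we may compute in the $\boxtimes_\zeta$-model inside $\C(\TT^2_\zeta)\tens A\tens\C(\TT)$, where homogeneous $x\in A$ and $y\in\C(\TT)$ satisfy $\iota_A(x)\iota_{\C(\TT)}(y)=\zeta^{\deg(x)\deg(y)}\iota_{\C(\TT)}(y)\iota_A(x)$. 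With $\deg\alpha=0$, $\deg\gamma=1$ and $\deg\bz=1$ this gives $\mathsf{z}\mathsf{a}\mathsf{z}^*=\mathsf{a}$ and $\mathsf{c}\mathsf{z}=\zeta\,\mathsf{z}\mathsf{c}$, i.e.\ $\mathsf{z}\mathsf{c}\mathsf{z}^*=\zeta^{-1}\mathsf{c}$. These are exactly the defining relations of $B$, so we obtain a unital $*$-homomorphism $\Phi\colon B\to\widetilde{B}$ with $\alpha\mapsto\mathsf{a}$, $\gamma\mapsto\mathsf{c}$, $z\mapsto\mathsf{z}$. It is surjective because $\widetilde{B}=\overline{\iota_A(A)\iota_{\C(\TT)}(\C(\TT))}$ and the image contains $\iota_A(A)$ and $\iota_{\C(\TT)}(\C(\TT))$.

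The main step is injectivity of $\Phi$. Here we use the crossed product description from the proof of Proposition \ref{prop-injkap}: $B\simeq\ZZ\ltimes A$ for the $\ZZ$-action $\gamma\mapsto\zeta^{\bn}\tens\gamma$, $\alpha\mapsto\I\tens\alpha$. Since $\ZZ$ is amenable, full and reduced crossed products agree, so it suffices to realise $\widetilde{B}$ as the reduced crossed product, i.e.\ to produce a faithful covariant representation landing on it. Concretely, in $\C(\TT^2_\zeta)\tens A\tens\C(\TT)$ we have
\[
\mathsf{a}=\I\tens\alpha\tens\I,\qquad \mathsf{c}=U\tens\gamma\tens\I,\qquad \mathsf{z}=V\tens\I\tens\bz.
\]
We represent $\C(\TT^2_\zeta)$ faithfully on $\Ltwo(\TT)$ via the regular representation used in the proof of Proposition \ref{prop-lemma2}, and $A$ faithfully on some $\sH$. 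The representation $x\mapsto\iota_A(x)$ of $A$ then decomposes along the spectral subspaces of $U$ as $\bigoplus_k\tilde{\rho}$-twisted copies of $A$, while $\mathsf{z}$ acts as a shift. This is precisely the regular covariant representation $\operatorname{Ind}$ of $(A,\tilde{\rho}^A)$, up to unitary equivalence and tensoring with the faithful factor $\bz$ on $\Ltwo(\TT)$ (Fell absorption). Consequently $\Phi$ identifies $B\simeq\ZZ\ltimes_r A$ with $\widetilde{B}$ and is injective.

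An alternative route to injectivity, which we would try if the spatial identification becomes messy, is to build an inverse map. The map $\kmap\colon A\to B$ together with $\C(\TT)\ni\bz\mapsto z$ gives a pair of equivariant morphisms into $B$ whose images satisfy the braided commutation relation. We would then verify, via the $\TT$-gauge action $z\mapsto\bz\tens z$ on $B$ and the corresponding dual action on $\widetilde{B}$, that the conditional expectations onto the fixed-point algebras ($\kmap(A)$ and $\iota_A(A)$ respectively) intertwine $\Phi$. Since the expectation on $B$ is faithful and $\Phi$ is isometric on $\kmap(A)$ (both $\iota_A$ and $\kmap$ are injective by Proposition \ref{prop-injkap}), the standard gauge-invariance argument yields injectivity. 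We expect this faithful-expectation and gauge-invariance argument to be the cleanest proof, and checking that $\Phi$ intertwines the two gauge actions on generators is routine.
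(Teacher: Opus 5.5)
Your argument is correct in substance, but it is organized differently from the paper's.

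\textbf{The paper's route.} The paper never writes down a map out of $B$. It inserts $\hh{R}=\ww^\TT_{23}$ into \eqref{eq-eq7} and notes that $(\bphi_A\tens\bphi_{\C(\TT)})(\ww^\TT_{23})$ reduces to $(\tilde{\phi}_A\tens\id)(\ww^\TT)$. After a flip it recognizes $\widetilde{B}$, as a concrete algebra, as the reduced crossed product $\ZZ\ltimes_{\tilde\rho}A$. The identification with $B$ is then taken from the proof of Proposition \ref{prop-injkap}, which uses amenability of $\ZZ$.

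\textbf{Your route.} You obtain $\Phi\colon B\to\widetilde{B}$ from the universal property and then prove injectivity. A side benefit is that $\Phi$ is by construction the generator-to-generator map needed later for the comultiplications.

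\emph{Spatial argument.} This is essentially the paper's argument transported to the quantum torus picture. Represent $U$ by the diagonal unitary $\delta_k\mapsto\zeta^k\delta_k$ and $V$ by the shift on $\ell^2(\ZZ)$. Then $\iota_A(x)$ becomes $\bigoplus_k\zeta^{k\deg(x)}x$, and $\mathsf{z}$ becomes the shift tensored with $\bz$. The unitary $\delta_k\tens\xi\tens\eta\mapsto\delta_k\tens\xi\tens\bz^k\eta$ (covariant Fell absorption) removes the factor $\bz$. Contrary to what you wrote, this representation of $\C(\TT^2_\zeta)$ on a single $\ell^2(\ZZ)$ is \emph{not} faithful when $\zeta$ is a root of unity. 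This is harmless: you only need its composite with $\Phi$ to be faithful on $B$.

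\emph{Gauge-invariance argument.} This is genuinely different and, as you suspected, the cleanest. Let $r_\lambda$ be the rotation $\bz\mapsto\lambda\bz$ of $\C(\TT)$; it is $\D(\TT)$-equivariant, so $\id\boxtimes r_\lambda$ gives a $\TT$-action on $\widetilde{B}$, intertwined by $\Phi$ with $z\mapsto\lambda z$. Averaging over a compact group always yields a faithful expectation. The relations give $E_B(B)=\kmap(A)$, and $\Phi\comp\kmap=\iota_A$. This needs only injectivity of $\iota_A$, which is a general property of braided tensor products rather than a consequence of Proposition \ref{prop-injkap}. It avoids amenability and crossed products altogether, and even reproves injectivity of $\kmap$. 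Calling it ``building an inverse map'' is misleading: no universal property of $A\boxtimes\C(\TT)$ is available to define a map $\widetilde{B}\to B$, but none is needed.

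\textbf{A missing justification.} Theorem \ref{thm2} is stated for factors carrying the $\ZZ$-action of Proposition \ref{prop-lemma1}. In the bosonization, however, $\ZZ$ acts trivially on $\C(\TT)$, whereas Proposition \ref{prop-lemma1} would give $\bz\mapsto\zeta^{\bn}\tens\bz$. You must observe that $\hh{R}=\ww^\TT_{23}$ pairs only the $\ZZ$-part of the covariant representation of the left factor with the $\TT$-part of that of the right factor. Hence $\sX\boxtimes\sY$ depends only on the $\ZZ$-action on $\sX$ and the $\TT$-action on $\sY$. So $A\boxtimes\C(\TT)$ is unchanged if the trivial $\ZZ$-action on $\C(\TT)$ is replaced by that of Proposition \ref{prop-lemma1}. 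Only then does Theorem \ref{thm2} justify the relation $\mathsf{c}\mathsf{z}=\zeta\,\mathsf{z}\mathsf{c}$ on which the definition of $\Phi$ rests. In the paper's proof this reduction is exactly the passage from $\bphi_A\tens\bphi_{\C(\TT)}$ to $\tilde{\phi}_A\tens\id$.
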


\begin{proof}
By construction, using $\hh{R}^\uu=\ww^\TT_{23}$ we have
\[
\resizebox{\textwidth}{!}{\ensuremath{\displaystyle
\begin{aligned}
A\boxtimes\C(\TT)
&=\operatorname{\overline{span}}
\bigl\{
(\bphi_A\tens\bphi_{\C(\TT)})(\ww^{\TT}_{23})(a\tens\I)
(\bphi_A\tens\bphi_{\C(\TT)})(\ww^{\TT}_{23})^*(\I\tens b)
\,\bigr|\bigl.\,
a\in A,\:b\in\C(\TT)\bigr\}\\
&=\operatorname{\overline{span}}
\bigl\{
(\tilde{\phi}_A\tens\phi_{\C(\TT)})(\ww^{\TT})(a\tens\I)
(\tilde{\phi}_A\tens\phi_{\C(\TT)})(\ww^{\TT})^*(\I\tens b)
\,\bigr|\bigl.\,
a\in A,\:b\in\C(\TT)\bigr\}\\
&=\operatorname{\overline{span}}
\bigl\{
(\tilde{\phi}_A\tens\id)(\ww^{\TT})(a\tens\I)
(\tilde{\phi}_A\tens\id)(\ww^{\TT})^*(\I\tens b)
\,\bigr|\bigl.\,
a\in A,\:b\in\C(\TT)\bigr\},
\end{aligned}
}}
\]
hence applying flip gives
\begin{align*}
\widetilde{B}
&=A\boxtimes\C(\TT)
\simeq\operatorname{\overline{span}}
\bigl\{
(\id\tens\tilde{\phi}_A)(\ww^{\ZZ})^*(\I\tens a)
(\id\tens\tilde{\phi}_A)(\ww^{\ZZ})(b\tens\I)
\,\bigr|\bigl.\,a\in A,\:b\in\C(\TT)\bigr\}\\
&=\ZZ\ltimes_{\tilde{\rho}}A\simeq B
\end{align*}
(see the proof of Proposition \ref{prop-injkap}). Note that this isomorphism maps generators to generators. Here $\ltimes_{\tilde{\rho}}$
is the crossed product by the action $\tilde{\rho}$ of $\ZZ$ on $A$ given by the restriction of action $\brho$ of $\D(\TT)$ on $A$ to $\ZZ$, i.e.~the one given by Proposition \ref{prop-lemma1}. Concretely $\tilde{\rho}(\alpha)=\I\tens\alpha$ and $\tilde{\rho}(\gamma)=\zeta^{\bn}\tens\gamma$. This action coincides with $\tilde{\rho}^A$ introduced in the proof of Proposition \ref{prop-injkap} and thus we conclude $\tilde{B}\simeq B$ and both versions of bosonization agree as \cst-algebras.
\end{proof}

Now we will use the map $\Psi$ described in Proposition \ref{prop-prop2Cstar} to define $\Delta_{\widetilde{B}}$ as the composition
\[
\xymatrix{
\widetilde{B}=A\boxtimes\C(\TT)\ar[rr]^{\Delta_A\boxtimes\id}&&A\boxtimes A\boxtimes\C(\TT)\ar[r]^(.32)\Psi
&\bigl(A\boxtimes\C(\TT)\bigr)\tens\bigl(A\boxtimes\C(\TT)\bigr)=\widetilde{B}\tens\widetilde{B},
}
\]
i.e.~$\Delta_{\widetilde{B}}=\Psi\comp(\Delta_A\boxtimes\id)$ (see \cite[Theorem 6.4]{MeyerRoyWoronowiczII}, cf.~also the proof of Proposition \ref{prop-hLeftInv}).

\begin{proposition}\label{prop_BBDel}
The isomorphism in Proposition \ref{prop-BB} respects comultiplications, hence we have $(B,\Delta_B)\simeq(\widetilde{B},\Delta_{\widetilde{B}})$ as compact quantum groups.
\end{proposition}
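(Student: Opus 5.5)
Let $\Phi\colon B\to\widetilde{B}$ denote the isomorphism of Proposition \ref{prop-BB}. It maps generators to generators: $\alpha\mapsto\iota_A(\alpha)$, $\gamma\mapsto\iota_A(\gamma)$ and $z\mapsto\iota_{\C(\TT)}(\bz)$. We need to show that
\[
(\Phi\tens\Phi)\comp\Delta_B=\Delta_{\widetilde{B}}\comp\Phi .
\]
Both sides are unital $*$-homomorphisms $B\to\widetilde{B}\tens\widetilde{B}$, and $B$ is generated as a \cst-algebra by $\alpha,\gamma,z$. It therefore suffices to check the equality on these three generators.

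\emph{The generators $\alpha$ and $\gamma$.} This computation was essentially done in the proof of Proposition \ref{prop-hLeftInv}. There the map $\psi$ is the composition of the canonical inclusion $\iota_{12}\colon A\boxtimes A\to A\boxtimes A\boxtimes\C(\TT)$ with $\Psi$. Since $(\Delta_A\boxtimes\id)\comp\iota_A=\iota_{12}\comp\Delta_A$, we obtain
\[
\Delta_{\widetilde{B}}\bigl(\iota_A(a)\bigr)=\psi\bigl(\Delta_A(a)\bigr),\qquad a\in A.
\]
We then expand $\Delta_A(\alpha)$ and $\Delta_A(\gamma)$ in terms of $\iota_1,\iota_2$, and use $\Psi(\iota_1(a))=\iota_A(a)\tens\I$ and $\Psi(\iota_2(b))=(\iota_{\C(\TT)}\tens\iota_A)\rho^A(b)$ from Proposition \ref{prop-prop2Cstar}, together with $\rho^A(\alpha)=\I\tens\alpha$ and $\rho^A(\gamma)=\bz\tens\gamma$. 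This gives
\begin{align*}
\Delta_{\widetilde{B}}\bigl(\iota_A(\alpha)\bigr)&=\iota_A(\alpha)\tens\iota_A(\alpha)-q\,\iota_A(\gamma^*)\iota_{\C(\TT)}(\bz)\tens\iota_A(\gamma),\\
\Delta_{\widetilde{B}}\bigl(\iota_A(\gamma)\bigr)&=\iota_A(\gamma)\tens\iota_A(\alpha)+\iota_A(\alpha^*)\iota_{\C(\TT)}(\bz)\tens\iota_A(\gamma).
\end{align*}
These are exactly the images under $\Phi\tens\Phi$ of the formulas \eqref{eq-eq32}.

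\emph{The generator $z$.} Here we compute
\[
\Delta_{\widetilde{B}}\bigl(\iota_{\C(\TT)}(\bz)\bigr)=\Psi\bigl((\Delta_A\boxtimes\id)\iota_{\C(\TT)}(\bz)\bigr)=\Psi\bigl(\iota_3(\bz)\bigr)=(\iota_{\C(\TT)}\tens\iota_{\C(\TT)})\Delta_\TT(\bz),
\]
which equals $\iota_{\C(\TT)}(\bz)\tens\iota_{\C(\TT)}(\bz)$. This matches $\Delta_B(z)=z\tens z$.

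The main point requiring care is justifying the identities $(\Delta_A\boxtimes\id)\comp\iota_A=\iota_{12}\comp\Delta_A$ and $(\Delta_A\boxtimes\id)\comp\iota_{\C(\TT)}=\iota_3$. These follow from the functoriality of $\boxtimes$ for equivariant morphisms: $\Delta_A$ is $\D(\TT)$-equivariant because it is $\TT$-equivariant and the $\ZZ$-actions are determined by the $\TT$-actions via Proposition \ref{prop-lemma1}, and Theorem \ref{thm2} lets us regard $\boxtimes$ as the Meyer--Roy--Woronowicz product. One must also confirm that the identification $\Phi$ of Proposition \ref{prop-BB} really sends $z$ to $\iota_{\C(\TT)}(\bz)$, and not to its adjoint, under the crossed-product conventions. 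If there is a conflict, replace $\Phi(z)$ by $\iota_{\C(\TT)}(\bz)^*$; the relation $z\gamma z^*=\zeta^{-1}\gamma$ decides which choice is correct, and it should be checked against Proposition \ref{prop-lemma2}. Once the equality holds on the generators, the comultiplications agree and $(B,\Delta_B)\simeq(\widetilde{B},\Delta_{\widetilde{B}})$ as compact quantum groups.
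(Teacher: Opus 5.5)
Your proposal is correct and is essentially the paper's proof: both check the equality on the generators $\alpha,\gamma,z$, computing $\Delta_{\widetilde{B}}(\iota_A(\alpha))$ and $\Delta_{\widetilde{B}}(\iota_A(\gamma))$ as $\Psi\comp\iota_{12}\comp\Delta_A$, and $\Delta_{\widetilde{B}}(\iota_{\C(\TT)}(\bz))=\Psi(\iota_3(\bz))=\iota_{\C(\TT)}(\bz)\tens\iota_{\C(\TT)}(\bz)$. Your hedge about possibly sending $z$ to $\iota_{\C(\TT)}(\bz)^*$ is unnecessary, and that choice would in fact break the check on $\alpha$: the braiding gives $\iota_A(\gamma)\iota_{\C(\TT)}(\bz)=\zeta\,\iota_{\C(\TT)}(\bz)\iota_A(\gamma)$, which is exactly $z\gamma z^*=\zeta^{-1}\gamma$ for $z=\iota_{\C(\TT)}(\bz)$.
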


\begin{proof}
We will use a combination of the two versions of the notation for embeddings of \cst-algebras into braided tensor products as described in \eqref{eq-notconv}: we will write $\iota_A$ and $\iota_{\C(\TT)}$ for the canonical embeddings of the factors into $A\boxtimes\C(\TT)$ and $\iota_1,\iota_2$ and $\iota_3$ for the embeddings of the factors into $A\boxtimes A\boxtimes\C(\TT)$. Finally we will denote by $\iota_{12}$ the embedding of $A\boxtimes A$ into $A\boxtimes A\boxtimes\C(\TT)$ and $\jmath_1$ and $\jmath_2$ will be the embeddings of $A$ onto the factors of the braided tensor product $A\boxtimes A$.

We have
\[
\Delta_{\widetilde{B}}\bigl(\iota_{\C(\TT)}(\bz))
=\Psi\bigl(\iota_3(\bz)\bigr)
=(\iota_{\C(\TT)}\tens\iota_{\C(\TT)})\bigl(\Delta_{\TT}(\bz)\bigr)
=\iota_{\C(\TT)}(\bz)\tens\iota_{\C(\TT)}(\bz)
\]
and
\[
\begin{aligned}
\Delta_{\widetilde{B}}\bigl(\iota_A(\alpha)\bigr)
&=(\Psi\comp\iota_{12})\bigl(\jmath_1(\alpha)\jmath_2(\alpha)-q\jmath_1(\gamma)^*\jmath_2(\gamma)\bigr)
=\Psi\bigl(\iota_1(\alpha)\iota_2(\alpha)-q\iota_1(\gamma)^*\iota_2(\gamma)\bigr)\\&=\bigl(\iota_A(\alpha)\tens\I\bigr)\bigl((\iota_{\C(\TT)}\tens\iota_A)\rho^A(\alpha)\bigr)
-q\bigl(\iota_A(\gamma)^*\tens\I\bigr)\bigl((\iota_{\C(\TT)}\tens\iota_A)\rho^A(\gamma)\bigr)\\
&=\iota_A(\alpha)\tens\iota_A(\alpha)-q\iota_A(\gamma^*)\iota_{\C(\TT)}(\bz)\tens\iota_A(\gamma),\\
\Delta_{\widetilde{B}}\bigl(\iota_A(\gamma)\bigr)
&=(\Psi\comp\iota_{12})\bigl(\jmath_1(\gamma)\jmath_2(\alpha)+\jmath_1(\alpha)^*\jmath_2(\gamma)\bigr)
=\Psi\bigl(\iota_1(\gamma)\iota_2(\alpha)+\iota_1(\alpha)^*\iota_2(\gamma)\bigr)\\
&=\bigl(\iota_A(\gamma)\tens\I\bigr)\bigl((\iota_{\C(\TT)}\tens\iota_A)\rho^A(\alpha)\bigr)
+\bigl(\iota_A(\alpha^*)\tens\I\bigr)
\bigl((\iota_{\C(\TT)}\tens\iota_A)\rho^A(\gamma)\bigr)\\
&=\iota_A(\gamma)\tens\iota_A(\alpha)+\iota_A(\alpha^*)\iota_{\C(\TT)}(\bz)\tens\iota_A(\gamma).
\end{aligned}
\]
Comparing these formulas with \eqref{eq-eq32} and noting that $\iota_{\C(\TT)}(\bz)$ is the generator $z$ of $B$ (via the isomorphism of Proposition \ref{prop-BB}) we see that $(B,\Delta_B)\simeq (\widetilde{B},\Delta_{\widetilde{B}})$ as compact quantum groups.
\end{proof}

\subsection{Properties of the braided flip}\label{app-BraidedFlip}

It this section we will prove Proposition \ref{prop1} thus showing that the braided flip $\chi^\boxtimes$ extends to a completely bounded map on $A\boxtimes A$ precisely when $\zeta=q/\overline{q}\in\TT$ is a root of unity. Our reasoning uses the notion of completely bounded multipliers which we will now briefly recall. For the proofs of the facts listed below and additional information see \cite{HuNeufangRuan,JungeNeufangRuan,CrannInner,DKV24}. We follow the notation of \cite{DKV24}.

Let $\GG$ be a locally compact quantum group. The \emph{Fourier algebra} of $\GG$ is defined as $\A(\GG)=\bigl\{(\omega\tens\id)\ww^{\hh{\GG}}\,\bigr|\bigl.\omega\in \Linf(\hh{\GG})_*\bigr\}\subseteq\C_0(\GG)$ (see e.g.~\cite[Section 1]{DKSS}). When equipped with the multiplication of $\C_0(\GG)$ and the operator space structure of $\Linf(\hh{\GG})_*$, it becomes a completely contractive Banach algebra. Next, one defines the space $\M^l_{\cb}(\A(\GG))$ of \emph{(left) completely bounded multipliers}, as the set of elements $b\in \Linf(\GG)$ satisfying the following condition: $ba\in\A(\GG)$ whenever $a\in\A(\GG)$ and the resulting map $\A(\GG)\to\A(\GG)$ is completely bounded. Since $\A(\GG)\simeq\Linf(\hh{\GG})_*$, the dual of the map $a\mapsto{ba}$ is a normal c.b.~map $\Linf(\hh{\GG})\to\Linf(\hh{\GG})$ which we denote by $\Theta^l(b)$.

The space $\M^l_{\cb}(\A(\GG))$ equipped with the norm $\|b\|_{\cb}=\|\Theta^l(b)\|_{\cb}$ and the multiplication of $\Linf(\GG)$ becomes a completely contractive Banach algebra and we have $\A(\GG)\subseteq \M^l_{\cb}(\A(\GG))\subseteq \M(\C_0(\GG))$. We note that for each $b\in\M^l_{\cb}(\A(\GG))$ the map $\Theta^l(b)$ restricts to $\C_0(\hh{\GG})$ and we have $\|b\|_{\cb}=\bigl\|\bigl.\Theta^l(b)\bigr|_{\C_0(\hh{\GG})}\bigr\|_{\cb}$.

A particularly nice family of completely bounded multipliers forms the \emph{Fourier-Stieltjes algebra} of $\GG$, defined as $\mathcal{B}(\GG)=\bigl\{(\omega\tens\id)\Ww^{\hh{\GG}}\,\bigr|\bigl.\,\omega\in\C_0^\uu(\hh{\GG})^*\bigr\}\subseteq\M(\C_0(\GG))$ (also c.f.~\cite[Section 1]{DKSS}). It is a completely contractive Banach algebra, when equipped with the multiplication of $\M(\C_0(\GG))$ and the operator space structure of $\C_0^\uu(\hh{\GG})^*$. Furthermore, $\mathcal{B}(\GG)\subseteq\M^l_{\cb}(\A(\GG))$ and we have a concrete formula for the associated c.b.~maps: if $b=(\omega\tens\id)(\Ww^{\hh{\GG}})\in\mathcal{B}(\GG)$, then $\bigl.\Theta^l(b)\bigr|_{\C_0(\hh{\GG})}=(\omega\tens\id)\comp\Delta^\ur_{\hh{\GG}}$, where
\[
\Delta_{\hh{\GG}}^{\ur}\in\Mor\bigl(\C_0(\hh{\GG}),\C_0^\uu(\hh{\GG})\tens\C_0(\hh{\GG})\bigr)
\]
is the half-lifted version of the comultiplication (\cite[Section 2]{DawsKrajczokVoigt}). In general the inclusion $\mathcal{B}(\GG)\subseteq\M^l_{\cb}(\A(\GG))$ is strict, but if $\GG$ is amenable, then $\mathcal{B}(\GG)=\M^l_{\cb}(\A(\GG))$ and this identification is completely isometric (see \cite[Theorem 7.2]{CrannInner} for the general and \cite{Bozejko} for the classical case).

To prove Proposition \ref{prop1} we start with a lemma concerning completely bounded multipliers on $\ZZ^2$.

\begin{lemma}\label{lemma2}
Fix $\zeta\in\TT$ and $z,z_1,z_2\in\CC$ with $|z|,|z_1|,|z_2|\le 1$. Let $g_z\colon\ZZ\to\CC$ be defined by
\[
g_z(n)=\begin{cases}
z^n&n\geq 0\\
\overline{z}^{-n}&n<0
\end{cases}.
\]
Furthermore let $g_{z_1,z_2}\colon\ZZ^2\ni(n,m)\mapsto g_{z_1}(n)g_{z_2}(m)\in\CC$ and $f_\zeta\colon\ZZ^2\ni(n,m)\mapsto\zeta^{nm}\in\CC$. Then
\begin{enumerate}
\item\label{lemma2-1} The function $g_z$ belongs to $\M^l_{\cb}(\A(\ZZ))$ and we have $\|g_z\|_{\cb}\le \tfrac{1+|z|}{1-|z|}$ for $|z|<1$ and $\|g_z\|_{\cb}=1$ when $|z|=1$,
\item\label{lemma2-2} the function $g_{z_1,z_2}$ belongs to $\M^l_{\cb}(\A(\ZZ^2))$ and $\|g_{z_1,z_2}\|_{\cb}=\|g_{z_1}\|_{\cb}\|g_{z_2}\|_{\cb}$,
\item\label{lemma2-3} if $\zeta\in\TT$ is a root of unity with $\zeta=\ee^{2\pi\ii n/d}$ with $\gcd(n,d)=1$, then $f_\zeta\in\M^l_{\cb}(\A(\ZZ^2))$ and $\|f_\zeta\|_{\cb}=d$,
\item\label{lemma2-4} if $\zeta\in\TT$ is not a root of unity, then $f_{\zeta}\not\in\M^l_{\cb}(\A(\ZZ^2))$.
\end{enumerate}
\end{lemma}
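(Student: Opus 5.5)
Since $\ZZ$ and $\ZZ^2$ are amenable, the plan is to work through the identification $\M^l_{\cb}(\A(\GG))=\mathcal{B}(\GG)$ (completely isometric). For $\GG=\ZZ^k$ this is the classical Fourier--Stieltjes algebra $B(\ZZ^k)$: positive-definite combinations of Fourier transforms of measures on $\TT^k$. Its norm is the total variation norm of the representing measure.

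For \eqref{lemma2-1}, with $|z|<1$ I will write $g_z(n)=\hat\mu(n)$ for an explicit function on $\TT$. For $z=r\ee^{\ii\varphi}$, $g_z$ is the sequence of Fourier coefficients of a shifted Poisson-type kernel. Indeed
\[
\sum_{n\ge0}z^n w^{-n}+\sum_{n<0}\overline{z}^{-n}w^{-n}=\frac{1}{1-z\overline{w}}+\frac{\overline{z}w}{1-\overline{z}w}.
\]
Its $L^1(\TT)$ norm is bounded by $\sum_n|g_z(n)|=\frac{1+|z|}{1-|z|}$, and already the trivial estimate $\|g_z\|_{B(\ZZ)}\le\|g_z\|_{\ell^1}$ gives the bound. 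Since $\ell^1(\ZZ)\subseteq\A(\ZZ)$ contractively, this is all that is needed. When $|z|=1$, $g_z(n)=z^n$ is a character, hence of norm $1$; note $g_z(-n)=\overline z^{n}=z^{-n}$ is consistent.

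For \eqref{lemma2-2}, the product $g_{z_1}\otimes g_{z_2}$ corresponds to the product measure $\mu_1\times\mu_2$. Total variation is multiplicative on product measures, and $B(\ZZ^2)=B(\ZZ\times\ZZ)$ with $\|\mu_1\times\mu_2\|=\|\mu_1\|\|\mu_2\|$. This gives equality (using the isometric identification for both groups).

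For \eqref{lemma2-3}, with $\zeta=\ee^{2\pi\ii n/d}$ the function $f_\zeta$ is $d\ZZ^2$-periodic, so it factors through the finite group $\ZZ_d^2$. Pulling back along a quotient is isometric on $B$, so $\|f_\zeta\|=\|f\|_{B(\ZZ_d^2)}$ with $f(a,b)=\zeta^{ab}$. On a finite abelian group the $B$-norm is $\sum_\chi|\hat f(\chi)|$. The Fourier coefficients are
\[
\hat f(s,t)=d^{-2}\sum_{a,b}\zeta^{ab}\zeta^{-(as+bt)}.
\]
Summing over $b$ forces $a\equiv t$, since $\gcd(n,d)=1$ means $\zeta$ is a primitive $d$-th root. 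This gives $\hat f(s,t)=d^{-1}\zeta^{-st}$, a unimodular multiple of $d^{-1}$ at all $d^2$ characters. The norm is therefore $d^2\cdot d^{-1}=d$.

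For \eqref{lemma2-4}, which is the main obstacle, I plan to argue by contradiction. Suppose $f_\zeta=\hat\mu$ for a measure $\mu$ on $\TT^2$. Restricting to the diagonal rows $m$ fixed gives $f_\zeta(\cdot,m)=\zeta^{m\,\cdot}$, a character. Averaging over the second variable, $\lim_N\frac1{2N+1}\sum_{|m|\le N}f_\zeta(k,m)$ equals $\delta_{k,0}$ since $\zeta^k\ne1$ for $k\ne0$. Averages of elements of $B(\ZZ^2)$ along the second variable converge to Fourier--Stieltjes transforms of the restricted measure $\mu|_{\TT\times\{1\}}$, which lie in $B(\ZZ)$. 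So $\delta_0\in B(\ZZ)$ with $\delta_0(k)=\hat\nu(k)$, forcing $\nu$ to be Lebesgue measure (continuous), which is fine. So instead I use the Bohr/almost-periodic structure: a measure's transform has an almost periodic part given by its atoms, with $\sum_{\text{atoms}}|\mu(\{x\})|^2=\lim$ mean of $|\hat\mu|^2$. Here the mean of $|f_\zeta|^2$ is $1$, so $\mu$ must be purely atomic with $\sum|c_x|^2=1$. The coefficients are recovered as $c_{(u,v)}=\mathrm{mean}\,f_\zeta(a,b)\overline{u^av^b}$. Computing via the $b$-mean forces $v=\zeta^a$, so for each $(u,v)$ the mean over $a$ is of a sequence supported on at most one $a$, hence $0$. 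All atoms vanish, contradicting $\sum|c_x|^2=1$. Hence $f_\zeta\notin B(\ZZ^2)=\M^l_{\cb}(\A(\ZZ^2))$.
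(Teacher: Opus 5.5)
Your proposal is correct. Parts (1)--(3) follow the paper in substance. The paper also works through $\mathcal{B}(\ZZ^k)\simeq\C(\TT^k)^*$ and bounds $\|g_z\|_{\cb}$ by the $\ell^1$-norm. In (3) it uses exactly the atomic measure $\tfrac1d\sum_{k,l}\zeta^{-kl}\delta_{(\zeta^k,\zeta^l)}$ that you obtain by factoring through $\ZZ_d^2$. In (2) it tensors the multiplier maps rather than the measures, which amounts to the same thing. (Incidentally, the kernel you wrote down in (1) equals $\tfrac{1-|z|^2}{|1-z\overline{w}|^2}\geq 0$, so in fact $\|g_z\|_{\cb}=1$.)

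The genuine difference is in (4). The paper uses that every element of $\mathcal{B}(\ZZ^2)$ is weakly almost periodic. It then violates Grothendieck's double limit criterion by inductively building sequences $(n_k)$, $(m_l)$, with $m_l$ odd, for which the two iterated limits of $\zeta^{n_km_l}$ are $1$ and $-1$. You instead use Wiener's theorem. From $|f_\zeta|\equiv 1$ you get $\sum_x|\mu(\{x\})|^2=1$; most directly, with $\tilde\mu(E)=\overline{\mu(E^{-1})}$ one has $\widehat{\mu*\tilde\mu}=|\hat\mu|^2\equiv1$, so $\mu*\tilde\mu$ is the Dirac mass at $(1,1)$, whose mass there is $\sum_x|\mu(\{x\})|^2$. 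On the other hand every atom vanishes, because $\zeta^a=v$ has at most one solution $a$. Your route is shorter and avoids the Diophantine construction. The paper's route proves more: $f_\zeta$ is not even weakly almost periodic.

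A few things to tidy up.
\begin{itemize}
\item The claim that $\mu$ is ``purely atomic'' does not follow: $\delta_{(1,1)}$ plus Haar measure also has mean of $|\hat\mu|^2$ equal to $1$. You never use it, so delete it.
\item Say that your mean for $c_{(u,v)}$ is the iterated one, first over $b$, then over $a$. Iterated Ces\`aro means do recover $\mu(\{(u,v)\})$, by applying dominated convergence twice, so your computation is legitimate. With square means you would additionally need equidistribution of $(\zeta^a)_a$. That is what shows the averages $\tfrac{1}{2N+1}\sum_{|b|\le N}(\zeta^a\overline{v})^b$, which tend to $0$ non-uniformly in $a$, still have vanishing mean over $|a|\le N$.
\item Your abandoned first attempt actually works if you slice at $v=\zeta^k$ instead of $v=1$. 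With your sign convention, the slice $E\mapsto\mu(E\times\{\zeta^k\})$ has transform $\delta_k$, hence total variation $1$. These slices live on infinitely many disjoint circles, which contradicts $\|\mu\|<\infty$.
\end{itemize}
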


\begin{proof}
Since $\ZZ$ is amenable, we can work with the Fourier-Stieltjes algebra $\mathcal{B}(\ZZ)$ which is completely isometric to $\C(\TT)^*$, identified with the space of Borel, regular, complex measures on $\TT$. Unraveling these identifications, we see that a measure $\rho\in \C(\TT)^*$ viewed as an element of $\mathcal{B}(\ZZ)$ corresponds to the function $\ZZ\ni n\mapsto \rho(\bz^{-n})\in\CC$, where $\bz$ is the canonical coordinate on $\TT$. An analogous observation applies to $\ZZ^2$ and $\TT^2$.

Ad \eqref{lemma2-1}. If $|z|=1$, then $g_z(n)=z^n=\delta_{\overline{z}}(\bz^{-n})$ hence $\|g_z\|_{\cb}=\|\delta_z\|=1$. In the case $|z|<1$, consider the measure $\rho=\sum\limits_{n\in\ZZ} g_z(n)\bh_{\TT}(\bz^{n}\cdot)$ ($\bh_\TT$ is the normalized Haar measure on $\TT$). Since $\rho(\bz^{-n})=g_z(n)$ and $\|\rho\|\le\sum\limits_{n\in \ZZ}\bigl|g_z(n)\bigr|=\tfrac{1+|z|}{1-|z|}$, the claim follows.

Point \eqref{lemma2-2} follows from the fact that $\Theta^l(g_{z_1,z_2})=\Theta^l(g_{z_1})\tens\Theta^l(g_{z_2})$.

Ad \eqref{lemma2-3}. Assume that $\zeta=\ee^{2\pi\ii n/d}$, where $n\in\ZZ_+$, $d\in\NN$ and $\gcd(n,d)=1$. Define the measure $\rho=\tfrac{1}{d}\sum\limits_{k,l=0}^{d-1}\zeta^{-kl}\delta_{(\zeta^k,\zeta^l)}$ ($\delta_{(\zeta^k,\zeta^l)}$ is the Dirac measure at the point $(\zeta^k,\zeta^l)\in\TT^2$). Since
\[
\rho(\bz_1^{-n} \bz_2^{-m})=
\tfrac{1}{d}
\sum_{k,l=0}^{d-1} \zeta^{-kl} \zeta^{-kn} \zeta^{-ml}=
\sum_{\substack{0\leq k\leq d-1\\k\equiv-m\,\text{mod}\,d}}
\zeta^{-kn}
=\zeta^{nm}=f_\zeta(n,m)
\]
for $(n,m)\in\ZZ^2$, we see that $\rho\in\C(\TT^2)^*$ corresponds to $f_\zeta\in\mathcal{B}(\ZZ^2)$ and $\|f_\zeta\|_{\cb}=\|\rho\|$. The upper bound $\|\rho\|\le d$ is  trivial, for the converse inequality choose a function $u\in \C(\TT^2)$ with $\|u\|=1$ and $u(\zeta^k,\zeta^l)=\zeta^{kl}$ for $0\le k,l\le d-1$, then $\rho(u)=d$.

Ad \eqref{lemma2-4}.
Assume by contradiction that $f_\zeta\in\mathcal{B}(\ZZ^2)$. Then $f_\zeta$ is weakly almost periodic (\cite[Proposition D.8]{KerrLi}), i.e.~the set $\bigl\{f_\zeta((n,m)+\cdot)\,\bigr|\bigl.\,(n,m)\in\ZZ^2\bigr\}\subseteq\ell^{\infty}(\ZZ^2)$ has compact closure in the weak topology. Hence Grothendieck's double limit criterion \cite[Theorem D.4]{KerrLi} (applied to the Stone-\v{C}ech compactification $\beta\ZZ^2$ of $\ZZ^2$) implies that $(n_k)_{k\in\NN}$ and $(m_l)_{l\in\NN}$ are sequences of integers such that the limits
\begin{equation}\label{eq3}
\lim_{k\to\infty}\lim_{l\to\infty}f_\zeta(n_k,m_l),\quad 
\lim_{l\to\infty}\lim_{k\to\infty}f_\zeta(n_k,m_l)
\end{equation}
exist, then they are equal.

We will now construct sequences $(n_k)_{k\in\NN}$, $(m_l)_{l\in\NN}$ for which the limits in \eqref{eq3} exist, but are different. More precisely, we will construct sequences such that the following statement
\[
\left.\begin{array}{l}
n_R\in\NN\\[2pt]
m_R\in 2\ZZ_++1\\[2pt]
\forall\:s\in\{1,\dotsc,R-1\}\ \bigl|\zeta^{n_sm_R}-1\bigr|\leq\tfrac{1}{R}\\[2pt]
\forall\:t\in\{1,\dotsc,R\}\ \bigl|\zeta^{n_Rm_t}+1\bigr|\leq\tfrac{1}{R}
\end{array}\right\}
\tag{$\star_R$}
\]
is true for all $R\ge 1$.

For $R=1$, set $m_1=1$ and choose $n_1\in\NN$ such that $|\zeta^{n_1}+1|\le 1$. Next, assume that we have constructed $m_1,\dotsc,m_R$ and $n_1,\dotsc,n_R$ such that the conditions $(\star_1),\dotsc,(\star_R)$ hold. Choose $p\in\NN$ in such a way that $\bigl|\zeta^{2p}-\zeta^{-1}\bigr|\leq\tfrac{1}{(R+1)\max\{n_1,\dotsc,n_R\}}$ and define $m_{R+1}=2p+1\in 2\NN+1$. For $1\le s \le R$ we have
\[
\bigl|\zeta^{n_sm_{R+1}}-1\bigr|
=\bigl|(\zeta^{m_{R+1}})^{n_s}-1^{n_s}\bigr|
\leq n_s\bigl|\zeta^{m_{R+1}}-1\bigr|
=n_s\bigl|\zeta^{2p+1}-1\bigr|
\leq\tfrac{1}{R+1}.
\]
Next choose $n_{R+1}\in\NN$ so that $\bigl|\zeta^{n_{R+1}}+1\bigr|\leq\tfrac{1}{(R+1) \max\{m_1,\dotsc,m_{R+1}\}}$. Then for $1\leq t\leq R+1$
\[
\bigl|\zeta^{n_{R+1}m_t}+1\bigr|
=\bigl|(\zeta^{n_{R+1}})^{m_t}-(-1)^{m_t}\bigr|
\leq m_t\bigl|\zeta^{n_{R+1}}+1\bigr|\leq\tfrac{1}{R+1},
\]
because each $m_t$ is odd. This proves $(\star_{R+1})$ and ends the construction. Using conditions $(\star_R)$ we see that
\[
\lim_{k\to\infty}\lim_{l\to\infty}f_\zeta(n_k,m_l)
=\lim_{k\to\infty}\lim_{l\to\infty}\zeta^{n_km_l}=1
\]
and
\[
\lim_{l\to\infty}\lim_{k\to\infty}f_\zeta(n_k,m_l)
=\lim_{l\to\infty}\lim_{k\to\infty}\zeta^{n_km_l}=-1,
\]
hence we obtain a contradiction.
\end{proof}

We will need one more simple lemma, which is a consequence of the proof of \cite[Theorem 2.3]{BraidedSU2} and \cite[Equation (2.6)]{KrajczokSoltan}.

\begin{lemma}\label{lemma3}
For $q\in\CC$ with $0<|q|<1$ we have $\operatorname{Sp}(\gamma)=\{0\}\cup\bigl\{\lambda|q|^{k}\,\bigr|\bigl.\,\lambda\in\TT,\:k\in\ZZ_+\bigr\}$.
\end{lemma}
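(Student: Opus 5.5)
The plan is to compute the spectrum of the normal operator $\gamma$ using the faithful embedding $\kmap\colon A\to B$ of Proposition \ref{prop-injkap}. Spectrum is preserved under injective unital $*$-homomorphisms, so $\operatorname{Sp}_A(\gamma)=\operatorname{Sp}_B(\gamma)$. I will prove two inclusions.

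\emph{Inclusion $\supseteq$.} It suffices to exhibit enough irreducible representations of $A$ and read off the spectrum of $\gamma$ in them. For each $\lambda\in\TT$ take $\ell^2(\ZZ_+)$ with orthonormal basis $\{e_k\}$ and set
\[
\pi_\lambda(\gamma)e_k=\lambda\mu^k|q|^k e_k,\qquad \pi_\lambda(\alpha)e_k=\sqrt{1-|q|^{2k}}\,e_{k-1},
\]
where $\mu\in\TT$ is a phase fixed by $q=|q|\overline{\mu}$ or similar. One checks the relations \eqref{eq-SUq2rels} on basis vectors. The relation $\alpha\gamma=\overline{q}\gamma\alpha$ forces the eigenvalue ratio $\gamma e_{k-1}$ versus $\gamma e_k$ to be $\overline{q}$. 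So the eigenvalues are $\lambda\overline{q}^{\,k}$, whose moduli are $|q|^k$ and whose phases are rotated. As $\lambda$ ranges over $\TT$, these fill $\{\lambda|q|^k\}$ for each $k$. Since $\gamma$ is normal with these eigenvalues, and the spectrum is closed, $0$ lies in it as an accumulation point. This matches the construction in the proof of \cite[Theorem 2.3]{BraidedSU2}.

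\emph{Inclusion $\subseteq$.} Here I use the relations directly. From $\gamma^*\gamma=\I-\alpha^*\alpha$ and $\alpha\alpha^*=\I-|q|^2\gamma^*\gamma$ we get $\alpha f(\gamma^*\gamma)=f(|q|^2\gamma^*\gamma)\alpha$. The standard argument for real $q$ then shows $\operatorname{Sp}(\gamma^*\gamma)\subseteq\{0\}\cup\{|q|^{2k}\mid k\in\ZZ_+\}$. This argument only involves $|q|$, so it transfers verbatim; it is \cite[Equation (2.6)]{KrajczokSoltan}. Since $\gamma$ is normal, $|\gamma|^2=\gamma^*\gamma$, and by the spectral mapping theorem $\operatorname{Sp}(\gamma)\subseteq\{w\mid |w|^2\in\operatorname{Sp}(\gamma^*\gamma)\}$. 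This gives exactly the right-hand side.

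The main obstacle is the spectral argument for $\gamma^*\gamma$ in the $\subseteq$ direction. If a point $t\in\operatorname{Sp}(\gamma^*\gamma)$ is not of the form $|q|^{2k}$, I would iterate the intertwining relation to push $t$ up to a value above $1$, which is impossible since $\|\gamma\|\le1$. Alternatively, I would push it to a point in $(|q|^2,1)$ where $\alpha^*$ must act isometrically, again giving a contradiction. I would simply cite \cite{KrajczokSoltan} for this step.
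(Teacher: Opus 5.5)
Your proof is correct and follows essentially the route the paper indicates: the paper gives no argument beyond pointing to the representations in the proof of \cite[Theorem 2.3]{BraidedSU2} and to \cite[Equation (2.6)]{KrajczokSoltan}, and your sketch of the $\subseteq$ step works, since $\alpha^*f(\gamma^*\gamma)\alpha=(\I-\gamma^*\gamma)f(|q|^{-2}\gamma^*\gamma)$ shows that $s\in\operatorname{Sp}(\gamma^*\gamma)\setminus\{1\}$ forces $|q|^{-2}s\in\operatorname{Sp}(\gamma^*\gamma)\subseteq[0,1]$. Only your ``alternative'' remark is misstated: on the spectral subspace of $\gamma^*\gamma$ for a set inside $(|q|^2,1)$ it is $\alpha$ that must vanish (it multiplies spectral values of $\gamma^*\gamma$ by $|q|^{-2}$, pushing them above $1$), which contradicts $\alpha^*\alpha=\I-\gamma^*\gamma\neq0$ there; it is not that $\alpha^*$ acts isometrically, but your main argument does not need this remark.
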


With these preliminaries, we can prove Proposition \ref{prop1}. 

\begin{proof}[Proof of Proposition \ref{prop1}]
Assume first that $\zeta$ is a root of unity and write $\zeta=\ee^{2\pi\ii n/d}$ for $n\in\ZZ_+$, $d\in\NN$ with $0\le n\le d-1$ and $\gcd(n,d)=1$. If $\zeta=1$ (i.e.~$q\in \RR$) the claim is trivial, hence assume that $d\ge 2$.
 
As explained in Section \ref{sect-SUq2} we can realize $\cA\algboxtimes\cA\subseteq A\boxtimes A\subseteq\C(\TT^2_\zeta)\tens A\tens A$. Recall (equation \eqref{eq1}) that $\chi^\boxtimes$ acts as
\[
\chi^\boxtimes\colon\iota_1(a)\iota_2(b)
=U^{\deg(a)}V^{\deg(b)}\tens a\tens b\longmapsto\iota_2(a)\iota_1(b)
=V^{\deg(a)}U^{\deg(b)}\tens b\tens a
\]
for homogeneous $a,b\in\cA$. Since $\zeta$ is a root of unity, we can use \cite[Corollary 1.12]{Boca} and identify $\C(\TT^2_\zeta)$ with $\C^*(\bz_1\tens U_0,\bz_2\tens V_0)\subseteq\C(\TT^2)\tens\Mat_d(\CC)$, where
\[
U_0=\begin{bmatrix}
0 & 1 & 0 & \cdots & 0\\
0 & 0 & 1 & \cdots & 0 \\
\vdots & \vdots & \vdots &
\ddots & \vdots\\
0 & 0 & 0 & \cdots & 1 \\
1 & 0 & 0 & \cdots & 0
\end{bmatrix},\quad
V_0=\begin{bmatrix}
1 & 0 & \cdots & 0\\
0 & \zeta & \cdots & 0 \\
\vdots & \vdots & \ddots & \vdots\\
0 & 0 & \cdots & \zeta^{d-1}
\end{bmatrix}.
\]
Let $\{e_k\}_{k=0,\dotsc,d-1}$ be the standard basis of $\CC^d$ (it will be convenient to treat $k$ as a cyclic variable in $\ZZ_d$). Define linear maps $\mathcal{O},\widetilde{\mathcal{O}}\colon\CC^d\to\CC^d$ by
\[
\mathcal{O}e_k=\tfrac{1}{\sqrt{d}}\sum_{l=0}^{d-1}\zeta^{k l}e_l,\qquad
\widetilde{\mathcal{O}}e_k=e_{-k}\in\CC^d
\]
(c.f.~\cite[Equation (1.11)]{Boca}). It is straightforward to check that $\mathcal{O}$ and $\widetilde{\mathcal{O}}$ are unitary and
\[
\mathcal{O} U_0 \mathcal{O}^*=V_0^{-1},\quad
\mathcal{O}V_0 \mathcal{O}^*=U_0,\quad
\widetilde{\mathcal{O}}U_0\widetilde{\mathcal{O}}^*=U_0^*,\quad
\widetilde{\mathcal{O}}V_0\widetilde{\mathcal{O}}^*=V_0^*.
\]
Next we define a linear map $\phi_0\colon\Mat_d(\CC)\to\Mat_d(\CC)$
\[
\phi_0(x)=\widetilde{\mathcal{O}}(\mathcal{O}x\mathcal{O}^*)^\top
\widetilde{\mathcal{O}}^*,
\]
where $\top$ denotes the transposition. Then $\phi_0$ is completely bounded with $\|\phi_0\|_{\cb}=d$ (\cite[Proposition 2.2.7]{EffrosRuan}). Furthermore we have
\begin{equation}\label{eq4}
\begin{aligned}
\phi_0(U_0^n V_0^m)
&=\widetilde{\mathcal{O}}
(\mathcal{O} U_0^n V_0^m \mathcal{O}^*)^t \widetilde{\mathcal{O}}^*\\
&=\widetilde{\mathcal{O}} (U_0^m)^t (V_0^{-n})^{t} \widetilde{\mathcal{O}}^*
=\widetilde{\mathcal{O}} U_0^{-m} V_0^{-n} \widetilde{\mathcal{O}}^*
=U_0^m V_0^n
\end{aligned}
\end{equation}
for any $n,m\in\ZZ$.

Next let $F\colon\C(\TT^2)\to\C(\TT^2)$ be the automorphism associated to the homeomorphism $\TT^2\ni(z_1,z_2)\mapsto(z_2,z_1)\in\TT^2$ and put $\widetilde{\phi}=F\tens\phi_0\colon\C(\TT^2)\tens\Mat_d(\CC)\to\C(\TT^2)\tens\Mat_d(\CC)$.

Since $F$ is a $*$-homomorphism, the map $\widetilde{\phi}$ is completely bounded with $\|\widetilde{\phi}\|_{\cb}=d$. Denoting the coordinate functions on $\TT^2$ by $\bz_1$ and $\bz_2$ and using \eqref{eq4} we obtain
\begin{align*}
\widetilde{\phi}(U^nV^m)
=\widetilde{\phi}(\bz_1^n\bz_2^m\tens U_0^nV_0^m)
&=\bz_2^n\bz_1^m\tens U_0^mV_0^n\\
&=\bz_1^m\bz_2^n\tens U_0^mV_0^n
=U^mV^n=\zeta^{nm}V^nU^m
\end{align*}
for all $n,m\in\ZZ$. By Lemma \ref{lemma2}\eqref{lemma2-3} the map $\Theta^l(f_{\overline{\zeta}})\colon\C(\TT^2)\to\C(\TT^2)$ acting by
\[
\bz_1^n\bz_2^m\longmapsto
f_{\overline{\zeta}}(-n,-m)\bz_1^n\bz_2^m=\zeta^{-nm}\bz_1^n\bz_2^m
\]
satisfies $\|\Theta^l(f_{\overline{\zeta}})\|_{\cb} =d$. It follows that $(\Theta^l(f_{\overline{\zeta}})\tens\id)\circ\widetilde\phi$ restricts to a completely bounded map $\phi\colon\C(\TT^2_{\zeta})\to\C(\TT^2_{\zeta})$ which satisfies
\[
\phi(U^nV^m)=V^nU^m,\qquad n,m\in\ZZ
\]
with $\|\phi\|_{\cb}\le d^2$. Finally observe that for homogeneous $a,b\in\cA$ we have
\begin{align*}
(\phi\tens\chi)\bigl(\iota_1(a)\iota_2(b)\bigr)
&=(\phi\tens\chi)\bigl(U^{\deg(a)}V^{\deg(b)}\tens a\tens b\bigr)\\
&=V^{\deg(a)}U^{\deg(b)}\tens b\tens a
=\iota_2(a)\iota_1(b)=\chi^{\boxtimes}\bigl(\iota_1(a)\iota_2(b)\bigr),
\end{align*}
where $\chi$ is the standard tensor flip on $A\tens A$. It follows that the restriction of $\phi\tens\chi$ to $A\boxtimes A$ defines an extension of $\chi^\boxtimes$ with c.b.~norm $\le d^2$. This ends the proof of the first part.

Assume now that $\zeta$ is not a root of unity, and assume by contradiction that $\chi^{\boxtimes}$ extends to a c.b.~map on $A\boxtimes A$ which we will denote by the same symbol. Then the map $(\chi^{\boxtimes})^2$ on $A\boxtimes A$ is completely bounded and acts by
\[
A\boxtimes A\ni\iota_1(a)\iota_2(b)\longmapsto
\zeta^{-2\deg(a)\deg(b)}\iota_1(a)\iota_2(b)\in A\boxtimes A
\]
for homogeneous $a,b\in\cA$. Lemma \ref{lemma2} tells us that $f_{\zeta^{-2}}\not\in \M^l_{\cb}(\A(\ZZ^2))$ and hence \cite[Proposition 7.2]{QuantumTori} implies that the map $ U^nV^m\mapsto f_{\zeta^{-2}}(n,m) U^nV^m$ does not extend to a linear, completely bounded map on $\C(\TT^2_\zeta)$. Thus for a fixed $N\ge 2$ we can find matrix $a=[a_{k,l}]_{k,l=1}^{p}\in\Mat_p(\C(\TT^2_\zeta))$ with $\|a\|_{\Mat_p(\C(\TT^2_\zeta))}=1$ with each matrix entry
\begin{equation}\label{eq5}
a_{k,l}=\sum_{n,m\in\ZZ} a_{k,l}^{n,m} U^n V^m
\end{equation}
(each $a_{k,l}$ is a finite linear combination of generators $U^nV^m$) and such that
\begin{equation}\label{eq6}
\left\|\biggl[\,
\sum_{n,m\in\ZZ}
 \zeta^{-2nm} a_{k,l}^{n,m} U^n V^m
\biggr]_{k,l=1}^{p}\right\|_{\Mat_p(\C(\TT^2_\zeta))}\ge N.
\end{equation}
Next, let us define
\[
b=[b_{k,l}]_{k,l=1}^{p}=
\biggl[\,
\sum_{n,m\in\ZZ}
a^{n,m}_{k,l} U^n V^m\tens\tilde{\gamma}^n\tens\tilde{\gamma}^m
\biggr]_{k,l=1}^{p}\in\Mat_p\bigl(\C(\TT^2_\zeta)\tens A\tens A\bigr),
\]
where we use the convenient notation $\tilde{\gamma}^n=\gamma^n$ if $n\ge 0$ and $\tilde{\gamma}^n=\gamma^{ * |n|}$ if $n<0$.

Consider now
\[
\mathsf{B}=\overline{\operatorname{span}}
\bigl\{\iota_1(\tilde{\gamma}^n)\iota_2(\tilde{\gamma}^m)\,\bigr|\bigl.\,n,m\in\ZZ\bigr\}
\]
which is a unital \cst-subalgebra in $A\boxtimes A$. We can write $b_{k,l}=\sum\limits_{n,m\in\ZZ} a^{n,m}_{k,l}\iota_1(\tilde{\gamma}^n)\iota_2(\tilde{\gamma}^m)$, hence in fact we have
\[
b\in\Mat_p(\mathsf{B})\subseteq\Mat_p\bigl(\C(\TT^2_\zeta)\tens\cst(\I,\gamma)\tens \cst(\I,\gamma)\bigr)\simeq\Mat_p\bigl(\C(\TT^2_\zeta)\tens\C(\Sp(\gamma))\tens\C(\Sp(\gamma))\bigr),
\]
where $\operatorname{Sp}(\gamma)=\{0\}\cup\bigl\{\lambda|q|^{k}\,\bigr|\bigl.\,\lambda\in\TT,\:k\in\ZZ_+\bigr\}$ (Lemma \ref{lemma3}).

Next, we can identify
\[
\Mat_p\bigl(\C(\TT^2_\zeta)\tens\C(\Sp(\gamma))\tens\C(\Sp(\gamma))\bigr)\simeq\C\bigl(\Sp(\gamma)\times \Sp(\gamma),\Mat_p(\C(\TT^2_\zeta))\bigr)
\]
(cf.~\cite[Theorem 6.4.17]{Murphy})
 and hence
\[
\begin{aligned}
\|b\|&=\sup\Bigl\{\bigl\|(\id\tens\ev_{z_1}\tens\ev_{z_2})b\bigr\|\,\Bigr|\Bigl.\,z_1,z_2\in\Sp(\gamma)\Bigr\}\\
&=\sup\left\{
\left\|
 \biggl[
  \sum_{n,m\in\ZZ}a^{n,m}_{k,l}U^nV^m\ev_{z_1}(\tilde{\gamma}^n)\ev_{z_2}(\tilde{\gamma}^m)
 \biggr]_{k,l=1}^{p}
\right\|_{\Mat_p(\C(\TT^2_\zeta))}
\,\vline\,
z_1,z_2\in\Sp(\gamma)
\right\}.
\end{aligned}
\]
Now we note that $\ev_{z}(\tilde{\gamma}^n)=\begin{cases}z^n,&n\ge 0\\ \overline{z}^{-n},& n<0\end{cases}$ which is exactly $g_z(n)$, where the function $g_z$ was introduced in Lemma \ref{lemma2}. Using again \cite[Proposition 7.2]{QuantumTori} and the fact that $\|a\|=1$ we obtain
\[
\begin{aligned}
\|b\|&=\sup
\left\{
 \left\|
 \biggl[\,
 \sum_{n,m\in\ZZ}a^{n,m}_{k,l}U^nV^mg_{z_1,z_2}(n,m)
 \biggr]_{k,l=1}^{p}
 \right\|_{\Mat_p(\C(\TT^2_\zeta))}
\,\vline\,
z_1,z_2\in\Sp(\gamma)
\right\}\\
&\le
\sup_{z_1,z_2\in\Sp(\gamma)}
\|g_{z_1,z_2}\|_{\cb}
=\left(\tfrac{1+|q|}{1-|q|}\right)^2.
\end{aligned}
\]
On the other hand, applying $p\times p$ matrix amplification of $(\chi^{\boxtimes})^2$ we have
\[
\bigl((\chi^{\boxtimes})^2\bigr)^{(p)}(b)
=\biggl[\,
\sum_{n,m\in\ZZ}
\zeta^{-2 nm}a_{n,m}^{k,l}U^nV^m\tens\tilde{\gamma}^n\tens\tilde{\gamma}^m
\biggr]_{k,l=1}^{p}
\!\!\in\Mat_p\bigl(\C(\TT^2_\zeta)\tens\cst(\I,\gamma)\tens\cst(\I,\gamma)\bigr)
\]
and we can bound the norm of this matrix from below using \eqref{eq6} as follows:
\[
\begin{aligned}
\bigl\|((\chi^{\boxtimes})^2)^{(p)}(b)\bigr\|
&=\sup\left\{
\left\|(\id\tens\ev_{z_1}\tens\ev_{z_2})\bigl(((\chi^{\boxtimes})^2)^{(p)}(b)\bigr)
\right\|_{\Mat_p(\C(\TT^2_\zeta))}
\,\vline\,
z_1,z_2\in \Sp(\gamma)
\right\}\\
&\ge 
\bigl\|(\id\tens\ev_{1}\tens\ev_{1})\bigl(((\chi^{\boxtimes})^2)^{(p)}(b)\bigr)
\bigr\|_{\Mat_p(\C(\TT^2_\zeta))}\\
&=\left\|
\biggl[\,
\sum_{n,m\in\ZZ}
\zeta^{-2nm}
a_{n,m}^{k,l}U^n V^m
\biggr]_{k,l=1}^{p}
\right\|_{\Mat_p(\C(\TT^2_\zeta))}\ge N.
\end{aligned}
\]
This shows that the c.b.~norm of $(\chi^\boxtimes)^2$ is not smaller than $N\left(\tfrac{1 +|q|}{1-|q|}\right)^{-2}$. Since $N$ was arbitrary, we obtain a contradiction.
\end{proof}

\end{appendices}

\section{Acknowledgements}

The work of the first author was partially supported by FWO grant 1246624N. The second author was partially supported by NCN (National Science Centre, Poland) grant no.~2022/47/B/ST1/00582. Additionally the research presented in this paper is based upon work supported by the Swedish Research Council under grant no.~2021-06594 while the second author was in residence at Institut Mittag-Leffler in Djursholm, Sweden during the year 2026.

The authors wish to thank Pawe{\l} Kasprzak for many stimulating discussions on the subject of braided quantum groups.

\end{document}